\documentclass[reqno, 11pt]{amsart}
\usepackage{amssymb,mathrsfs}
\usepackage[usenames, dvipsnames]{color}
\usepackage{esint}
\usepackage[colorlinks=true,linkcolor=blue,citecolor=red,urlcolor=black]{hyperref}
\usepackage{verbatim}
\usepackage{cancel}
\usepackage[margin=1in]{geometry}
\usepackage{tikz}

\usepackage{cite}
\usepackage[nobysame,non-sorted-cites,alphabetic,initials]{amsrefs}
\def\MR#1{}

\theoremstyle{plain}
\newtheorem{theorem}{Theorem}[section]
\newtheorem{lemma}[theorem]{Lemma}
\newtheorem{corollary}[theorem]{Corollary}
\newtheorem{proposition}[theorem]{Proposition}

\theoremstyle{definition}

\theoremstyle{remark}

\newcommand{\I}{\operatorname{I}}

\numberwithin{equation}{section}

\newcommand{\bR}{\mathbb{R}}

\newcommand{\bT}{\mathbb{T}}

\newcommand\cB{\mathcal{B}}

\newcommand\cS{\mathcal{S}}

\newcommand\cX{\mathcal{X}}

\makeatletter
\def\dashint{\operatorname%
{\,\,\text{\bf--}\kern-.98em\DOTSI\intop\ilimits@\!\!}}
\makeatother

\begin{document}
\title[Stability of symmetric flows]{Asymptotic stability of symmetric flows with viscous inflow boundary condition}
\author[Y. Guo]{Yan Guo}

\author[Z. Yang]{Zhuolun Yang}

\address[Y. Guo]{Division of Applied Mathematics, Brown University, 182 George Street, Providence, RI 02912, USA}
\email{yan\_guo@brown.edu}

\address[Z. Yang]{Department of Mathematics, The Ohio State University, 231 West 18th Avenue, Columbus, OH 43210, USA}
\email{yang.8242@osu.edu}

\thanks{Y. Guo was partially supported by NSF Grant DMS-2405051}

\thanks{Z. Yang was partially supported by NSF Grant DMS-2550221}

\subjclass[2020]{35Q30, 76E05}

\keywords{Navier-Stokes equations, inviscid limits, asymptotic stability, symmetric flow, stability threshold}

\date{\today}
\begin{abstract}
We study the two-dimensional incompressible Navier-Stokes equations in a channel $\Omega=(0,L)\times(0,H)$ with small viscosity $\varepsilon\ll1$, an $\varepsilon$-Navier slip condition on the horizontal walls, and a viscous inflow condition for the perturbation stream function. For a broad class of symmetric base profiles $u_0(y)$ vanishing on the walls, we construct an exact steady solution $(u_s,v_s)$ that is $O(\varepsilon^{1/3})$-close to the shear $(u_0,0)$. We then develop a new weighted vorticity energy method to prove uniform linear stability and exponential decay: perturbations decay exponentially in a weighted $L^2$ norm on the time scale $O(\varepsilon^{-1/3})$. In the short-channel regime $L\ll1$, the method yields nonlinear asymptotic stability with threshold $O(\varepsilon^{2/3})$. In the long-channel regime, assuming concavity together with a spectral condition, we introduce a quantity \textit{Rayleigh vorticity} to control the non-favorable terms and obtain nonlinear stability with threshold
$O(\varepsilon^{5/6+})$.
\end{abstract}

\maketitle
\tableofcontents


\section{Introduction and main results}


\subsection{The problem setup}

Let $\Omega := (0,L) \times (0,H)$ denote a two-dimensional channel. We consider the incompressible Navier-Stokes equations
\begin{equation} \label{NS}
\left\{
\begin{aligned}
u_t + u u_x + v u_y + p_x - \varepsilon \Delta u &= 0,\\
v_t + u v_x + v v_y + p_y - \varepsilon \Delta v &= 0,\\
u_x+ v_y &= 0,
\end{aligned}
\right. \quad \mbox{in}~\Omega \times (0,\infty),
\end{equation}
with an $\varepsilon$-Navier boundary condition:
\begin{equation}\label{Navier_boundary_condition_new}
\left\{\begin{aligned}
u(x,0) - A\varepsilon^{1/3} u_y(x,0) &= 0,\\
u(x,H) + A\varepsilon^{1/3} u_y(x,H) &= 0,\\
v(x,0) = v(x,H) &= 0
\end{aligned}
\right.
\end{equation}
for some $A > 0$.

The objective of this paper is to develop a new energy method to investigate the stability of steady symmetric channel flows, possibly depending on $x$, for small viscosity $\varepsilon \ll 1$.

Let the base flow be $\{u_0(y),0\}$, whose assumptions are specified below. We construct a steady-state solution $\{u_s(x,y), v_s(x,y)\}$ that satisfies the steady Navier-Stokes equation and the boundary condition \eqref{Navier_boundary_condition_new}, and that is close to base flow $\{u_0(y),0\}$. We then analyze the linear and nonlinear stability of $\{ u_s, v_s\}$ by studying the linearized Navier-Stokes equation around $(u_s,v_s)$ in vorticity form:
\begin{equation}\label{Linearized_NS}
\left\{\begin{aligned}
\Delta \phi_t + u_s \Delta \phi_x - \Delta u_s \phi_x + v_s \Delta \phi_y - \Delta v_s \phi_y - \varepsilon \Delta^2 \phi &= 0, \quad \mbox{in}~\Omega \times (0,\infty),\\
\phi|_{t=0} &= \phi_0,
\end{aligned}\right.
\end{equation}
and the nonlinear Navier-Stokes equation:
\begin{equation}\label{Nonlinear_NS}
\left\{\begin{aligned}
\Delta \phi_t + u_s \Delta \phi_x - \Delta u_s \phi_x + v_s \Delta \phi_y - \Delta v_s \phi_y - \varepsilon \Delta^2 \phi &= -\phi_y \Delta \phi_x + \phi_x \Delta \phi_y, \quad \mbox{in}~\Omega \times (0,\infty),\\
\phi|_{t=0} &= \phi_0,
\end{aligned}\right.
\end{equation}
where $\phi$ denotes the stream function of the perturbation. We assume that perturbation $\phi$ satisfies the viscous inflow boundary condition:
\begin{equation}\label{viscous_bc}
\phi|_{x=0} = \phi_x|_{x=L} = \phi_{xx}|_{x=0} = \phi_{xxx}|_{x = L} =  0.
\end{equation}
The $\varepsilon$-Navier boundary condition \eqref{Navier_boundary_condition_new} reads as
\begin{equation}\label{Navier_bc2}
(\phi_y - A\varepsilon^{1/3} \Delta \phi)|_{y=0} = (\phi_y + A\varepsilon^{1/3} \Delta \phi)|_{y=H}= \phi|_{y = 0} = \phi|_{y = H} =0.
\end{equation}\\

\subsection{Main results}

\subsubsection{Main results in the short channel}

We first state our main results in the short channel with $L \ll 1$. In this case, the height $H$ is irrelevant, so we assume $H = 1$ for simplicity. 

Let $u_0(y) \in C^\infty(0,1)$ be even symmetric with respect to $y = \frac{1}{2}$, and satisfy
\begin{equation} \label{assumption_L1}
\left\{
\begin{aligned}
&u_0(y) > 0, \quad y \in (0, 1),\\
&u_0'(0) = - u_0'(1) > 0,\\
&u_0(0) = u_0(1) = 0,\\
&\left\|\frac{u_0'''}{u_0}\right\|_{L^\infty} \le C.
\end{aligned}\right.
\end{equation}
Given such a base profile, we can construct a steady-state solution that is close to the base profile.

\begin{theorem}[Existence of steady state]\label{Thm_Existence_L}
Let $u_0(y)$ be the base flow that satisfies the assumption \eqref{assumption_L1} . For any $A > 0$, $0 < \varepsilon \ll L \ll 1$, there exists a steady state $\{ u_s(x,y), v_s(x,y) \}$ to the Navier-Stokes equation \eqref{NS} with the $\varepsilon$-Navier boundary condition \eqref{Navier_boundary_condition_new} such that $u_s$ is even with respect to $y = 1/2$, and $v_s$ is odd with respect to $y = 1/2$. Furthermore, it satisfies
\begin{equation}\label{ss_estimates}
\begin{aligned}
&0 < u_s(x,y) - u_0(y) \lesssim \varepsilon^{\frac{1}{3}}, \quad |v_{s}|  \lesssim \varepsilon, \quad |u_{sy} - u_{0y}| \lesssim \varepsilon^{\frac{2}{3}}, \quad
|u_{sx}| + |v_{sx}|  \lesssim \varepsilon^{1-},\\
& |\Delta u_a| \lesssim 1, \qquad | \Delta u_{ax}| + |\Delta u_a - u_{0yy}| \lesssim \varepsilon^{\frac{1}{3}},  \qquad |\Delta v_a| + |\Delta v_{ax}| \lesssim \varepsilon^{\frac{2}{3}},\\
& \| \sqrt{u_0} (\Delta u_s - \Delta u_a) \|_{L^\infty_x L^2_y} + \| \sqrt{u_0} (\Delta v_s - \Delta v_a) \|_{L^\infty_x L^2_y}    \lesssim \varepsilon^{\frac{5}{6}-},\\
& \| \sqrt{u_0} (\Delta u_{sx} - \Delta u_{ax})  \|_{L^2} + \sqrt{u_0} (\Delta v_{sx} - \Delta v_{ax}) \|_{L^2} \lesssim \varepsilon^{\frac{5}{6}-},
\end{aligned}
\end{equation}
where $\{u_a, v_a\}$ is the approximated solution given in \eqref{approximated}, and the constants of these estimates may depend on $A$.
\end{theorem}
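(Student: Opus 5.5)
We construct the steady state as $u_s=u_a+u_r$, $v_s=v_a+v_r$: an explicit approximate solution $(u_a,v_a)$ plus a small remainder found by a contraction argument. In the stream-function formulation, the remainder stream function $\psi_r$ solves the steady linearized problem around $(u_a,v_a)$, forced by the error of the approximation and by a quadratic term.

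\textbf{Step 1: the approximate solution.} We build $(u_a,v_a)$ as
\[
u_a=u_0(y)+\varepsilon^{1/3}u_e^{(1)}(y)+\varepsilon\,(\text{interior correctors})+\text{boundary-layer correctors}.
\]
\begin{itemize}
\item The shear $u_0$ does not satisfy the $\varepsilon$-Navier condition \eqref{Navier_boundary_condition_new}, since $u_0(0)-A\varepsilon^{1/3}u_0'(0)=-A\varepsilon^{1/3}u_0'(0)\neq0$. We correct it by a positive, even, $y$-dependent term of size $\varepsilon^{1/3}$, for instance $\varepsilon^{1/3}A u_0'(0)$ times a smooth symmetric cutoff. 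This gives $0<u_s-u_0\lesssim\varepsilon^{1/3}$.
\item A pure shear fails to be steady only through the viscous term $\varepsilon u_{0yy}$, which cannot be balanced by a pressure gradient unless $u_0'''=0$. We therefore add a slow $x$-dependent correction of order $\varepsilon$. Solve the linearized Euler/Rayleigh-type equation $u_0\,\Delta\psi_x-u_0''\psi_x=\varepsilon\,\partial_y(\ldots)$ by writing $\psi_x=u_0\,\theta$. The hypothesis $\|u_0'''/u_0\|_{L^\infty}\le C$ is exactly what makes the forcing divided by $u_0$ bounded, so $\theta$ exists with the symmetry preserved.
\item This yields $|v_a|\lesssim\varepsilon$ and $|u_{ax}|,|v_{ax}|\lesssim\varepsilon^{1-}$.
\item Viscous boundary layers of thickness $\varepsilon^{1/3}$ near $y=0,1$ fix the slip condition at the next order. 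They account for $|u_{sy}-u_{0y}|\lesssim\varepsilon^{2/3}$ and for the bounds on $\Delta u_a$ and $\Delta v_a$ listed in \eqref{ss_estimates}.
\item Iterating a few orders makes the residual $R_a$ of size $\varepsilon^{5/6}$ or smaller in the weighted norm $\|\sqrt{u_0}\,\cdot\|$.
\item Every corrector is chosen even (for $u$) or odd (for $v$) about $y=1/2$, which gives the symmetry claims.
\end{itemize}

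\textbf{Step 2: the remainder problem.} The remainder solves
\[
u_a\Delta\psi_x-\Delta u_a\,\psi_x+v_a\Delta\psi_y-\Delta v_a\,\psi_y-\varepsilon\Delta^2\psi=R_a+N(\psi),
\]
with boundary conditions \eqref{viscous_bc} and \eqref{Navier_bc2}. We prove a steady a priori estimate by testing against a weighted multiplier, for example $\Delta\psi\cdot(\text{weight})$ or $\psi_x/u_a$ in Rayleigh fashion.
\begin{itemize}
\item The transport term $u_a\Delta\psi_x$ tested with $\Delta\psi$ gives a boundary contribution $\tfrac12\int u_a|\Delta\psi|^2\big|_{x=L}$ with a favorable sign. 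The inflow conditions $\phi=\phi_{xx}=0$ at $x=0$ kill the term at $x=0$.
\item Poincaré in $x$ on the short interval, with constant of order $L$, converts this boundary control into interior control of $\|\sqrt{u_0}\,\Delta\psi\|$.
\item Because $L\ll1$, the Poincaré constant is small, so the non-favorable term $\Delta u_a\,\psi_x$ is absorbed.
\item The viscous term, together with \eqref{Navier_bc2}, produces a positive dissipation plus boundary terms at $y=0,1$ controlled by $A\varepsilon^{1/3}$.
\end{itemize}
This gives the linear estimate $\|\sqrt{u_0}\,\Delta\psi\|_{L^\infty_xL^2_y}+\|\sqrt{u_0}\,\Delta\psi_x\|_{L^2}\lesssim\|R\|$ in suitable dual norms. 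Existence for the linear problem then follows by Galerkin approximation or by adding $\delta\Delta^2$ continuity.

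\textbf{Step 3: the nonlinear iteration.} We apply a contraction mapping in the ball of radius $\varepsilon^{5/6-}$. The nonlinear term $N$ is bounded using $L^\infty$ embeddings, which lose powers of $\varepsilon$ through the viscous dissipation; this loss is the source of the "$-$" in the exponent. The fixed point yields the last two lines of \eqref{ss_estimates}, and the pointwise bounds for $u_s$ and $v_s$ follow from those for $u_a$ and $v_a$ plus the smallness of the remainder.

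\textbf{Main obstacle.} I expect Step 2 to be the hardest. The weight $u_0$ degenerates at the walls, so the viscous boundary terms from the Navier condition must be matched against the degenerate weighted norm, and the boundary-layer part of $\Delta u_a$, which is only $O(1)$, must be absorbed there. I would first try the multiplier $\Delta\psi_x$ combined with a Hardy inequality of the form $\|\psi_x/y\|\lesssim\|\psi_{xy}\|$ near the walls, using smallness in $L$ to close the estimate.
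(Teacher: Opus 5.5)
Your Step 1 is essentially the paper's construction: a shift $u_e^0=u_0+Au_0'(0)\varepsilon^{1/3}$, an $x$-independent $v_e^1$ obtained from $q_e=v_e^1/u_e^0$ using the bound on $u_0'''/u_0$, and an $\varepsilon^{1/3}$ Prandtl layer with Robin data. One caution: the shift must stay $\gtrsim\varepsilon^{1/3}$ everywhere, and the paper takes it constant. A cutoff that vanishes somewhere would destroy $u_s-u_0>0$.

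\textbf{Gap in Step 2: the coercivity mechanism fails.} Testing the transport term against $\Delta\psi$ or $u_0\Delta\psi$ yields only a trace at $x=L$, plus lower-order $u_{ax}$ terms, and a trace cannot control an interior $L^2$ norm. Poincar\'e in $x$ needs $\Delta\psi_x$, which is available only through the dissipation. At best this gives $\varepsilon\|\sqrt{u_0}\Delta\psi_x\|_2^2\gtrsim\varepsilon L^{-2}\|\sqrt{u_0}\Delta\psi\|_2^2$, and $\varepsilon L^{-2}\ll1$ in the regime $\varepsilon\ll L$. Meanwhile $(\Delta u_a\psi_x,u_0\Delta\psi)$ is bounded only by $C\|\psi_x\|_2\|u_0\Delta\psi\|_2$ with $C=O(1)$, since $\Delta u_a=u_{0yy}+O(\varepsilon^{1/3})$. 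So an interior positive term with a large constant is indispensable. The paper produces it with a decreasing $x$-weight: for the multiplier $u_0\Delta\phi\,W$ with $W=2-x/L$, \[ (u_0\Delta\phi_x,\,Wu_0\Delta\phi)=\tfrac12\|\sqrt{W}\,u_0\Delta\phi\|_{x=L}^2+\tfrac{1}{2L}\|u_0\Delta\phi\|_2^2 . \] The last term absorbs $\Delta u_a\phi_x$ once $L\ll1$, using $\|\nabla\phi\|_2\lesssim\|u_0\Delta\phi\|_2$. That bound is proved through $q=\phi/u_0$, a Hardy inequality in $y$, and Poincar\'e in $x$. The critical viscous wall terms $\tfrac{\varepsilon}{2}(Wu_{0y}\Delta\phi,\Delta\phi)_{y=0,1}$ are then converted, via $\Delta\phi=\pm\phi_y/(A\varepsilon^{1/3})$ from the Navier condition, into $\varepsilon^{1/3}A^{-2}|\phi_y|^2$ and integrated back by parts in $y$.

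\textbf{Gap in Steps 1 and 3: the exponents do not give the last two lines of \eqref{ss_estimates}.} Those lines involve $\Delta\nabla\psi_r$ and $\Delta\nabla\psi_{r,x}$, i.e.\ third derivatives of the remainder stream function. That is one derivative beyond the linear estimate you state. Such quantities are controlled only by the dissipation, at a cost of $\varepsilon^{-1/2}$. The coercive quantity is $\frac1L\|u_0\Delta\phi\|_2^2+\varepsilon\|\sqrt{u_0}\nabla\Delta\phi\|_2^2$, not a loss-free bound on $\sqrt{u_0}\Delta\psi$.

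In the paper, the residual satisfies $|E|\lesssim\varepsilon^{4/3}$ and the remainder is written as $\varepsilon^{4/3-}\phi$. The estimate with the differentiated multiplier $-(u_0\Delta\phi_xW)_x$ gives $\|u_0\Delta\phi_x\|_2+\sqrt{\varepsilon}\|\sqrt{u_0}\nabla\Delta\phi_x\|_2\lesssim1$. Sobolev in $x$ then yields $\varepsilon^{4/3-}\cdot\varepsilon^{-1/2}=\varepsilon^{5/6-}$.

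With a residual of size only $\varepsilon^{5/6}$, as you suggest, the same argument gives merely $\varepsilon^{1/3-}$. Worse, with the paper's estimates the quadratic term loses $\varepsilon^{-1}$ in these norms, so the remainder amplitude must be $o(\varepsilon)$ for the fixed point to close at all. A contraction in a ball of radius $\varepsilon^{5/6-}$ in an unspecified norm does not repair this. Finally, the ``$-$'' in $\varepsilon^{5/6-}$ comes from the scaling $\varepsilon^{4/3-}$, which makes $\varepsilon^{-(4/3-)}E$ small enough to absorb. The $L^\infty$ losses you mention are what produce the $\varepsilon^{1-}$ in $|u_{sx}|+|v_{sx}|$.
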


Then, we have the linear and nonlinear stability results for the steady state $\{ u_s(x,y), v_s(x,y) \}$.

\begin{theorem}[Linear stability]\label{Thm_L_linear}
Under the assumption of Theorem \ref{Thm_Existence_L}, let $u_0, u_s, v_s$ be as in Theorem \ref{Thm_Existence_L}, and let $\phi$ satisfy the linearized Navier-Stokes equation \eqref{Linearized_NS} with boundary conditions \eqref{viscous_bc} and \eqref{Navier_bc2}, we have the estimate
\begin{equation}\label{est:L_linear}
\begin{aligned}
\|\sqrt u_0 \Delta \phi\|_{L^2}^2(T) + \frac{1}{L} \int_0^T \| u_0 \Delta \phi \|_{L^2}^2 + \varepsilon \int_0^T  \| \sqrt{u_0} \nabla \Delta \phi \|_{L^2}^2 \lesssim  \|\sqrt u_0 \Delta \phi_0\|_{L^2}^2 \quad \mbox{for all}~T > 0.
\end{aligned}
\end{equation}
Moreover, we have the exponential decay
\begin{equation}\label{est:L_linear_enhance}
\| \nabla \phi\|_{L^2}(T) + \| \sqrt u_0 \Delta \phi\|_{L^2}(T) \lesssim e^{- c_1 \varepsilon^{\frac{1}{3}} T}\|\sqrt u_0 \Delta \phi_0\|_{L^2}
\end{equation}
for all $T > 0$ and some $c_1 > 0$.
\end{theorem}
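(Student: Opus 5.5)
The plan is a weighted vorticity energy estimate: multiply the linearized equation \eqref{Linearized_NS} by $u_0\Delta\phi$, or rather by $\Delta\phi$ times the weight $u_0(y)$, and integrate over $\Omega$.

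\textbf{The time derivative and the transport term.} The time derivative gives $\frac12\frac{d}{dt}\|\sqrt{u_0}\Delta\phi\|_{L^2}^2$. For the transport term I would write $u_s = u_0 + (u_s-u_0)$. The leading piece integrates to
\[
\int u_0 u_s\,\Delta\phi_x\Delta\phi = \tfrac12\int_0^1 u_0u_s|\Delta\phi|^2\Big|_{x=0}^{x=L}\,dy-\tfrac12\int u_0u_{sx}|\Delta\phi|^2 .
\]
The viscous inflow conditions \eqref{viscous_bc} should make the inflow trace $\Delta\phi|_{x=0}=\phi_{yy}|_{x=0}$ vanish, because $\phi|_{x=0}=0$ and $\phi_{xx}|_{x=0}=0$. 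Hence only the favorable outflow term $\frac12\int u_0u_s|\Delta\phi|^2(L,y)\,dy\ge0$ survives. The error $u_{sx}=O(\varepsilon^{1-})$ is harmless.

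\textbf{The $\frac1L\int\|u_0\Delta\phi\|^2$ term.} This should come from the outflow boundary term combined with a one-dimensional Poincaré-type inequality in $x$ on the short interval $(0,L)$:
\[
\int_0^L f^2\,dx\lesssim L f(L)^2+L^2\int_0^L f_x^2\,dx,
\]
applied to $f=u_0\Delta\phi$. Since the outflow term alone only gives trace control, the more robust route is to multiply the equation by the $x$-weighted multiplier $(1+x/L)\,u_0\Delta\phi$ rather than $u_0\Delta\phi$. The transport term then produces $-\frac{1}{2L}\int u_0u_s|\Delta\phi|^2$ with a good sign, i.e.\ $\gtrsim\frac1L\|u_0\Delta\phi\|^2$ up to the relation $u_s\ge u_0$ given by \eqref{ss_estimates}. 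The weight $1+x/L$ is bounded above and below, so the resulting norm is equivalent to $\|\sqrt{u_0}\Delta\phi\|^2$.

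\textbf{Remaining terms.}
\begin{itemize}
\item \emph{The $-\Delta u_s\phi_x$ term.} This gives $\int u_0\Delta u_s\,\phi_x\Delta\phi$. I would bound it by
\[
\|u_0\Delta\phi\|\,\|\Delta u_s\|_{\infty}\|\phi_x\|\lesssim\|u_0\Delta\phi\|\,\|\nabla\phi\| .
\]
Then I need $\|\nabla\phi\|\lesssim\|\sqrt{u_0}\Delta\phi\|$ (a weighted elliptic/Hardy estimate using $\phi=0$ on $y=0,1$ and $u_0\sim y(1-y)$) and, better, $\|\phi_x\|\lesssim L\|\nabla\phi_x\|$ or a Poincaré in $x$ from $\phi|_{x=0}=0$. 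That smallness in $L$ lets it be absorbed into $\frac1L\|u_0\Delta\phi\|^2$. A Hardy inequality with weight $u_0$ bridges $\|u_0\Delta\phi\|$ and $\|\sqrt{u_0}\Delta\phi\|$ as needed.
\item \emph{The $v_s$ terms.} These are $O(\varepsilon)$ in size with the $\Delta v_s$ bounds from \eqref{ss_estimates}, so they are perturbative.
\end{itemize}

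\textbf{The viscous term (main obstacle).} The term $-\varepsilon\int u_0\Delta^2\phi\,\Delta\phi$ yields $\varepsilon\|\sqrt{u_0}\nabla\Delta\phi\|^2$, plus a commutator $\varepsilon\int u_0'\,\partial_y\Delta\phi\,\Delta\phi$ and boundary terms. The $x$-boundary terms vanish or have a sign by \eqref{viscous_bc}, e.g.\ $\Delta\phi_x|_{x=L}=0$ from $\phi_x=\phi_{xxx}=0$ there. The $y$-boundary terms vanish since $u_0=0$ on the walls.

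The commutator is the delicate piece. Writing it as $-\frac\varepsilon2\int u_0''|\Delta\phi|^2$ plus wall traces $\frac\varepsilon2 u_0'|\Delta\phi|^2$ evaluated at $y=0,1$, I would use the Navier condition \eqref{Navier_bc2}, $\Delta\phi=\pm\varepsilon^{-1/3}A^{-1}\phi_y$ at the walls, to convert these traces. The sign should be favorable or controllable, since $u_0'(0)>0$ and $u_0'(1)<0$, after combining with the bulk term. This is where the $\varepsilon^{1/3}$ scaling and the bound $u_0'''/u_0\in L^\infty$ enter. Gronwall then gives \eqref{est:L_linear}.

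\textbf{Exponential decay.} For \eqref{est:L_linear_enhance}, I would prove a coercivity inequality of the form
\[
\varepsilon^{1/3}\|\sqrt{u_0}\Delta\phi\|^2\lesssim \tfrac1L\|u_0\Delta\phi\|^2+\varepsilon\|\sqrt{u_0}\nabla\Delta\phi\|^2 .
\]
To get it, split $y$ into the boundary layers $u_0\lesssim\varepsilon^{1/3}$ and the interior. In the interior, $u_0\ge\varepsilon^{1/3}$ gives the bound directly. In the layers of width $\varepsilon^{1/3}$, a weighted Poincaré in $y$ using the $\varepsilon$-dissipation gives the factor $\varepsilon\cdot\varepsilon^{-2/3}=\varepsilon^{1/3}$. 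The resulting differential inequality yields decay at rate $c_1\varepsilon^{1/3}$, and the $\|\nabla\phi\|$ bound follows from the elliptic estimate above.
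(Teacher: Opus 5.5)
Your architecture matches the paper's: the multiplier $W(x)\,u_0\Delta\phi$, the vanishing of $\Delta\phi|_{x=0}$ and $\Delta\phi_x|_{x=L}$, the Navier condition for the wall traces, and the weighted Hardy inequality in $y$ for the $\varepsilon^{1/3}$ rate, which you handle correctly. But the step that drives everything fails as written, because your weight $W=1+x/L$ is \emph{increasing}. Integrating by parts,
\[
(u_s\Delta\phi_x,\,W u_0\Delta\phi)=\frac12\int_0^1 W u_s u_0|\Delta\phi|^2\Big|_{x=L}\,dy-\frac{1}{2L}\int u_s u_0|\Delta\phi|^2-\frac12\int W u_{sx}u_0|\Delta\phi|^2 .
\]
This quantity sits on the same side as $\frac12\frac{d}{dt}\|\sqrt{Wu_0}\,\Delta\phi\|^2$. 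So $-\frac{1}{2L}\int u_su_0|\Delta\phi|^2$ is a growth term of the largest size in the problem, not a dissipation: transport to the right carries vorticity toward larger weight. Every other term is supposed to be absorbed into $\frac1L\|u_0\Delta\phi\|^2$, so the argument as written has no dissipative mechanism at all. You need $W'<0$, for example $W=2-x/L$ as in the paper. Then the bulk term is $+\frac{1}{2L}\int u_su_0|\Delta\phi|^2\ge\frac{1}{2L}\|u_0\Delta\phi\|^2$, and the outflow trace is still nonnegative.

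Second, the viscous wall traces are not favorable, and this is the genuinely delicate point. After integrating $\varepsilon(Wu_{0y}\Delta\phi_y,\Delta\phi)$ by parts in $y$, both $\frac\varepsilon2 Wu_{0y}|\Delta\phi|^2\big|_{y=1}$ and $-\frac\varepsilon2 Wu_{0y}|\Delta\phi|^2\big|_{y=0}$ are nonpositive. Moreover, $u_{0yy}$ has no sign in the short channel, and $\varepsilon\|\Delta\phi\|^2_{y=0,1}$ sits exactly at the borderline where a trace/Hardy bound by $\varepsilon\|\sqrt{u_0}\Delta\phi_y\|^2$ fails. The missing step has three parts:
\begin{itemize}
\item The Navier condition gives $\varepsilon|\Delta\phi|^2=\varepsilon^{1/3}A^{-2}\phi_y^2$ on the walls.
\item Integrating back by parts in $y$ turns the traces into $\frac{\varepsilon^{1/3}}{A^2}(Wu_{0y}\phi_y,\phi_{yy})+\frac{\varepsilon^{1/3}}{2A^2}(Wu_{0yy}\phi_y,\phi_y)\lesssim A^{-4}\|\phi_y\|^2+\varepsilon^{2/3}\|\Delta\phi\|^2$.
\item Using $\varepsilon^{1/3}\|\Delta\phi\|\lesssim\sqrt{\sigma\varepsilon}\,\|\sqrt{u_0}\Delta\phi_y\|+\sigma^{-1}\|u_0\Delta\phi\|$ and $\|\phi_y\|\lesssim\|u_0\Delta\phi\|$, both pieces become a small multiple of $\varepsilon\|\sqrt{u_0}\Delta\phi_y\|^2$ plus $O_A(1)\|u_0\Delta\phi\|^2$. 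The latter is absorbed by $\frac{1}{2L}\|u_0\Delta\phi\|^2$ because $L\ll1$.
\end{itemize}
The bound $u_0'''/u_0\in L^\infty$ plays no role here; it enters only in constructing $u_s$.

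The estimate $\|\nabla\phi\|\lesssim\|u_0\Delta\phi\|$, rather than $\|\nabla\phi\|\lesssim\|\sqrt{u_0}\Delta\phi\|$, is also what the $-\Delta u_s\phi_x$ term needs. There you must first split $\Delta u_s=\Delta u_a+(\Delta u_s-\Delta u_a)$, since only $\Delta u_a$ is bounded pointwise. The estimate itself follows from $\|\nabla\phi\|^2=-(\phi/u_0,\,u_0\Delta\phi)$ and Hardy in $y$. With only the weaker bound you are left with $C\|\sqrt{u_0}\Delta\phi\|^2$ on the right, and Gronwall gives $e^{CT}$ growth. The uniform-in-$T$ estimate requires full absorption:
\[
\frac{d}{dt}\|\sqrt{Wu_0}\,\Delta\phi\|^2+\frac1L\|u_0\Delta\phi\|^2+\varepsilon\|\sqrt{u_0}\nabla\Delta\phi\|^2\le0 .
\]
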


\begin{theorem}[Nonlinear stability]\label{Thm_L_nonlinear}
Under the assumption of Theorem \ref{Thm_Existence_L}, let $u_0, u_s, v_s$ be as in Theorem \ref{Thm_Existence_L}. There exists a positive constant $c_0$ depending only on $u_0$, such that if
\begin{equation}\label{L_initial}
\|\sqrt u_0 \Delta \phi_{0x}\|_{L^2} \le c_0 \varepsilon^{\frac{2}{3}},
\end{equation}
then \eqref{Nonlinear_NS} admits a unique solution $\phi$ satisfying the boundary conditions \eqref{viscous_bc} and \eqref{Navier_bc2}, and the estimate
\begin{equation}\label{est:L_nonlinear}
\begin{aligned}
\|\sqrt u_0 \Delta \phi_x\|_{L^2}^2(T) + \frac{1}{L} \int_0^T \| u_0 \Delta \phi_x \|_{L^2}^2 + \varepsilon \int_0^T  \| \sqrt{u_0} \nabla \Delta \phi_x \|_{L^2}^2 \lesssim  \|\sqrt u_0 \Delta \phi_{0x}\|_{L^2}^2 \quad \mbox{for all}~T > 0.
\end{aligned}
\end{equation}
Moreover, we have the exponential decay
\begin{equation}\label{est:L_nonlinear_enhance}
\| \nabla \phi_x\|_{L^2}(T) + \| \sqrt u_0 \Delta \phi_x\|_{L^2}(T) \lesssim e^{- c_1 \varepsilon^{\frac{1}{3}} T} \|\sqrt u_0 \Delta \phi_{0x}\|_{L^2}
\end{equation}
for all $T > 0$ and some $c_1 > 0$.\\
\end{theorem}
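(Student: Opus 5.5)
The plan is to differentiate \eqref{Nonlinear_NS} in $x$ and run on $\psi:=\phi_x$ the same weighted vorticity energy estimate that gives Theorem \ref{Thm_L_linear}, treating the nonlinearity as a forcing term closed by a bootstrap.

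\textbf{Step 1: the equation for $\psi$ and its boundary conditions.} The boundary conditions \eqref{viscous_bc} are designed to survive differentiation in $x$. From $\phi|_{x=0}=\phi_{xx}|_{x=0}=0$ we get, evaluating the equation at $x=0$, the pair of conditions needed for $\psi$ there. At $x=L$, $\phi_x|_{x=L}=\phi_{xxx}|_{x=L}=0$ gives $\psi|_{x=L}=\psi_{xx}|_{x=L}=0$. So $\psi$ satisfies boundary conditions of the same viscous inflow/outflow type, with the roles of the two ends matched, and \eqref{Navier_bc2} still holds for $\psi$ because it is $x$-independent.

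Differentiating \eqref{Nonlinear_NS} gives
\begin{equation*}
\Delta\psi_t+u_s\Delta\psi_x-\Delta u_s\psi_x+v_s\Delta\psi_y-\Delta v_s\psi_y-\varepsilon\Delta^2\psi=\mathcal{C}+\mathcal{N},
\end{equation*}
where:
\begin{itemize}
\item the commutator $\mathcal{C}=-u_{sx}\Delta\phi_x+\Delta u_{sx}\phi_x-v_{sx}\Delta\phi_y+\Delta v_{sx}\phi_y$ is small by \eqref{ss_estimates}, with $|u_{sx}|+|v_{sx}|\lesssim\varepsilon^{1-}$ and $\Delta u_{sx},\Delta v_{sx}$ of size $\varepsilon^{1/3}$ plus weighted $\varepsilon^{5/6-}$ errors;
\item $\mathcal{N}=\partial_x(-\phi_y\Delta\phi_x+\phi_x\Delta\phi_y)$.
\end{itemize}

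\textbf{Step 2: the energy identity.} Multiply by $-u_0\Delta\psi$ (or the exact multiplier of the linear proof, presumably $\Delta\psi/(\text{weight})$ or $u_0\Delta\psi$) and integrate, reproducing the linear identity behind \eqref{est:L_linear}. The transport term $u_s\Delta\psi_x\cdot u_0\Delta\psi$ yields the boundary flux at $x=L$ and, for $L\ll1$, the damping term $\frac1L\|u_0\Delta\psi\|^2$ via a Poincaré-type inequality in $x$. The viscous term yields $\varepsilon\|\sqrt{u_0}\nabla\Delta\psi\|^2$ together with boundary terms controlled through the $\varepsilon^{1/3}$-Navier condition. These are exactly as in Theorem \ref{Thm_L_linear}, which I take as given.

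The commutator $\mathcal{C}$ is absorbed for $\varepsilon\ll L$. This needs the elliptic bounds $\|\nabla\phi\|$ and $\|\phi_y\|_{L^\infty}$ in terms of $\|\sqrt{u_0}\Delta\phi_x\|$. I would obtain these from $\phi|_{x=0}=0$, since $\phi=\int_0^x\psi$ costs only a factor $L$, together with the weighted Hardy inequality near walls already used for \eqref{est:L_linear_enhance}.

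\textbf{Step 3: the nonlinear term (the main obstacle).} We must estimate
\begin{equation*}
\int\mathcal{N}\,u_0\Delta\psi=\int(-\phi_{xy}\Delta\psi+\phi_{xx}\Delta\phi_y-\phi_y\Delta\psi_x+\psi\Delta\psi_y)\,u_0\Delta\psi .
\end{equation*}
For the top-order transport piece $-\phi_y\Delta\psi_x\,u_0\Delta\psi$ and its $y$-analogue, integrate by parts using $\mathrm{div}(\phi_y,-\phi_x)=0$, so that only derivatives of $u_0$ and boundary terms remain. The boundary terms vanish at $y=0,1$ since $\phi=0$ and $u_0=0$ there, and at $x=0,L$ they are handled by the $x$-boundary conditions. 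The remaining terms I bound by
\begin{equation*}
\|\nabla\phi_x\|_{L^\infty}\|\sqrt{u_0}\Delta\psi\|^2,\qquad \|\psi\|_{L^\infty}\|\sqrt{u_0}\Delta\psi\|\,\|\sqrt{u_0}\nabla\Delta\psi\|,
\end{equation*}
using Sobolev/Gagliardo--Nirenberg bounds in terms of $\|\sqrt{u_0}\Delta\psi\|$ and $\|\sqrt{u_0}\nabla\Delta\psi\|$. The loss of the weight $u_0$ near the walls costs powers of $\varepsilon^{-1/3}$, the boundary-layer scale set by the Navier condition. I expect the worst term to be controlled by
\begin{equation*}
\varepsilon^{-2/3}\|\sqrt{u_0}\Delta\psi\|\cdot\Big(\tfrac1L\|u_0\Delta\psi\|^2+\varepsilon\|\sqrt{u_0}\nabla\Delta\psi\|^2\Big),
\end{equation*}
which the dissipation absorbs provided $\sup_t\|\sqrt{u_0}\Delta\psi\|\le 2C c_0\varepsilon^{2/3}$. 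This is the bootstrap hypothesis, consistent with \eqref{L_initial}.

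\textbf{Step 4: closing the argument.} Closing the bootstrap yields \eqref{est:L_nonlinear}. Local well-posedness comes from a standard Galerkin/contraction argument in the same weighted space, and uniqueness follows from the same energy estimate applied to the difference of two solutions. Exponential decay \eqref{est:L_nonlinear_enhance} follows exactly as in the linear case: the dissipation bounds $\varepsilon^{1/3}\|\sqrt{u_0}\Delta\psi\|^2$ from below, via interpolation of the $\frac1L$ and $\varepsilon$ terms across the $\varepsilon^{1/3}$ boundary layer, and Grönwall then gives the $e^{-c_1\varepsilon^{1/3}T}$ rate.
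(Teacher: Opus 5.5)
Your proposal has a genuine gap in Steps 1--2. There you assert that $\psi=\phi_x$ satisfies boundary conditions ``of the same type'' as $\phi$, so that the linear estimate can be taken as given. It cannot. At $x=L$ you correctly get $\psi=\psi_{xx}=0$, hence $\Delta\psi=\phi_{xxx}+\phi_{xyy}=0$ there. At the inflow $x=0$ you only have $\psi_x=0$, and $\Delta\psi|_{x=0}\neq0$ in general. This is the reverse of the situation for $\phi$, where $\Delta\phi|_{x=0}=0$ removes the inflow flux and only the favorable outflow flux survives.

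Consequently, if you test the differentiated equation with $Wu_0\Delta\psi$, $W=2-x/L$, the transport term produces nothing at $x=L$. (The damping $\frac{1}{2L}\|u_0\Delta\psi\|_2^2$ comes from $W'=-1/L$, not from a Poincar\'e inequality.) Instead it produces the \emph{unfavorable} inflow flux $-\frac12(u_su_0W\Delta\psi,\Delta\psi)_{x=0}$. No choice of weight fixes this. For $f(L)=0$ one has $\int_0^LWff_x\,dx=-\frac12W(0)f(0)^2-\frac12\int_0^LW'f^2\,dx$, which has no sign, and the inflow trace $f(0)$ is not controlled by $\|f\|_{L^2(0,L)}$. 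A trace bound $f(0)^2\le\frac{\eta}{L}\|f\|_2^2+\frac{L}{\eta}\|f_x\|_2^2$ brings in $L\|u_0\Delta\psi_x\|_2^2$, which the dissipation $\varepsilon\|\sqrt{u_0}\nabla\Delta\psi\|_2^2$ cannot absorb when $\varepsilon\ll L$. In addition, the viscous term leaves the boundary term $\varepsilon(u_0W\Delta\psi,\psi_{xxx})_{x=0}$. The $\varepsilon$-Navier condition, which lives on $y=0,1$, says nothing about it.

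The missing idea is to use the equation itself at $x=0$ inside that viscous boundary term. At $x=0$ the quantities $\Delta\phi_t$, $\phi_y$, $\Delta\phi_y$ and the nonlinearity all vanish, and $\Delta^2\phi=\phi_{xxxx}$. Hence $\varepsilon\psi_{xxx}=u_s\Delta\psi-\Delta u_s\psi$ there. Substituting this turns the net inflow contribution into $+\frac12(u_su_0W\Delta\psi,\Delta\psi)_{x=0}-(\Delta u_s\psi,u_0W\Delta\psi)_{x=0}$. The second piece is absorbed by the first together with $\frac1L\|u_0\Delta\psi\|_2^2$, using $\|\psi\|_{x=0}\le\sqrt{L}\|\psi_x\|_2\lesssim\|u_0\Delta\psi\|_2$; the part involving $\Delta u_s-\Delta u_a$ is small by the weighted bounds in \eqref{ss_estimates}.

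The paper packages exactly this in one stroke by testing the \emph{undifferentiated} equation with $-(u_0\Delta\phi_xW)_x$. Since $\Delta\phi|_{x=0}=0$ (so also $\Delta\phi_{yy}|_{x=0}=0$) and $\Delta\phi_x|_{x=L}=0$, the temporal and viscous terms produce no $x$-boundary terms. The transport term yields the positive $\frac12(u_0\Delta\phi_x,u_0\Delta\phi_xW)_{x=0}$, which absorbs the term $(\Delta u_a\phi_x,u_0\Delta\phi_xW)_{x=0}$.

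With this repaired, the rest of your outline is compatible with the paper; a bootstrap in place of its contraction map on $\cX$ is fine. Note, however, that your Step 3 bound is asserted rather than derived. The $\varepsilon^{-2/3}$ comes from the anisotropic estimates $\|\nabla\phi_x\|_{L^2_xL^\infty_y}\lesssim\varepsilon^{-1/6}(\|u_0\Delta\phi_x\|_2+\sqrt\varepsilon\|\sqrt{u_0}\Delta\phi_{xy}\|_2)$ and $\|\sqrt{u_0}\Delta\phi_x\|_{L^\infty_xL^2_y}\lesssim\|\sqrt{u_0}\Delta\phi_{xx}\|_2$. In each term the highest derivative is placed in the weighted dissipation norm.
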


\subsubsection{Main results in the long channel}

We now state our results in the long channel $\Omega = (0,L) \times (0,H)$. In this case, we consider a similar base flow, with an additional concavity assumption. Let $\mu(y) \in C^\infty(0,1)$ be even symmetric with respect to $y = \frac{1}{2}$, and satisfy
\begin{equation} \label{assumption_1}
\left\{
\begin{aligned}
&\mu(y) > 0, \quad y \in (0, 1),\\
&\mu'(0) = - \mu'(1) > 0,\\
&\mu(0) = \mu(1) = 0,\\
&-\mu''(y) \ge C_1 > 0,\\
&\left\|\frac{\mu'''}{\mu}\right\|_{L^\infty} \le C.
\end{aligned}\right.
\end{equation}
We additionally assume that for any function $f \in H^1(0,1)$ satisfying $f(1/2) = 0$, we have the following Hardy-type inequality
\begin{equation}\label{assumption_2}
\|f\|_{L^2_y} \le C_2 \| \mu f' \|_{L^2_y}
\end{equation}
for some positive $C_2$ satisfying
\begin{equation}\label{assumption_3}
C_1 C_2 < 2\pi.
\end{equation}
Let $u_0(y):= \mu(y/H)$ be the base flow.
We define the quantity ``Rayleigh vorticity" by
\begin{equation}\label{def:R}
R(t,x,y):= u_0(y) \Delta \phi(t,x,y) - u_{0yy}(y) \phi(t,x,y),
\end{equation}
and the quantity ``quotient" by
\begin{equation}\label{quotient}
q(t,x,y):= \frac{\phi(t,x,y)}{u_0(y)}.
\end{equation}
Then we have similar results on the existence of a steady state and its stability.

\begin{theorem}[Existence of steady state]\label{Thm_H_Existence}
Let $\mu(y)$ satisfy the assumption \eqref{assumption_1}, \eqref{assumption_2}, and \eqref{assumption_3}, and let $u_0(y) := \mu(y/H)$ be the base flow. For positive $H$, $L$, and $A$ satisfying
\begin{equation}\label{H_assumption}
 HL/A^3 \ll 1,
\end{equation}
 and $0 < \varepsilon \ll 1$, there exists a steady state $\{ u_s(x,y), v_s(x,y) \}$ to the Navier-Stokes equation \eqref{NS} with the $\varepsilon$-Navier boundary condition \eqref{Navier_boundary_condition_new} such that $u_s$ is even with respect to $y = H/2$, and $v_s$ is odd with respect to $y = H/2$. Furthermore, it satisfies the estimate
\begin{equation}\label{ss_estimates_H}
\begin{aligned}
&0 < u_s(x,y) - u_0(y) \lesssim A \varepsilon^{\frac{1}{3}}, \quad |v_{s}|  \lesssim \varepsilon^{1-}, \quad \|u_{sy} - u_{0y}\|_{L^2} \lesssim \varepsilon^{\frac{2}{3}}, \quad
\|u_{sx}\|_{L^2}  \lesssim \varepsilon^{1-},\\
& |\Delta u_a| \lesssim 1, \qquad | \Delta u_{ax}| + |\Delta u_a - u_{0yy}| \lesssim \varepsilon^{\frac{1}{3}},  \qquad |\Delta v_a| + |\Delta v_{ax}| \lesssim \varepsilon^{\frac{2}{3}},\\
& \| \sqrt{u_0} (\Delta u_s - \Delta u_a) \|_{L^2} + \| \sqrt{u_0} (\Delta v_s - \Delta v_a) \|_{L^2}    \lesssim \varepsilon^{\frac{5}{6}-},
\end{aligned}
\end{equation}
where $\{u_a, v_a\}$ is the approximated solution given in \eqref{approximated}, and the constants of these estimates, expect for the first one, may depend on $A, H,$ and $L$.
\end{theorem}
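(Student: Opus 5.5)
We follow the usual approximate-solution-plus-remainder scheme, parallel to Theorem \ref{Thm_Existence_L}, with the short-channel smallness replaced by \eqref{H_assumption} and the concavity/Hardy structure \eqref{assumption_1}--\eqref{assumption_3}.

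\textbf{Step 1: the approximate solution $\{u_a,v_a\}$.} The shear $(u_0,0)$ is not steady for $\varepsilon>0$: it leaves the forcing $\varepsilon u_{0yy}$ and violates the Navier condition, since $u_0(0)=0$ but $A\varepsilon^{1/3}u_{0y}(0)\neq 0$. I would first add an $x$-independent correction $\varepsilon^{1/3}u^{(1)}(y)$, even about $y=H/2$. It is chosen to fix the Navier trace, which forces $u^{(1)}(0)=A\mu'(0)/H>0$ and gives $0<u_s-u_0\lesssim A\varepsilon^{1/3}$. A pressure gradient $p_x=\varepsilon\,\mathrm{const}$ absorbs the interior forcing. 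The remaining $O(\varepsilon)$ residual, together with the inflow data, is resolved by a slowly $x$-dependent Prandtl-type layer of lower order, which produces $v_a=O(\varepsilon^{2/3})$ in the derivative norms listed. One then checks the stated pointwise bounds directly from the explicit construction. These are $|\Delta u_a|\lesssim1$, $|\Delta u_a-u_{0yy}|+|\Delta u_{ax}|\lesssim\varepsilon^{1/3}$, and $|\Delta v_a|+|\Delta v_{ax}|\lesssim\varepsilon^{2/3}$. The residual $R_a$ of the steady NS must be $O(\varepsilon^{4/3})$ or better in a weighted norm. Symmetry is preserved at each order because the profiles are even and $v$ is built odd.

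\textbf{Step 2: the remainder problem.} Write $u_s=u_a+u$ and $v_s=v_a+v$, with stream function $\psi$. This gives the steady version of \eqref{Nonlinear_NS} linearized about $(u_a,v_a)$:
\begin{equation*}
u_a\Delta\psi_x-\Delta u_a\psi_x+v_a\Delta\psi_y-\Delta v_a\psi_y-\varepsilon\Delta^2\psi=R_a+N(\psi),
\end{equation*}
with boundary conditions \eqref{viscous_bc} and \eqref{Navier_bc2}. The key linear estimate uses the weighted vorticity multiplier $u_0\Delta\psi$ (really $\Delta\psi$ times a weight), as in the stability analysis. For $\int u_a\Delta\psi_x\,\Delta\psi\,u_0$, the inflow conditions give a boundary term at $x=L$ with a good sign. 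For the dangerous term $-\int\Delta u_a\psi_x\,u_0\Delta\psi$, I would use the Rayleigh structure: $u_0\Delta\psi-u_{0yy}\psi=R$ and $q=\psi/u_0$. Since $-\mu''\ge C_1$, the combination $-u_{0yy}\psi_x\Delta\psi$ can be rewritten through $q$ and controlled by $\|u_0 q_y\|$-type quantities. This is where the Hardy inequality \eqref{assumption_2} (applied to odd parts vanishing at $H/2$) and the condition $C_1C_2<2\pi$ enter. The $2\pi$ comes from a Poincaré bound in $x$ on the channel, and the factor $HL/A^3\ll1$ absorbs the $\varepsilon^{1/3}$-layer contributions. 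The viscous term gives $\varepsilon\|\sqrt{u_0}\nabla\Delta\psi\|^2$, and the Navier condition makes its boundary terms signed after using $A\varepsilon^{1/3}$.

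\textbf{Step 3: closing.} From the linear estimate, $\|\sqrt{u_0}\Delta\psi\|_{L^2}\lesssim\varepsilon^{-1/2}\|R_a\|\lesssim\varepsilon^{5/6-}$, losing $\varepsilon^{0-}$ from logarithmic weight singularities near the walls. A contraction mapping then handles $N(\psi)=-\psi_y\Delta\psi_x+\psi_x\Delta\psi_y$, using the Sobolev-type embedding $\|\nabla\psi\|_{L^\infty}\lesssim\varepsilon^{-\gamma}(\ldots)$ with room since $\varepsilon^{5/6}$ beats the losses. The remaining bounds in \eqref{ss_estimates_H} follow: $|v_s|\lesssim\varepsilon^{1-}$ and $\|u_{sy}-u_{0y}\|\lesssim\varepsilon^{2/3}$. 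Positivity $u_s>u_0$ holds because the $\varepsilon^{1/3}$ corrector dominates the $\varepsilon^{5/6-}$ remainder everywhere, including at the walls via the Navier slip value.

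\textbf{Main obstacle.} I expect the hard part to be the coercivity of Step 2 in the long channel, where no smallness of $L$ is available to absorb $\Delta u_a\psi_x$. The sharp constant bookkeeping combining Hardy, concavity and the $x$-Poincaré inequality into $C_1C_2<2\pi$ is delicate. I would first attempt it on the model $u_a=u_0$ and then treat $u_a-u_0$ perturbatively.
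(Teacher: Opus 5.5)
Your overall architecture (a Navier-shifted shear, a boundary layer, and a remainder obtained by a weighted-vorticity fixed point) is the paper's. However, two load-bearing steps are missing or wrong.

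\textbf{The approximate solution.} A shear $(U(y),0)$ is a steady solution only if $\varepsilon U''$ is constant. So a constant pressure gradient absorbs $\varepsilon u_{0yy}$ only for Poiseuille flow. For a general concave $\mu$, the leftover $\varepsilon(u_{0yy}-c)$, i.e.\ the vorticity residual $\varepsilon u_{0yyy}$, is an interior $O(\varepsilon)$ term, and no wall-localized Prandtl layer can remove it. This is not cosmetic. With remainder amplitude $\varepsilon^{\gamma}$, the forcing is $\varepsilon^{-\gamma}E$. The nonlinear term contributes $\varepsilon^{\gamma}\cdot\varepsilon^{-1/6}\varepsilon^{-1/2}\varepsilon^{-1/3}\|\psi\|^2\|\phi\|=\varepsilon^{\gamma-1}\|\psi\|^2\|\phi\|$; these are the losses in the mixed-norm bounds \eqref{est:mix_norm_H} and \eqref{est:mix_norm_misc}. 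Hence, with these bounds, the fixed point closes only if $\varepsilon^{-1}\|E\|\ll1$, independently of $\gamma$. Moreover, the target $\varepsilon^{5/6-}$ for $\sqrt{u_0}\,\Delta(u_s-u_a)$ requires essentially $E=O(\varepsilon^{4/3})$.

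The missing ingredient is the $O(\varepsilon)$ linearized Euler corrector $\{u_e^1,v_e^1\}$ in \eqref{approximated}. One solves $-u_e^0v^1_{eyy}+u^0_{eyy}v^1_e=u^0_{eyyy}$ with $v_e^1(0)=v_e^1(H)=0$. Setting $q_e=v_e^1/u_e^0$ gives the divergence-form equation $-((u_e^0)^2q_{ey})_y=u^0_{eyyy}$. This is solved by Lax--Milgram with the Hardy inequality of Lemma \ref{lem_hardy2_narrow}, and $\|\mu'''/\mu\|_{L^\infty}<\infty$ supplies the pointwise bounds. Then $u_e^1=-\int_0^x v^1_{ey}\,dx'$ is linear in $x$. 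It is the Navier-trace mismatch of this $u_e^1$, not the inflow data, that the $\varepsilon^{1/3}$-Prandtl layer corrects.

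\textbf{The coercive estimate.} Your Step 2 names the obstacle but does not resolve it. With the multiplier $u_0\Delta\psi\,W$, the term $-(u_{0yy}\psi_x,u_0\Delta\psi\,W)$ has no sign. Once $L$ is not small, it is of the same size as the good term $-\frac12(W'u_0\Delta\psi,u_0\Delta\psi)$.

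The paper's device is the multiplier $\frac{R}{-u_{0yy}}W$ with $R=u_0\Delta\psi-u_{0yy}\psi$ and $W=2L-x$. Since $u_0\Delta\psi_x-u_{0yy}\psi_x=R_x$, and $R=0$ at $x=0$ (because $\psi=\psi_{xx}=0$ there), one gets $(R_x,\frac{RW}{-u_{0yy}})=\frac12\|R/\sqrt{-u_{0yy}}\|_2^2+\frac12(R,\frac{WR}{-u_{0yy}})_{x=L}\ge0$. This sign comes from the exact-derivative structure, not from $|W'|$ being large as in the short channel. Concavity is used precisely to make this weight positive. Assumption \eqref{H_assumption} is used to absorb the term $\frac{\varepsilon^{1/3}}{A^2}(W\Xi\psi_y,\psi_{yy})$ that the $\varepsilon$-Navier condition produces from the viscous term.

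The spectral condition \eqref{assumption_3} plays no role in the steady problem. In the paper it enters only through the temporal cross term $(\phi_t,\phi_x)$ of the evolution problem. The constant $\frac{H}{4\pi}$ there comes from the $y$-modes $\sin(2m\pi y/H)$ in bounding $\|g_x\|_2$ for $-\Delta g=\phi_t$, not from a Poincar\'e bound in $x$. So your plan to extract coercivity from Hardy plus $C_1C_2<2\pi$ targets the wrong mechanism, and as written it does not yield a closed estimate.
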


\begin{theorem}[Linear stability]\label{Thm_H_linear}
Under the assumption of Theorem \ref{Thm_H_Existence}, let $u_0, u_s, v_s$ be as in Theorem \ref{Thm_H_Existence}, and let $\phi$ be odd with respect to $y = H/2$, and satisfy the linearized Navier-Stokes equation \eqref{Linearized_NS} with boundary conditions \eqref{viscous_bc} and \eqref{Navier_bc2}, we have the estimate
\begin{equation}\label{est:H_linear}
\begin{aligned}
&\| \sqrt{\frac{u_0}{-u_{0yy}}}  \Delta \phi \|_{L^2}^2(T) + \| \nabla \phi\|_{L^2}^2(T)  + \frac{1}{L} \int_0^T \|\frac{R}{\sqrt{-u_{0yy}}}\|_{L^2}^2 + \varepsilon \int_0^T  \| \sqrt{\frac{u_0}{-u_{0yy}}} \nabla \Delta \phi \|_{L^2}^2\\
&\lesssim \| \sqrt{\frac{u_0}{-u_{0yy}}}  \Delta \phi_0 \|_{L^2}^2 \quad \mbox{for all}~T > 0.
\end{aligned}
\end{equation}
Moreover, we have the exponential decay
\begin{equation}\label{est:H_linear_enhance}
\| \nabla \phi\|_{L^2}(T) + \| \sqrt{\frac{u_0}{-u_{0yy}}}  \Delta \phi \|_{L^2}(T) \lesssim e^{- c_1 \varepsilon^{\frac{1}{3}} T}\| \sqrt{\frac{u_0}{-u_{0yy}}}  \Delta \phi_0 \|_{L^2}
\end{equation}
for all $T > 0$ and some $c_1 > 0$.
\end{theorem}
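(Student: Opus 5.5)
The plan is a weighted vorticity energy estimate, first for the shear $(u_0,0)$ and then with the steady-state corrections treated as perturbations. We multiply the linearized equation \eqref{Linearized_NS} by $w\,\Delta\phi$ with weight $w=u_0/(-u_{0yy})$ and integrate over $\Omega$. Since $-u_{0yy}\ge C_1/H^2>0$ by \eqref{assumption_1}, this weight is comparable to $u_0$.

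\textbf{Step 1: the transport term.} For the principal part $u_0\Delta\phi_x - u_{0yy}\phi_x$ we use $R$ from \eqref{def:R}. Write
\begin{equation*}
w\Delta\phi\,(u_0\Delta\phi_x - u_{0yy}\phi_x)=\frac{1}{-u_{0yy}}\Big(u_0\Delta\phi+(-u_{0yy})\phi\Big)\partial_x\Big(u_0\Delta\phi\Big)\cdot\frac{1}{u_0}\cdot u_0 .
\end{equation*}
More cleanly, $u_0\Delta\phi_x-u_{0yy}\phi_x=R_x$ and $w\Delta\phi=(R+u_{0yy}\phi)/(-u_{0yy})$. This gives
\begin{equation*}
\int \frac{R R_x}{-u_{0yy}} \;-\; \int \phi R_x .
\end{equation*}
The first integral is $\frac12\int_0^H \frac{R^2}{-u_{0yy}}\big|_{x=0}^{x=L}$. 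The second, after integration by parts in $x$, becomes $\int\phi_x R$ plus boundary terms, and $\int\phi_x R=\int \phi_x(u_0\Delta\phi - u_{0yy}\phi)$.

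The term $\int u_0\phi_x\Delta\phi$ integrates by parts to $-\int u_{0y}\phi_x\phi_y$ plus boundary terms. The remaining combination $\int u_{0y}\phi_x\phi_y - \int u_{0yy}\phi\phi_x$ closes into exact $x$-derivatives, using $\partial_y(u_{0y}\phi\phi_x)$, oddness, and $\phi=0$ on $y=0,H$.

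At $x=0$ we have $\phi=\phi_{xx}=0$, so $\Delta\phi=\phi_{yy}=0$ and $R=0$. Hence the boundary terms reduce to $x=L$ contributions: the favorable $\frac12\int R^2/(-u_{0yy})|_{x=L}$, together with terms like $\int u_0\phi_x\ldots$, which vanish because $\phi_x=0$ at $x=L$. The interior damping term $\frac1L\int\|R/\sqrt{-u_{0yy}}\|^2$ is not given directly by this computation. We plan to obtain it by repeating the multiplier argument with the extra factor $(L+x)$ or $x$, which converts the boundary flux into a bulk term of size $1/L$.

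\textbf{Step 2: coercivity and the viscous term.} We must show that the weighted energy $\|\sqrt w\Delta\phi\|^2$ controls $\|\nabla\phi\|^2$ and $\|R/\sqrt{-u_{0yy}}\|^2$. For this we expand $\int R^2/(-u_{0yy}) = \int w u_0(\Delta\phi)^2 - 2\int u_0\phi\Delta\phi + \int(-u_{0yy})\phi^2$.

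Writing $\phi=u_0 q$ with $q$ from \eqref{quotient}, which vanishes at $y=H/2$ by oddness, the Hardy inequality \eqref{assumption_2} with $C_1C_2<2\pi$ gives positivity. We expect the $2\pi$ to enter through the smallest $x$-Fourier mode allowed by \eqref{viscous_bc}, namely $\sin(\pi x/2L)$-type modes, combined in the $x$-Poincaré inequality with the $y$-Hardy inequality for $\|\nabla\phi\|$.

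The viscous term $-\varepsilon\int w\Delta\phi\,\Delta^2\phi$ produces $\varepsilon\|\sqrt w\nabla\Delta\phi\|^2$ plus commutator terms $\varepsilon\int w_y\Delta\phi\,\Delta\phi_y$. These are absorbed by Cauchy--Schwarz, using $|w_y|\lesssim 1$ together with a weighted Hardy inequality near the walls.

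\textbf{Step 3: main obstacle, boundary terms and error terms.} The viscous boundary contributions at $y=0,H$ are the hardest part, because $w$ vanishes there only linearly. The Navier condition \eqref{Navier_bc2} gives $\Delta\phi=\mp\phi_y/(A\varepsilon^{1/3})$ there. The resulting boundary terms must be controlled by $\varepsilon^{2/3}\int|\phi_y|^2|_{y=0}$ plus a trace estimate, and this is where the $\varepsilon^{1/3}$ scale appears.

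The steady-state corrections $u_s-u_0$, $v_s$, $\Delta u_s-u_{0yy}$ and $\Delta v_s$ are bounded by \eqref{ss_estimates_H} by $\varepsilon^{1/3}$ or better, and they are absorbed by the $1/L$ damping under \eqref{H_assumption}. Integrating in time gives \eqref{est:H_linear}.

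\textbf{Step 4: exponential decay.} For the decay \eqref{est:H_linear_enhance}, we use the dissipation $\varepsilon\|\sqrt w\nabla\Delta\phi\|^2$ and interpolate near the walls: $\varepsilon\|\sqrt{w}\nabla\Delta\phi\|^2+\frac1L\|R\|^2\gtrsim\varepsilon^{1/3}\|\sqrt w\Delta\phi\|^2$, with the boundary-layer width $\varepsilon^{1/3}$ balancing the weight. Gronwall's inequality then gives the rate $e^{-c_1\varepsilon^{1/3}T}$.
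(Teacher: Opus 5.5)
Your Step 1 rests on a false claim, and that claim is exactly where the long-channel result is decided.

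\textbf{The Reynolds-stress term does not cancel.} Your multiplier is $w\Delta\phi\,W=\big(\frac{R}{-u_{0yy}}-\phi\big)W$. So besides the exact term $\int\frac{RR_x}{-u_{0yy}}W$ you must handle $-\int\phi R_xW$. Carrying out the integrations by parts gives
\begin{equation*}
-\int\phi R_x W=-\int W u_{0y}\phi_x\phi_y+\big(\text{boundary terms at }x=0,L\big)+\big(\text{bulk terms containing }W'\big).
\end{equation*}
The term $\int Wu_{0y}\phi_x\phi_y$ does not close into exact $x$-derivatives. Writing $u_{0yy}\phi\phi_x+u_{0y}\phi_x\phi_y=\partial_y(u_{0y}\phi\phi_x)-u_{0y}\phi\phi_{xy}$ only trades it for $-\int Wu_{0y}\phi\phi_{xy}$, which integrates by parts in $x$ back into itself.

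It is the classical energy-production (Reynolds-stress) term. Already in the $x$-periodic case, $\phi=f(y)\cos kx+g(y)\sin kx$ produces $\frac{k}{2}\int u_{0y}(gf'-fg')\,dy$, which is generically nonzero. Oddness of $\phi$ does not remove it, because the integrand is even about $H/2$.

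This term is $O(1)$ in $\varepsilon$ and carries the full weight $W$. Every bulk damping term your computation produces carries only a factor $W'$. So unless $L$ is small, nothing absorbs it. This is why the multiplier $u_0\Delta\phi\,W$ works only in the short channel, where $|W'|=1/L$ is large. Your Step 2 invokes \eqref{assumption_3} for a positivity that your manifestly positive energy $\int wW|\Delta\phi|^2$ does not need, while the term that actually requires a spectral condition is never addressed.

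\textbf{Two further problems in Step 1.} First, the damping requires $W'<0$, i.e.\ a weight decreasing downstream such as $W=NL-x$. Your factors $x$ or $L+x$ make the bulk term $-\frac12\int W'\frac{R^2}{-u_{0yy}}$ negative. Second, the boundary bookkeeping is off. For example, $-\frac12(u_0W\phi_x,\phi_x)_{x=0}$ survives with a bad sign, since $\phi_x\neq0$ at $x=0$.

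\textbf{What the paper does instead.} It tests with the full multiplier $\frac{R}{-u_{0yy}}W=(w\Delta\phi+\phi)W$, for which the Rayleigh part is exactly $\frac12(R,\frac{R}{-u_{0yy}})+\frac12(R,\frac{WR}{-u_{0yy}})_{x=L}$. The cost moves to the time derivative, in two pieces.
\begin{itemize}
\item The term $(\Delta\phi_t,\phi W)$ produces the indefinite energy $\|\sqrt{wW}\Delta\phi\|_2^2-\|\sqrt W\nabla\phi\|_2^2+\|\phi\|_{x=L}^2$. It is coercive by Lemma \ref{lem:positive}, via the identity $\int|\phi_y|^2W+\int\frac{u_{0yy}}{u_0}\phi^2W=\int u_0^2q_y^2W$.
\item It also leaves a residual $-(\phi_t,\phi_x)$, which comes only from $W'=-1$ and therefore carries no factor $W$.
\end{itemize}
The residual is controlled by duality (Proposition \ref{prop:phi_t}). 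With $-\Delta g=\phi_t$, separation of variables gives $\|g_x\|_2\le\frac{H}{4\pi}\|\phi_t\|_2$. The $2\pi$ comes from the $y$-frequencies $2m\pi/H$ forced by oddness about $H/2$, uniformly in the $x$-frequency; it does not come from the $x$-modes $\sin(\pi x/2L)$ as you guessed. Combined with $\|\phi_x\|_2\le C_2H\|R\|_2$ from \eqref{assumption_2}, this yields $|(\phi_t,\phi_x)|\le\frac{C_2H^2}{4\pi}\|R\|_2^2+\cdots$. This is absorbed into $\frac12\|R/\sqrt{-u_{0yy}}\|_2^2$ using \eqref{assumption_3}.

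These three ingredients are what your proposal is missing: the $+\phi$ in the multiplier, coercivity of the resulting indefinite energy, and the sharp-constant bound on $\phi_t$.

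\textbf{Minor point.} With your multiplier, the Navier boundary term is $\frac{\varepsilon^{1/3}}{A^2}\phi_y^2$, not $\varepsilon^{2/3}$. Assumption \eqref{H_assumption} is needed precisely to absorb that term after integrating by parts in $y$, not for the steady-state corrections.
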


\begin{theorem}[Nonlinear stability]\label{Thm_H_nonlinear}
Under the assumption of Theorem \ref{Thm_H_Existence}, let $u_0, u_s, v_s$ be as in Theorem \ref{Thm_H_Existence}. For any $\beta > 0$, there exists a positive constant $c_0$ depending on $\beta$, $u_0$, $L$, and $H$, such that if $\phi_0$ is odd with respect to $y = H/2$, and satisfies
\begin{equation}\label{H_initial}
\| \sqrt{\frac{u_0}{-u_{0yy}}}  \Delta \phi_{0} \|_{L^2} \le c_0 \varepsilon^{\frac{5}{6}+ \beta},
\end{equation}
then \eqref{Nonlinear_NS} admits a unique solution $\phi$ satisfying the boundary conditions \eqref{viscous_bc} and \eqref{Navier_bc2}, and the estimates \eqref{est:H_linear} and \eqref{est:H_linear_enhance}.
\end{theorem}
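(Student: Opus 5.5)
We argue by a bootstrap (continuity) argument built on the weighted energy structure from the proof of Theorem \ref{Thm_H_linear}, with the nonlinear term $N(\phi):=-\phi_y\Delta\phi_x+\phi_x\Delta\phi_y$ treated as a forcing. First, local well-posedness in the weighted space: for smooth odd data, standard parabolic theory for the fourth-order equation with boundary conditions \eqref{viscous_bc}, \eqref{Navier_bc2} gives a unique local solution. The symmetry of $u_s,v_s$ (Theorem \ref{Thm_H_Existence}) makes the linear operator preserve oddness in $y$ about $H/2$. The nonlinearity also preserves it: $\phi$ odd implies $\phi_x$ odd and $\phi_y$ even, so $N(\phi)$ is odd. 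Hence $\phi(t)$ stays odd, and the Hardy inequality \eqref{assumption_2} stays available for the relevant quantities, in particular $q=\phi/u_0$, which is even but whose $x$-derivative-type combinations vanish at $y=H/2$ where needed. Uniqueness then follows from an $L^2$ energy estimate for the difference of two solutions.

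Next we rerun the linear energy identity. We test the equation against the same multiplier that produced \eqref{est:H_linear}, namely the Rayleigh-vorticity combination $R/(-u_{0yy})$ (essentially $\frac{u_0}{-u_{0yy}}\Delta\phi-\phi$). This gives, for $E(t):=\|\sqrt{u_0/(-u_{0yy})}\Delta\phi\|_{L^2}^2+\|\nabla\phi\|_{L^2}^2$ and dissipation $D(t):=\frac1L\|R/\sqrt{-u_{0yy}}\|^2+\varepsilon\|\sqrt{u_0/(-u_{0yy})}\nabla\Delta\phi\|^2$,
\[
\frac{d}{dt}E + c D \le C\Big|\int N(\phi)\,\frac{R}{-u_{0yy}}\Big|.
\]
The enhanced-decay argument behind \eqref{est:H_linear_enhance} additionally gives $D\gtrsim \varepsilon^{1/3}E$. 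The task is then to bound the nonlinear pairing by $C\,\varepsilon^{-5/6-\beta/2}\sqrt{E}\,D$ (or by $E^{1/2}D$ times a power of $\varepsilon$ that loses at most $\varepsilon^{-5/6-}$). Under the bootstrap hypothesis $E\le 2C c_0^2\varepsilon^{5/3+2\beta}$, the right side is then absorbed into $cD/2$. This closes the bootstrap and yields \eqref{est:H_linear} and \eqref{est:H_linear_enhance} globally.

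The main obstacle is estimating $\int N(\phi)R/(-u_{0yy})$ with only weighted control. The dissipation controls $\nabla\Delta\phi$ only with weight $\sqrt{u_0}$ and a factor $\varepsilon$, and it controls $R$ without derivatives. We first integrate by parts using the transport structure: $\int(\phi_y\partial_x-\phi_x\partial_y)(\Delta\phi)\,\frac{u_0}{-u_{0yy}}\Delta\phi$ reduces to $\int \phi_x (\Delta\phi)^2\,\partial_y(\frac{u_0}{-u_{0yy}})$ plus boundary terms at $x=0,L$. These boundary terms are handled by \eqref{viscous_bc}, since $\phi=\Delta\phi=0$ at $x=0$ and the outflow term has a favorable sign after $u_s>0$ dominates. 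The remaining terms carry unweighted $\Delta\phi$ near the walls. We control them by Hardy/Sobolev interpolation across the viscous-boundary-layer scale $\varepsilon^{1/3}$: $\|\Delta\phi\|_{L^2}$ near $y=0$ is bounded by $\varepsilon^{-1/6}$ times the weighted norms, using $u_0\sim y$ and the weighted $\nabla\Delta\phi$ dissipation. We also need $\|\phi_x\|_{L^\infty}\lesssim \|\nabla\phi\|^{1/2}\|\Delta\phi\|^{1/2}$-type bounds, losing $\varepsilon^{-1/6}$ or $\varepsilon^{-1/3}$ factors. Counting powers, each nonlinear term should be bounded by $\varepsilon^{-5/6-}E^{1/2}D$; the extra $\varepsilon^{\beta}$ absorbs logarithmic losses. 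We would first try the splitting $y\lessgtr\varepsilon^{1/3}$ to verify the exponent $5/6$, since that is where the threshold in \eqref{H_initial} comes from.
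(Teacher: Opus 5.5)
Your overall scheme is the same as the paper's. You run the weighted Rayleigh-vorticity energy estimate with $N(\phi)=-\phi_y\Delta\phi_x+\phi_x\Delta\phi_y$ as a forcing, and close because the nonlinear pairing is $O(\varepsilon^{-(\frac{5}{6}+)}E^{1/2}D)$. Using a bootstrap instead of the paper's contraction map $\psi\mapsto\phi$ on the ball of radius $2c_0\varepsilon^{\frac{5}{6}+\beta}$ is an inessential difference. The gap is that this bound, the only new ingredient beyond Theorem \ref{Thm_H_linear}, is asserted rather than proved, and the route you sketch for it fails at three points.

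(i) The bound $\|\phi_x\|_{L^\infty}\lesssim\|\nabla\phi\|_2^{1/2}\|\Delta\phi\|_2^{1/2}$ is false in two dimensions, since $\Delta\phi\in L^2$ does not put $\nabla\phi$ in $L^\infty$. What is needed is the anisotropic estimate \eqref{est:nabla_phi1}, $\|\nabla\phi\|_{L^\infty}\lesssim_p\|\Delta\phi\|_{L^p_yL^2_x}$ for $p>2$. Combined with \eqref{est:Delta_phi_mix} and \eqref{est:H_weighted_Hardy}, it gives \eqref{est:nabla_phi_L_infty}, $\|\nabla\phi\|_{L^\infty}\lesssim\varepsilon^{-(\frac{1}{3}+)}D^{1/2}$. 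This $0+$ loss is why $\beta>0$ is required.

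(ii) Your transport integration by parts produces the outflow flux $\frac{1}{2}\int_{x=L}\phi_y\frac{u_0W}{-u_{0yy}}(\Delta\phi)^2\,dy$, and its sign is not favorable. Merging it with the linear outflow term requires $u_s+\phi_y\ge 0$ on $\{x=L\}$, i.e.\ a pointwise bound $\phi_y\ge -u_s$ where $u_s=O(A\varepsilon^{1/3})$ at the corners $(L,0)$, $(L,H)$. Neither the bootstrap bound on $E$ nor the time-integrated control of $D$ provides this.

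(iii) A fixed splitting at $y\sim\varepsilon^{1/3}$ only gives $\|\Delta\phi\|_2^2\lesssim\varepsilon^{-\frac{2}{3}}D+\varepsilon^{-\frac{1}{3}}E$. So $\|\phi_x\|_{L^\infty}\|\Delta\phi\|_2^2$ contains $\varepsilon^{-(1+)}D^{3/2}$, which is not of the form $E^{1/2}D$ and cannot be absorbed using smallness of $E$ alone.

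The integration by parts is unnecessary. The dissipation controls $\sqrt{\varepsilon}\|\sqrt{u_0/(-u_{0yy})}\,\nabla\Delta\phi\|_2$ with exactly the weight the multiplier carries, so the paper estimates directly
\[
\Big|\Big(\phi_y\Delta\phi_x,\frac{u_0W}{-u_{0yy}}\Delta\phi\Big)\Big|\lesssim\|\nabla\phi\|_{L^\infty}\Big\|\sqrt{\frac{u_0}{-u_{0yy}}}\,\Delta\phi_x\Big\|_2\Big\|\sqrt{\frac{u_0}{-u_{0yy}}}\,\Delta\phi\Big\|_2\lesssim\varepsilon^{-(\frac{1}{3}+)}\,\varepsilon^{-\frac{1}{2}}\,D\,E^{1/2}.
\]
The other half of the multiplier, $\phi W$, is handled the same way using $\|\phi/\sqrt{u_0}\|_2\lesssim\|\phi_y\|_2\lesssim E^{1/2}$. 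Note that $\frac{R}{-u_{0yy}}=\frac{u_0}{-u_{0yy}}\Delta\phi+\phi$, not $-\phi$. So $\frac{5}{6}=\frac{1}{3}+\frac{1}{2}$ is the $L^\infty$ loss plus the dissipation loss, not an artifact of a wall splitting. If you keep your route, the flux in (ii) must be estimated, e.g.\ via $\|\sqrt{u_0}\Delta\phi\|_{x=L}^2\lesssim\|\sqrt{u_0}\Delta\phi\|_2\|\sqrt{u_0}\Delta\phi_x\|_2$. Point (iii) then needs the time-dependent splitting $\|\Delta\phi\|_2^2\lesssim\|\sqrt{u_0}\Delta\phi\|_2\|\sqrt{u_0}\Delta\phi_y\|_2+\|\sqrt{u_0}\Delta\phi\|_2^2$.

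Your inequality $\frac{d}{dt}E+cD\le C|\int N(\phi)\frac{R}{-u_{0yy}}|$ is also not what rerunning the linear proof gives:
\begin{itemize}
\item The multiplier must carry the weight $W$ of \eqref{weight}; its $W'=-1$ is what produces $\frac{1}{2}\|R/\sqrt{-u_{0yy}}\|_2^2$.
\item The energy is the indefinite functional made coercive by Lemma \ref{lem:positive}.
\item The residual $(\phi_t,\phi_x)$ is controlled through Proposition \ref{prop:phi_t}, which uses the equation. So $N(\phi)$ enters a second time, through $(N(\phi),g)$.
\end{itemize}
That extra term satisfies the same $\varepsilon^{-(\frac{5}{6}+)}E^{1/2}D$ bound (use $\|g/\sqrt{u_0}\|_2\lesssim\|g_y\|_2\lesssim\|\phi_t\|_2$), but it must be included. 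Finally, $q=\phi/u_0$ is odd, not even; that is precisely why $q(\cdot,H/2)=0$ and \eqref{assumption_2} applies to $q$.
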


Let us give some remarks on the theorems above:
\begin{itemize}
\item Formally, as the viscosity $\varepsilon \to 0$, the $\varepsilon$-Navier boundary condition \eqref{Navier_boundary_condition_new} becomes the no-slip boundary condition, and the inflow boundary condition of the base flow $u_0(y)$ is retained due to the viscous inflow boundary condition \eqref{viscous_bc}. Similar to \cite{GuoIyer23}, one may impose a more general viscous boundary condition
$$
\phi|_{x=0} = a_1^\varepsilon(y), \quad \phi_x|_{x=L} = a_2^\varepsilon(y), \quad \phi_{xx}|_{x=0} = a_3^\varepsilon(y), \quad \phi_{xxx}|_{x=L} = a_4^\varepsilon(y).
$$
Thanks to the exponential decay estimates \eqref{est:L_linear_enhance}, \eqref{est:L_nonlinear_enhance}, and \eqref{est:H_linear_enhance}, we can still derive global-in-time stability provided that the boundary data $a^\varepsilon_i$, $i=1,2,3,4$, are of order $O(\varepsilon^{1/6})$.
\\
\item The stability of shear flow $u_0(y)$ has been vastly studied in many literature. However, when $u_0$ is neither the Couette nor Poiseuille flow, it satisfies the Navier-Stokes equation with an extra forcing term $(-\varepsilon u''_0, 0)$. In contrast, we first construct a non-shear steady state $\{u_s,v_s\}$ that is close to $\{u_0, 0\}$ and satisfies the unforced Navier–Stokes equation exactly. We then analyze the stability of this steady state.\\
\item Assumption \eqref{assumption_L1} is mild. In particular, profiles satisfying \eqref{assumption_L1} may have a very fluctuating part. See Figure~\ref{fig:example} for example. As a consequence, Theorems~\ref{Thm_L_linear} and \ref{Thm_L_nonlinear} imply that, when the channel is sufficiently short, there is a wide class of stable symmetric flows. Thanks to the viscous inflow boundary condition \eqref{viscous_bc}, any profile satisfying \eqref{assumption_L1} serves as a local attractor within its neighborhood of $O(\varepsilon^{\frac{2}{3}})$, demonstrating a rich and chaotic behavior in the inviscid limit of $\varepsilon \ll 1$.

\begin{figure}[h]
\begin{tikzpicture}[>=stealth]

\begin{scope}[x=6cm,y=2cm]

  \draw[->] (-0.02,0) -- (1.05,0) node[right] {$y$};
  \draw[->] (0,-0.10) -- (0,1.05) node[above] {$u_0(y)$};

  \draw (0,0) node[below left] {$0$};
  \draw (1,0) node[below] {$1$};
  \draw (0.5,0) -- (0.5,0.03) node[below=4pt] {$\tfrac12$};

  \draw[densely dashed] (0.5,0) -- (0.5,0.98);

  \draw[line width=1.1pt,domain=0:1,samples=700,smooth,variable=\y]
    plot ({\y},
          { 1.55*(\y*(1-\y))^0.60 *
            (1.25
             + 0.22*cos(deg(10*pi*(\y-0.5)))
             + 0.18*cos(deg(18*pi*(\y-0.5)))
             + 0.12*cos(deg(26*pi*(\y-0.5)))
            )
          });

\end{scope}
\begin{scope}[shift={(7.5,0)},x=6cm,y=2cm]

  \draw[->] (-0.02,0) -- (1.05,0) node[right] {$y$};
  \draw[->] (0,-0.10) -- (0,1.05) node[above] {$u_0(y)$};

  \draw (0,0) node[below left] {$0$};
  \draw (1,0) node[below] {$1$};
  \draw (0.5,0) -- (0.5,0.03) node[below=4pt] {$\tfrac12$};

  \draw[densely dashed] (0.5,0) -- (0.5,0.98);

  \draw[line width=1.1pt,domain=0:1,samples=450,smooth,variable=\y]
    plot ({\y},
          { 3.0*(\y)*(1-\y) *
            (0.90
             + 0.22*cos(deg(4*pi*(\y-0.5)))
             + 0.12*cos(deg(8*pi*(\y-0.5)))
            )
          });

\end{scope}
\end{tikzpicture}
\caption{Two examples of $u_0$ satisfying the assumption \eqref{assumption_L1}}\label{fig:example}
\end{figure}
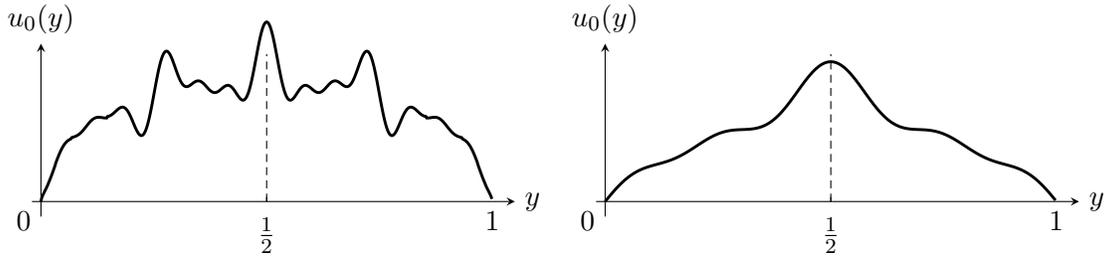

\item Under assumption \eqref{assumption_1}, the Hardy-type inequality \eqref{assumption_2} holds automatically for some constant $C_2$ depending on the profile $\mu(y)$ (see Lemma \ref{lem_hardy2_narrow}). Thus, the essential assumption for the profile $\mu$ is the spectral condition \eqref{assumption_3}. For the Poiseuille flow $\mu(y) = y(1-y)$, we have $C_1 = 2$, and we provide a proof of \eqref{assumption_2} with $C_2 = 2$ (see Appendix \ref{Sec:appendix})\footnote{In an earlier draft, we proved \eqref{assumption_2} with $C_2 \approx 2.58$. We thank Hongjie Dong for pointing out an alternative argument that improves the constant to $C_2 = 2$.}. So \eqref{assumption_3} is satisfied for the Poiseuille flow with a substantial margin. Therefore, one can verify that the assumption \eqref{assumption_3} holds for a broader class of concave symmetric profiles close to the Poiseuille flow. It is very likely that the spectral condition \eqref{assumption_3} can be relaxed.\\

\item The asymptotic stability threshold in the short channel is $\frac{2}{3}$. This agrees with \cites{Chen-Li-Shen-Zhang25, Ding-Lin25}, which study, respectively, symmetric shear flows with no-slip boundary conditions and Poiseuille flow with Navier-slip boundary conditions (in which $\Delta\phi=0$). However, the space we impose is a weighted $H^1$ space, which is weaker than $H^4$ and $H^{\frac{7}{2}+}$ used in those work. Moreover, the weight $\sqrt{u_0}$ degenerates near the boundary and behaves like $\varepsilon^{\frac{1}{6}}$ there (see, for instance, Lemma~\ref{lem_hardy1}). Therefore, our result indicates that the stability threshold for an unweighted Sobolev space should be $\frac{1}{2}$ near the boundary. The weighted $H^1$ space can be further relaxed to a weighted $L^2$ space. The trade-off is a higher stability threshold, namely $\frac{5}{6}+$, which is analogous to the long-channel case (Theorem \ref{Thm_H_nonlinear}).\\

\item The exponential decay estimates \eqref{est:L_linear_enhance}, \eqref{est:L_nonlinear_enhance}, and \eqref{est:H_linear_enhance} correspond to a dissipation time scale $O(\varepsilon^{-1/3})$. This is faster than the typical $O(\varepsilon^{-1/2})$ enhanced dissipation time scale proved for symmetric shear flows in periodic channels; see \cites{CotiZelati-Elgindi-Widmayer20, Zotto23, Ding-Lin25, Chen-Li-Shen-Zhang25}. This stronger stabilizing effect is due to our viscous inflow boundary condition \eqref{viscous_bc}.\\

\end{itemize}

\subsection{Background and motivations}

Hydrodynamic stability in the high Reynolds number regime has attracted significant attention since the early work of Reynolds. Experimentally, steady laminar flows generally lose stability and transition to turbulence as the Reynolds number increases. A natural question is how small a perturbation must be, in terms of the Reynolds number, to prevent transition away from a given steady state. Trefethen et al. \cite{Trefethen} first purposed this crucial question, which was later formulated by Bedrossian, Germain, and Masmoudi \cites{Bedrossian-Germain-Masmoudi17, Bedrossian-Germain-Masmoudi19} as the following stability threshold problem: Given a norm $\| \cdot\|_X$ on the initial data, determine a $\gamma = \gamma(X)$ such that $\| \phi_0\|_X \ll \varepsilon^\gamma$ implies stability, whereas $\| \phi_0\|_X \gg \varepsilon^\gamma$ implies instability. The exponent $\gamma$ is referred as the stability threshold.

Since then, there has been a number of literature in studying the stability of various flows. For Couette flow and more general monotone shear flows, we refer to \cites{Bedrossian-Vicol-Wang18, Chen-Li-Wei-Zhang20, Bedrossian-Germain-Masmoudi20, Bedrossian-Germain-Masmoudi22, Masmoudi-Zhao22, Chen-Wei-Zhang23a, Deng-Masmoudi23, Bian-Grenier-Masmoudi-Zhao25, Bedrossian-He-Iyer-Wang25, Bedrossian-He-Iyer-Li-Wang25} and the references therein. For non-monotone shear flows, including Poiseuille and Kolmogorov flows, see \cites{Wei-Zhang-Zhao19, Wei-Zhang-Zhao20, Li-Wei-Zhang20, Chen-Wei-Zhang23b, Chen-Ding-Lin-Zhang23, Chen-Jia-Wei-Zhang25, Beekie-Chen-Jia26} and the references therein. These works highlight three important stabilizing mechanisms. The first is inviscid damping, namely the decay of velocity or stream function perturbations driven by mixing in the underlying Rayleigh term. The second is enhanced dissipation, where mixing couples with viscosity to produce decay on time scales much faster than the diffusive scale. The third is vorticity depletion, which refers to the asymptotic vanishing of the vorticity near the critical points of a non-monotone shear flow.

For planar Poiseuille flow, enhanced dissipation on the time scale $O(\varepsilon^{-\frac{1}{2}}|\log \varepsilon|)$ in $\bT \times \bR$ was first proved by Coti Zelati, Elgindi, and Widmayer \cite{CotiZelati-Elgindi-Widmayer20} based on the hypocoercivity framework. Del Zotto \cite{Zotto23} later sharpened the time scale to $O(\varepsilon^{-1/2})$ and also obtained an $L^2$-based stability threshold $\gamma \le \frac{2}{3}+$. On the bounded channel $\bT \times [-1,1]$, under the boundary conditions 
$$\phi(t,x,\pm 1) = 0, \quad \Delta \phi (t,x,\pm 1) = 0,$$ Ding and Lin \cite{Ding-Lin25} proved the same enhanced dissipation rate and established a stability threshold $\gamma \le \frac{2}{3}$ for $H^{\frac{7}{2}+}$. More recently, Chen, Li, Shen, and Zhang \cite{Chen-Li-Shen-Zhang25} considered general concave symmetric shear flows in $\bT \times [-1,1]$ with no-slip boundary conditions, and again obtained the $O(\varepsilon^{-1/2})$ enhanced dissipation time scale together with a stability threshold $\gamma \le \frac{2}{3}$ in $H^{4}$. A common feature of the settings above is that the domain is periodic in $x$ and the base flow is a shear. This structure allows one to decompose the solution in Fourier modes in $x$ and analyze the mode-by-mode dynamics. We also point out that on an unbounded channel $\bR \times [-1,1]$, symmetric flows are linearly unstable \cite{Grenier-Guo-Toan16}.

Another important physical phenomenon in the high Reynolds number regime is the formation of boundary layers. Because the no-slip boundary condition for the Navier–Stokes equations is incompatible with the boundary condition satisfied by the inviscid limit, Prandtl proposed that the Navier–Stokes solution admits an asymptotic expansion of the form
$$\left\{
\begin{aligned}
u_{NS} &= u_e + u_p + O(\sqrt{\varepsilon}),\\
v_{NS} &= v_e + \sqrt{\varepsilon} v_p + O(\sqrt{\varepsilon}),
\end{aligned}\right.
$$
where $\{u_e,v_e\}$ is the limiting Euler solution, and $\{u_p,v_p\}$ is the Prandtl boundary layer corrector. In general, this expansion fails for the unsteady Navier–Stokes equations in $L^\infty$ topology; see \cite{Grenier-Toan19}. In contrast, in the steady setting the expansion can be justified when $\{u_e,v_e\}$ is shear and $\{u_p,v_p\}$ is close to Blasius self-similar profile; see \cites{GuoIyer23,IyerMasmoudi21a}. This yields an important class of steady solutions in channels, where the Prandtl corrector introduces an $x$-dependence that is not compatible with Fourier-mode analysis.

Motivated by the discussion above, we develop a new energy framework to study stability for symmetric flows in non-periodic channels. Rather than working with steady states that include a full Prandtl boundary layer, we construct a steady state with a weaker boundary layer, built from a symmetric shear profile that vanishes at the boundaries, and we prove its stability within our framework. We view this as a first step toward analyzing stability for more general non-periodic steady states, including the physically relevant examples discussed above. To the best of our knowledge, this is the first stability-thresholds result in a non-periodic channel.

\subsection{Notations}

Since our analysis relies heavily on the $L^2$-norm, we use $\| \cdot \|_2$ (and occasionally $\| \cdot \|$ or $\| \cdot \|_{L^2}$) to denote the $L^2$-norm for the spacial variables. Norms restricted to a one-dimensional boundary (for example, $\{x = L\}$) are denoted by  $\| \cdot \|_{x=L}$. Similarly, we often use $(\cdot, \cdot)$ to denote the $L^2$ inner product, with surface-restricted inner products distinguished analogously. If a different norm is intended, we will indicate it with a subscript, e.g., $\| \cdot \|_{L^\infty_x L^2_y}$.

We often use the notation $\lesssim$ in the estimates, to indicate an inequality that holds up to a uniform constant. Unless otherwise stated, the uniform constant $C$ may depend on the base flow $u_0$, but is independent of $\varepsilon, A, L$, and $H$ (we write $\lesssim_L$ when dependence on $L$ is allowed). Similarly, we use the notation $O(B)$ to denote a quantity that can be controlled by $CB$, where $C$ is a uniform constant. The only exception is Section \ref{sec:construction}, where the uniform constants may depend on $u_0, A, L$, and $H$, but remain independent of $\varepsilon$.\\

\subsection{Overview of the proof}

One of the major difficulties in studying non-monotone flow is to estimate the term $-u_{0yy} \phi_x$. We overcome this difficulty by two different arguments in the short- and long-channel settings.

When the channel is sufficiently short, we test the linearized Navier-Stokes equation \eqref{Linearized_NS} by $u_0 \Delta \phi W$, where $W = 2-x/L$. With this choice, the Rayleigh term generates a large positive contribution $\|u_0 \Delta \phi\|^2/L$, which can be arranged to dominate the unfavorable term $-u_{0yy} \phi_x$. The viscous dissipation yields an additional positive term $\varepsilon\|\sqrt{u_0} \nabla \Delta \phi\|^2$, but it also produces boundary contributions $\varepsilon \| \sqrt{|u_{0y}|} \Delta \phi \|^2_{y=0,H}$. These boundary terms are critical: they sit at the borderline where a direct Hardy inequality does not close. To bypass this obstacle, we use the $\varepsilon$-Navier boundary condition \eqref{Navier_bc2} to reduce the effective order of these boundary contributions.

When the channel is not short, the term $u_0 \Delta \phi_x$ no longer dominates the unfavorable term $-u_{0yy} \phi_x$. In this case, we group the two terms together as $R_x$, where $R$ is the ``Rayleigh vorticity" defined in \eqref{def:R}. The key observation is that the quantity $-R/u_{0yy}$ couples well with the remaining terms in \eqref{Linearized_NS}. In particular, we obtain a positivity estimate from its interaction with the temporal term; see Lemma \ref{lem:positive}. Another important ingredient is the estimate of a residual term $(\phi_t, \phi_x)$, where the spectral condition \eqref{assumption_3} is involved; see Proposition \ref{prop:phi_t}.

The rest of the article is organized as follows. We postpone the proof of Theorems \ref{Thm_Existence_L} and \ref{Thm_H_Existence} to Section \ref{sec:construction}, since the steady-state construction can be carried out uniformly. Assuming the existence of these steady states, we prove stability results in the short channel (Theorems \ref{Thm_L_linear} and \ref{Thm_L_nonlinear}) in Section \ref{sec:stability_short}, and in the long channel (Theorems \ref{Thm_H_linear} and \ref{Thm_H_nonlinear}) in Section \ref{sec:stability_long}. Finally, we provide a proof of \eqref{assumption_2} for the Poiseuille flow $\mu(y) = y(1-y)$ with $C_2 =  \frac{4\sqrt{5}}{2 \sqrt{5} - 1}$ in the Appendix \ref{Sec:appendix}.\\


\section{Stability in the short channel}\label{sec:stability_short}


In this section, we prove Theorems \ref{Thm_L_linear} and \ref{Thm_L_nonlinear}. As mentioned before, we postpone the construction of steady states to Section \ref{sec:construction}. Here, we recall \eqref{ss_estimates} the key estimates of the steady state.

\subsection{Preliminary estimates}\label{sec:L_prelim}
In this subsection, we establish some preliminary estimates. First, we have a weighted Hardy inequality.
\begin{lemma}\label{lem_hardy1}
For $f \in H^1(\Omega)$, $\sigma > 0$, we have
\begin{equation}\label{Hardy1}
\int u_0^i f^2 \lesssim (\sigma \varepsilon^{1/3})^{i+1}  \int u_0 f_{y}^2 + (\sigma \varepsilon^{1/3})^{i-2}  \int u_0^2 f^2, \quad i=0,1.
\end{equation}
\end{lemma}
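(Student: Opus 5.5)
The estimate is one-dimensional in $y$, so we prove it for fixed $x$ and integrate in $x$ at the end. By the symmetry of $u_0$ about $y=1/2$ (with $H=1$ here), it suffices to work on $(0,1/2)$; the half near $y=1$ is identical. Set $\delta := \sigma\varepsilon^{1/3}$. By \eqref{assumption_L1}, $u_0(y)\sim y$ on $(0,1/2)$: $u_0$ is positive on the interior, vanishes at $0$ with $u_0'(0)>0$, and is smooth. We split the interval into a boundary region $(0,\delta)$ and an interior region $(\delta,1/2)$. If $\delta\gtrsim 1$ there is no boundary region, and the bound follows directly from $u_0\lesssim 1$ and the interior argument below, so we may assume $\delta\ll 1$.

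In the interior region $y\ge\delta$ we have $u_0\gtrsim \delta$, hence
\[
u_0^i\lesssim u_0^2\,\delta^{i-2},
\]
so that
\[
\int_\delta^{1/2} u_0^i f^2 \lesssim \delta^{i-2}\int u_0^2 f^2 ,
\]
which is the second term on the right of \eqref{Hardy1}.

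In the boundary region we use a one-dimensional weighted Hardy/trace argument. We write $y^i f^2 = \tfrac{1}{i+1}\,\partial_y\big(y^{i+1}\big) f^2$ and integrate by parts on $(0,2\delta)$ against a cutoff $\chi$ that equals $1$ on $(0,\delta)$, vanishes at $2\delta$, and satisfies $|\chi'|\lesssim\delta^{-1}$. This gives
\[
\int_0^{2\delta} \chi\, y^i f^2 \lesssim \int_0^{2\delta} y^{i+1}|f|\,|f_y| + \delta^{-1}\int_\delta^{2\delta} y^{i+1} f^2 .
\]
The boundary term at $y=0$ vanishes because of the factor $y^{i+1}$, which is where $i+1>0$ is used.

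We treat the two terms on the right as follows.
\begin{itemize}
\item The second term is supported where $y\sim\delta$, so it is bounded by $\delta^{i-2}\int y^2 f^2\lesssim \delta^{i-2}\int u_0^2 f^2$.
\item For the first term we use Cauchy--Schwarz:
\[
y^{i+1}|f f_y| \le \tfrac12\, y^i f^2 + C\, y^{i+2} f_y^2 .
\]
Since $y\le 2\delta$, we have $y^{i+2}f_y^2\lesssim \delta^{i+1}\, y f_y^2\lesssim \delta^{i+1} u_0 f_y^2$.
\end{itemize}
The term $\tfrac12 y^i f^2$ is not localized by $\chi$. We split it as $\int_0^\delta$, which is absorbed into the left-hand side, and $\int_\delta^{2\delta}$, which is again of interior type.

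The step needing the most care is making this absorption rigorous. It requires either a cutoff weight in the Cauchy--Schwarz step or a justification that $\int y^i f^2$ is finite. For $f\in H^1$ the integral is finite, since $i\ge 0$ and the interval is bounded, so the absorption is legitimate. Combining the boundary and interior bounds yields \eqref{Hardy1} for $i=0,1$ with constants depending only on $u_0$. Integrating in $x$ completes the proof.
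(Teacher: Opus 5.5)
Your main argument is essentially the paper's. You localize at scale $\delta=\sigma\varepsilon^{1/3}$ near each wall, integrate $\partial_y(y^{\,\cdot})f^2$ by parts against a cutoff, use Cauchy--Schwarz and absorb, and handle the rest of the interval via $u_0\gtrsim\delta$. The differences are cosmetic. The paper first bounds $u_0^i\lesssim\delta^i$ on the cutoff's support and runs the $i=0$ computation with $\partial_y(y)$. It also uses $\chi^2$ instead of $\chi$, so the Cauchy--Schwarz term falls exactly on the left-hand side. Your splitting of $\int_0^{2\delta}$ into $(0,\delta)$ and $(\delta,2\delta)$ does the same job.

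Two corrections are needed.

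First, your treatment of the case $\delta\gtrsim 1$ is wrong. In that regime it is the interior region that disappears, not the boundary region, and $u_0\gtrsim\delta$ fails near the walls. In fact no argument can cover that case. Take $f\equiv 1$: the left side of \eqref{Hardy1} is $L\int_0^1u_0^i\,dy>0$, while the right side is $\delta^{i-2}L\int_0^1u_0^2\,dy$, which tends to $0$ as $\delta\to\infty$ for $i=0,1$. So the lemma must be read with $\sigma\varepsilon^{1/3}\lesssim 1$. That is how it is used ($\sigma$ fixed, $\varepsilon\ll1$), and the paper's proof tacitly assumes it too: its bounds $y\lesssim u_0$ and $u_0\gtrsim\sigma\varepsilon^{1/3}$ on the cutoff regions need $\delta$ small. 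For $\delta\in[\delta_0,C]$, apply the small-$\delta_0$ case and absorb the factor $(C/\delta_0)^{2-i}$ into the constant.

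Second, there is a constant-bookkeeping slip in the absorption. Your integration by parts gives
$\int\chi y^if^2\le\frac{2}{i+1}\int_0^{2\delta}y^{i+1}|f||f_y|+\frac{1}{i+1}\int|\chi'|\,y^{i+1}f^2$.
With weight $\tfrac12$ in Young's inequality, the term to be absorbed has coefficient exactly $1$ when $i=0$, so it does not absorb. Use $y^{i+1}|ff_y|\le\eta\,y^if^2+\frac{1}{4\eta}y^{i+2}f_y^2$ with $\eta$ small, e.g. $\eta=1/4$. The rest of your argument then goes through unchanged.
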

\begin{proof}
Let $\chi(y)$ be a smooth cut-off function supported in $[0,1]$, and $\chi \equiv 1$ in $[0, 1/2]$. We write
$$
\int u_0^i  f^2 \lesssim \int u_0^i f^2 \chi^2 \Big( \frac{y}{\sigma \varepsilon^{1/3}} \Big) + \int u_0^i f^2 \chi^2 \Big( \frac{1-y}{\sigma \varepsilon^{1/3}} \Big)+ \int u_0^i f^2 [1-\chi\Big( \frac{y}{\sigma \varepsilon^{1/3}} \Big) - \chi \Big( \frac{1-y}{\sigma \varepsilon^{1/3}}  \Big)]^2.
$$

For the first term, we have
\begin{align*}
\int u_0^i f^2 \chi^2 \Big( \frac{y}{\sigma \varepsilon^{1/3}} \Big) \lesssim & (\sigma \varepsilon^{1/3})^i \int  f^2 \chi^2 \Big( \frac{y}{\sigma \varepsilon^{1/3}} \Big)\\
 =& (\sigma \varepsilon^{1/3})^i  \int \partial_y(y) f^2 \chi^2 \Big( \frac{y}{\sigma \varepsilon^{1/3}} \Big)\\
=&-2 (\sigma \varepsilon^{1/3})^i  \int y f f_y \chi^2 - 2 (\sigma \varepsilon^{1/3})^i  \int \frac{y}{\sigma \varepsilon^{1/3}} f^2 \chi' \chi\\
\lesssim & (\sigma \varepsilon^{1/3})^i  \int y^2 f_{y}^2 \chi^2 + (\sigma \varepsilon^{1/3})^i  \int \Big( \frac{y}{\sigma \varepsilon^{1/3}} \Big)^2 f^2 |\chi'|^2\\
\lesssim & (\sigma \varepsilon^{1/3})^{i+1}  \int u_0 f_y^2 + (\sigma \varepsilon^{1/3})^{i-2} \int u_0^2 f^2,
\end{align*}
where we used $y \lesssim u_0 \lesssim y$ and $y \lesssim \sigma \varepsilon^{1/3}$ in the support of $\chi$. The second term can be estimated similarly.

For the last term, using $u_0 \gtrsim \sigma \varepsilon^{1/3}$ in the support of $1-\chi\Big( \frac{y}{\sigma \varepsilon^{1/3}} \Big) - \chi \Big( \frac{1-y}{\sigma \varepsilon^{1/3}}  \Big)$, we have
$$
\int u_0^i f^2 [1-\chi\Big( \frac{y}{\sigma \varepsilon^{1/3}} \Big) - \chi \Big( \frac{1-y}{\sigma \varepsilon^{1/3}}  \Big)]^2 \lesssim (\sigma \varepsilon^{1/3})^{i-2} \int u_0^2 f^2.
$$
This proves the lemma.
\end{proof}

Very often, we use \eqref{Hardy1} with $f= \Delta \phi$ or $f = \Delta \phi_x$.

\begin{corollary}
For $\sigma > 0$, we have
\begin{equation}\label{est:L_weighted_Hardy}
\begin{aligned}
\varepsilon^{\frac{1}{3}}\| \Delta \phi\|_2 & \lesssim \sqrt\sigma \sqrt\varepsilon \| \sqrt{u_0} \Delta \phi_y\|_2 + \sigma^{-1} \|u_0 \Delta \phi\|_2,\\
\varepsilon^{\frac{1}{3}}\| \Delta \phi_x\|_2 & \lesssim \sqrt\sigma \sqrt\varepsilon \| \sqrt{u_0} \Delta \phi_{xy}\|_2 + \sigma^{-1} \|u_0 \Delta \phi_x\|_2,\\
\varepsilon^{\frac{1}{6}}\| \sqrt{u_0} \Delta \phi\|_2 & \lesssim \sigma \sqrt\varepsilon \| \sqrt{u_0} \Delta \phi_y\|_2 + \sigma^{-1/2} \|u_0 \Delta \phi\|_2,\\
\varepsilon^{\frac{1}{6}}\| \sqrt{u_0} \Delta \phi_x\|_2 & \lesssim \sigma \sqrt\varepsilon \| \sqrt{u_0} \Delta \phi_{xy}\|_2 + \sigma^{-1/2} \|u_0 \Delta \phi_x\|_2.
\end{aligned}
\end{equation}
\end{corollary}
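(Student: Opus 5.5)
This is a direct application of Lemma \ref{lem_hardy1} with $f=\Delta\phi$ (respectively $f=\Delta\phi_x$ for the second and fourth lines), followed by taking square roots. The only work is choosing the parameter in \eqref{Hardy1} so that the powers of $\varepsilon$ match. We also use the elementary fact $\sqrt{a+b}\le\sqrt a+\sqrt b$. The statements for $\Delta\phi_x$ are word-for-word the same, since Lemma \ref{lem_hardy1} applies to any $f\in H^1$ and $\partial_y\Delta\phi_x=\Delta\phi_{xy}$. So it suffices to treat the first and third lines.

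\emph{First line ($i=0$).} Applying \eqref{Hardy1} with $i=0$ and parameter $\sigma$ gives
$$\int(\Delta\phi)^2\lesssim \sigma\varepsilon^{1/3}\int u_0(\Delta\phi_y)^2+\sigma^{-2}\varepsilon^{-2/3}\int u_0^2(\Delta\phi)^2 .$$
Multiplying by $\varepsilon^{2/3}$ yields
$$\varepsilon^{2/3}\|\Delta\phi\|_2^2\lesssim \sigma\varepsilon\|\sqrt{u_0}\Delta\phi_y\|_2^2+\sigma^{-2}\|u_0\Delta\phi\|_2^2 .$$
Taking square roots gives exactly $\varepsilon^{1/3}\|\Delta\phi\|_2\lesssim\sqrt\sigma\sqrt\varepsilon\|\sqrt{u_0}\Delta\phi_y\|_2+\sigma^{-1}\|u_0\Delta\phi\|_2$.

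\emph{Third line ($i=1$).} Applying \eqref{Hardy1} with $i=1$ and parameter $\sigma$ gives
$$\int u_0(\Delta\phi)^2\lesssim\sigma^2\varepsilon^{2/3}\int u_0(\Delta\phi_y)^2+\sigma^{-1}\varepsilon^{-1/3}\int u_0^2(\Delta\phi)^2 .$$
Multiplying by $\varepsilon^{1/3}$ yields
$$\varepsilon^{1/3}\|\sqrt{u_0}\Delta\phi\|_2^2\lesssim\sigma^2\varepsilon\|\sqrt{u_0}\Delta\phi_y\|_2^2+\sigma^{-1}\|u_0\Delta\phi\|_2^2 .$$
Taking square roots produces the coefficients $\sigma$ and $\sigma^{-1/2}$, as claimed.

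There is no real obstacle. The one point to check is that the implicit constant in Lemma \ref{lem_hardy1} is uniform in $\sigma>0$, so that the inequalities hold for every $\sigma$. The proof of the lemma confirms this: it uses only $y\lesssim u_0\lesssim y$ near the walls and $u_0\gtrsim\sigma\varepsilon^{1/3}$ off the cutoff support. If $\sigma\varepsilon^{1/3}\gtrsim1$, the cutoffs cover the whole interval and the same bounds persist; at worst the lemma's constants absorb this case.
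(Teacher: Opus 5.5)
Your argument is the paper's own. The corollary is stated without separate proof, as the direct consequence of Lemma \ref{lem_hardy1} with $f=\Delta\phi$ or $f=\Delta\phi_x$ and $i=0,1$, multiplied by $\varepsilon^{2/3}$ or $\varepsilon^{1/3}$ respectively and then square-rooted. Your exponent bookkeeping is correct.

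Your last paragraph needs a correction. The implicit constant is uniform only for $\sigma\varepsilon^{1/3}\lesssim 1$, not for every $\sigma>0$. When $\sigma\varepsilon^{1/3}\to\infty$, Lemma \ref{lem_hardy1} genuinely fails: take $f\equiv 1$, so the first right-hand term vanishes and the second equals $(\sigma\varepsilon^{1/3})^{i-2}\int u_0^2\to 0$. So that regime is not ``absorbed by the constants.'' This is harmless only because every application in the paper uses bounded $\sigma$.
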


\begin{lemma}\label{lem_hardy2}
For $f \in H^1(\Omega)$ with $f = 0$ on $x = 0$ or $x = L$, $0 < \sigma < 1/2$, we have
\begin{equation}\label{Hardy2}
\int f^2 \lesssim \int u_0^2 f_{y}^2 + \frac{L^2}{\sigma^2}  \int u_0^2 f_x^2.
\end{equation}
\end{lemma}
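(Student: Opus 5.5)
We split the domain in $y$ into a boundary region, where $u_0$ is small, and an interior region, where $u_0 \gtrsim \sigma$. The two terms on the right of \eqref{Hardy2} handle these regions respectively.

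\emph{Interior region.} Take the same smooth cut-off $\chi$ as in the proof of Lemma \ref{lem_hardy1}, now at scale $\sigma$: write $\chi_0(y) = \chi(y/\sigma)$, $\chi_1(y) = \chi((1-y)/\sigma)$, and set $\psi = 1-\chi_0-\chi_1$. On the support of $\psi$ we have $y\in[\sigma/2, 1-\sigma/2]$, so $u_0 \gtrsim \sigma$ because $u_0 \sim y(1-y)$. Assume $f=0$ on $x=0$; the case $x=L$ is symmetric. The one-dimensional Poincaré inequality in $x$ on $(0,L)$ gives, for each fixed $y$,
$$\int_0^L f^2\,dx \le L^2 \int_0^L f_x^2\,dx.$$
Multiplying by $\psi^2$ and integrating in $y$, we get
$$\int f^2\psi^2 \lesssim L^2\int f_x^2\psi^2 \lesssim \frac{L^2}{\sigma^2}\int u_0^2 f_x^2.$$
This is the second term of \eqref{Hardy2}.

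\emph{Boundary region, near $y=0$.} On $\{y\le\sigma\}$ we use the one-dimensional Hardy identity in $y$ from Lemma \ref{lem_hardy1}, but without the extra powers of $u_0$. We integrate $\partial_y(y)\, f^2\chi_0^2$ by parts. There is no boundary term at $y=0$, since the factor $y$ vanishes there; we need $f$ only in $H^1$, not vanishing at $y=0$. This gives
$$\int f^2\chi_0^2 = -2\int y f f_y \chi_0^2 - 2\int \frac{y}{\sigma} f^2\chi_0\chi_0'.$$

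\emph{Closing the estimate.} The first term is bounded by $\tfrac14\int f^2\chi_0^2 + C\int y^2 f_y^2\chi_0^2$. Since $y\lesssim u_0$ there, this is at most $\tfrac14\int f^2\chi_0^2 + C\int u_0^2 f_y^2$. The second term is the main obstacle. It is supported where $\chi_0'\neq0$, that is $y\in[\sigma/2,\sigma]$, with $y/\sigma\le1$. It is therefore bounded by $C\int_{\sigma/2\le y\le\sigma} f^2$, and this piece lies in the region where $u_0\gtrsim\sigma$.

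On that region we apply the same $x$-Poincaré argument as for the interior, using a slightly enlarged interior cut-off (or, equivalently, $1-\chi(2y/\sigma)-\chi(2(1-y)/\sigma)$). This bounds the term by $\frac{L^2}{\sigma^2}\int u_0^2 f_x^2$. Constants from $\chi'$ are absorbed because the cut-off scale is $\sigma<1/2$, which keeps the two boundary layers disjoint. The same argument applies near $y=1$. Absorbing the $\tfrac14$ term into the left-hand side and summing the three regions yields \eqref{Hardy2}.
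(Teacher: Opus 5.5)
Your proof is correct and follows essentially the same route as the paper. Both use the same three-piece cut-off decomposition at scale $\sigma$, the Hardy integration by parts near each wall with the $\chi'$ term landing where $u_0 \gtrsim \sigma$, and the Poincar\'e inequality in $x$. The paper first bounds the non-Hardy pieces by $\sigma^{-2}\int u_0^2 f^2$ and then applies Poincar\'e with the $y$-dependent weight $u_0^2$, which is just your order reversed. The enlarged cut-off you mention is not needed: Poincar\'e in $x$ holds for each fixed $y$, so it applies directly on the strip $\{\sigma/2\le y\le\sigma\}$.
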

\begin{proof}
Similar as the above, we let $\chi(y)$ be a smooth cut-off function supported in $[0,1]$, and $\chi \equiv 1$ in $[0, 1/2]$. We write
$$
\int f^2 \lesssim \int f^2 \chi^2 \Big( \frac{y}{\sigma} \Big) + \int f^2 \chi^2 \Big( \frac{1-y}{\sigma} \Big) + \int f^2 [1-\chi\Big( \frac{y}{\sigma} \Big) - \chi \Big( \frac{1-y}{\sigma} \Big)]^2.
$$
For the first term, we proceed as in the proof of Lemma \ref{lem_hardy1} and obtain
\begin{align*}
\int f^2 \chi^2 \Big( \frac{y}{\sigma} \Big)\lesssim & \int y^2 f_{y}^2 \chi^2 + \int \Big( \frac{y}{\sigma} \Big)^2 f^2 |\chi'|^2\\
\lesssim &  \int u_0^2 f_{y}^2 + \frac{1}{\sigma^2} \int u_0^2 f^2.
\end{align*}
Similarly, the second term can be estimated in the same way, and the last term can be controlled by
$$
\int f^2  [1-\chi\Big( \frac{y}{\sigma} \Big) - \chi \Big( \frac{1-y}{\sigma} \Big)]^2 \lesssim \frac{1}{\sigma^2} \int u_0^2 f^2.
$$
Finally, by Poincare inequality in $x$, we have
$$
\int u_0^2 f^2 \lesssim  L^2 \int u_0^2 f_x^2.
$$
This concludes the proof.
\end{proof}

\begin{lemma}
For $L \ll 1$, we have
\begin{equation}\label{phi_H1_estimate}
\|\phi_x\|_2 + \| \phi_y\|_2  \lesssim \|u_0 \Delta \phi\|_2,
\end{equation}
and
\begin{equation}\label{phi_x_H1_estimate}
\|\phi_{xx}\|_2 + \| \phi_{xy}\|_2  \lesssim \|u_0 \Delta \phi_x\|_2.
\end{equation}
\end{lemma}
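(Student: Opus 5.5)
The plan is to integrate by parts once to write $\|\nabla\phi\|_2^2$ as $-(\phi,\Delta\phi)$ plus boundary terms. The weight $u_0$ is then inserted via Cauchy--Schwarz together with Lemma \ref{lem_hardy2} applied to $f=\phi/u_0$, or more simply by controlling $\|\phi/u_0\|_2$ by $\|\nabla\phi\|_2$.

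\textbf{Boundary terms.} By \eqref{viscous_bc} and \eqref{Navier_bc2} the boundary terms all vanish:
\begin{itemize}
\item on $y=0,H$ we have $\phi=0$;
\item on $x=0$ we have $\phi=0$;
\item on $x=L$ we have $\phi_x=0$, so the term $\phi\phi_x$ vanishes.
\end{itemize}
Hence
\[
\|\nabla\phi\|_2^2=-(\phi,\Delta\phi)=-\Big(\frac{\phi}{u_0},u_0\Delta\phi\Big)\le \Big\|\frac{\phi}{u_0}\Big\|_2\|u_0\Delta\phi\|_2 .
\]

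\textbf{Bounding the quotient.} Since $\phi$ vanishes on $y=0$ and $y=1$, and $u_0\gtrsim \min(y,1-y)$ there, the classical one-dimensional Hardy inequality in $y$ gives $\|\phi/u_0\|_2\lesssim\|\phi_y\|_2$. This uses $u_0'(0)>0$ and $u_0>0$ in the interior. Dividing through yields $\|\nabla\phi\|_2\lesssim\|u_0\Delta\phi\|_2$, which is \eqref{phi_H1_estimate}.

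This argument does not even need $L\ll1$. Alternatively, one could use Lemma \ref{lem_hardy2}, whose $L^2/\sigma^2$ factor is harmless when $L$ is small. I would keep the Hardy-in-$y$ route since it is cleaner.

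\textbf{The $x$-differentiated estimate.} For \eqref{phi_x_H1_estimate}, repeat the argument with $\psi=\phi_x$. The boundary terms in $-(\psi,\Delta\psi)$ vanish as follows:
\begin{itemize}
\item $\psi=\phi_x=0$ on $y=0,1$, because $\phi=0$ there;
\item on $x=L$, $\psi=\phi_x=0$;
\item on $x=0$, $\psi_x=\phi_{xx}=0$.
\end{itemize}
So $\|\nabla\phi_x\|_2^2=-(\phi_x/u_0,u_0\Delta\phi_x)$, and Hardy in $y$ gives $\|\phi_x/u_0\|_2\lesssim\|\phi_{xy}\|_2$, which closes the estimate.

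\textbf{Main obstacle.} The only delicate point is verifying that every boundary term disappears exactly under the mixed conditions \eqref{viscous_bc}, together with justifying Hardy with the weight $u_0$ near both walls. The latter follows from $y\lesssim u_0$ near $y=0$, and symmetrically near $y=1$, combined with $u_0$ being bounded below in the interior.
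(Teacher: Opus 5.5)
Your proof is correct, and it takes a different, simpler route than the paper. The paper works with the quotient $q=\phi/u_0$ and the identity $(q,u_0\Delta\phi)=-\|u_0\nabla q\|_2^2+(q,u_0u_{0yy}q)$. Since $u_0$ is not assumed concave in the short channel, the last term has no sign. The paper absorbs it using the Poincar\'e inequality in $x$ together with Lemma \ref{lem_hardy2}, which is exactly where $L\ll1$ enters, and then gets $\|\nabla\phi\|_2\lesssim\|u_0\nabla q\|_2$ from Lemma \ref{lem_hardy2} again.

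You instead pair $\Delta\phi$ with $\phi$ itself, so no $u_{0yy}$ term ever appears. Your boundary-term bookkeeping under \eqref{viscous_bc} and $\phi|_{y=0,1}=0$ is right for both $\phi$ and $\phi_x$. The only analytic input is the one-dimensional Hardy inequality $\|\phi/u_0\|_{L^2_y}\lesssim\|\phi_y\|_{L^2_y}$, so your estimate holds for every $L$. It even recovers the paper's intermediate bound, via $\|u_0\nabla q\|_2\le\|\nabla\phi\|_2+\|u_{0y}\phi/u_0\|_2\lesssim\|\nabla\phi\|_2$.

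What the paper's route buys is that it serves as the template for Lemma \ref{lem_H1}. There the Rayleigh vorticity $R$ makes the $u_{0yy}$ term cancel exactly, giving $\|u_0\nabla q\|_2\le C_2H\|R\|_2$ with the explicit constant from \eqref{assumption_2}. The spectral condition \eqref{assumption_3} needs that explicit constant, and a Cauchy--Schwarz/Hardy bound in terms of $\|u_0\Delta\phi\|_2$ would not deliver it.

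One small caution: drop the aside about applying Lemma \ref{lem_hardy2} to $f=\phi/u_0$. Writing $u_0q_y=\phi_y-u_{0y}q$ brings back a multiple of $\|q\|_2$ with no small constant, so that variant does not close by itself. Your main Hardy-in-$y$ argument does not need it.
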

\begin{proof}
Recall \eqref{quotient}, the definition of $q$, we have
\begin{align*}
(q, u_0 \Delta \phi) =& (q, u_0(u_0 q_{yy} + 2u_{0y}q_y + u_{0yy}q + u_0 q_{xx}))\\
=& - (u_0^2 q_y, q_y) - 2(u_0 u_{0y} q, q_y) + 2(u_0 u_{0y} q, q_y) + (q, u_0 u_{0yy} q) - (u_0^2 q_x, q_x)\\
=& -(u_0^2 \nabla q, \nabla q)+ (q, u_0 u_{0yy} q).
\end{align*}
Using Poincare inequality in $x$, and Lemma \ref{lem_hardy2} with $f = q$, we can estimate
\begin{align*}
|(q, u_0 u_{0yy} q)| \lesssim& \|q\|_2 \|u_0 q\|_2 \lesssim L \|q\|_2 \|u_0 q_x\|_2\\
\lesssim& L (\|u_0 q_y\|_2 + \frac{L}{\sigma}\|u_0 q_x\|_2) \|u_0 q_x\|_2 \le \delta \|u_0 \nabla q\|_2^2
\end{align*}
for some small $\delta > 0$. Therefore by Lemma \ref{lem_hardy2} with $f = q$ again,
\begin{align*}
(1-\delta) \|u_0 \nabla q\|_2^2 \lesssim |(q, u_0 \Delta \phi)| \lesssim \|q\|_2\|u_0 \Delta \phi\|_2 \lesssim \|u_0 \nabla q\|_2\|u_0 \Delta \phi\|_2,
\end{align*}
which implies
$$
\|u_0 \nabla q\|_2 \lesssim \|u_0 \Delta \phi\|_2.
$$
Therefore, by Lemma \ref{lem_hardy2} with $f = q$ again, we have
\begin{align*}
\|\phi_x\|_2 + \| \phi_y\|_2 =& \|u_0 q_x\|_2 + \|u_0 q_y + u_{0y}q \|_2\\
\lesssim& \|u_0 q_x \|_2 + \|u_0 q_y \|_2 + \| q \|_2\\
\lesssim& \|u_0 \nabla q\|_2 \lesssim \|u_0 \Delta \phi\|_2.
\end{align*}
This proves the estimate \eqref{phi_H1_estimate}. Replacing $\phi$ by $\phi_x$ and $q$ by $q_x$, and repeating the argument above yields the estimate \eqref{phi_x_H1_estimate}.
\end{proof}

Finally, we collect some mixed norm estimates.
\begin{lemma}\label{lem:mix_norm}
\begin{equation}\label{est:mix_norm}
\begin{aligned}
\| \nabla \phi\|_{L^2_x L^\infty_y} &\lesssim \varepsilon^{-\frac{1}{6}} (\| u_0 \Delta \phi\|_2 + \sqrt{\varepsilon} \| \sqrt{u_0} \Delta \phi_y\|_2),\\
\| \nabla \phi_x\|_{L^2_x L^\infty_y} &\lesssim \varepsilon^{-\frac{1}{6}} (\| u_0 \Delta \phi_x\|_2 + \sqrt{\varepsilon} \| \sqrt{u_0} \Delta \phi_{xy}\|_2),\\
\| \sqrt{u_0} \Delta \phi\|_{L^\infty_x L^2_y} &\lesssim \varepsilon^{-\frac{1}{3}} ( \| u_0 \Delta \phi\|_2 + \sqrt{\varepsilon} \| \sqrt{u_0} \nabla \Delta \phi\|_2),\\
\| \sqrt{u_0} \Delta \phi_x\|_{L^\infty_x L^2_y} &\lesssim \varepsilon^{-\frac{1}{3}} ( \| u_0 \Delta \phi_x\|_2 + \sqrt{\varepsilon} \| \sqrt{u_0} \nabla\Delta \phi_{x}\|_2)
\end{aligned}
\end{equation}
\end{lemma}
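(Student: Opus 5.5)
The four estimates come in two pairs, and the second line of each pair follows from the first by replacing $\phi$ with $\phi_x$. The $x$-derivative does not affect any $y$-weight argument. The one place the boundary conditions enter, the Poincar\'e step in $x$, still works because $\phi_x=0$ at $x=L$ by \eqref{viscous_bc}. So I will prove only the first and third lines.

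\textbf{First line.} For each fixed $x$, I use the one-dimensional Gagliardo--Nirenberg bound in $y$: $\|g\|_{L^\infty_y}^2\lesssim \|g\|_{L^2_y}\|g_y\|_{L^2_y}+\|g\|_{L^2_y}^2$. I apply it to $g=\phi_x$ and $g=\phi_y$ and integrate in $x$. This gives
\[
\|\nabla\phi\|_{L^2_xL^\infty_y}^2\lesssim \|\nabla\phi\|_2\|\nabla^2\phi\|_2+\|\nabla\phi\|_2^2 .
\]
By \eqref{phi_H1_estimate}, $\|\nabla\phi\|_2\lesssim\|u_0\Delta\phi\|_2$. For the second-order term I use elliptic regularity in $\Omega$: $\|\nabla^2\phi\|_2\lesssim\|\Delta\phi\|_2+\|\nabla\phi\|_2$. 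Both the Dirichlet condition on $y=0,1$ and the mixed conditions at $x=0,L$ are amenable to even/odd reflection in $x$, so the second-derivative bound holds.

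Next, the first line of \eqref{est:L_weighted_Hardy} with $\sigma=1$ gives
\[
\|\Delta\phi\|_2\lesssim \varepsilon^{-1/3}\bigl(\sqrt\varepsilon\|\sqrt{u_0}\Delta\phi_y\|_2+\|u_0\Delta\phi\|_2\bigr).
\]
Combining, $\|\nabla\phi\|_{L^2_xL^\infty_y}^2\lesssim \varepsilon^{-1/3}(\cdots)^2$, which is the claim after taking square roots.

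\textbf{Third line.} Set $F(x)=\|\sqrt{u_0}\Delta\phi(x,\cdot)\|_{L^2_y}^2$. Since $\Delta\phi$ need not vanish at $x=0$ or $x=L$, I take the Sobolev route rather than a Poincar\'e argument. Pick a point $x_0$ with $F(x_0)\le L^{-1}\int F$. The fundamental theorem of calculus then gives
\[
F(x)\le L^{-1}\|\sqrt{u_0}\Delta\phi\|_2^2+2\|\sqrt{u_0}\Delta\phi\|_2\|\sqrt{u_0}\Delta\phi_x\|_2 .
\]

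I bound the two norms separately. The third line of \eqref{est:L_weighted_Hardy} with $\sigma=1$ gives $\|\sqrt{u_0}\Delta\phi\|_2\lesssim\varepsilon^{-1/6}(\cdots)$, where $(\cdots)$ denotes $\|u_0\Delta\phi\|_2+\sqrt\varepsilon\|\sqrt{u_0}\nabla\Delta\phi\|_2$. Trivially, $\|\sqrt{u_0}\Delta\phi_x\|_2\le\varepsilon^{-1/2}\sqrt\varepsilon\|\sqrt{u_0}\nabla\Delta\phi\|_2$. The product term is therefore $\lesssim \varepsilon^{-1/6-1/2}(\cdots)^2=\varepsilon^{-2/3}(\cdots)^2$. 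The first term is $\lesssim L^{-1}\varepsilon^{-1/3}(\cdots)^2$, which is harmless for fixed $L$ when $\varepsilon\ll L$. Hence $F\lesssim\varepsilon^{-2/3}(\cdots)^2$, and taking square roots gives the $\varepsilon^{-1/3}$ factor.

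\textbf{Main obstacle.} I expect the hardest step to be the elliptic $H^2$ bound in the first line. The corners and the mixed boundary conditions at $x=0,L$ need care. If reflection proves awkward, I would instead integrate by parts twice using $\phi=0$ on $\partial\Omega$ to show $\|\phi_{xx}\|^2+2\|\phi_{xy}\|^2+\|\phi_{yy}\|^2=\|\Delta\phi\|^2$. The boundary terms vanish because $\phi_{xx}=0$ at $x=0$, $\phi_x=0$ at $x=L$, and $\phi_x=0$ on $y=0,1$.
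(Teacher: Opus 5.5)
Your first two lines follow the paper's argument: Sobolev in $y$ at each fixed $x$, then \eqref{phi_H1_estimate} (resp.\ \eqref{phi_x_H1_estimate}) and \eqref{est:L_weighted_Hardy} with $\sigma=1$. Your identity $\|\phi_{xx}\|_2^2+2\|\phi_{xy}\|_2^2+\|\phi_{yy}\|_2^2=\|\Delta\phi\|_2^2$ fills in the step $\|\nabla\phi_y\|_2\lesssim\|\Delta\phi\|_2$, which the paper uses silently. Two small corrections:
\begin{itemize}
\item $\phi$ does not vanish on all of $\partial\Omega$; at $x=L$ only $\phi_x=0$. The boundary terms still drop out.
\item When you pass to $\phi_x$, the same identity needs $\phi_{xx}|_{x=0}=0$. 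So the Poincar\'e step is not the only place the boundary conditions enter.
\end{itemize}

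For the third line you depart from the paper, and the reason is a false premise: $\Delta\phi$ \emph{does} vanish at $x=0$. Indeed $\phi|_{x=0}=0$ gives $\phi_{yy}|_{x=0}=0$, and $\phi_{xx}|_{x=0}=0$ is part of \eqref{viscous_bc}. The paper therefore writes $F(x)=\int_0^x F'$ and gets $F\le 2\|\sqrt{u_0}\Delta\phi\|_2\|\sqrt{u_0}\Delta\phi_x\|_2$ with no averaging term. For the fourth line one integrates from $x=L$ instead, because $\Delta\phi_x|_{x=L}=\phi_{xxx}|_{x=L}+\phi_{xyy}|_{x=L}=0$.

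Your averaging step leaves an extra term $L^{-1}\varepsilon^{-1/3}(\cdots)^2$. This is below $\varepsilon^{-2/3}(\cdots)^2$ only if $L\gtrsim\varepsilon^{1/3}$, i.e.\ $\varepsilon\lesssim L^3$, not merely $\varepsilon\ll L$ as you state. The paper effectively works in that regime: absorbing \eqref{Dis_1-1} into \eqref{Ray_crucial} already needs $\varepsilon^{1/3}\ll L$. So your version suffices for every application in the paper. It does not, however, give the $L$-uniform constant that $\lesssim$ denotes in the paper. Replacing the averaging point by the boundary vanishing removes the issue.
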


\begin{proof}
We only prove the first and the third inequalities, as the second and the fourth ones follow similarly.
\begin{align*}
\| \nabla \phi\|_{L^2_x L^\infty_y}^2 &= \int_0^L \sup_{y} |\nabla \phi|^2 \, dx\\
&\lesssim \int_0^L \Big( \int_0^1 |\nabla \phi|^2 \, dy + \int_0^1 |\nabla \phi||\nabla \phi_y| \, dy \Big) \, dx\\
&\lesssim \| \nabla \phi \|_2^2 + \| \nabla \phi\|_2 \| \Delta \phi\|_2,
\end{align*}
where we used in second line the Sobolev inequality in $y$. Then the first inequality follows from \eqref{est:L_weighted_Hardy} and \eqref{phi_H1_estimate}. For the third inequality,
\begin{align*}
\int_0^1 u_0| \Delta \phi|^2 \, dy &= 2 \int_0^x \int_0^1 u_0 \Delta \phi_x \Delta \phi \, dy dx\\
&\lesssim \|\sqrt{u_0} \Delta \phi\|_2 \| \sqrt{u_0} \Delta \phi_{x}\|_2\\
&\lesssim \varepsilon^{-\frac{2}{3}} ( \|u_0 \Delta \phi\|_2^2 + \varepsilon \| \sqrt{u_0} \nabla\Delta \phi\|_2^2)
\end{align*}
by \eqref{est:L_weighted_Hardy}. Therefore, the third inequality follows.
\end{proof}

\subsection{Linear stability estimates}\label{sec:L_linear} In the subsection, we derive estimates for linear stability for $\varepsilon \ll L \ll 1$. We define the weight
$$W(x) := 2 - x/L$$ so that $$W' = - \frac{1}{L}$$ gives a negative large constant. We multiply the linearized Navier-Stokes equation \eqref{Linearized_NS} by $W u_0 \Delta \phi$, and estimate each term.

\subsubsection{Rayleigh terms}

We estimate the four Rayleigh terms separately. First, we have
\begin{equation}\label{Ray_1}
\begin{aligned}
(u_s \Delta \phi_x , Wu_0 \Delta \phi) =& (u_0 \Delta \phi_x , Wu_0 \Delta \phi) + ((u_{s} - u_0) \Delta \phi_x , Wu_0 \Delta \phi)\\
=& \frac{1}{2} (u_0 \Delta \phi, W u_0 \Delta \phi)_{x=L} - \frac{1}{2}(W' u_0 \Delta \phi, u_0 \Delta \phi)\\
&+\frac{1}{2}( (u_{s} - u_0)\Delta \phi, Wu_0 \Delta \phi)_{x=L} - \frac{1}{2} ((u_{s} - u_0)W' \Delta \phi , u_0 \Delta \phi) \\
&- \frac{1}{2} (u_{sx}W \Delta \phi , u_0 \Delta \phi).
\end{aligned}
\end{equation}
The term 
\begin{equation}\label{Ray_crucial}
(\ref{Ray_1}.2)= \frac{1}{2L} \|u_0 \Delta \phi\|_2^2
\end{equation}
is the crucial large positive term. The terms (\ref{Ray_1}.1), (\ref{Ray_1}.3) and (\ref{Ray_1}.4) are positive since $u_s - u_0 > 0$. Using the fact that $|u_{sx}| \lesssim \varepsilon^{1-}$ from \eqref{ss_estimates} and \eqref{est:L_weighted_Hardy}, we can estimate the last term
\begin{equation}\label{Ray_1-1}
\begin{aligned}
|(\ref{Ray_1}.5)| \lesssim_A  \varepsilon^{1-} \| \sqrt{u_0} \Delta \phi\|_2^2  
\lesssim_A   \varepsilon^{\frac{2}{3}-} ( \varepsilon \| \sqrt{u_0} \Delta \phi_y \|_2^2 +  \|u_0 \Delta \phi\|_2^2).
\end{aligned}
\end{equation}

Second, we write
\begin{equation}\label{Ray_2}
(-\Delta u_s \phi_x, u_0 W \Delta \phi) = (-\Delta u_a \phi_x, u_0 W \Delta \phi) - ((\Delta u_s - \Delta u_a) \phi_x, u_0 W \Delta \phi).
\end{equation}
Using \eqref{ss_estimates} and \eqref{phi_H1_estimate}, we have
\begin{equation}\label{Ray_2_1}
|(\ref{Ray_2}.1)| \lesssim_A \|\phi_x\|_2\| u_0 \Delta \phi\|_2  \lesssim_A \| u_0 \Delta \phi\|_2^2,
\end{equation}
which can be absorbed by \eqref{Ray_crucial}. By \eqref{ss_estimates}, \eqref{est:L_weighted_Hardy} and \eqref{est:mix_norm}, we can estimate
\begin{equation}\label{Ray_2_2}
\begin{aligned}
|(\ref{Ray_2}.2)| &\lesssim  \| \sqrt{u_0} (\Delta u_s - \Delta u_a) \|_{L^\infty_x L^2_y} \| \phi_x\|_{L^2_x L^\infty_y} \| \sqrt{u_0} \Delta \phi\|_2 \\
&\lesssim_A \varepsilon^{\frac{1}{2}-}(\| u_0 \Delta \phi\|_2^2 + \varepsilon \| \sqrt{u_0} \Delta \phi_{y}\|_2^2).
\end{aligned}
\end{equation}

Next, using  $|v_s| \lesssim \varepsilon$ from \eqref{ss_estimates} and \eqref{est:L_weighted_Hardy}, we have
\begin{equation}\label{Ray_3}
\begin{aligned}
|(v_s \Delta \phi_y ,  u_0 W \Delta \phi)| &\lesssim_A \varepsilon \| \sqrt{u_0} \Delta \phi_y \|_2 \| \sqrt{u_0} \Delta \phi\|_2\\
&\lesssim_A \varepsilon^{\frac{1}{3}}(\| u_0 \Delta \phi\|_2^2 + \varepsilon \| \sqrt{u_0} \Delta \phi_{y}\|_2^2).
\end{aligned}
\end{equation}

For the last term, we split $\Delta v_s = \Delta v_a + (\Delta v_s - \Delta v_a)$ and use \eqref{ss_estimates}, \eqref{phi_H1_estimate}, and \eqref{est:mix_norm} to estimate
\begin{equation}\label{Ray_4}
\begin{aligned}
|(\Delta v_s \phi_y ,  u_0 W \Delta \phi)| \lesssim & |(\Delta v_a \phi_y ,  u_0 W \Delta \phi)| + |((\Delta v_s - \Delta v_a) \phi_y ,  u_0 W \Delta \phi)|\\
\lesssim_A & \varepsilon^{\frac{2}{3}} \|\phi_y\|_2 \|u_0 \Delta \phi\|_2 + \| \sqrt{u_0} (\Delta v_s - \Delta v_a) \|_{L^\infty_x L^2_y} \|\phi_y\|_{L^2_x L^\infty_y}  \| u_0 \Delta \phi\|_2\\
\lesssim_A &  \varepsilon^{\frac{2}{3}} \| u_0 \Delta \phi\|_2^2 +  \varepsilon^{\frac{2}{3}-} (\| u_0 \Delta \phi\|_2^2 + \varepsilon \| \sqrt{u_0} \Delta \phi_{y}\|_2^2).
\end{aligned}
\end{equation}

\subsubsection{Dissipation term}
\begin{equation}\label{Dis_1}
\begin{aligned}
-\varepsilon (\Delta^2 \phi, W u_0 \Delta \phi) =& \varepsilon (Wu_0 \Delta \phi_x, \Delta \phi_x) + \varepsilon (Wu_0 \Delta \phi_y, \Delta \phi_y)\\
&+  \varepsilon (W' u_0 \Delta \phi_x, \Delta \phi)  +  \varepsilon (W u_{0y} \Delta \phi_y, \Delta \phi).
\end{aligned}
\end{equation}
The terms (\ref{Dis_1}.1) and (\ref{Dis_1}.2) are favorable positive terms. Using \eqref{est:L_weighted_Hardy}, we can estimate
\begin{equation}\label{Dis_1-1}
\begin{aligned}
|(\ref{Dis_1}.3)| \le & \frac{\varepsilon}{100} \| \sqrt{u_0} \Delta \phi_x\|_2^2 + \frac{O(\varepsilon)}{L^2} \|\Delta \phi \|_2^2\\
 \le & \frac{\varepsilon}{100} \| \sqrt{u_0} \Delta \phi_x\|_2^2 +\frac{\varepsilon}{100} \| \sqrt{u_0} \Delta \phi_y\|_2^2 + \frac{O(\varepsilon^{1/3})}{L^2} \|u_0 \Delta \phi\|_2^2.
\end{aligned}
\end{equation}
The last term (\ref{Dis_1}.4) is a crucial term, we integrate by parts in $y$ and use the $\varepsilon$-Navier boundary condition \eqref{Navier_bc2}:
\begin{equation}\label{Dis_1-2}
\begin{aligned}
(\ref{Dis_1}.4) =& \frac{\varepsilon}{2} (W u_{0y} \Delta \phi , \Delta \phi)_{y=H} - \frac{\varepsilon}{2} (W u_{0y} \Delta \phi , \Delta \phi)_{y=0} - \frac{\varepsilon}{2} (W u_{0yy} \Delta \phi, \Delta \phi) \\
=& \frac{\varepsilon^{1/3}}{2A^2} (W u_{0y} \phi_y ,\phi_y)_{y=H} - \frac{\varepsilon^{1/3}}{2A^2} (W u_{0y} \phi_y ,\phi_y)_{y=0} - \frac{\varepsilon}{2} (W u_{0yy} \Delta \phi, \Delta \phi)\\
=& \frac{\varepsilon^{1/3}}{A^2} (W u_{0y} \phi_y ,\phi_{yy}) + \frac{\varepsilon^{1/3}}{2A^2} (W u_{0yy} \phi_y ,\phi_y) - \frac{\varepsilon}{2} (W u_{0yy} \Delta \phi, \Delta \phi).
\end{aligned}
\end{equation}
Therefore, by \eqref{est:L_weighted_Hardy} and \eqref{phi_H1_estimate}, we have
\begin{equation}\label{Dis_1-3}
\begin{aligned}
|(\ref{Dis_1-2}.1)| \lesssim & \frac{1}{A^4} \| \phi_y\|_2^2 + \varepsilon^{2/3} \|\Delta \phi\|_2^2\\
\le& \frac{\varepsilon}{50} \|\sqrt{u_0} \Delta \phi_y\|_2^2 +  \frac{O(1)}{A^4} \|u_0 \Delta \phi\|_2^2.
\end{aligned}
\end{equation}
And the last two terms in \eqref{Dis_1-2} can be estimated similarly:
\begin{equation}\label{Dis_1-4}
\begin{aligned}
|(\ref{Dis_1-2}.2)| + |(\ref{Dis_1-2}.3)| \lesssim & \frac{\varepsilon^{1/3}}{A^2}\|\phi_y\|_2^2 + \varepsilon \| \Delta \phi \|_2^2\\
\lesssim_A &  \varepsilon^{4/3} \|\sqrt{u_0} \Delta \phi_y\|_2^2 + \varepsilon^{1/3} \|u_0 \Delta \phi\|_2^2.
\end{aligned}
\end{equation}

\subsubsection{Proof of Theorem \ref{Thm_L_linear}} The temporal term can be straightforwardly written as
$$
(\Delta \phi_t, W u_0 \Delta \phi) = \frac{1}{2} \frac{d}{dt} \| \sqrt{W} \sqrt{u_0} \Delta \phi \|_2^2.
$$
Now we collect all the terms from \eqref{Ray_1} - \eqref{Dis_1-4}. By choosing  $L > 0$ small enough, we can see that the favorable positive terms \eqref{Ray_crucial}, (\ref{Dis_1}.1), and (\ref{Dis_1}.2) dominate all the non-favorable terms from \eqref{Ray_1} - \eqref{Dis_1-4} when $\varepsilon \ll L \ll 1$. Therefore, we end up with
\begin{equation}\label{est:L_linear_final}
\frac{d}{dt} \|\sqrt{W} \sqrt{u_0} \Delta \phi \|_2^2 + \frac{1}{L} \|u_0 \Delta \phi\|_2^2 + \varepsilon \| \sqrt{u_0} \nabla \Delta \phi \|_2^2 \le 0.
\end{equation}
This concludes the proof of \eqref{est:L_linear}. Furthermore, by \eqref{est:L_weighted_Hardy}, we know that
$$
\varepsilon^{1/3} \|\sqrt{u_0} \Delta \phi\|_2^2 \lesssim \|u_0 \Delta \phi\|_2^2 + \varepsilon \| \sqrt{u_0} \nabla \Delta \phi \|_2^2.
$$
Therefore, \eqref{est:L_linear_final} and \eqref{phi_H1_estimate} imply the exponential decay estimate \eqref{est:L_linear_enhance}. This concludes the proof of Theorem \ref{Thm_L_linear}.\\

\subsection{Nonlinear stability estimates}\label{sec:L_nonlinear} In this subsection, we prove Theorem \ref{Thm_L_nonlinear}. We define the space $\cX$ to be the closure of smooth function satisfying the boundary conditions \eqref{viscous_bc} and \eqref{Navier_bc2} under the norm
$$
\| \phi \|_{\cX}^2:= \sup_{t \in (0,\infty)} \|\sqrt u_0 \Delta \phi_x\|_{L^2}^2 + \frac{1}{L} \int_0^\infty \| u_0 \Delta \phi_x \|_{L^2}^2 \, dt + \varepsilon \int_0^\infty  \| \sqrt{u_0} \nabla \Delta \phi_x \|_{L^2}^2 \, dt.
$$
We define the solution map $\cS: \psi \mapsto \phi$, where $\phi$ is the solution to
\begin{equation}\label{Nonlinear_new}
\left\{\begin{aligned}
\Delta \phi_t + u_s \Delta \phi_x - \Delta u_s \phi_x + v_s \Delta \phi_y - \Delta v_s \phi_y - \varepsilon \Delta^2 \phi &= -\psi_y \Delta \psi_x + \psi_x \Delta \psi_y, \quad \mbox{in}~\Omega \times (0,\infty),\\
\phi|_{t=0} &= \phi_0,
\end{aligned}\right.
\end{equation}
and satisfies the boundary conditions \eqref{viscous_bc} and \eqref{Navier_bc2}. Let 
$$
\cB := \{ \phi \in \cX : \| \phi \|_{\cX} \le 2 c_0 \varepsilon^{\frac{2}{3}} \},
$$
where $c_0$ is the small constant in Theorem \ref{Thm_L_nonlinear}. Our goal is to show that $\cS$ is a contraction mapping in $\cB$ when $\varepsilon \ll L \ll 1$.

We multiply the equation \eqref{Nonlinear_new} by $-(u_0 \Delta\phi_x W)_x$ and estimate term by term. The linear part is similar to what have been done in the previous section.

\subsubsection{Temporal term}
\begin{equation}\label{N_Tem}
-(\Delta \phi_t, (u_0 \Delta\phi_x W)_x) = (\Delta \phi_{xt}, u_0 \Delta\phi_x W) = \frac{1}{2} \frac{d}{dt} \|\sqrt{u_0} \Delta \phi_x \sqrt{W} \|_2^2.
\end{equation}

\subsubsection{Rayleigh terms}
Again, we estimate the four Rayleigh terms separately. First, we have
\begin{equation}\label{N_Ray_1}
\begin{aligned}
-(u_s \Delta \phi_x , (u_0 \Delta\phi_x W)_x) =& -(u_0 \Delta \phi_x , (u_0 \Delta\phi_x W)_x) - ((u_{s} - u_0)\Delta \phi_x, (u_0 \Delta\phi_x W)_x)\\
=& - \frac{1}{2}( u_0 \Delta \phi_x, u_0 \Delta \phi_x W') + \frac{1}{2} (u_0 \Delta \phi_x,  u_0 \Delta \phi_x W)_{x=0}\\
&- \frac{1}{2} ((u_{s} - u_0) \Delta \phi_x , u_0 \Delta \phi_x W') +\frac{1}{2}( (u_{s} - u_0)\Delta \phi_x, Wu_0 \Delta \phi_x)_{x=0}  \\
&+ \frac{1}{2} (u_{sx}W \Delta \phi_x , u_0 \Delta \phi_x).
\end{aligned}
\end{equation}
The term 
\begin{equation}\label{N_Ray_crucial}
(\ref{N_Ray_1}.1)= \frac{1}{2L} \|u_0 \Delta \phi_x\|_2^2
\end{equation}
is the crucial large positive term. The terms (\ref{N_Ray_1}.2), (\ref{N_Ray_1}.3) and (\ref{N_Ray_1}.4) are positive since $u_s - u_0 > 0$. As \eqref{Ray_1-1}, the last term can be estimated by

\begin{equation}\label{N_Ray_1-1}
\begin{aligned}
|(\ref{N_Ray_1}.5)| \lesssim_A  \varepsilon^{1-} \| \sqrt{u_0} \Delta \phi_x\|_2^2  
\lesssim_A   \varepsilon^{\frac{2}{3}-} ( \varepsilon \| \sqrt{u_0} \Delta \phi_{xy} \|_2^2 +  \|u_0 \Delta \phi_x\|_2^2).
\end{aligned}
\end{equation}

Second, we write
\begin{equation}\label{N_Ray_2}
\begin{aligned}
(\Delta u_s \phi_x, (u_0 \Delta\phi_x W)_x) =& (\Delta u_a \phi_x,(u_0 \Delta\phi_x W)_x) + ((\Delta u_s - \Delta u_a) \phi_x, (u_0 \Delta\phi_x W)_x)\\
=& -(\Delta u_{ax} \phi_x, u_0 \Delta\phi_x W) - (\Delta u_a \phi_{xx},u_0 \Delta\phi_x W)\\
&- (\Delta u_a \phi_{x},u_0 \Delta\phi_x W)_{x = 0} + ((\Delta u_s - \Delta u_a) \phi_x, u_0 \Delta\phi_{xx} W)\\
&+ ((\Delta u_s - \Delta u_a) \phi_x, u_0 \Delta\phi_x W').
\end{aligned}
\end{equation}
By \eqref{ss_estimates}, Poincare inequality in $x$, and \eqref{phi_x_H1_estimate}, we can estimate
\begin{equation}\label{N_Ray_2_1}
\begin{aligned}
|(\ref{N_Ray_2}.1)| + |(\ref{N_Ray_2}.2)| \lesssim_A \| \phi_{xx}\|_2 \| u_0 \Delta\phi_x \|_2 \lesssim_A \| u_0 \Delta\phi_x \|_2^2,
\end{aligned}
\end{equation}
and
\begin{equation}\label{N_Ray_2_2}
\begin{aligned}
|(\ref{N_Ray_2}.3)| &\le \frac{1}{100} \| u_0 \Delta\phi_x \|_{x=0}^2 + O(1) \| \phi_x\|_{x=0}^2\\
&\le \frac{1}{100} \| u_0 \Delta\phi_x \|_{x=0}^2 + O(1)\|\phi_{xx}\|_2^2\\
&\le \frac{1}{100} \| u_0 \Delta\phi_x \|_{x=0}^2 + O(1) \| u_0 \Delta\phi_x \|_2^2,
\end{aligned}
\end{equation}
where the first term can be absorbed by (\ref{N_Ray_1}.2). Then we use Sobolev inequality in $y$, \eqref{ss_estimates}, and \eqref{phi_x_H1_estimate} to estimate
\begin{equation}\label{N_Ray_2_3}
\begin{aligned}
|(\ref{N_Ray_2}.4)| + |(\ref{N_Ray_2}.5)| &\lesssim \frac{1}{L}  \| \sqrt{u_0} (\Delta u_s - \Delta u_a) \|_{L^\infty_x L^2_y}  \|\phi_x\|_{L^2_x L^\infty_y} \| \sqrt{u_0} \Delta\phi_{xx} \|_2\\
& \lesssim_A \frac{\varepsilon^{\frac{5}{6}-}}{L}\| \phi_{xy}\|_2 \| \sqrt{u_0} \Delta\phi_{xx} \|_2\\
& \lesssim_A \frac{\varepsilon^{\frac{1}{3}-}}{L} (\| u_0 \Delta\phi_x \|_2^2 + \varepsilon  \| \sqrt{u_0} \Delta\phi_{xx} \|_2^2).
\end{aligned}
\end{equation}

Next, we use \eqref{ss_estimates}, Poincare inequality in $x$ and \eqref{est:L_weighted_Hardy} to estimate
\begin{equation}\label{N_Ray_3}
\begin{aligned}
-(v_s \Delta \phi_y, (u_0 \Delta\phi_x W)_x) =&~ (v_{sx}\Delta \phi_y, u_0 \Delta\phi_x W) + (v_{s}\Delta \phi_{xy}, u_0 \Delta\phi_x W)\\
\lesssim_A&~ \varepsilon^{1-}\| \sqrt{u_0} \Delta \phi_y\|_2 \| \sqrt{u_0} \Delta \phi_x\|_2 + \varepsilon \| \sqrt{u_0} \Delta \phi_{xy}\|_2 \| \sqrt{u_0} \Delta \phi_x\|_2\\
\lesssim_A&~ \varepsilon^{\frac{1}{3}-} (\sqrt{\varepsilon}  \| \sqrt{u_0} \Delta \phi_{xy}\|_2) (\|u_0 \Delta \phi_x\|_2  +  \sqrt{\varepsilon} \| \sqrt{u_0} \Delta \phi_{xy} \|_2)\\
&+ \varepsilon^{\frac{1}{3}} (\sqrt{\varepsilon}  \| \sqrt{u_0} \Delta \phi_{xy}\|_2) (\|u_0 \Delta \phi_x\|_2  +  \sqrt{\varepsilon} \| \sqrt{u_0} \Delta \phi_{xy} \|_2)\\
\lesssim_A&~ \varepsilon^{\frac{1}{3}-}(\|u_0 \Delta \phi_x\|_2^2  +  \varepsilon \| \sqrt{u_0} \Delta \phi_{xy} \|_2^2).
\end{aligned}
\end{equation}

Finally, we split $\Delta v_s$ and obtain 
\begin{equation}\label{N_Ray_4}
\begin{aligned}
(\Delta v_s  \phi_y, (u_0 \Delta\phi_x W)_x) = & (\Delta v_a  \phi_y, (u_0 \Delta\phi_x W)_x) + (\Delta (v_s-v_a)  \phi_y, (u_0 \Delta\phi_x W)_x)\\
=& (\Delta v_a  \phi_y, u_0 \Delta\phi_{xx} W) + (\Delta v_a  \phi_y, u_0 \Delta\phi_x W')\\
& - (\Delta (v_{sx}-v_{ax})  \phi_y, u_0 \Delta\phi_x W) - (\Delta (v_s-v_a)  \phi_{xy}, u_0 \Delta\phi_x W).
\end{aligned}
\end{equation}
Using \eqref{ss_estimates}, Poincare inequality in $x$ and \eqref{phi_x_H1_estimate}, we can directly estimate
\begin{equation}\label{N_Ray_4_1}
\begin{aligned}
|(\ref{N_Ray_4}.1)|+ |(\ref{N_Ray_4}.2)| \lesssim_A &~ \varepsilon^{\frac{2}{3}} \| \phi_{xy}\|_2 \|\sqrt{u_0} \Delta\phi_{xx}\|_2 + \frac{\varepsilon^{\frac{2}{3}}}{L} \| \phi_{xy}\|_2  \|u_0 \Delta\phi_{x}\|_2\\
\lesssim_A &~ \varepsilon^{\frac{1}{6}} \|u_0 \Delta\phi_{x}\|_2 ( \sqrt{\varepsilon} \|\sqrt{u_0} \Delta\phi_{xx}\|_2) + \frac{\varepsilon^{\frac{2}{3}}}{L} \|u_0 \Delta\phi_{x}\|_2^2\\
\lesssim_A &~ \varepsilon^{\frac{1}{6}}(\|u_0 \Delta \phi_x\|_2^2  +  \varepsilon \| \sqrt{u_0} \Delta \phi_{xy} \|_2^2).
\end{aligned}
\end{equation}
For the last two terms in \eqref{N_Ray_4}, we use Sobolev inequality in $x$, \eqref{ss_estimates}, \eqref{est:mix_norm}, and \eqref{est:L_weighted_Hardy} to estimate
\begin{equation}\label{N_Ray_4_2}
\begin{aligned}
|(\ref{N_Ray_4}.3)|+ |(\ref{N_Ray_4}.4)| \lesssim &~ \| \sqrt{u_0} \Delta (v_{sx}-v_{ax})\|_2 \|\phi_y\|_{L^\infty} \| \sqrt{u_0} \Delta \phi_x\|_2\\
+&~  \| \sqrt{u_0} \Delta (v_{s}-v_{a})\|_{L^\infty_x L^2_y} \| \phi_{xy}\|_{L^2_x L^\infty_y}  \| \sqrt{u_0} \Delta \phi_x\|_2\\
\lesssim&_A ~ \varepsilon^{\frac{5}{6}-}\| \phi_{xy}\|_{L^2_x L^\infty_y}  \| \sqrt{u_0} \Delta \phi_x\|_2\\
\lesssim&_A ~ \varepsilon^{\frac{1}{2}-}(\|u_0 \Delta \phi_x\|_2^2  +  \varepsilon \| \sqrt{u_0} \Delta \phi_{xy} \|_2^2).
\end{aligned}
\end{equation}

\subsubsection{Dissipation term}
\begin{equation}\label{N_Dis_1}
\begin{aligned}
\varepsilon (\Delta^2 \phi, (u_0 \Delta\phi_x W)_x) =& \varepsilon (\Delta \phi_{xx}, (u_0 \Delta\phi_x W)_x) + \varepsilon (\Delta \phi_{yy}, (u_0 \Delta\phi_x W)_x)\\
=&  \varepsilon (\Delta \phi_{xx}, u_0 \Delta\phi_{xx} W)  +  \varepsilon (\Delta \phi_{xx}, u_0 \Delta\phi_{x} W') - \varepsilon (\Delta \phi_{xyy}, u_0 \Delta\phi_x W)\\
=&  \varepsilon (\Delta \phi_{xx}, u_0 \Delta\phi_{xx} W)  +  \varepsilon (\Delta \phi_{xx}, u_0 \Delta\phi_{x} W')\\
+&  \varepsilon (\Delta \phi_{xy}, u_0 \Delta\phi_{xy} W) +  \varepsilon (\Delta \phi_{xy}, u_{0y} \Delta\phi_x W).
\end{aligned}
\end{equation}
The term (\ref{N_Dis_1}.1) and (\ref{N_Dis_1}.3) are favorable positive terms. The second term can be estimated by \eqref{est:L_weighted_Hardy},
\begin{equation}\label{N_Dis_1_1}
\begin{aligned}
|(\ref{N_Dis_1}.2)| \lesssim & \frac{\varepsilon}{L} \| \sqrt{u_0}\Delta \phi_{xx} \|_2 \| \sqrt{u_0}\Delta \phi_{x} \|_2\\
\lesssim & \frac{\varepsilon^{\frac{1}{3}}}{L} (\|u_0 \Delta \phi_x\|_2^2  +  \varepsilon \| \sqrt{u_0} \nabla \Delta \phi_{x} \|_2^2).
\end{aligned}
\end{equation}
The last term (\ref{N_Dis_1}.4) is a crucial term, we use the $\varepsilon$-Navier boundary condition \eqref{Navier_bc2}:
\begin{equation}\label{N_Dis_1_2}
\begin{aligned}
(\ref{N_Dis_1}.4) =& \frac{\varepsilon}{2} (W u_{0y} \Delta \phi_x , \Delta \phi_x)_{y=H} - \frac{\varepsilon}{2} (W u_{0y} \Delta \phi_x , \Delta \phi_x)_{y=0} - \frac{\varepsilon}{2} (W u_{0yy} \Delta \phi_x, \Delta \phi_x) \\
=& \frac{\varepsilon^{1/3}}{2A^2} (W u_{0y} \phi_{xy} ,\phi_{xy})_{y=H} - \frac{\varepsilon^{1/3}}{2A^2} (W u_{0y} \phi_{xy} ,\phi_{xy})_{y=0} - \frac{\varepsilon}{2} (W u_{0yy} \Delta \phi_x, \Delta \phi_x)\\
=& \frac{\varepsilon^{1/3}}{A^2} (W u_{0y} \phi_{xy} ,\phi_{xyy}) + \frac{\varepsilon^{1/3}}{2A^2} (W u_{0yy} \phi_{xy} ,\phi_{xy}) - \frac{\varepsilon}{2} (W u_{0yy} \Delta \phi_x, \Delta \phi_x).
\end{aligned}
\end{equation}
Therefore, by \eqref{est:L_weighted_Hardy} and \eqref{phi_x_H1_estimate}, we have
\begin{equation}\label{N_Dis_1_3}
\begin{aligned}
|(\ref{N_Dis_1_2}.1)| \lesssim &~ \varepsilon^{\frac{2}{3}} \|\Delta \phi_x\|_2^2 + \frac{1}{A^4} \| \phi_{xy}\|_2^2\\
\le&~ \frac{\varepsilon}{100} \|\sqrt{u_0} \Delta \phi_y\|_2^2 +  \frac{O(1)}{A^4} \|u_0 \Delta \phi\|_2^2.
\end{aligned}
\end{equation}
And the last two terms in \eqref{N_Dis_1_2} can be estimated similarly:
\begin{equation}\label{N_Dis_1_4}
\begin{aligned}
|(\ref{N_Dis_1_2}.2)| + |(\ref{N_Dis_1_2}.3)| \lesssim &~ \frac{\varepsilon^{\frac{1}{3}}}{A^2}\|\phi_{xy}\|_2^2 + \varepsilon \| \Delta \phi_x \|_2^2\\
\lesssim&_A ~  \varepsilon^{\frac{1}{3}} (\|u_0 \Delta \phi_x\|_2^2  +  \varepsilon \| \sqrt{u_0} \Delta \phi_{xy} \|_2^2).
\end{aligned}
\end{equation}

\subsubsection{Nonlinear terms}
First, we have
\begin{equation}\label{N_nonlinear_1}
(\psi_y \Delta \psi_x, (u_0 \Delta\phi_x W)_x) = - (\psi_{xy} \Delta \psi_x, u_0 \Delta\phi_x W) - (\psi_{y} \Delta \psi_{xx}, u_0 \Delta\phi_x W).
\end{equation}
By Sobolev inequality in $x$ and \eqref{est:mix_norm}, we can estimate
\begin{equation}\label{N_nonlinear_1_1}
\begin{aligned}
|(\ref{N_nonlinear_1}.1)| &\lesssim \| \psi_{xy}\|_{L^2_x L^\infty_y} \| \sqrt{u_0} \Delta \psi_x \|_{L^\infty_x L^2_y} \| \sqrt{u_0} \Delta \phi_x\|_2\\
&\lesssim \| \psi_{xy}\|_{L^2_x L^\infty_y} \| \sqrt{u_0} \Delta \psi_{xx} \|_{2} \| \sqrt{u_0} \Delta \phi_x\|_2\\
&\lesssim \varepsilon^{-\frac{2}{3}} (\|u_0 \Delta \psi_x\|_2^2  +  \varepsilon \| \sqrt{u_0} \nabla \Delta \psi_{x} \|_2^2) \| \sqrt{u_0} \Delta \phi_x\|_2,
\end{aligned}
\end{equation}
and
\begin{equation}\label{N_nonlinear_1_2}
\begin{aligned}
|(\ref{N_nonlinear_1}.2)| &\lesssim \| \psi_{y}\|_{L^\infty} \| \sqrt{u_0} \Delta \psi_{xx} \|_{2} \| \sqrt{u_0} \Delta \phi_x\|_2\\
&\lesssim \| \psi_{xy}\|_{L^2_x L^\infty_y} \| \sqrt{u_0} \Delta \psi_{xx} \|_{2} \| \sqrt{u_0} \Delta \phi_x\|_2\\
&\lesssim \varepsilon^{-\frac{2}{3}} (\|u_0 \Delta \psi_x\|_2^2  +  \varepsilon \| \sqrt{u_0} \nabla \Delta \psi_{x} \|_2^2) \| \sqrt{u_0} \Delta \phi_x\|_2.
\end{aligned}
\end{equation}

Finally, we can write the second term
\begin{equation}\label{N_nonlinear_2}
-(\psi_x \Delta \psi_y, (u_0 \Delta\phi_x W)_x) =  (\psi_{xx} \Delta \psi_y, u_0 \Delta\phi_x W) + (\psi_{x} \Delta \psi_{xy}, u_0 \Delta\phi_x W).
\end{equation}
Similar as above, we can estimate
\begin{equation}\label{N_nonlinear_2_2}
\begin{aligned}
|(\ref{N_nonlinear_2}.1)| + |(\ref{N_nonlinear_2}.2)| &\lesssim \| \psi_{xx}\|_{L^2_x L^\infty_y} \| \sqrt{u_0} \Delta \psi_{xy} \|_{2} \| \sqrt{u_0} \Delta \phi_x\|_2\\
&\lesssim \varepsilon^{-\frac{2}{3}} (\|u_0 \Delta \psi_x\|_2^2  +  \varepsilon \| \sqrt{u_0} \nabla \Delta \psi_{x} \|_2^2) \| \sqrt{u_0} \Delta \phi_x\|_2.
\end{aligned}
\end{equation}

\subsubsection{Proof of Theorem \ref{Thm_L_nonlinear}}
Now we are ready to prove Theorem \ref{Thm_L_nonlinear}. We collect all the terms from \eqref{N_Tem} - \eqref{N_nonlinear_2_2}. By choosing $L > 0$ small enough, we can see that the favorable positive terms \eqref{N_Ray_crucial}, (\ref{N_Dis_1}.1), and (\ref{N_Dis_1}.3) dominate all the other terms from the linear part when $\varepsilon \ll L \ll 1$. Therefore, we end up with
\begin{equation}\label{est:L_nonlinear_final}
\frac{d}{dt} \|\sqrt{W} \sqrt{u_0} \Delta \phi_x \|_2^2 + \frac{1}{L} \|u_0 \Delta \phi_x\|_2^2 + \varepsilon \| \sqrt{u_0} \nabla \Delta \phi_x \|_2^2 \lesssim \varepsilon^{-\frac{2}{3}} (\|u_0 \Delta \psi_x\|_2^2  +  \varepsilon \| \sqrt{u_0} \nabla \Delta \psi_{x} \|_2^2) \| \sqrt{u_0} \Delta \phi_x\|_2.
\end{equation}
This implies
\begin{align*}
&\sup_{t \in (0,\infty)} \|\sqrt u_0 \Delta \phi_x\|_{2}^2 + \frac{1}{L} \int_0^\infty \| u_0 \Delta \phi_x \|_{2}^2 \, dt + \varepsilon \int_0^\infty  \| \sqrt{u_0} \nabla \Delta \phi_x \|_{2}^2 \, dt\\
\lesssim & \varepsilon^{-\frac{2}{3}} \sup_{t \in (0,\infty)} \|\sqrt u_0 \Delta \phi_x\|_{2} \int_0^\infty \Big( \|u_0 \Delta \psi_x\|_2^2  +  \varepsilon \| \sqrt{u_0} \nabla \Delta \psi_{x} \|_2^2 \Big) \, dt + \|\sqrt u_0 \Delta \phi_{0x}\|_{2}^2.
\end{align*}
Since $\psi \in \cB$, and by the initial data \eqref{L_initial}, we can conclude that $\phi \in \cB$ when $c_0$ is small enough. Hence the solution map $\cS$ maps $\cB$ into $\cB$. 

To show $\cS$ is a contraction mapping, we take $\psi_1, \psi_2 \in \cB$, and denote $\phi_1 = \cS(\psi_1), \phi_2 = \cS(\psi_2)$. We further denote $\widetilde{\phi} = \phi_1 - \phi_2$ and $\widetilde{\psi} = \psi_1 - \psi_2$. Repeating the estimates in this section for the equation for $\widetilde{\phi}$, we will end up with
\begin{align*}
&\sup_{t \in (0,\infty)} \|\sqrt u_0 \Delta \widetilde\phi_x\|_{2}^2 + \frac{1}{L} \int_0^\infty \| u_0 \Delta \widetilde\phi_x \|_{2}^2 \, dt + \varepsilon \int_0^\infty  \| \sqrt{u_0} \nabla \Delta \widetilde\phi_x \|_{2}^2 \, dt\\
\lesssim & \varepsilon^{-\frac{2}{3}} \sup_{t \in (0,\infty)} \|\sqrt u_0 \Delta \widetilde\phi_x\|_{2} \left( \int_0^\infty \Big( \|u_0 \Delta \widetilde\psi_x\|_2^2  +  \varepsilon \| \sqrt{u_0} \nabla \Delta \widetilde\psi_{x} \|_2^2 \Big) \, dt \right)^{\frac{1}{2}} \times\\
& \times \left( \int_0^\infty \sum_{j=1}^2 \Big( \|u_0 \Delta \psi_{jx}\|_2^2  +  \varepsilon \| \sqrt{u_0} \nabla \Delta \psi_{jx} \|_2^2 \Big) \, dt \right)^{\frac{1}{2}}\\
\lesssim & c_0 \sup_{t \in (0,\infty)} \|\sqrt u_0 \Delta \widetilde\phi_x\|_{2} \left( \int_0^\infty \Big( \|u_0 \Delta \widetilde\psi_x\|_2^2  +  \varepsilon \| \sqrt{u_0} \nabla \Delta \widetilde\psi_{x} \|_2^2 \Big) \, dt \right)^{\frac{1}{2}}.
\end{align*}
This implies 
$$
\| \widetilde \phi\|_{\cX} \lesssim c_0 \| \widetilde \psi\|_{\cX},
$$
and hence $\cS$ is a contraction mapping when $c_0$ is small. By the contraction mapping theorem, there exists a unique solution $\phi$ satisfying the estimate \eqref{est:L_nonlinear}. Finally, the exponential decay estimate \eqref{est:L_nonlinear_enhance} follows from \eqref{est:L_nonlinear_final}, \eqref{phi_x_H1_estimate}, and the weighted Hardy inequality
$$
\varepsilon^{1/3} \|\sqrt{u_0} \Delta \phi_x\|_2^2 \lesssim \|u_0 \Delta \phi_x\|_2^2 + \varepsilon \| \sqrt{u_0} \nabla \Delta \phi_x \|_2^2
$$
from \eqref{est:L_weighted_Hardy}. This concludes the proof of Theorem \ref{Thm_L_nonlinear}.


\section{Stability in the long channel}\label{sec:stability_long}

In this section, we prove Theorems \ref{Thm_H_linear} and \ref{Thm_H_nonlinear}, assuming Theorem \ref{Thm_H_Existence}, which will be proved in Section \ref{sec:construction} too.

\subsection{Preliminary estimates} In this subsection, we establish some preliminary estimates. Many of the estimates are similar to what we have obtained in Section \ref{sec:L_prelim}.

\begin{lemma}\label{lem_hardy1_narrow}
For $f \in H^1(\Omega)$, $\sigma > 0$, we have
\begin{equation}\label{Hardy1_narrow}
\int u_0^i f^2 \lesssim (\sigma \varepsilon^{1/3}H)^{i+1}  \int u_0 f_{y}^2 + (\sigma \varepsilon^{1/3})^{i-2}H^{2-i}  \int u_0^2 f^2, \quad i=0,1.
\end{equation}
\end{lemma}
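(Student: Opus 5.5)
The plan is to repeat the cutoff argument of Lemma \ref{lem_hardy1}, now on the strip $(0,H)$ with $u_0(y)=\mu(y/H)$. The only new point is tracking how $H$ enters. From \eqref{assumption_1} we have $\mu(s)\simeq s$ near $s=0$, $\mu(s)\simeq 1-s$ near $s=1$, and $\mu\gtrsim \min(s,1-s)$ on $(0,1)$. Hence
\[
\frac{y}{H}\lesssim u_0(y)\lesssim \frac{y}{H}\quad\mbox{for } y\le H/2,
\]
and symmetrically near $y=H$, with constants depending only on $\mu$.

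Set $\delta:=\sigma\varepsilon^{1/3}H$, which is the natural boundary-layer width in the variable $y$. With $\chi$ as in Lemma \ref{lem_hardy1}, we split
\[
1=\chi(y/\delta)+\chi((H-y)/\delta)+\Big[1-\chi(y/\delta)-\chi((H-y)/\delta)\Big].
\]
For the bottom piece, $u_0\lesssim \delta/H=\sigma\varepsilon^{1/3}$ on the support, so
\[
\int u_0^i f^2\chi^2\lesssim (\sigma\varepsilon^{1/3})^i\int f^2\chi^2 .
\]
We then write $1=\partial_y y$ and integrate by parts in $y$; there is no boundary term at $y=0$ because of the factor $y$. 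Cauchy--Schwarz gives
\[
(\sigma\varepsilon^{1/3})^i\Big[\int y^2 f_y^2\chi^2+\int (y/\delta)^2 f^2|\chi'|^2\Big].
\]

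In the first term we use $y^2\le \delta\, y\lesssim \delta H u_0$, which yields the bound
\[
(\sigma\varepsilon^{1/3})^{i}\,\sigma\varepsilon^{1/3}H^2\int u_0 f_y^2 .
\]
In the second term, $\chi'$ is supported where $y\simeq\delta$, so there $u_0\simeq \sigma\varepsilon^{1/3}$. Hence $f^2\lesssim (\sigma\varepsilon^{1/3})^{-2}u_0^2f^2$, and this term is bounded by $(\sigma\varepsilon^{1/3})^{i-2}\int u_0^2f^2$. The top piece near $y=H$ is handled identically. On the interior piece $u_0\gtrsim\sigma\varepsilon^{1/3}$, so $u_0^i f^2\le (\sigma\varepsilon^{1/3})^{i-2}u_0^2f^2$ directly.

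The main point to check is that the powers of $H$ come out exactly as stated; this is pure bookkeeping. The weight-change inequality $y\lesssim Hu_0$ produces $H^2(\sigma\varepsilon^{1/3})^{i+1}$ from the gradient term. This matches $(\sigma\varepsilon^{1/3}H)^{i+1}$ for $i=1$. For $i=0$ we must reconcile $H^2$ against $H$. If $H\lesssim 1$ the extra factor of $H$ is harmless, but in general we choose the cutoff width differently in the two cases. For $i=0$, using $u_0\simeq y/H$ and taking width $\delta=\sigma\varepsilon^{1/3}H$, the gradient term is $\int y^2f_y^2\lesssim \delta H\int u_0f_y^2$. This forces us to skip the prefactor $(\sigma\varepsilon^{1/3})^i$ and keep $H$-factors from $u_0^i\lesssim (\delta/H)^i$ carefully. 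The net effect is that the zeroth-order term carries $H^{2-i}$ once we use $u_0\simeq\sigma\varepsilon^{1/3}$ on the transition region and do not convert it to a power of $\delta$.

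I would therefore write out both cases $i=0,1$ explicitly and adjust the choice of $\delta$, possibly to a scaled cutoff width, so that both terms land on the stated powers. If an exact match fails for one case, a rescaling $y=Hs$ reduces everything to Lemma \ref{lem_hardy1} on $(0,1)$, and the $H$-factors can then be read off from $dy=H\,ds$ and $\partial_y=H^{-1}\partial_s$.
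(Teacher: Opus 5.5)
Your strategy is the one the paper intends; its proof only says ``similar to Lemma \ref{lem_hardy1}''. Your computation with width $\delta=\sigma\varepsilon^{1/3}H$ is correct and gives
\[
\int u_0^i f^2\lesssim(\sigma\varepsilon^{1/3})^{i+1}H^{2}\int u_0f_y^2+(\sigma\varepsilon^{1/3})^{i-2}\int u_0^2f^2 ,
\]
which is exactly what rescaling $y=Hs$ in Lemma \ref{lem_hardy1} produces. The gap is in the bookkeeping after that.

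\begin{itemize}
\item For $i=1$, only the gradient term matches. The zeroth-order term lacks the factor $H^{2-i}=H$, which is harmless only if $H\gtrsim1$.
\item For $i=0$, your remark that the extra $H$ is harmless when $H\lesssim1$ ignores the zeroth-order term. That term then lacks $H^2$ and is \emph{larger} than the stated one. For $H\gtrsim1$ the gradient term is too large instead.
\item Your explanation that $H^{2-i}$ appears once you use $u_0\simeq\sigma\varepsilon^{1/3}$ on the transition region is wrong. With your width, that region contributes no power of $H$ at all.
\end{itemize}
The closing hedge (``adjust $\delta$, or rescale'') is not a proof.

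The missing idea is to take the unscaled width $\delta=\sigma\varepsilon^{1/3}$ in $y$, assuming $\sigma\varepsilon^{1/3}\le H/4$. This is the analogue of the implicit restriction $\sigma\varepsilon^{1/3}\lesssim1$ in Lemma \ref{lem_hardy1}, and $f\equiv1$ shows some such restriction is necessary. With this width:
\begin{itemize}
\item on the boundary pieces, $u_0\lesssim\sigma\varepsilon^{1/3}/H$ and $y^2\le\delta y\lesssim\sigma\varepsilon^{1/3}H\,u_0$;
\item on $\operatorname{supp}\chi'$ and on the interior piece, $u_0\gtrsim\sigma\varepsilon^{1/3}/H$.
\end{itemize}
The same three estimates then give
\[
\int u_0^if^2\lesssim(\sigma\varepsilon^{1/3})^{i+1}H^{1-i}\int u_0f_y^2+(\sigma\varepsilon^{1/3})^{i-2}H^{2-i}\int u_0^2f^2 .
\]
Equivalently, rescale and apply Lemma \ref{lem_hardy1} with parameter $\sigma\varepsilon^{1/3}/H$. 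For $i=0$ this is exactly \eqref{Hardy1_narrow}.

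For $i=1$, the displayed bound has $H^0$ in the gradient term where \eqref{Hardy1_narrow} has $H^2$. It therefore implies the stated bound only for $H\gtrsim1$, and no choice of width helps. Width $\delta$ gives $\delta^2\int u_0f_y^2+(H/\delta)\int u_0^2f^2$, and matching the statement would require $\sigma\varepsilon^{1/3}\lesssim\delta\lesssim\sigma\varepsilon^{1/3}H$, which is impossible for $H<1$.

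In fact the $i=1$ line as written fails for $H\ll1$ if the constant is independent of $H$. Take $f=\eta(y/\ell)$ with a fixed nonzero $\eta\in C_c^\infty([0,1))$ and $\ell\ll H$, and set $t=\sigma\varepsilon^{1/3}$. The left side is $\simeq\ell^2/H$ and the right side is $\simeq t^2H+t^{-1}\ell^3/H$. Choosing $t=\ell H^{-2/3}$ (say with $\ell=H^2$, so $t\le H/4$ for small $H$) makes the ratio of left to right $\simeq H^{-2/3}$.

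The $H^{1-i}$ version is also the one consistent with \eqref{est:H_weighted_Hardy}, where the coefficient is $\sigma\sqrt\varepsilon$ with no $H$. So prove the displayed inequality, and conclude \eqref{Hardy1_narrow} for $i=0$, and for $i=1$ only under $H\gtrsim1$. Do not try to force an exact match.
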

\begin{proof}
The proof is similar to that of Lemma \ref{lem_hardy1}.
\end{proof}

\begin{lemma}\label{lem_hardy2_narrow}
For $f \in H^1(\Omega)$ with $f = 0$ on $y = H/2$, we have
\begin{equation}\label{Hardy2_narrow}
\int f^2 \lesssim H^2 \int u_0^2 f_{y}^2.
\end{equation}
\end{lemma}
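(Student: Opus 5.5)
We want $\int f^2 \lesssim H^2\int u_0^2 f_y^2$ for $f$ vanishing on $y=H/2$. Since $u_0(y)=\mu(y/H)$, the plan is to fix $x$, rescale to the unit interval, and prove a one-dimensional Hardy inequality there. Then we integrate in $x$.

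\emph{Rescaling.} Setting $y = H s$ and $g(s) = f(x,Hs)$, we have
\[
\int_0^H f^2\,dy = H\int_0^1 g^2\,ds, \qquad \int_0^H u_0^2 f_y^2\,dy = H^{-1}\int_0^1 \mu^2 (g')^2\,ds .
\]
So it suffices to show $\int_0^1 g^2 \lesssim \int_0^1 \mu^2 (g')^2$ for $g\in H^1(0,1)$ with $g(1/2)=0$. The implicit constant will depend only on $\mu$. Multiplying the rescaled inequality by $H$ then produces exactly the factor $H^2$.

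\emph{One-dimensional estimate.} By symmetry it suffices to treat $(0,1/2)$. We use the identity $\partial_s(s-\tfrac12)=1$ and integrate by parts:
\[
\int_0^{1/2} g^2\,ds = \big[(s-\tfrac12)g^2\big]_0^{1/2} - 2\int_0^{1/2}(s-\tfrac12) g g'\,ds .
\]
The boundary term at $s=1/2$ vanishes because $g(1/2)=0$. The boundary term at $s=0$ equals $-\tfrac12 g(0)^2\le 0$, so it has a favorable sign and can be dropped.

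For the remaining integral we use the lower bound on $\mu$. By \eqref{assumption_1}, $\mu>0$ on $(0,1)$, $\mu(0)=0$ and $\mu'(0)>0$, so $\mu(s)\gtrsim s$ on $[0,1/2]$. On $[0,1/2]$ we also have $|s-\tfrac12|\le \tfrac12$. Since $s$ itself degenerates at $0$, we cannot simply bound $|s-\tfrac12|$ by $C\mu(s)$. Instead we split with a cutoff $\chi$, exactly as in Lemma \ref{lem_hardy2}, into a region near $0$ and a region away from $0$.

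\emph{Away from the wall.} On $[\delta,1/2]$ we have $\mu\ge c_\delta>0$. Cauchy--Schwarz gives
\[
2\int (s-\tfrac12)\, g g' \le \tfrac12\int g^2 + C_\delta \int \mu^2 (g')^2 .
\]

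\emph{Near the wall.} On $[0,\delta]$ we run the argument of Lemma \ref{lem_hardy2} with the weight $s$ in place of $s-\tfrac12$:
\[
\int g^2\chi^2 \lesssim \int s^2 (g')^2 + \int g^2 |\chi'|^2 ,
\]
and then use $s^2\lesssim\mu^2$. The leftover cutoff term $\int g^2|\chi'|^2$ is supported where $\mu\gtrsim\delta$.

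\emph{Main obstacle.} The delicate point is closing the estimate for this leftover interior term $\int g^2|\chi'|^2$. Unlike in Lemma \ref{lem_hardy2}, we have no Poincaré inequality in $x$ to control it. The plan is to use the vanishing $g(1/2)=0$ on the interior region, where $\mu\ge c>0$, through the one-dimensional Poincaré inequality
\[
\int_{\delta/2}^{1/2} g^2 \lesssim \int_{\delta/2}^{1/2} (g')^2 \lesssim_\delta \int \mu^2 (g')^2 .
\]
Combining this with the near-wall bound gives the one-dimensional inequality. The interval $(1/2,1)$ is handled by the same argument, or by the symmetry of $\mu$.
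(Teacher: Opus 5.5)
Your rescaling to $g(s)=f(x,Hs)$ is correct. The argument you actually close with is essentially the paper's proof: a Hardy estimate with weight $s$ under a cutoff $\chi$ near the wall (using $s\lesssim\mu(s)$ on $[0,\tfrac12]$), with the cutoff remainder $\int g^2|\chi'|^2$ and the interior controlled through $g(\tfrac12)=0$ and $\mu\gtrsim_\delta 1$. The paper uses the same cutoff at scale $\sigma H$ and writes $f(x,y)=\int_y^{H/2}f_y(x,s)\,ds$ on $(\tfrac{\sigma}{2}H,(1-\tfrac{\sigma}{2})H)$.

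The opening integration by parts, however, is wrong and should be deleted. The boundary term is $\big[(s-\tfrac12)g^2\big]_0^{1/2}=0-(-\tfrac12)g(0)^2=+\tfrac12 g(0)^2$, so it has the unfavorable sign. It also cannot be controlled: truncating $\log(1/s)-\log 2$ at level $s=\eta$ keeps $\int\mu^2(g')^2$ bounded while $g(0)\to\infty$ as $\eta\to0$. The weight $s-\tfrac12$ is backwards: it vanishes where $g$ already vanishes, and not at the wall, where $g$ is uncontrolled.

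As written the pieces also do not fit together. Your near-wall step bounds $\int g^2\chi^2$, not the near-wall part of $-2\int(s-\tfrac12)gg'$, and that part is not controlled by $\int\mu^2(g')^2$. Instead, start from $\int_0^{1/2}g^2\le\int g^2\chi^2+\int_{\delta/2}^{1/2}g^2$ (with $\chi\equiv1$ on $[0,\delta/2]$) and apply your near-wall and interior bounds.

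With the right weight the cutoff is not needed at all. Both boundary terms in $\int_0^{1/2}g^2=[sg^2]_0^{1/2}-2\int_0^{1/2}s\,gg'$ vanish, so $\|g\|_{L^2(0,1/2)}\le 2\|sg'\|_{L^2(0,1/2)}\lesssim\|\mu g'\|_{L^2(0,1/2)}$ in one line.
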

\begin{proof}
Similar as the above, we let $\chi(y)$ be a smooth cut-off function supported in $[0,1]$, and $\chi \equiv 1$ in $[0, 1/2]$. For some small $\sigma > 0$, we write
$$
\int f^2 \lesssim \int f^2 \chi^2 \Big( \frac{y}{\sigma H} \Big) + \int f^2 \chi^2 \Big( \frac{H-y}{\sigma H} \Big) + \int f^2 [1-\chi\Big( \frac{y}{\sigma H} \Big) - \chi \Big( \frac{H-y}{\sigma H} \Big)]^2.
$$
For the first term, we proceed as in the proof of Lemma \ref{lem_hardy1_narrow} and obtain
\begin{align*}
\int f^2 \chi^2 \Big( \frac{y}{\sigma H} \Big)\lesssim & \int y^2 f_{y}^2 \chi^2 + \int \Big( \frac{y}{\sigma H} \Big)^2 f^2 |\chi'|^2\\
\lesssim & H^2 \int u_0^2 f_{y}^2 + \int f^2 |\chi'|^2.
\end{align*}
The second term can be handled similarly. For $y \in (\frac{\sigma}{2} H , (1-\frac{\sigma}{2})H)$, we write
$$
f(x,y) = \int_{y}^{\frac{H}{2}}f_y(x,s)\, ds.
$$
Therefore,
$$
\begin{aligned}
 &\int f^2 |\chi' \Big( \frac{y}{\sigma H} \Big)|^2 +  \int f^2 |\chi' \Big( \frac{H-y}{\sigma H} \Big)|^2 + \int f^2  [1-\chi\Big( \frac{y}{\sigma H} \Big) - \chi \Big( \frac{H-y}{\sigma H} \Big)]^2\\
\lesssim & \int_{\frac{\sigma}{2} H}^{(1-\frac{\sigma}{2})H} \left| \int_{y}^{\frac{H}{2}}f_y(x,s)\, ds \right|^2 \, dy\\
\lesssim & \int_{\frac{\sigma}{2} H}^{(1-\frac{\sigma}{2})H} H  \int_{\frac{\sigma}{2} H}^{(1-\frac{\sigma}{2})H} |f_y(x,s)|^2\, ds \, dy\\
\lesssim & H^2 \int_{\frac{\sigma}{2} H}^{(1-\frac{\sigma}{2})H} u_0^2(s)|f_y(x,s)|^2\, ds,
\end{aligned}
$$
where we used the fact $u_0(s) \gtrsim \sigma$ when $s \in (\frac{\sigma}{2} H , (1-\frac{\sigma}{2})H)$. This concludes the proof.
\end{proof}

\begin{lemma}\label{lem_H1} We have the following estimates
\begin{equation}\label{H1_control_by_R}
\|\phi_x\|_2 + \|\phi_y\|_2 \lesssim \|u_0 \nabla q\|_2 \lesssim H \|R\|_2,
\end{equation}
\begin{equation}\label{Laplace_control_by_R}
\|u_0 \Delta \phi \|_2 \lesssim \|R\|_2,
\end{equation}
where $R$ is given in \eqref{def:R}.
\end{lemma}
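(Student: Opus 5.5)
The plan is to rewrite the Rayleigh vorticity in terms of the quotient $q=\phi/u_0$ from \eqref{quotient}, where it becomes a divergence-form degenerate elliptic operator. We work in the setting of Theorem \ref{Thm_H_linear}, where $\phi$ is odd with respect to $y=H/2$. Since $u_0$ is even, $q$ is then odd, so $q(x,H/2)=0$ and Lemma \ref{lem_hardy2_narrow} applies to $f=q$.

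\textbf{Step 1: divergence form of $R$.} Substitute $\phi=u_0q$ into $\Delta\phi = u_0\Delta q + 2u_{0y}q_y + u_{0yy}q$. The $u_{0yy}\phi$ term in \eqref{def:R} cancels exactly, leaving
\begin{equation*}
R = u_0^2\Delta q + 2u_0u_{0y}q_y = \partial_x(u_0^2 q_x) + \partial_y(u_0^2 q_y) = \nabla\cdot(u_0^2\nabla q).
\end{equation*}

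\textbf{Step 2: the energy identity.} Test this identity against $q$ and integrate by parts. The boundary terms vanish for the following reasons:
\begin{itemize}
\item at $x=0$, $q=0$ because $\phi=0$ there by \eqref{viscous_bc};
\item at $x=L$, $q_x=\phi_x/u_0=0$;
\item on $y=0,H$, the weight $u_0^2$ vanishes, and $q$ stays bounded since $\phi=0$ there by \eqref{Navier_bc2} while $u_{0y}\neq0$.
\end{itemize}
This gives
\begin{equation*}
\|u_0\nabla q\|_2^2 = -(q,R) \le \|q\|_2\|R\|_2 \lesssim H\|u_0 q_y\|_2\|R\|_2,
\end{equation*}
where the last step is Lemma \ref{lem_hardy2_narrow}. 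Hence $\|u_0\nabla q\|_2\lesssim H\|R\|_2$, which is the second inequality in \eqref{H1_control_by_R}. Unlike the short-channel estimate \eqref{phi_H1_estimate}, there is no lower-order term $(q,u_0u_{0yy}q)$ to absorb here, because that term was absorbed into $R$ in Step 1.

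\textbf{Step 3: the first inequality in \eqref{H1_control_by_R}.} We have $\phi_x=u_0q_x$ and $\phi_y=u_0q_y+u_{0y}q$. Since $u_0=\mu(y/H)$, we have $|u_{0y}|\lesssim H^{-1}$. Applying Lemma \ref{lem_hardy2_narrow} again gives $\|u_{0y}q\|_2\lesssim H^{-1}\|q\|_2\lesssim\|u_0q_y\|_2$. Therefore $\|\phi_x\|_2+\|\phi_y\|_2\lesssim\|u_0\nabla q\|_2$.

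\textbf{Step 4: proof of \eqref{Laplace_control_by_R}.} Write $u_0\Delta\phi = R + u_{0yy}u_0q$. Using $|u_{0yy}|\lesssim H^{-2}$, $u_0\lesssim1$, then Lemma \ref{lem_hardy2_narrow} and Step 2, we get
\begin{equation*}
\|u_{0yy}u_0q\|_2\lesssim H^{-2}\|q\|_2\lesssim H^{-1}\|u_0q_y\|_2\lesssim\|R\|_2.
\end{equation*}

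\textbf{Main obstacle.} The only delicate point is justifying the vanishing of the boundary terms at $y=0,H$ and the finiteness of $\|q\|_2$, since $q$ is defined through division by the degenerate weight $u_0$. I would first argue for smooth $\phi$ satisfying \eqref{Navier_bc2}: there $\phi=O(y)$ near $y=0$, so $q$ is smooth up to the wall and $u_0^2qq_y\to0$. The general case would then follow by density. I would also track the powers of $H$ in Steps 3 and 4 to confirm that they cancel as claimed.
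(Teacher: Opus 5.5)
Your proposal is correct and follows essentially the same route as the paper. The steps match: the identity $(q,R)=-\|u_0\nabla q\|_2^2$, which you derive more transparently from the divergence form $R=\nabla\cdot(u_0^2\nabla q)$; Lemma~\ref{lem_hardy2_narrow} applied to $q$; and the splitting $\phi_y=u_0q_y+u_{0y}q$. The remaining differences are minor. For \eqref{Laplace_control_by_R} you bound $\|u_{0yy}\phi\|_2$ by Hardy on $q$, whereas the paper uses Poincar\'e in $y$ followed by \eqref{H1_control_by_R}. You also make explicit two things the paper uses tacitly: the oddness of $\phi$, so that $q(x,H/2)=0$, and the vanishing of the boundary terms.
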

\begin{proof}
Recall \eqref{quotient}, the definition of $q$, we have
\begin{align*}
(q, R) =& (q, u_0(u_0 q_{yy} + 2u_{0y}q_y + u_{0yy}q + u_s q_{xx})) - (q, u_{0yy}\phi)\\
=& - (u_0^2 q_y, q_y) - 2(u_0 u_{0y} q, q_y) + 2(u_0 u_{0y} q, q_y) + (q, u_0 u_{0yy} q) - (u_0^2 q_x, q_x) - (q, u_{0yy}u_0 q)\\
=& -(u_0^2 \nabla q, \nabla q).
\end{align*}
Therefore, by \eqref{Hardy2_narrow} with $f = q$,
\begin{align*}
 \|u_0 \nabla q\|_2^2 = |(q, R)| \le \|q\|_2\|R\|_2 \lesssim H\|u_0 \nabla q\|_2\|R\|_2,
\end{align*}
which implies
$$
\|u_0 \nabla q\|_2 \lesssim  H\|R\|_2.
$$
Therefore, by \eqref{Hardy2_narrow} with $f = q$ again, we have
\begin{align*}
\|\phi_x\|_2 + \| \phi_y\|_2 =& \|u_0 q_x \|_2 + \|u_0 q_y + u_{0y}q \|_2\\
\lesssim& \|u_0 q_x \|_2 + \|u_0 q_y \|_2 + \frac{1}{H} \| q \|_2\\
\lesssim& \|u_0 \nabla q\|_2 \lesssim H \|R\|_2.
\end{align*}
This concludes the proof of \eqref{H1_control_by_R}. To prove \eqref{Laplace_control_by_R}, we use Poincare's inequality in $y$ and \eqref{H1_control_by_R} to obtain
\begin{align*}
\|u_0 \Delta \phi \|_2 \lesssim \|R\|_2 + \| u_{0yy} \phi\|_2 \lesssim \|R\|_2 + \frac{1}{H}\| \phi_y\|_2 \lesssim \|R\|_2.
\end{align*}
\end{proof}

Very often, we use \eqref{Hardy1_narrow} with $f= \Delta \phi$ or $f = \Delta \phi_x$. Together with Lemma \ref{lem_H1}, we have the following.

\begin{corollary}
For $\sigma > 0$, we have
\begin{equation}\label{est:H_weighted_Hardy}
\begin{aligned}
\varepsilon^{\frac{1}{3}}\| \Delta \phi\|_2 & \lesssim \frac{\sqrt\sigma}{\sqrt{H}} \sqrt\varepsilon \| \sqrt{\frac{u_0}{-u_{0yy}}} \Delta \phi_y\|_2 + \sigma^{-1} \|\frac{R}{\sqrt{-u_{0yy}}}\|_2,\\
\varepsilon^{\frac{1}{6}}\| \sqrt{\frac{u_0}{-u_{0yy}}}  \Delta \phi\|_2 & \lesssim \sigma \sqrt\varepsilon \| \sqrt{\frac{u_0}{-u_{0yy}}} \Delta \phi_y\|_2 + \sigma^{-1/2} \sqrt{H}  \|\frac{R}{\sqrt{-u_{0yy}}}\|_2.\\
\end{aligned}
\end{equation}
\end{corollary}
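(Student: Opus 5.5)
The plan is to apply Lemma \ref{lem_hardy1_narrow} with $f=\Delta\phi$ and then convert the weights. Since $-u_{0yy}(y) = -\mu''(y/H)/H^2$ and $-\mu''\ge C_1>0$ is smooth and bounded, we have $-u_{0yy}\simeq H^{-2}$ uniformly. Hence
\[
\Big\| \sqrt{\tfrac{u_0}{-u_{0yy}}}\, g\Big\|_2 \simeq H\|\sqrt{u_0}\,g\|_2,
\qquad
\Big\|\tfrac{R}{\sqrt{-u_{0yy}}}\Big\|_2\simeq H\|R\|_2 .
\]
Every weighted norm in \eqref{est:H_weighted_Hardy} can therefore be traded for an unweighted-in-$u_{0yy}$ norm at the cost of an explicit power of $H$.

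For the first inequality, take $i=0$ in \eqref{Hardy1_narrow}:
\[
\|\Delta\phi\|_2^2\lesssim \sigma\varepsilon^{1/3}H\|\sqrt{u_0}\Delta\phi_y\|_2^2+(\sigma\varepsilon^{1/3})^{-2}H^2\|u_0\Delta\phi\|_2^2 .
\]
Multiplying by $\varepsilon^{2/3}$ gives
\[
\varepsilon^{2/3}\|\Delta\phi\|_2^2 \lesssim \sigma H\,\varepsilon\|\sqrt{u_0}\Delta\phi_y\|_2^2 + \sigma^{-2}H^2\|u_0\Delta\phi\|_2^2 .
\]
Next, \eqref{Laplace_control_by_R} gives $\|u_0\Delta\phi\|_2\lesssim\|R\|_2\simeq H^{-1}\|R/\sqrt{-u_{0yy}}\|_2$, so the second term becomes $\sigma^{-2}\|R/\sqrt{-u_{0yy}}\|_2^2$. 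In the first term, $\|\sqrt{u_0}\Delta\phi_y\|_2\simeq H^{-1}\|\sqrt{u_0/(-u_{0yy})}\Delta\phi_y\|_2$, which yields the factor $\sigma H\cdot H^{-2}=\sigma/H$. Taking square roots gives the first line.

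For the second inequality, take $i=1$ in \eqref{Hardy1_narrow}:
\[
\|\sqrt{u_0}\Delta\phi\|_2^2\lesssim(\sigma\varepsilon^{1/3}H)^2\|\sqrt{u_0}\Delta\phi_y\|_2^2+(\sigma\varepsilon^{1/3})^{-1}H\|u_0\Delta\phi\|_2^2 .
\]
Multiply by $\varepsilon^{1/3}H^2$, which converts the left side into $\varepsilon^{1/3}\|\sqrt{u_0/(-u_{0yy})}\Delta\phi\|_2^2$. The first term becomes
\[
\sigma^2\varepsilon H^4\|\sqrt{u_0}\Delta\phi_y\|_2^2\simeq \sigma^2 H^2\,\varepsilon\Big\|\sqrt{\tfrac{u_0}{-u_{0yy}}}\Delta\phi_y\Big\|_2^2 .
\]
The second term becomes $\sigma^{-1}H^3\|u_0\Delta\phi\|_2^2\lesssim \sigma^{-1}H\|R/\sqrt{-u_{0yy}}\|_2^2$.

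The main obstacle is this $H$-bookkeeping. Done this way, the first term carries an extra factor $H^2$ relative to the stated bound $\sigma\sqrt\varepsilon\|\cdot\|$. I would absorb it by rescaling $\sigma$ in \eqref{Hardy1_narrow}: apply the lemma with $\sigma' = \sigma/H$ in the boundary-layer cutoff, or equivalently track more carefully that the boundary-layer width is $\sigma\varepsilon^{1/3}H$ and $u_0\simeq y/H$ there. I would redo the scaling in the proof of Lemma \ref{lem_hardy1} with $u_0(y)=\mu(y/H)$ to obtain the correct powers of $H$. If any residual $H$-dependence remains, it can be absorbed into the implicit constant, since in this section $H$ is fixed and constants may depend on the base flow. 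The rest is just Lemma \ref{lem_hardy1_narrow} combined with \eqref{Laplace_control_by_R}.
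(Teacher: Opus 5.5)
You follow the paper's route. The paper's proof is only the remark that \eqref{Hardy1_narrow} with $f=\Delta\phi$ is combined with Lemma \ref{lem_H1}. Your derivation of the first line is correct: the $i=0$ case, multiplied by $\varepsilon^{2/3}$, together with \eqref{Laplace_control_by_R} and $-u_{0yy}\simeq H^{-2}$, gives exactly the factors $\sqrt{\sigma/H}$ and $\sigma^{-1}$.

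The second line, however, is not closed by what you propose. The mismatch you noticed is real, but your concrete remedy only moves it. If you put $\sigma/H$ into the $i=1$ case of \eqref{Hardy1_narrow} as printed, the $\Delta\phi_y$ term comes out right. The last term, though, becomes $(\sigma\varepsilon^{1/3}/H)^{-1}H\cdot\varepsilon^{1/3}H^2\|u_0\Delta\phi\|_2^2\lesssim\sigma^{-1}H^2\|R/\sqrt{-u_{0yy}}\|_2^2$, i.e.\ $\sigma^{-1/2}H$ instead of $\sigma^{-1/2}\sqrt H$. More generally, replacing $\sigma$ by $\sigma H^{-a}$ produces the factors $\sigma H^{1-a}$ and $\sigma^{-1/2}H^{(1+a)/2}$, so you would need $a=1$ and $a=0$ at once. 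The two options you call equivalent are therefore not equivalent. Absorbing the leftover power of $H$ into the constant is not available either: by the paper's convention implicit constants are independent of $H$, and the explicit $H$-dependence is exactly what the statement asserts.

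The missing ingredient is the careful rescaling you mention but do not carry out. It shows that the $i=1$ case of \eqref{Hardy1_narrow} as printed carries a spurious factor $H$ in its last term, so the paper's one-line argument, read literally, has the same issue for $H>1$. Write $y=H\tilde y$ and apply Lemma \ref{lem_hardy1} to $\mu$. Under this change of variables, $\int_0^H u_0^if^2\,dy$ and $\int_0^H u_0^2f^2\,dy$ pick up a factor $H$, while $\int_0^H u_0f_y^2\,dy$ picks up $H^{-1}$. One gets
\[
\int u_0^i f^2\lesssim(\sigma\varepsilon^{1/3})^{i+1}H^2\int u_0 f_y^2+(\sigma\varepsilon^{1/3})^{i-2}\int u_0^2f^2,\qquad i=0,1.
\]
With $\sigma/H$ in place of $\sigma$, the case $i=0$ is exactly the printed inequality, which is why your first line works. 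The case $i=1$ with $f=\Delta\phi$, multiplied by $\varepsilon^{1/3}H^2$, gives
\[
\varepsilon^{1/3}\Big\|\sqrt{\tfrac{u_0}{-u_{0yy}}}\,\Delta\phi\Big\|_2^2\lesssim\sigma^2\varepsilon H^2\|\sqrt{u_0}\,\Delta\phi_y\|_2^2+\sigma^{-1}H^3\|u_0\Delta\phi\|_2^2\lesssim\sigma^2\varepsilon\Big\|\sqrt{\tfrac{u_0}{-u_{0yy}}}\,\Delta\phi_y\Big\|_2^2+\sigma^{-1}H\Big\|\tfrac{R}{\sqrt{-u_{0yy}}}\Big\|_2^2,
\]
using \eqref{Laplace_control_by_R} and $-u_{0yy}\simeq H^{-2}$. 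Taking square roots gives the second line. With this step supplied, your argument is complete.
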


By the Lemmas above, we have the following mixed norm estimates.

\begin{lemma}\label{lem:mix_norm_H}
\begin{equation}\label{est:mix_norm_H}
\begin{aligned}
\| \nabla \phi\|_{L^2_x L^\infty_y} &\lesssim_H \varepsilon^{-\frac{1}{6}} \Big( \|\frac{R}{\sqrt{-u_{0yy}}}\|_2 + \sqrt{\varepsilon} \| \sqrt{\frac{u_0}{-u_{0yy}}} \Delta \phi_y\|_2\Big),\\
\| \nabla \phi\|_{L^\infty_x L^2_y} &\lesssim_H \varepsilon^{-\frac{1}{6}}  \Big( \|\frac{R}{\sqrt{-u_{0yy}}}\|_2 + \sqrt{\varepsilon} \| \sqrt{\frac{u_0}{-u_{0yy}}} \Delta \phi_{y}\|_2\Big).
\end{aligned}
\end{equation}
\end{lemma}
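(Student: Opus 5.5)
Both estimates come from a one-dimensional Sobolev/interpolation step, combined with the $H^1$ and weighted-Hardy bounds already established. I would mirror the proof of Lemma \ref{lem:mix_norm}, replacing \eqref{phi_H1_estimate} and \eqref{est:L_weighted_Hardy} by \eqref{H1_control_by_R}, \eqref{Laplace_control_by_R} and \eqref{est:H_weighted_Hardy}. Since $-u_{0yy}\ge C_1/H^2>0$ is bounded above and below (with $H$-dependent constants, absorbed in $\lesssim_H$), the weight $1/\sqrt{-u_{0yy}}$ can be inserted or removed freely.

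\textbf{First estimate.} For each fixed $x$, the one-dimensional Sobolev inequality on $(0,H)$ gives $\sup_y|\nabla\phi|^2\lesssim_H \int|\nabla\phi|^2\,dy+\int|\nabla\phi||\nabla\phi_y|\,dy$. Integrating in $x$ then yields $\|\nabla\phi\|_{L^2_xL^\infty_y}^2\lesssim_H \|\nabla\phi\|_2^2+\|\nabla\phi\|_2\|\nabla\phi_y\|_2$.

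To control $\|\nabla\phi_y\|_2$ by $\|\Delta\phi\|_2$, I would use an elliptic $H^2$ bound. Integrating by parts, $\|\nabla^2\phi\|_2^2=\|\Delta\phi\|_2^2$ up to boundary terms. These boundary terms vanish on $x=0,L$ by \eqref{viscous_bc}: there $\phi=\phi_{xx}=0$, respectively $\phi_x=0$. On $y=0,H$ we have $\phi=0$, so $\phi_{xx}=0$ there. The remaining boundary contribution should again vanish or be harmless; this is the same step implicitly used in Lemma \ref{lem:mix_norm}.

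It then suffices to combine the pieces. By \eqref{H1_control_by_R}, $\|\nabla\phi\|_2\lesssim_H\|R\|_2$. By the first line of \eqref{est:H_weighted_Hardy} with $\sigma=1$,
\[
\|\Delta\phi\|_2\lesssim_H \varepsilon^{-1/3}\Bigl(\|\tfrac{R}{\sqrt{-u_{0yy}}}\|_2+\sqrt\varepsilon\|\sqrt{\tfrac{u_0}{-u_{0yy}}}\Delta\phi_y\|_2\Bigr).
\]
Hence $\|\nabla\phi\|_2\|\Delta\phi\|_2\lesssim \varepsilon^{-1/3}(\cdots)^2$. Taking square roots gives the factor $\varepsilon^{-1/6}$; the term $\|\nabla\phi\|_2^2$ is better, since $\varepsilon^{-1/3}\ge1$.

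\textbf{Second estimate.} Here I would use the fundamental theorem of calculus in $x$ instead. Since $\phi|_{x=0}=0$, we have $\phi_y|_{x=0}=0$. Also $\phi_x|_{x=L}=0$.

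For $\phi_y$, write
\[
\int_0^H|\phi_y|^2(x,y)\,dy=2\int_0^x\!\!\int_0^H\phi_y\phi_{xy},
\]
which is at most $2\|\phi_y\|_2\|\phi_{xy}\|_2$. For $\phi_x$, integrate from $x=L$ to get $\int|\phi_x|^2(x,\cdot)\,dy\le2\|\phi_x\|_2\|\phi_{xx}\|_2$. Thus $\|\nabla\phi\|_{L^\infty_xL^2_y}^2\lesssim\|\nabla\phi\|_2\|\nabla^2\phi\|_2$, and the same combination as above closes the estimate.

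\textbf{Main obstacle.} The main point to check is the $H^2$ elliptic bound $\|\nabla^2\phi\|_2\lesssim\|\Delta\phi\|_2$ under the mixed boundary conditions, in particular the corner and boundary terms in the integration by parts. If this is delicate, I would fall back to bounding $\|\phi_{yy}\|$ and $\|\phi_{xy}\|$ directly. One would integrate by parts $(\phi_{xx},\phi_{yy})=\|\phi_{xy}\|^2$ plus boundary terms. These involve $\phi_{xy}\phi_x$ at $x=0,L$, which vanish because $\phi_y=0$ at $x=0$ and $\phi_x=0$ at $x=L$, and $\phi_{xx}\phi_y$ at $y=0,H$, which vanish because $\phi_{xx}=0$ there.
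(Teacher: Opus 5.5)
Your proposal is correct and is essentially the paper's argument. The paper says only that the proof is identical to Lemma \ref{lem:mix_norm}: Sobolev in $y$ gives $\|\nabla\phi\|_2^2+\|\nabla\phi\|_2\|\Delta\phi\|_2$, the fundamental theorem of calculus in $x$ handles the $L^\infty_xL^2_y$ norm, and \eqref{H1_control_by_R} and \eqref{est:H_weighted_Hardy} replace the short-channel bounds, exactly as you do. Your check that $\|\nabla^2\phi\|_2=\|\Delta\phi\|_2$ (all boundary terms vanish under \eqref{viscous_bc} and $\phi|_{y=0,H}=0$) supplies a step the paper uses silently. One small slip: the boundary term at $x=0,L$ in your fallback computation is $\phi_{xy}\phi_y$, not $\phi_{xy}\phi_x$; your justification is in fact for $\phi_{xy}\phi_y$.
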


\begin{proof}
The proofs of these estimates are almost identical to that of Lemma \ref{lem:mix_norm}.
\end{proof}

\begin{lemma}\label{lem:mix_norm_Hardy}
\begin{equation}\label{est:mix_norm_Hardy}
\begin{aligned}
\| \frac{\phi}{\sqrt{u_0}}\|_{L^2_x L^\infty_y} &\lesssim_H \|\phi_y\|_2 \lesssim_H \|\frac{R}{\sqrt{-u_{0yy}}}\|_2,\\
\| \frac{\phi_x}{\sqrt{u_0}}\|_{L^2_x L^\infty_y} &\lesssim_H \|\phi_{xy}\|_2 \lesssim_H \varepsilon^{-\frac{1}{3}} \Big( \|\frac{R}{\sqrt{-u_{0yy}}}\|_2 + \sqrt{\varepsilon} \| \sqrt{\frac{u_0}{-u_{0yy}}} \Delta \phi_{y}\|_2\Big).
\end{aligned}
\end{equation}
\end{lemma}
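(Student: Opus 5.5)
The plan is a one-dimensional weighted Sobolev embedding in $y$ at fixed $x$, followed by integration in $x$ and an appeal to Lemma \ref{lem_H1} and the weighted Hardy inequalities.

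\textbf{The pointwise bound.} Since $\phi$ vanishes at $y=0$ and $y=H$ by \eqref{Navier_bc2}, for $y\le H/2$ we write $\phi(x,y)=\int_0^y \phi_y(x,s)\,ds$. Cauchy--Schwarz gives $|\phi(x,y)|\le \sqrt{y}\,\|\phi_y(x,\cdot)\|_{L^2_y}$. On $(0,H/2]$ we have $u_0(y)=\mu(y/H)\gtrsim y/H$, because $\mu'(0)>0$ and $\mu>0$ on the interior. Hence
$$\frac{|\phi(x,y)|^2}{u_0(y)}\lesssim H\,\|\phi_y(x,\cdot)\|_{L^2_y}^2 .$$
For $y\ge H/2$ we integrate from $y=H$ instead and use the symmetry of $u_0$. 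Taking the supremum in $y$ and integrating in $x$ yields $\|\phi/\sqrt{u_0}\|_{L^2_xL^\infty_y}\lesssim_H\|\phi_y\|_2$. The same argument applied to $\phi_x$, which also vanishes at $y=0,H$, gives $\|\phi_x/\sqrt{u_0}\|_{L^2_xL^\infty_y}\lesssim_H\|\phi_{xy}\|_2$.

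\textbf{First line.} We combine \eqref{H1_control_by_R}, namely $\|\phi_y\|_2\lesssim H\|R\|_2$, with the concavity assumption $-u_{0yy}\lesssim 1$ (since $\mu$ is smooth on a compact interval). This gives $\|R\|_2\lesssim\|R/\sqrt{-u_{0yy}}\|_2$, which completes the first line.

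\textbf{Second line.} We need $\|\phi_{xy}\|_2\lesssim_H \varepsilon^{-1/3}\bigl(\|R/\sqrt{-u_{0yy}}\|_2+\sqrt\varepsilon\|\sqrt{u_0/(-u_{0yy})}\,\Delta\phi_y\|_2\bigr)$. We integrate by parts: $\|\nabla\phi_x\|_2^2=-(\phi_x,\Delta\phi_x)+\text{boundary terms}$. The boundary terms vanish. At $y=0,H$ we have $\phi_x=0$. At $x=0$ we have $\phi_x\phi_{xx}=0$ since $\phi_{xx}=0$ there, and at $x=L$ the product vanishes since $\phi_x=0$. Integrating by parts once more in $x$ moves the derivative, $(\phi_x,\Delta\phi_x)=-(\phi_{xx},\Delta\phi)+\text{boundary}$. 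The boundary term at $x=0$ vanishes because $\phi_{xx}=0$ there, and the one at $x=L$ because $\phi_x=0$ there. Therefore
$$\|\nabla\phi_x\|_2^2=(\phi_{xx},\Delta\phi)\le\|\phi_{xx}\|_2\|\Delta\phi\|_2,$$
so $\|\nabla\phi_x\|_2\le\|\Delta\phi\|_2$. The first inequality of \eqref{est:H_weighted_Hardy}, with $\sigma$ fixed, then bounds $\varepsilon^{1/3}\|\Delta\phi\|_2$ by the desired right-hand side, with constants depending on $H$.

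\textbf{Main obstacle.} The only delicate point is checking that the boundary terms in the double integration by parts vanish under \eqref{viscous_bc} and \eqref{Navier_bc2}. The terms at $y=0,H$ vanish because $\phi_x=0$ on the horizontal walls. If a term survived, we would fall back on $\|\phi_{xy}\|_2\le\|\nabla\phi_x\|_2$ and control it via the $H^2$-type identity $\|\nabla^2\phi\|_2\lesssim\|\Delta\phi\|_2$ on the rectangle.
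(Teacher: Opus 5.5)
Your proposal is correct and follows the paper's route. You use the same one-dimensional bound $|\phi|/\sqrt{u_0}\lesssim_H\|\phi_y(x,\cdot)\|_{L^2_y}$, obtained from $\phi=\int_0^y\phi_y$ near the walls, then \eqref{H1_control_by_R} for the first line and the first inequality of \eqref{est:H_weighted_Hardy} for the second, and you helpfully make explicit the step $\|\phi_{xy}\|_2\le\|\Delta\phi\|_2$ that the paper leaves implicit. One small correction: in your second integration by parts the boundary term at $x=0$ is $\phi_x\Delta\phi$, which vanishes because $\Delta\phi=\phi_{xx}+\phi_{yy}=0$ there, using also $\phi_{yy}=0$ from $\phi(0,\cdot)\equiv0$, not merely because $\phi_{xx}=0$.
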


\begin{proof}
For the first estimate, we first consider when $0 < y < \frac{H}{10}$. Then
$$
\left|\frac{\phi}{\sqrt{u_0}} \right| \lesssim_H \frac{|\phi(x,y)|}{\sqrt{y}} = \frac{\left| \int_0^y \phi_{y}(x,y') \, dy'\right|}{\sqrt{y}} \lesssim \|\phi_{y}\|_{L^2_y}.
$$
Same estimate holds when $\frac{9H}{10} < y < H$. When $\frac{H}{10} \le y \le \frac{9H}{10}$, we simply have
$$
\left|\frac{\phi}{\sqrt{u_0}} \right| \lesssim_H |\phi(x,y)| = \left| \int_0^y \phi_{y}(x,y') \, dy'\right| \lesssim_H \|\phi_{y}\|_{L^2_y}.
$$
Therefore, combining with \eqref{H1_control_by_R}, we have
$$
\| \frac{\phi}{\sqrt{u_0}}\|_{L^2_x L^\infty_y} \lesssim_H \| \phi_y \|_2 \lesssim_H   \|\frac{R_x}{\sqrt{-u_{0yy}}}\|_2.
$$
For the second estimate, we follow the same computations to obtain
$$
\| \frac{\phi_x}{\sqrt{u_0}}\|_{L^2_x L^\infty_y} \lesssim_H \|\phi_{xy}\|_2.
$$
Then we use the estimate \eqref{est:H_weighted_Hardy}. This concludes the proof.
\end{proof}

Next, we derive some mixed norm estimate for $\Delta \phi$.

\begin{lemma}
For any $2 < q < \infty$, we have
\begin{equation}\label{est:Delta_phi_mix}
\begin{aligned}
\| \Delta \phi \|_{L^2_x L^q_y} &\lesssim_{q,H} \| \Delta \phi \|_{2} + \| \sqrt{u_0} \Delta \phi_y \|_{2},\\
\| \Delta \phi \|_{L^2_x L^\infty_y}&\lesssim_{H} \| \Delta \phi \|_{2} + \varepsilon^{0-}\| \sqrt{u_0} \Delta \phi_y \|_{2}.
\end{aligned}
\end{equation}
\end{lemma}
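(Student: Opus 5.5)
The plan is to work one $x$-slice at a time and then integrate in $x$. Set $g(y) := \Delta\phi(x,y)$ for fixed $x$. We need one-dimensional bounds of the form
\[
\|g\|_{L^q_y} \lesssim \|g\|_{L^2_y} + \|\sqrt{u_0} g_y\|_{L^2_y}, \qquad \|g\|_{L^\infty_y} \lesssim \|g\|_{L^2_y} + \varepsilon^{0-}\|\sqrt{u_0} g_y\|_{L^2_y}.
\]
Squaring and integrating over $x\in(0,L)$ then gives \eqref{est:Delta_phi_mix}. Away from the walls, say on $(H/10,9H/10)$, we have $u_0\gtrsim_H 1$, so the ordinary one-dimensional Sobolev inequality $\|g\|_{L^\infty}\lesssim \|g\|_{L^2}+\|g_y\|_{L^2}$ already gives both estimates. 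The work is near the walls; by symmetry we only treat $y\in(0,H/10)$, where $u_0\sim_H y$.

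Near $y=0$ the plan is a weighted fundamental-theorem-of-calculus argument. Pick $y_0\in(H/10,H/5)$ with $|g(y_0)|\lesssim_H \|g\|_{L^2_y}$, which exists by averaging. For $y<y_0$, Cauchy--Schwarz gives
\[
|g(y)| \le |g(y_0)| + \int_y^{y_0} |g_y| \le |g(y_0)| + \Big(\int_y^{y_0}\frac{ds}{s}\Big)^{1/2}\Big(\int s\,g_y^2\Big)^{1/2} \lesssim \|g\|_{L^2_y} + \log^{1/2}\!\big(\tfrac{H}{y}\big)\,\|\sqrt{u_0} g_y\|_{L^2_y}.
\]
Since $\log^{1/2}(1/y)\in L^q(0,1)$ for every $q<\infty$, taking the $L^q_y$ norm gives the first estimate, with a constant depending on $q$ and $H$.

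For the $L^\infty$ bound the logarithm blows up at $y=0$, so we cut at a scale $\delta$. On $y\ge\delta$ the pointwise bound above gives $|g|\lesssim \|g\|_{L^2_y}+|\log\delta|^{1/2}\|\sqrt{u_0}g_y\|_{L^2_y}$. On $y<\delta$ the raw function must be controlled by some other means, and this is the main obstacle: $\|g\|_{L^2}+\|\sqrt{y}g_y\|_{L^2}$ does not embed into $L^\infty$, since $\log\log(1/y)$ is a counterexample. So extra information is needed at the boundary, and the natural source is the $\varepsilon$-Navier condition \eqref{Navier_bc2}, which gives $\Delta\phi|_{y=0}=\varepsilon^{-1/3}A^{-1}\phi_y|_{y=0}$. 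I would first bound $|\phi_y(x,0)|$ through $\|\phi_y\|_{L^2_y}+\|\phi_{yy}\|_{L^2_y}$, and $\phi_{yy}=\Delta\phi-\phi_{xx}$, so this is in terms of $\|g\|_{L^2_y}$ plus lower-order terms. Then for $y<\delta$,
\[
|g(y)|\le |g(0)|+\log^{1/2}\!\big(\tfrac{\delta}{y}\big)\cdots,
\]
but this still diverges. The fix is to integrate from $0$ to $y$ and use the bound $\int_0^\delta|g_y|\le \delta^{1/2-}\|y^{1/2-}g_y\|$; to make that available, lose an $\varepsilon^{0-}$ by choosing $\delta=\varepsilon^{N}$ and comparing $y^{-1}$ with $\delta^{-1}\lesssim \varepsilon^{-N}$ only on a set where a power gain compensates.

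Concretely, I expect the clean version to be an interpolation: $\|g\|_{L^\infty_y}\lesssim_q \|g\|_{L^q_y}^{1-\theta}\|g_y\|_{L^{1}}^{\theta}$-type bounds, with $q$ large, combined with the first estimate, so that the losses become $C_q$ times $\varepsilon^{-c/q}$. This would come from $\|g\|_{L^2}$-control near the boundary via the weighted Hardy Corollary \eqref{est:H_weighted_Hardy} at scale $\varepsilon^{1/3}$. Choosing $q$ large then produces the $\varepsilon^{0-}$ loss in the second estimate. This last step is where I am least certain, and I would verify it carefully before relying on it.
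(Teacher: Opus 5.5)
Your proof of the first estimate is correct and follows the paper's argument. You pick a good interior point, integrate $g_y$ down to $y$ with Cauchy--Schwarz against the weight $u_0\sim y$, and use $\log^{1/2}(1/y)\in L^q$.

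The second estimate is where the proposal breaks down. Every device you list for the region $y<\delta$ ultimately needs control of $\int_0^\delta|g_y|\,dy$. That includes the bound through $\|y^{1/2-}g_y\|$, interpolation against $\|g_y\|_{L^1}$, and the weighted Hardy corollary. Controlling $\int_0^\delta|g_y|\,dy$ amounts to a weight strictly stronger than $\sqrt{u_0}$ at the wall, which the right-hand side does not supply. The $\varepsilon$-Navier condition \eqref{Navier_bc2} does not help: it only fixes the trace $g(0)$ and says nothing about $\sup_{0<y<\delta}|g(y)-g(0)|$. So the argument does not close.

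In fact it cannot be closed, because your own $\log\log$ observation survives the boundary condition. Take $H=1$, fix $\varepsilon$, and for $T\gg1$ define $g_T$ as follows:
\begin{itemize}
\item $g_T(y)=\sqrt{T}\,\Lambda\big(\log(1/(4y))/T\big)$ on $(0,\tfrac14)$, where $\Lambda(s)=\min\{s,2-s\}_+$;
\item $g_T=0$ on $[\tfrac14,\tfrac12]$;
\item $g_T$ is extended oddly about $y=\tfrac12$, and mollified if you want smoothness.
\end{itemize}
Substituting $t=\log(1/(4y))$ gives $\int_0^{1/4}y|g_T'|^2\,dy=\int_0^\infty|\partial_t g_T|^2\,dt=2$ and $\|g_T\|_{L^2}^2\lesssim T^{-1}$, yet $\sup g_T=\sqrt T$.

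Since $g_T$ vanishes near the walls, you can build a $\phi$ satisfying every constraint. Let $\phi=\sin(\tfrac{\pi x}{2L})\,a(y)$, where $a''-\tfrac{\pi^2}{4L^2}a=g_T+c_Th$ and $a(0)=a(1)=0$. Here $h$ is a fixed odd interior bump, and $c_T=O(T^{-1/2})$ is chosen so that $a'(0)=0$, hence $a'(1)=0$ by oddness. Then $\phi$ satisfies \eqref{viscous_bc}, \eqref{Navier_bc2} and the oddness, and $\|\Delta\phi\|_2+\|\sqrt{u_0}\Delta\phi_y\|_2\lesssim_L 1$. However, $\|\Delta\phi\|_{L^2_xL^\infty_y}\gtrsim\sqrt{LT}\to\infty$, so no constant of the form $C\varepsilon^{-\delta}$ can work.

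The paper's own proof of this part has the same defect. For $0<y<\varepsilon^{0+}$ it bounds $|\Delta\phi(x,y)-\Delta\phi(x,y_2)|$ by $|\log\varepsilon|^{1/2}\|\sqrt{u_0}\Delta\phi_y\|_{L^2_y}$. That requires $\int_y^{y_2}ds/u_0\lesssim|\log\varepsilon|$, which fails once $y\ll\varepsilon^{C}$.

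What your pointwise bound, or the paper's, genuinely proves is one of two weaker statements:
\begin{itemize}
\item the $L^\infty$ bound over $\{\varepsilon^{C}<y<H-\varepsilon^{C}\}$, with a loss of $|\log\varepsilon|^{1/2}$;
\item the full bound with $\sqrt{u_0}$ replaced by $\sqrt{u_0+\varepsilon^{1/3}}$.
\end{itemize}
Otherwise, only the first ($q<\infty$) estimate is safe to use.
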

\begin{proof}
To prove the first estimate, we separate three cases. When $H/10 < y < 9H/10$, we have that
$$
|\Delta \phi (x,y)| \lesssim |\Delta \phi (x,y_1)| + \Big| \int_y^{y_1} |\sqrt{u_0} \Delta \phi_y (x,s)| \, ds  \Big|
$$
for some $y_1 \in (H/10, 9H/10)$. Therefore, by direct integration and H\"older's inequality, we have
$$
\Bigg( \int_0^L \Big( \int_{H/10}^{9H/10} |\Delta \phi (x,y)|^q \, dy \Big)^{2/q} \Bigg)^{1/2} \lesssim_{q,H} \| \Delta \phi \|_{L^2} + \| \sqrt{u_0} \Delta \phi_y \|_{L^2}.
$$
When $0< y < H/10$, we have that for some $y_2 \in (0, H/10)$,
\begin{align*}
|\Delta \phi (x,y)| &\lesssim |\Delta \phi (x,y_2)| + \Big| \int_y^{y_2} |\frac{1}{\sqrt{s}}\sqrt{u_0} \Delta \phi_y (x,s)| \, ds  \Big|\\
&\lesssim |\Delta \phi (x,y_2)| + |\log y|^{1/2} \Big( \int_0^H u_0 |\Delta \phi_y| \, ds \Big)^{1/2}.
\end{align*}
Since $|\log y| \in L^q(0,H)$ for any $q < \infty$, we have 
$$
\Bigg( \int_0^L \Big( \int_{0}^{H/10} |\Delta \phi (x,y)|^q \, dy \Big)^{2/q} \Bigg)^{1/2} \lesssim_q \| \Delta \phi \|_{L^2} + \| \sqrt{u_0} \Delta \phi_y \|_{L^2}.
$$
The same argument applies when $9H/10< y < H$, which concludes the proof of the first estimate. For the second one, we separate the cases when $\varepsilon^{0+} < y < H - \varepsilon^{0^+}$, when $0 < y < \varepsilon^{0+}$, and when $H - \varepsilon^{0^+} < y < H$.

When $\varepsilon^{0+} < y < H - \varepsilon^{0^+}$, we have
$$
|\Delta \phi (x,y)| \lesssim_H |\Delta \phi (x,y_1)| + \varepsilon^{0-} \Big| \int_y^{y_1} |\sqrt{u_0} \Delta \phi_y (x,s)| \, ds  \Big|
$$
for some $y_1 \in (\varepsilon^{0+}, H - \varepsilon^{0+})$. For the other two cases, we have
$$
|\Delta \phi (x,y)| \lesssim_H |\Delta \phi (x,y_2)| + |\log \varepsilon|^{1/2} \Big( \int_0^H u_0 |\Delta \phi_y| \, ds \Big)^{1/2}
$$
for some $y_2 \in (0, \varepsilon^{0+})$ or $(H - \varepsilon^{0+}, H)$. This concludes the proof.
\end{proof}

We can control the $L^\infty$ norm of $\nabla \phi$ by some mixed norm of $\Delta \phi$ as follows.

\begin{lemma}
\begin{equation}\label{est:nabla_phi1}
\| \nabla \phi\|_{L^\infty} \lesssim_{p,H} \| \Delta \phi\|_{L^p_yL^2_x}\quad \mbox{for any}~ p > 2.
\end{equation}
\end{lemma}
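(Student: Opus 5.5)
We need $\|\nabla\phi\|_{L^\infty}\lesssim \|\Delta\phi\|_{L^p_yL^2_x}$ with $p>2$, where $\phi$ satisfies the boundary conditions \eqref{viscous_bc} and \eqref{Navier_bc2}. The plan is to treat $\phi$ as the solution of an elliptic problem $\Delta\phi=\omega$ on the rectangle, with $\phi=0$ on $y=0,H$, $\phi=0$ on $x=0$ and $\phi_x=0$ on $x=L$. We then expand in the eigenfunctions of $-\partial_x^2$ with these mixed conditions, $e_k(x)=\sqrt{2/L}\,\sin(\lambda_k x)$ with $\lambda_k=(k+\tfrac12)\pi/L$. These are orthonormal in $L^2(0,L)$, and $\phi_x$ expands in the cosines $\sqrt{2/L}\cos(\lambda_k x)$. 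Writing $\phi=\sum_k \phi_k(y)e_k(x)$ and $\omega=\sum_k\omega_k(y)e_k(x)$, each mode satisfies the ODE $\phi_k''-\lambda_k^2\phi_k=\omega_k$ on $(0,H)$ with $\phi_k(0)=\phi_k(H)=0$.

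For each mode we write $\phi_k(y)=\int_0^H G_k(y,s)\omega_k(s)\,ds$, where $G_k$ is the explicit Green's function built from $\sinh$. Its $y$-derivative satisfies the standard bound $|\partial_yG_k(y,s)|\lesssim e^{-\lambda_k|y-s|}$, uniformly in $k$ up to $H$-dependent constants; this includes the jump at $s=y$, which is bounded. We also have $|\lambda_kG_k(y,s)|\lesssim e^{-\lambda_k|y-s|}$. Since $|e_k|,|e_k'|/\lambda_k\lesssim L^{-1/2}$, we get
$$|\nabla\phi(x,y)|\lesssim \sum_k \int_0^H e^{-\lambda_k|y-s|}|\omega_k(s)|\,ds .$$
Next we apply Cauchy--Schwarz in $k$:
$$\sum_k e^{-\lambda_k|y-s|}|\omega_k(s)|\le \Big(\sum_k e^{-2\lambda_k|y-s|}\Big)^{1/2}\|\omega(\cdot,s)\|_{L^2_x},$$
using Parseval. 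The geometric-type sum is bounded by $C(1+(L/|y-s|)^{1/2})$, so it behaves like $|y-s|^{-1/2}$ near the diagonal. Hence
$$|\nabla\phi(x,y)|\lesssim_{L,H}\int_0^H (1+|y-s|^{-1/2})\,\|\omega(\cdot,s)\|_{L^2_x}\,ds .$$
Finally, Hölder in $s$ with exponents $p$ and $p'$ closes the estimate. The kernel $|y-s|^{-1/2}$ lies in $L^{p'}(0,H)$ exactly when $p'<2$, that is $p>2$. This gives $\|\nabla\phi\|_{L^\infty}\lesssim_{p,H}\|\Delta\phi\|_{L^p_yL^2_x}$, with the constant also depending on $L$ through the sum bound, which is consistent with the paper's conventions in this section.

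The main obstacle is obtaining the uniform-in-$k$ Green's function bounds, in particular for $\partial_yG_k$ near the walls $y=0,H$ and for small $\lambda_k$, where the exponential decay degenerates. For low modes this is harmless, since there $\lambda_k\gtrsim 1/L$ and the kernel is bounded. The one subtle point to check is that the Dirichlet conditions at $y=0,H$ do not spoil the bound $e^{-\lambda_k|y-s|}$. This can be verified by writing
$$G_k(y,s)=-\frac{\sinh(\lambda_k y_<)\,\sinh(\lambda_k(H-y_>))}{\lambda_k\sinh(\lambda_k H)},$$
where $y_<=\min(y,s)$ and $y_>=\max(y,s)$, and estimating directly. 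The boundary conditions $\phi_{xx}|_{x=0}=\phi_{xxx}|_{x=L}=0$ are not needed here; only the second-order conditions enter, which justifies the sine expansion. Term-by-term differentiation is justified by density of smooth functions.
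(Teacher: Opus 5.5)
Your proposal is correct and is essentially the paper's proof. Both use the same sine expansion with $\lambda_k=(k+\tfrac12)\pi/L$, the same sinh Green's function bounds $|\partial_y G_k|+\lambda_k|G_k|\lesssim e^{-\lambda_k|y-s|}$, Cauchy--Schwarz in $k$ with Parseval, and H\"older in $s$, using that $\big(\sum_k e^{-2\lambda_k|y-s|}\big)^{1/2}\lesssim_L |y-s|^{-1/2}$ lies in $L^{p'}$ exactly when $p>2$.

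There is one wording slip. It is the square root of the sum that behaves like $|y-s|^{-1/2}$; the sum itself equals $\frac{1}{2\sinh(\pi|y-s|/L)}\le \frac{L}{2\pi|y-s|}$. Your remark that the constant also depends on $L$ applies equally to the paper's own argument.
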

\begin{proof}
For the simplicity of the presentation, we prove this Lemma with $H = 1$. Due to the viscous inflow boundary condition \eqref{viscous_bc}, we take the Fourier basis in $x$ to be
$$
\varphi_m(x)=\sin(\lambda_m x),\quad \lambda_m=\frac{(2m+1)\pi}{2L},\quad m=0,1,2,\dots.
$$
We write
\begin{align*}
\phi(x,y)&=\sum_{m\ge0} a_m(y)\varphi_m(x),\\
\Delta \phi(x,y)&=\sum_{m\ge0} b_m(y)\varphi_m(x).
\end{align*}
Then we have
\begin{equation}\label{eq:mode-ode}
a_m''(y)-\lambda_m^2 a_m(y)=b_m(y),\qquad y\in(0,1),
\end{equation}
with Dirichlet boundary condition \(a_m(0)=a_m(1)=0\). It is classical that one can solve \eqref{eq:mode-ode} through the Green's function
$$
G_m(y,s) = \frac{1}{\lambda_m \sinh \lambda_m} \left\{
\begin{aligned}
&\sinh(\lambda_m y) \sinh (\lambda_m (1-s)), \quad && 0 \le y \le s \le 1,\\
&\sinh(\lambda_m s) \sinh (\lambda_m (1-y)), \quad && 0 \le s < y \le 1.
\end{aligned}
\right.
$$
Then we can write
$$
a_m(y) = \int_0^1 G(y,s) b_m(s) \, ds.
$$
It is straightforward to show that
\begin{equation}\label{est:Green}
|G_m(y,s)| \lesssim \frac{1}{\lambda_m}e^{- \lambda_m |s-y|}.
\end{equation}
We only prove \eqref{est:Green} when $y \le s$, as the other case follows similarly. When $\lambda_m > 1$, we have $\sinh \lambda_m \gtrsim e^{\lambda_m}$, and hence
$$
|G_m(y,s)| \lesssim \frac{e^{\lambda_m y}e^{\lambda_m (1-s)}}{\lambda_m e^{\lambda_m}} \le \frac{1}{\lambda_m}e^{ \lambda_m (y-s)}.
$$
When $0 \le \lambda_m \le 1$, we have $e^{ \lambda_m (y-s)} \ge e^{-1} \gtrsim 1$ and $\sinh \lambda_m \le \sinh 1 \lesssim 1$. Therefore,
$$
|G_m(y,s)| \lesssim \frac{(\sinh \lambda_m)^2}{\lambda_m \sinh \lambda_m} \lesssim  \frac{1}{\lambda_m}e^{ \lambda_m (y-s)}.
$$
Similarly, one can prove that
\begin{equation}\label{est:Green_y}
|\partial_y G_m(y,s)| \lesssim e^{- \lambda_m |s-y|}.
\end{equation}
By \eqref{est:Green_y}, Fubini Theorem, and H\"older's inequality, we can estimate 
\begin{align*}
|\phi_y(x,y)| &= \left| \sum_m a_m'(y) \varphi(x) \right|\\
& \lesssim \sum_m \int_0^1 e^{- \lambda_m |s-y|} |b_m(s)| \, ds\\
&= \int_0^1 \sum_m e^{- \lambda_m |s-y|} |b_m(s)| \, ds\\
&\le \int_0^1 \Big( \sum_m e^{- 2\lambda_m |s-y|} \Big)^{1/2} \Big( \sum_m |b_m(s)|^2 \Big)^{1/2}\, ds\\
&\le \Bigg( \underbrace{\int_0^1 \Big( \sum_m e^{- 2\lambda_m |s-y|}\Big)^{p'/2} \, ds}_{\I}  \Bigg)^{1/p'} \| \Delta \phi\|_{L^p_y L^2_x},
\end{align*}
where $1/p' = 1 - 1/p$. It remains to show the integral $\I$ converges, and it suffices to show that $( \sum_m e^{- 2\lambda_m |s-y|})^{p'/2}$ is integrable near $s = y$. For $s \neq y$, we can directly compute that
\begin{align*}
 \sum_m e^{- 2\lambda_m |s-y|} = \frac{e^{- \pi|s-y|/L}}{1 - e^{-2\pi|s-y|/L}} \lesssim \frac{1}{|s-y|}, \qquad \mbox{when}~s \sim y.
\end{align*}
Since $p > 2$, $p'/2 < 1$. Therefore, the integral $\I$ converges. We can conclude that
$$
\|\phi_y\|_{L^\infty} \lesssim_p  \| \Delta \phi\|_{L^p_y L^2_x}.
$$
Similarly, by \eqref{est:Green}, we can estimate
\begin{align*}
|\phi_x(x,y)| = \left| \sum_m a_m(y) \varphi'(x) \right| \lesssim \sum_m \int_0^1 e^{- \lambda_m |s-y|} |b_m(s)| \, ds.
\end{align*}
Following the same argument gives
$$
\|\phi_x\|_{L^\infty} \lesssim_p  \| \Delta \phi\|_{L^p_y L^2_x}.
$$
This concludes the proof of this lemma.
\end{proof}

Combining the two lemmas above, we have the following $L^\infty$ estimate for $\nabla \phi$.

\begin{corollary}
\begin{equation}\label{est:nabla_phi_L_infty}
\| \nabla \phi\|_{L^\infty} \lesssim_H \varepsilon^{-(\frac{1}{3}+)} \Big( \|\frac{R}{\sqrt{-u_{0yy}}}\|_2 + \sqrt{\varepsilon} \| \sqrt{\frac{u_0}{-u_{0yy}}} \Delta \phi_y\|_2\Big).
\end{equation}
\end{corollary}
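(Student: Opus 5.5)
The plan is to chain \eqref{est:nabla_phi1} with the mixed-norm bound \eqref{est:Delta_phi_mix}, and then convert the resulting unweighted quantities into the weighted norms via \eqref{est:H_weighted_Hardy}.

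First, fix $p>2$ slightly larger than $2$. Minkowski's integral inequality gives $\|\Delta\phi\|_{L^p_yL^2_x}\le \|\Delta\phi\|_{L^2_xL^p_y}$, since $p\ge 2$. Hence \eqref{est:nabla_phi1} yields
$$\|\nabla\phi\|_{L^\infty}\lesssim_{p,H}\|\Delta\phi\|_{L^2_xL^p_y}.$$
Applying the first estimate in \eqref{est:Delta_phi_mix} with $q=p$ then gives
$$\|\nabla\phi\|_{L^\infty}\lesssim_{H}\|\Delta\phi\|_2+\|\sqrt{u_0}\Delta\phi_y\|_2.$$
One could instead use the $L^\infty_y$ version, but it is not needed: any finite $p>2$ suffices, and the $\varepsilon^{0-}$ loss recorded in the statement absorbs whatever choice of $p$ is made.

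Second, the two terms are bounded separately. For the first, the first line of \eqref{est:H_weighted_Hardy} with $\sigma=1$ gives
$$\|\Delta\phi\|_2\lesssim_H \varepsilon^{-1/3}\Big(\sqrt\varepsilon\|\sqrt{\tfrac{u_0}{-u_{0yy}}}\Delta\phi_y\|_2+\|\tfrac{R}{\sqrt{-u_{0yy}}}\|_2\Big).$$
For the second, recall that $-u_{0yy}$ is bounded above and below by \eqref{assumption_1}. Therefore
$$\|\sqrt{u_0}\Delta\phi_y\|_2\lesssim \varepsilon^{-1/2}\,\sqrt\varepsilon\|\sqrt{\tfrac{u_0}{-u_{0yy}}}\Delta\phi_y\|_2.$$

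The main obstacle is that this second bound produces the factor $\varepsilon^{-1/2}$, which is worse than the claimed $\varepsilon^{-1/3-}$. To fix this, I would not apply \eqref{est:Delta_phi_mix} as stated. Instead I would redo its proof with a cutoff at scale $\delta$ near the walls, for instance $\delta=\varepsilon^{1/3}$.

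In the interior region, $y\in(\delta,H-\delta)$, the fundamental theorem of calculus in $y$ gives
$$|\Delta\phi(y)|\lesssim|\Delta\phi(y_1)|+\int|\Delta\phi_y|.$$
One then interpolates between $\|\Delta\phi\|_2$ and $\|\sqrt{u_0}\Delta\phi_y\|_2$, Gagliardo–Nirenberg style, so that the $L^p_y$ norm with $p$ close to $2$ uses only a small power of the derivative term. Because the unweighted estimate only needs $p=2^+$, the derivative enters with weight $\theta\to0$:
$$\|\Delta\phi\|_{L^2_xL^p_y}\lesssim \|\Delta\phi\|_2^{1-\theta}\big(\|\Delta\phi\|_2+\|\sqrt{u_0}\Delta\phi_y\|_2\big)^{\theta},\qquad \theta=\theta(p)\to0 \text{ as } p\to2.$$
With this, the $\varepsilon^{-1/2}$ loss appears only to the power $\theta$, while $\|\Delta\phi\|_2$ contributes $\varepsilon^{-1/3}$. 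The total loss is then $\varepsilon^{-1/3-\theta/6}=\varepsilon^{-(1/3+)}$, which is the stated bound.

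The near-wall region is handled by the logarithmic argument already used in the proof of \eqref{est:Delta_phi_mix}, which costs only $|\log\varepsilon|$ factors. Verifying that this interpolation holds uniformly, including in the boundary strips, is the step I expect to require the most care.
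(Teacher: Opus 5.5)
Your proposal is correct and is essentially the paper's argument. It chains Minkowski with \eqref{est:nabla_phi1}, uses an interpolation so that the $\varepsilon^{-1/2}$-costly term $\|\sqrt{u_0}\Delta\phi_y\|_2$ enters only with a small power $\theta$, and finishes with \eqref{est:H_weighted_Hardy}, giving $\varepsilon^{-1/3-\theta/6}$. The only difference is that the paper obtains the interpolation without any new cutoff argument near the walls, via H\"older: $\|\Delta\phi\|_{L^2_xL^p_y}\le\|\Delta\phi\|_{2}^{1-\theta}\|\Delta\phi\|_{L^2_xL^q_y}^{\theta}$ with $q$ large and fixed, followed by the first line of \eqref{est:Delta_phi_mix} at that $q$.
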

\begin{proof}
By \eqref{est:nabla_phi1}, Minkowski's inequality, and H\"older's inequality, we have
$$
\| \nabla \phi\|_{L^\infty} \lesssim \| \Delta \phi\|_{L^p_yL^2_x} \lesssim \| \Delta \phi\|_{L^2_xL^p_y} \lesssim \|\Delta \phi\|_{L^2_xL^2_y}^\theta \|\Delta \phi\|_{L^2_xL^q_y}^{1-\theta},
$$
where
$$
\frac{1}{p} = \frac{\theta}{2} + \frac{1-\theta}{q}.
$$
Therefore, for any $\delta > 0$, we can choose $p>2$ and close to $2$, and $q$ sufficiently large, such that
$\theta = 1-6\delta$. Therefore, by \eqref{est:H_weighted_Hardy} and \eqref{est:Delta_phi_mix}, we have
\begin{align*}
\|\nabla \phi\|_{L^\infty} &\lesssim_\delta \|\Delta \phi\|_{L^2_xL^2_y}^{1-6\delta} \|\Delta \phi\|_{L^2_xL^q_y}^{6\delta}\\
&\lesssim_\delta \varepsilon^{-1/3 - \delta} \Big( \|\frac{R}{\sqrt{-u_{0yy}}}\|_2 + \sqrt{\varepsilon} \| \sqrt{\frac{u_0}{-u_{0yy}}} \Delta \phi_y\|_2\Big).
\end{align*}
This concludes the proof.
\end{proof}

Finally, we collect some other useful mixed norm estimates.

\begin{lemma}
\begin{equation}\label{est:mix_norm_misc}
\begin{aligned}
\| \sqrt{u_0} \Delta \phi\|_{L^\infty_x L^2_y} &\lesssim_{H} \varepsilon^{-\frac{1}{3}} \Big( \|\frac{R}{\sqrt{-u_{0yy}}}\|_2 + \sqrt{\varepsilon} \| \sqrt{\frac{u_0}{-u_{0yy}}} \nabla \Delta \phi\|_2\Big),\\
\|\phi\|_{L^\infty_xL^2_y} &\lesssim_{L}  \|\frac{R}{\sqrt{-u_{0yy}}}\|_2.
\end{aligned}
\end{equation}
\end{lemma}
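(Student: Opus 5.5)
For the first estimate of \eqref{est:mix_norm_misc} I would follow the third inequality of Lemma \ref{lem:mix_norm}. Since $\phi_{xx}|_{x=0}=0$ from \eqref{viscous_bc} and $\phi|_{x=0}=0$ (so $\phi_{yy}|_{x=0}=0$), we have $\Delta\phi|_{x=0}=0$. Hence, for each fixed $x$,
\[
\int_0^H u_0 |\Delta \phi|^2(x,y)\,dy = 2\int_0^x\int_0^H u_0 \Delta\phi_x \Delta\phi \,dy\,dx' \le 2\|\sqrt{u_0}\Delta\phi\|_2\|\sqrt{u_0}\Delta\phi_x\|_2 .
\]
Because $-u_{0yy}$ is bounded above and below by positive constants (depending on $H$) under \eqref{assumption_1}, the weight $\sqrt{u_0}$ is comparable to $\sqrt{u_0/(-u_{0yy})}$. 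The factor $\|\sqrt{u_0}\Delta\phi\|_2$ is handled by the second line of \eqref{est:H_weighted_Hardy} with $\sigma=1$, which gives $\varepsilon^{-1/6}$ times the bracket. The factor $\|\sqrt{u_0}\Delta\phi_x\|_2$ is trivially bounded by $\varepsilon^{-1/2}\sqrt{\varepsilon}\|\sqrt{u_0/(-u_{0yy})}\nabla\Delta\phi\|_2$. The product is then at most $\varepsilon^{-2/3}$ times the squared bracket, and taking square roots gives the stated $\varepsilon^{-1/3}$. The only point to check is that \eqref{est:H_weighted_Hardy} uses only the $y$-derivative, so it applies as stated.

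For the second estimate I would write $\phi(x,y)=\int_0^x\phi_x(x',y)\,dx'$, using $\phi|_{x=0}=0$. Cauchy--Schwarz in $x'$ gives $\|\phi(x,\cdot)\|_{L^2_y}\le \sqrt{L}\,\|\phi_x\|_2$ uniformly in $x$. Then \eqref{H1_control_by_R} gives $\|\phi_x\|_2\lesssim H\|R\|_2$, and since $-u_{0yy}$ is bounded above, $\|R\|_2\lesssim \|R/\sqrt{-u_{0yy}}\|_2$. Combining these yields $\|\phi\|_{L^\infty_xL^2_y}\lesssim_L \|R/\sqrt{-u_{0yy}}\|_2$.

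The only delicate step is the first bound, namely verifying the boundary vanishing $\Delta\phi|_{x=0}=0$ so that no boundary term appears. If one preferred to avoid it, one could instead integrate from $x=L$, using $\phi_x|_{x=L}=\phi_{xxx}|_{x=L}=0$, but that gives only $\Delta\phi_x|_{x=L}=0$, not $\Delta\phi|_{x=L}=0$. So the $x=0$ side is the correct one to use, and the rest is routine bookkeeping with the weights.
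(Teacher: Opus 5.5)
Your proposal is correct and is essentially the paper's proof. For the first bound you write $\int_0^H u_0|\Delta\phi|^2\,dy$ as an $x$-integral from $x=0$ and apply \eqref{est:H_weighted_Hardy}, and for the second you use the Sobolev/Poincar\'e inequality in $x$ together with \eqref{H1_control_by_R}; you also usefully spell out two points the paper leaves implicit, namely that $\Delta\phi|_{x=0}=0$ follows from \eqref{viscous_bc} and that $\|\sqrt{u_0}\Delta\phi_x\|_2$ is bounded trivially by $\varepsilon^{-1/2}$ times the dissipation term.
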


\begin{proof}For the first estimate
\begin{align*}
\int_0^H u_0| \Delta \phi|^2 \, dy &= 2 \int_0^x \int_0^H u_0 \Delta \phi_x \Delta \phi \, dy dx\\
&\lesssim \|\sqrt{u_0} \Delta \phi\|_2 \| \sqrt{u_0} \Delta \phi_{x}\|_2\\
&\lesssim_{H} \varepsilon^{-\frac{2}{3}} \Big( \|\frac{R}{\sqrt{-u_{0yy}}}\|_2^2 + \varepsilon \| \sqrt{\frac{u_0}{-u_{0yy}}} \nabla \Delta \phi\|_2^2\Big)
\end{align*}
By \eqref{est:H_weighted_Hardy}. For the second one, we use Sobolev inequality in $x$ and \eqref{H1_control_by_R} to obtain
$$
\|\phi\|_{L^\infty_xL^2_y} \lesssim_{L} \|\phi_x\|_2 \lesssim_{L}  \|\frac{R}{\sqrt{-u_{0yy}}}\|_2.
$$
\end{proof}

\subsection{Linear stability estimates}\label{sec:H_linear}

In this section, we prove Theorem \ref{Thm_H_linear}. 

Define
\begin{equation}\label{weight}
W(x) = NL - x,
\end{equation}
where $N$ is a large number to be determined later. Then $W' = -1$. We test the linearized Navier-Stokes equation \eqref{Linearized_NS} by $\frac{RW}{-u_{0yy}}$, and estimate each term.

\subsubsection{Rayleigh term}
Note that the main part of the Rayleigh terms can be written as
$$u_s \Delta \phi_x - \Delta u_s \phi_x = R_x + (u_s-u_0)\Delta \phi_x - (\Delta u_s - u_{0yy})\phi_x.$$
We then have
\begin{equation}\label{Ray_nar_1}
\begin{aligned}
(u_s \Delta \phi_x - \Delta u_s \phi_x , W \frac{R}{-u_{0yy}})=& (R_x,  W \frac{R}{-u_{0yy}}) + ((u_s-u_0)W\Delta\phi_{x},u_0\frac{\Delta\phi}{ -u_{0yy}})\\
&+((u_s-u_0)W\Delta\phi_{x},\phi) - ((\Delta u_s - u_{0yy})\phi_x,W \frac{R}{-u_{0yy}} ).
\end{aligned}
\end{equation}
The first term
\begin{equation}\label{Ray_nar_1-1}
(\ref{Ray_nar_1}.1) = \frac{1}{2}(R, \frac{R}{-u_{0yy}}) + \frac{1}{2}(R,W\frac{R}{-u_{0yy}})_{x=L}
\end{equation}
gives the crucial positive contribution.
\begin{equation}\label{Ray_nar_1-2}
(\ref{Ray_nar_1}.2) = \frac{1}{2}((u_s-u_0)W\Delta\phi,\frac{u_0\Delta\phi}{-u_{0yy}})_{x=L}+\frac{1}{2}((u_s-u_0)\Delta
\phi,\frac{u_0\Delta\phi}{-u_{0yy}}) - \frac{1}{2} (u_{sx} W\Delta
\phi,\frac{u_0\Delta\phi}{-u_{0yy}}).
\end{equation}
The first two terms of \eqref{Ray_nar_1-2} are positive. By \eqref{ss_estimates_H}, \eqref{est:Delta_phi_mix}, and \eqref{est:mix_norm_misc}, we can estimate
\begin{equation}\label{Ray_nar_1-2-1}
\begin{aligned}
(\ref{Ray_nar_1-2}.3) &\lesssim_{H,L} \| u_{sx}\|_2 \| \Delta \phi\|_{L^2_x L^\infty_y} \| u_0 \Delta \phi\|_{L^\infty_x L^2_y}\\
&\lesssim_{A,L,H} \varepsilon^{\frac{1}{6}-} \Big(  \|\frac{R}{\sqrt{-u_{0yy}}}\|_2^2 + \varepsilon \| \sqrt{\frac{u_0}{-u_{0yy}}} \Delta \phi_y\|_2^2 \Big).
\end{aligned}
\end{equation}
Next, we write
\begin{equation}\label{Ray_nar_1-3}
(\ref{Ray_nar_1}.3) = ((u_s-u_0)W\Delta\phi,\phi)_{x=L}-((u_s-u_0)W\Delta
\phi,\phi_{x})-(u_{sx}W\Delta
\phi,\phi)+((u_s-u_0)\Delta\phi,\phi).
\end{equation}
We note
\begin{equation}\label{Ray_nar_1-3-1}
\begin{aligned}
|(\ref{Ray_nar_1-3}.1)|  \leq &\frac{1}{4}((u_s-u_0)W\Delta\phi,\frac{u_0\Delta\phi}{-u_{0yy}})_{x=L}
+O(\varepsilon^{1/3})A(W\{\frac{-u_{0yy}}{u_0}\}\phi, \phi)_{x=L}\\
\leq &\frac{1}{4}((u_s-u_0)W\Delta\phi,\frac{u_0\Delta\phi}{-u_{0yy}})_{x=L}
+O(\varepsilon^{1/3})AH^{-2}(\phi_x, \frac{\phi}{u_0})\\
\leq &\frac{1}{4}((u_s-u_0)W\Delta\phi,\frac{u_0\Delta\phi}{-u_{0yy}})_{x=L}
+O(\varepsilon^{1/3})AH^{-1}\|\phi_x\|_2\|\phi_y\|_2\\
\leq &\frac{1}{4}((u_s-u_0)W\Delta\phi,\frac{u_0\Delta\phi}{-u_{0yy}})_{x=L}
+O(\varepsilon^{1/3})AH^{-1} \|\frac{R}{\sqrt{-u_{0yy}}}\|_2^2,
\end{aligned}
\end{equation}
where we used Hardy's inequality in $y$ and \eqref{H1_control_by_R}. Then (\ref{Ray_nar_1-3-1}.1) and (\ref{Ray_nar_1-3-1}.2) can be absorbed by (\ref{Ray_nar_1-2}.1) and (\ref{Ray_nar_1-1}.1), respectively. Next, we can estimate by \eqref{ss_estimates_H} and \eqref{H1_control_by_R},
\begin{equation}\label{Ray_nar_1-3-2}
\begin{aligned}
(\ref{Ray_nar_1-3}.2)  =& -((u_s-u_0)W\phi_{yy},\phi_{x})-((u_s-u_0)W\phi_{xx}
,\phi_{x})\\
 =&((u_s-u_0)W\phi_{y},\phi_{xy}) + ((u_s-u_0)_y W\phi_{y},\phi_{x}) -((u_s-u_0)W\phi_{xx} ,\phi_{x})\\
=&\frac{1}{2}((u_s-u_0)W\phi_{y},\phi_{y})_{x=L} - \frac{1}{2}(u_{sx}W\phi_{y},\phi_{y}) + \frac{1}{2}((u_s-u_0)\phi_{y},\phi_{y})+ ((u_{sy}-u_{0y}) W\phi_{y},\phi_{x})\\
&+ \frac{1}{2}((u_s-u_0)W\phi_{x},\phi_{x})_{x=0} + \frac{1}{2}(u_{sx}W\phi_{x},\phi_{x}) - \frac{1}{2}((u_s-u_0)\phi_{x},\phi_{x})\\
=&\frac{1}{2}((u_s-u_0)W\phi_{y},\phi_{y})_{x=L}+ \|u_{sx}\|_2\|\nabla\phi\|_{L^2_xL^\infty_y}\|\nabla\phi\|_{L^\infty_xL^2_y} + O(\varepsilon^{\frac{1}{3}})\|\nabla\phi\|_2^2\\
& +\|u_{sy}-u_{0y}\|_2 \|  \phi_x\|_{L^2_xL^\infty_y} \|\phi_y\|_{L^\infty_xL^2_y}+ \frac{1}{2}((u_s-u_0)W\phi_{x},\phi_{x})_{x=0} \\
=& \frac{1}{2}((u_s-u_0)W\phi_{y},\phi_{y})_{x=L} + \frac
{1}{2}((u_s-u_0)W\phi_{x},\phi_{x})_{x=0}\\
&+ O(\varepsilon^{\frac{1}{3}})\Big(  \|\frac{R}{\sqrt{-u_{0yy}}}\|_2^2 + \varepsilon \| \sqrt{\frac{u_0}{-u_{0yy}}} \Delta \phi_y\|_2^2 \Big).
\end{aligned}
\end{equation}
The first two terms are positive, and the last term can be absorbed by (\ref{Ray_nar_1-1}.1). Similarly, by \eqref{ss_estimates_H}, \eqref{H1_control_by_R}, and \eqref{est:mix_norm_H}, we have
\begin{equation}\label{Ray_nar_1-3-3}
\begin{aligned}
|(\ref{Ray_nar_1-3}.3+4)| =& |-(u_{sx}W\Delta \phi,\phi)+((u_s-u_0)(\phi_{xx}+ \phi_{yy}),\phi)|\\
=& |(u_{sx}W\Delta \phi,\phi)
+ ((u_{s}-u_0) \phi_x, \phi_x) + (u_{sx} \phi_x, \phi)\\
& + ((u_{s}-u_0) \phi_y, \phi_y) + ((u_{sy}-u_{0y}) \phi_y, \phi)|\\
\lesssim&_{A,L,H}~ \|u_{sx}\|_2 \| \Delta \phi\|_{L^2_xL^\infty_y} \|\phi\|_{L^\infty_xL^2_y} + \varepsilon^{\frac{1}{3}} \|\nabla \phi\|_2^2\\
& + \|u_{sx}\|_2 \|  \phi_x\|_{L^2_xL^\infty_y} \|\phi\|_{L^\infty_xL^2_y} + \|u_{sy}-u_{0y}\|_2 \|  \phi_x\|_{L^2_xL^\infty_y} \|\phi\|_{L^\infty_xL^2_y}\\
\lesssim&_{A,L,H}~ \varepsilon^{\frac{1}{3}} \Big(  \|\frac{R}{\sqrt{-u_{0yy}}}\|_2^2 + \varepsilon \| \sqrt{\frac{u_0}{-u_{0yy}}} \Delta \phi_y\|_2^2 \Big).
\end{aligned}
\end{equation}
Finally, we write
\begin{equation}\label{Ray_nar_1-4}
(\ref{Ray_nar_1}.4) = - ((\Delta u_s - \Delta u_a) \phi_x, W \frac{R}{-u_{0yy}}) -((\Delta u_a - u_{0yy}) \phi_x, W \frac{R}{-u_{0yy}}).
\end{equation}
For the first term, we use \eqref{ss_estimates_H}, \eqref{est:nabla_phi_L_infty}, \eqref{est:H_weighted_Hardy}, \eqref{est:mix_norm_H}, and \eqref{est:mix_norm_Hardy} to estimate
\begin{equation}\label{Ray_nar_1-4-1}
\begin{aligned}
|(\ref{Ray_nar_1-4}.1)| \le& |((\Delta u_s - \Delta u_a) \phi_x, W \frac{u_0 \Delta\phi }{-u_{0yy}})| + |((\Delta u_s - \Delta u_a) \phi_x, W \phi)| \\
 \lesssim&_H \| \sqrt{u_0}(\Delta u_s - \Delta u_a)\|_{2} \| \phi_x\|_{L^2_xL^\infty_y} \|\sqrt{u_0} \Delta\phi \|_{L^\infty_xL^2_y}\\
  &+ \| \sqrt{u_0}(\Delta u_s - \Delta u_a)\|_{2}\| \frac{\phi}{\sqrt{u_0}}\|_{L^2_x L^\infty_y} \| \phi_x\|_{L^\infty_xL^2_y}\\
\lesssim&_{A,L,H} \varepsilon^{\frac{1}{3}-}  \Big( \|\frac{R}{\sqrt{-u_{0yy}}}\|_2^2 + \varepsilon \| \sqrt{\frac{u_0}{-u_{0yy}}} \nabla\Delta \phi\|_2^2\Big),
\end{aligned}
\end{equation}
and the second term can be simply estimated by \eqref{ss_estimates_H} and \eqref{H1_control_by_R},
\begin{equation}\label{Ray_nar_1-4-2}
\begin{aligned}
|(\ref{Ray_nar_1-4}.2)| \lesssim_{A,L,H}\varepsilon^{\frac{1}{3}} \| \phi_x\|_{2} \|\frac{R}{\sqrt{-u_{0yy}}}\|_2 \lesssim_{A,L,H}\varepsilon^{\frac{1}{3}}\|\frac{R}{\sqrt{-u_{0yy}}}\|_2^2.
\end{aligned}
\end{equation}

For rest of the Rayleigh terms, we have
\begin{equation}\label{Ray_nar_2}
\begin{aligned}
(v_s \Delta \phi_y - \Delta v_s \phi_y,  W \frac{R}{-u_{0yy}}) =& (v_s \Delta \phi_y, W \frac{u_0 \Delta \phi}{-u_{0yy}} ) - (\Delta v_s \phi_y, W \frac{u_0 \Delta \phi}{-u_{0yy}} )\\
& + (v_s \Delta \phi_y, W \phi) - (\Delta v_s \phi_y, W \phi).
\end{aligned}
\end{equation}
By \eqref{ss_estimates_H} and \eqref{est:H_weighted_Hardy}, we can estimate
\begin{equation}\label{Ray_nar_2-1}
\begin{aligned}
|(\ref{Ray_nar_2}.1)| &\lesssim_{A,L,H} \varepsilon^{1-} \| \sqrt{\frac{u_0}{-u_{0yy}}} \Delta \phi_{y}\|_2  \| \sqrt{\frac{u_0}{-u_{0yy}}} \Delta \phi\|_2  \\
&\lesssim_{A,L,H}  \varepsilon^{\frac{1}{3}-} \Big( \|\frac{R}{\sqrt{-u_{0yy}}}\|_2^2 + \varepsilon \| \sqrt{\frac{u_0}{-u_{0yy}}} \Delta \phi_{y}\|_2^2\Big).
\end{aligned}
\end{equation}
For the third term, we use Hardy inequality in $y$, \eqref{ss_estimates_H}, and \eqref{est:H_weighted_Hardy} to estimate
\begin{equation}\label{Ray_nar_2-2}
\begin{aligned}
|(\ref{Ray_nar_2}.3)| &\lesssim_{A,L,H} \varepsilon^{1-} \|u_0 \Delta \phi_y\|_2 \| \frac{\phi}{u_0} \|_2\\
&\lesssim_{A,L,H} \varepsilon^{1-} \| \sqrt{\frac{u_0}{-u_{0yy}}} \Delta \phi_{y}\|_2 \|\phi_y\|_2\\
&\lesssim_{A,L,H} \varepsilon^{\frac{1}{2}-} \Big( \|\frac{R}{\sqrt{-u_{0yy}}}\|_2^2 + \varepsilon \| \sqrt{\frac{u_0}{-u_{0yy}}} \Delta \phi_{y}\|_2^2\Big).
\end{aligned}
\end{equation}
For the second and the last term, similar to \eqref{Ray_nar_1-4}, we split $v_s$, and use \eqref{ss_estimates_H}, \eqref{H1_control_by_R}, \eqref{est:H_weighted_Hardy}, \eqref{est:mix_norm_H}, and Hardy inequality in $y$ to estimate
\begin{equation}\label{Ray_nar_2-3}
\begin{aligned}
|(\ref{Ray_nar_2}.2 + 4)| =& |(\Delta v_a \phi_y, W \frac{u_0 \Delta \phi}{-u_{0yy}} ) + ((\Delta v_s - \Delta v_a) \phi_y, W \frac{u_0 \Delta \phi}{-u_{0yy}} )\\
&+ (\Delta v_a \phi_y, W \phi) + ((\Delta v_s - \Delta v_a) \phi_y, W \phi)|\\
\lesssim&_{A,L,H}~ \varepsilon^{\frac{2}{3}} \|\phi_y\|_2 \|u_0 \Delta \phi\|_2 + \|\sqrt{u_0} (\Delta v_s - \Delta v_a)\|_{2} \| \phi_y\|_{L^2_x L^\infty_y} \| \sqrt{u_0} \Delta \phi\|_{L^\infty_x L^2_y}\\
&+ \varepsilon^{\frac{2}{3}} \| \phi_y \|_2 \| \phi \|_y + \|\sqrt{u_0} (\Delta v_s - \Delta v_a)\|_{2} \| \phi_y\|_{L^\infty_x L^2_y} \| \frac{\phi}{\sqrt{u_0}} \|_{L^2_x L^\infty_y} \\
\lesssim&_{A,L,H}~ \varepsilon^{\frac{1}{3}-} \Big( \|\frac{R}{\sqrt{-u_{0yy}}}\|_2^2 + \varepsilon \| \sqrt{\frac{u_0}{-u_{0yy}}} \nabla\Delta \phi\|_2^2\Big).
\end{aligned}
\end{equation}
Estimates for the Rayleigh terms are concluded.

\subsubsection{Dissipation term}
\begin{equation}\label{Dis_nar_1}
-\varepsilon(\Delta^2 \phi,  W \frac{R}{-u_{0yy}}) = - \varepsilon (\Delta^2 \phi, W u_0 \frac{\Delta \phi}{-u_{0yy}}) + \varepsilon (\Delta^2 \phi, W \phi)
\end{equation}
For the first term, we have
\begin{equation}\label{Dis_nar_1-1}
\begin{aligned}
(\ref{Dis_nar_1}.1) =& ~\varepsilon (W \frac{u_0}{-u_{0yy}} \Delta \phi_x, \Delta \phi_x) + \varepsilon (W\frac{u_0}{-u_{0yy}} \Delta \phi_y, \Delta \phi_y)\\
&+  \varepsilon (W' \frac{u_0}{-u_{0yy}} \Delta \phi_x, \Delta \phi)  +  \varepsilon (W \Big( \frac{u_{0y}}{-u_{0yy}} + \frac{u_0u_{0yyy}}{u_{0yy}^2} \Big) \Delta \phi_y, \Delta \phi).
\end{aligned}
\end{equation}
The terms (\ref{Dis_nar_1-1}.1) and (\ref{Dis_nar_1-1}.2) are favorable positive terms. Using \eqref{est:H_weighted_Hardy}, we can estimate
\begin{equation}\label{Dis_nar_1-1-1}
\begin{aligned}
|(\ref{Dis_nar_1-1}.3)| \lesssim &~ \varepsilon \| \sqrt{\frac{u_0}{-u_{0yy}}} \Delta \phi_x\|_2 \|  \sqrt{\frac{u_0}{-u_{0yy}}}\Delta \phi \|_2\\
\lesssim &_H~ \varepsilon^{\frac{1}{3}} \Big( \|\frac{R}{\sqrt{-u_{0yy}}}\|_2^2 + \varepsilon \| \sqrt{\frac{u_0}{-u_{0yy}}} \nabla\Delta \phi\|_2^2\Big).
\end{aligned}
\end{equation}
(\ref{Dis_nar_1-1}.4) is the crucial term. For simplicity of the presentation, we denote $\Xi := \frac{u_{0y}}{-u_{0yy}} + \frac{u_0u_{0yyy}}{u_{0yy}^2}$. Note that $|\Xi| \lesssim H$, and $|\Xi_y| \lesssim 1$. We integrate by parts in $y$ and use the Navier boundary condition \eqref{Navier_bc2}:
\begin{equation}\label{Dis_nar_1-1-2}
\begin{aligned}
(\ref{Dis_nar_1-1}.4) =& ~\frac{\varepsilon}{2} (W \Xi \Delta \phi , \Delta \phi)_{y=H} - \frac{\varepsilon}{2} (W \Xi \Delta \phi , \Delta \phi)_{y=0} - \frac{\varepsilon}{2} (W \Xi_y \Delta \phi, \Delta \phi) \\
=& ~\frac{\varepsilon^{\frac{1}{3}}}{2A^2} (W \Xi \phi_y ,\phi_y)_{y=H} - \frac{\varepsilon^{\frac{1}{3}}}{2A^2} (W \Xi \phi_y ,\phi_y)_{y=0} - \frac{\varepsilon}{2} (W \Xi_y \Delta \phi, \Delta \phi)\\
=& ~\frac{\varepsilon^{\frac{1}{3}}}{A^2} (W \Xi \phi_y ,\phi_{yy}) + \frac{\varepsilon^{\frac{1}{3}}}{2A^2} (W \Xi_y \phi_y ,\phi_y) - \frac{\varepsilon}{2} (W \Xi_y \Delta \phi, \Delta \phi).
\end{aligned}
\end{equation}
The term (\ref{Dis_nar_1-1-2}.1) is the key term, we estimate it by  \eqref{H1_control_by_R} and \eqref{est:H_weighted_Hardy} with $\sigma = (HL)^{\frac{1}{3}}$,
\begin{equation}\label{Dis_nar_1-1-2-1}
\begin{aligned}
|(\ref{Dis_nar_1-1-2}.1)| \lesssim & \frac{\varepsilon^{\frac{1}{3}}HL}{A^2} \|\phi_y\|_2\| \Delta \phi\|_2\\
\lesssim & \frac{H^{\frac{2}{3}}L^{\frac{2}{3}}}{A^2}  \|\frac{R}{\sqrt{-u_{0yy}}}\|_2 \Big( \|\frac{R}{\sqrt{-u_{0yy}}}\|_2 + \sqrt\varepsilon \| \sqrt{\frac{u_0}{-u_{0yy}}} \Delta \phi_y \sqrt{W}\|_2\Big).
\end{aligned}
\end{equation}
The assumption \eqref{H_assumption} is applied here to assure that (\ref{Dis_nar_1-1-2}.1) can be absorbed by (\ref{Ray_nar_1-1}.1) and (\ref{Dis_nar_1-1}.2).
For the last two terms in \eqref{Dis_nar_1-1-2}, we use Lemma \eqref{est:H_weighted_Hardy} and \eqref{H1_control_by_R} to estimate
\begin{equation}\label{Dis_nar_1-1-2-2}
\begin{aligned}
|(\ref{Dis_nar_1-1-2}.2)| + |(\ref{Dis_nar_1-1-2}.3)| \lesssim_A &~ \varepsilon^{\frac{1}{3}}\| \phi_y\|_2^2 + \varepsilon \|\Delta \phi\|_2^2\\
\lesssim_{A,H} &~ \varepsilon^{\frac{1}{3}}  \Big( \|\frac{R}{\sqrt{-u_{0yy}}}\|_2^2 + \varepsilon \| \sqrt{\frac{u_0}{-u_{0yy}}} \Delta \phi_y\|_2^2\Big).
\end{aligned}
\end{equation}
Finally, for the second term in \eqref{Dis_nar_1}, we integrate by parts and obtain
\begin{equation}\label{Dis_nar_1-2}
(\ref{Dis_nar_1}.2) = - \varepsilon(\Delta \phi_y, W \phi_y) - \varepsilon(\Delta \phi_x, W \phi_x) + \varepsilon (\Delta \phi_x, \phi).
\end{equation}
For the first term, we further integrate by parts and use the $\varepsilon$-Navier boundary condition \eqref{Navier_bc2} to obtain 
\begin{equation}\label{Dis_nar_1-2-1}
\begin{aligned}
(\ref{Dis_nar_1-2}.1) &= \varepsilon(\Delta \phi, W \phi_{yy}) - \varepsilon (\Delta \phi, W \phi_y)_{y=H} + \varepsilon (\Delta \phi, W \phi_y)_{y = 0}\\
&= \varepsilon(\Delta \phi, W \phi_{yy}) + \frac{\varepsilon^{\frac{2}{3}}}{A} (\phi_y, W \phi_y)_{y = H} + \frac{\varepsilon^{\frac{2}{3}}}{A} (\phi_y, W \phi_y)_{y = 0}.
\end{aligned}
\end{equation}
Then we can estimate $(\ref{Dis_nar_1-2}.1)$ using \eqref{est:H_weighted_Hardy} and \eqref{est:mix_norm_H},
\begin{equation}\label{Dis_nar_1-2-2}
\begin{aligned}
|(\ref{Dis_nar_1-2}.1)| &\lesssim_{A,H,L} \varepsilon \|\Delta \phi\|_2^2 + \varepsilon^{\frac{2}{3}} \| \phi_y\|_{L^2_x L^\infty_y}^2\\
 &\lesssim_{A,H,L} \varepsilon^{\frac{1}{3}}  \Big( \|\frac{R}{\sqrt{-u_{0yy}}}\|_2^2 + \varepsilon \| \sqrt{\frac{u_0}{-u_{0yy}}} \Delta \phi_y\|_2^2\Big).
\end{aligned}
\end{equation}
For the last two terms in \eqref{Dis_nar_1-2}, we use \eqref{est:mix_norm_Hardy} to estimate
\begin{equation}\label{Dis_nar_1-2-3}
\begin{aligned}
|(\ref{Dis_nar_1-2}.2)| &\lesssim_L \varepsilon \| \sqrt{u_0} \Delta \phi_x \|_2 \| \frac{\phi_x}{\sqrt{u_0}}\|_2 + \varepsilon \| \sqrt{u_0} \Delta \phi_x \|_2 \| \frac{\phi}{\sqrt{u_0}}\|_2\\
&\lesssim_{H,L} \varepsilon^{\frac{1}{6}}  \Big( \|\frac{R}{\sqrt{-u_{0yy}}}\|_2^2 + \varepsilon \| \sqrt{\frac{u_0}{-u_{0yy}}} \nabla \Delta \phi\|_2^2\Big).
\end{aligned}
\end{equation}
Estimates for the dissipation term are concluded.

\subsubsection{Temporal term} For the temporal term, recalling that $W'(x) = -1$, we have
\begin{equation}\label{Tem}
\begin{aligned}
(\Delta \phi_t, \frac{R}{-u_{0yy}}W ) &= (\Delta \phi_t, \frac{u_0}{-u_{0yy} } \Delta \phi W) + (\Delta \phi_t, \phi W)\\
&= \frac{d}{dt} \frac{1}{2} (\Delta {\phi}, \frac{u_0}{-u_{0yy} } \Delta \phi W) - \frac{d}{dt} \frac{1}{2} (\nabla \phi, \nabla \phi W) + (\phi_{tx} , \phi) .
\end{aligned}
\end{equation}
For the last term, we integrate by parts to obtain
\begin{equation}\label{Tem-1}
\begin{aligned}
(\ref{Tem-1}.3) = (\phi_t, \phi)_{x=L} - (\phi_t, \phi_x) = \frac{1}{2} \frac{d}{dt} (\phi,\phi)_{x=L} - (\phi_t, \phi_x).
\end{aligned}
\end{equation}
The estimation of \eqref{Tem} consists of two parts. First, we prove the positivity of 
\begin{equation}\label{Tem_term}
(\Delta {\phi}, \frac{u_0}{-u_{0yy} } \Delta \phi W) - (\nabla \phi, \nabla \phi W) + (\phi,\phi)_{x=L}.
\end{equation}
The second part is the estimation of (\ref{Tem-1}.2). Part one is summarized in the following lemma.
\begin{lemma}\label{lem:positive}
There exists $c > 0$ such that
\begin{equation}\label{Tem_positive}
\begin{aligned}
&(\Delta {\phi}, \frac{u_0}{-u_{0yy} } \Delta \phi W) - (\nabla \phi, \nabla \phi W) + (\phi,\phi)_{x=L}\\
 \ge &c(\Delta {\phi}, \frac{u_0}{-u_{0yy} } \Delta \phi W) + c ( u_0 \nabla q, u_0 \nabla q W) + c(\phi,\phi)_{x=L}.
\end{aligned}
\end{equation}
\end{lemma}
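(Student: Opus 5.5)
The plan is to show that the left-hand side of \eqref{Tem_positive} equals a sum of two manifestly nonnegative weighted quantities, one in $R$ and one in $\nabla q$, and then to bound each term on the right by them. Denote the left-hand side by $E$.

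\emph{Step 1: remove the gradient term.} Integrate by parts in $(\nabla\phi,\nabla\phi W)$. The boundary terms on $y=0,H$ and $x=0$ vanish because $\phi=0$ there. At $x=L$ the boundary term $W\phi\phi_x$ vanishes because $\phi_x|_{x=L}=0$ by \eqref{viscous_bc}. Since $W'=-1$, the leftover term is $(\phi_x,\phi)=\frac12(\phi,\phi)_{x=L}$. Hence
$$
(\nabla\phi,\nabla\phi W)=-(\Delta\phi,W\phi)+\tfrac12(\phi,\phi)_{x=L},
$$
and therefore
$$
E=\Big(\Delta\phi,\frac{u_0}{-u_{0yy}}\Delta\phi W\Big)+(\Delta\phi,W\phi)+\tfrac12(\phi,\phi)_{x=L}.
$$

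\emph{Step 2: rewrite in terms of $R$ and $q$.} Substitute $u_0\Delta\phi=R+u_{0yy}\phi$ and expand the square. The cross terms in $\phi^2$ cancel exactly, leaving
$$
E=\Big(\frac{R^2}{u_0(-u_{0yy})},W\Big)-(R,qW)+\tfrac12(\phi,\phi)_{x=L}.
$$
Writing $\phi=u_0q$, a direct computation (the same one as in Lemma \ref{lem_H1}) gives the divergence structure $R=\nabla\cdot(u_0^2\nabla q)$. Integrating by parts with the weight $W$, and using $u_0=0$ on $y=0,H$ and $q=0$ on $x=0$, gives
$$
-(R,qW)=(u_0^2\nabla q,\nabla q\,W)-\tfrac12(\phi,\phi)_{x=L}.
$$
The two trace terms at $x=L$ then cancel, and we obtain the exact identity
$$
E=\Big(\frac{R^2}{u_0(-u_{0yy})},W\Big)+(u_0^2\nabla q,\nabla q\,W).
$$
This identity is the heart of the lemma. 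The main care needed is in checking the signs of the boundary terms at $x=L$ against \eqref{viscous_bc}.

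\emph{Step 3: control the right-hand side of \eqref{Tem_positive} by the identity.} For the vorticity term, use $u_0\Delta\phi=R+u_{0yy}\phi$ again to get
$$
\Big(\Delta\phi,\frac{u_0}{-u_{0yy}}\Delta\phi W\Big)\le 2\Big(\frac{R^2}{u_0(-u_{0yy})},W\Big)+2\big((-u_{0yy})u_0q^2,W\big).
$$
The last term is $\lesssim H^{-2}(q^2,W)$. Since $\phi$ is odd about $y=H/2$, we have $q(x,H/2)=0$. As $W$ depends only on $x$, Lemma \ref{lem_hardy2_narrow} applies slice by slice in $x$ and bounds it by $(u_0^2q_y^2,W)$. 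For the trace term, since $\phi|_{x=0}=0$ and $W\ge (N-1)L$,
$$
(\phi,\phi)_{x=L}\le L\int_0^L\!\!\int\phi_x^2\le \frac{1}{N-1}(u_0^2q_x^2,W),
$$
using $\phi_x=u_0q_x$. Combining these with Step 2 gives \eqref{Tem_positive} for a small $c$ depending only on $u_0$; the factors of $H$ cancel by the scaling $u_{0yy}\sim H^{-2}$.
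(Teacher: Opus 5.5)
Your proof is correct, and it reaches the lemma by a slightly different and sharper route than the paper's.

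\textbf{Common starting point.} Both arguments begin with the same integration by parts, $-(\nabla\phi,\nabla\phi W)=(\Delta\phi,\phi W)-\frac12(\phi,\phi)_{x=L}$.

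\textbf{The paper's route.} The paper applies Cauchy--Schwarz to $(\Delta\phi,\phi W)$ with the weights $\frac{u_0}{-u_{0yy}}$ and $\frac{-u_{0yy}}{u_0}$. Combined with the one-dimensional identity \eqref{iden_quotient}, this gives only the inequality \eqref{Tem-2_2}, i.e.\ your $E\ge (u_0^2\nabla q,\nabla q\,W)$. The remainder discarded there is exactly $\big(\frac{R^2}{u_0(-u_{0yy})},W\big)$. So your Step 2 identity is the paper's inequality with its slack made explicit; you get it from the divergence form $R=\nabla\cdot(u_0^2\nabla q)$ already used in Lemma \ref{lem_H1}.

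\textbf{Different endgames.} The paper writes the left-hand side as $c$ times itself plus $(1-c)$ times itself. The vorticity term and the trace $(\phi,\phi)_{x=L}$ are then simply retained from the first piece. The only auxiliary input is $\|\nabla\phi\sqrt W\|_2\lesssim\|u_0\nabla q\sqrt W\|_2$ (the weighted form of \eqref{H1_control_by_R}), used to absorb $-c\|\nabla\phi\sqrt W\|_2^2$; consequently its $c$ does not depend on $N$. You instead bound the vorticity term and the trace from above by your two nonnegative pieces. This costs two extra estimates:
\begin{itemize}
\item a Hardy estimate in $y$, via \eqref{Hardy2_narrow} and the oddness of $\phi$ (which the paper also uses implicitly through \eqref{H1_control_by_R});
\item a Poincar\'e estimate in $x$ that requires $N-1$ bounded below, which is harmless since $N$ is large.
\end{itemize}
The paper's route is a bit shorter. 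Yours exhibits the temporal energy as an exact sum of two manifestly nonnegative quadratic forms, so its nonnegativity needs no Hardy inequality at all.

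\textbf{One point to state explicitly.} When writing it up, say why the boundary terms in Step 2 vanish. At $x=L$ they vanish because $q_x=\phi_x/u_0=0$ there. At $y=0,H$ they vanish because $u_0^2qq_y=\phi\phi_y-u_{0y}\phi^2/u_0\to0$, which requires a bit more than $u_0=0$.
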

\begin{proof}
First, we claim that
\begin{equation}\label{iden_quotient}
\int |\phi_y|^2 W + \int \frac{u_{0yy}}{u_0} \phi^2 W = \int u_0^2 q_y^2 W.
\end{equation}
Indeed, w e rewrite $\phi$ as $u_0 q$ and integrate by parts,
\begin{align*}
\int |\phi_y|^2 W + \int \frac{u_{0yy}}{u_0} \phi^2 W &= \int \Big[ (u_0 q_y + u_{0y}q)^2 + u_0 u_{0yy} q^2 \Big] W\\
&= \int \Big[ u_0^2 q_y^2 + u_{0y}^2q^2 + 2 u_0 u_{0y} q q_y + u_0 u_{0yy} q^2 \Big] W\\
&= \int \Big[ u_0^2 q_y^2 + u_{0y}^2q^2 - (u_0 u_{0y})_y q^2 + u_0 u_{0yy} q^2 \Big] W\\
&= \int u_0^2 q_y^2 W.
\end{align*}
Then we can estimate
\begin{equation}\label{Tem-2}
\begin{aligned}
 (\ref{Tem_term}.2) &= -  (\nabla \phi, \nabla \phi W) \\ &=  (\Delta \phi, \phi W)-  (\phi_x, \phi)\\
 &=  (\Delta \phi, \phi W) - \frac{1}{2} (\phi ,\phi)|_{x=L}\\
 & \ge - \frac{1}{2} \int \frac{u_0}{-u_{0yy}} |\Delta \phi|^2 W - \frac{1}{2} \int \frac{-u_{0yy}}{u_0} \phi^2 W - \frac{1}{2} (\phi ,\phi)|_{x=L}\\
 &= - \frac{1}{2} \int \frac{u_0}{-u_{0yy}} |\Delta \phi|^2 W + \frac{1}{2} \int u_0^2 q_y^2 W - \frac{1}{2} \int |\phi_y|^2 W - \frac{1}{2} (\phi ,\phi)|_{x=L},\\
\end{aligned}
\end{equation}
where we used \eqref{iden_quotient} in the last line. Moving the term $- \frac{1}{2} \int |\phi_y|^2 W$ to the left-hand side of \eqref{Tem-2}, and a piece $- \frac{1}{2} \int |\phi_x|^2 W$ from $-  (\nabla \phi, \nabla \phi W)$ to the right-hand side of \eqref{Tem-2}, we can conclude that
\begin{equation}\label{Tem-2_1}
\begin{aligned}
 (\ref{Tem_term}.2) &= -  (\nabla \phi, \nabla \phi W) \\
 & \ge - \int \frac{u_0}{-u_{0yy}} |\Delta \phi|^2 W + \int u_0^2 q_y^2 W +  \int |\phi_x|^2 W -  (\phi ,\phi)|_{x=L}.
\end{aligned}
\end{equation}
Plugging \eqref{Tem-2_1} into \eqref{Tem_term} gives that
\begin{equation}\label{Tem-2_2}
\begin{aligned}
\eqref{Tem_term} \ge \int u_0^2 q_y^2 W +  \int |\phi_x|^2 W = \|u_0 \nabla q \sqrt{W}\|_2^2.
\end{aligned}
\end{equation}
Finally, because of \eqref{H1_control_by_R}, we know that there exists a $c > 0$, such that
\begin{equation}\label{Tem-2_3}
c \| \nabla \phi \sqrt{W}\|_2^2 \le (1-2c) \|u_0 \nabla q \sqrt{W}\|_2^2.
\end{equation}
Combining \eqref{Tem-2_2} and \eqref{Tem-2_3}, we can conclude that
\begin{align*}
\eqref{Tem_term} \ge &c (\Delta {\phi}, \frac{u_0}{-u_{0yy} } \Delta \phi W) -  c (\nabla \phi, \nabla \phi W) + c (\phi, \phi)|_{x=L} +(1-c) ( u_0 \nabla q, u_0 \nabla q W)\\
 \ge &c (\Delta {\phi}, \frac{u_0}{-u_{0yy} } \Delta \phi W) + c (\phi, \phi)|_{x=L} +c ( u_0 \nabla q, u_0 \nabla q W).
\end{align*}
This concludes the proof of \eqref{Tem_positive}.
\end{proof}

By a similar argument, we can also prove the following estimate.
\begin{lemma}
\begin{equation}\label{H1_control_by_energy}
\| \nabla \phi \|_2 + \| u_0 \nabla q\|_2 \le \| \sqrt{\frac{u_0}{-u_{0yy}}} \Delta \phi\|_2.
\end{equation}
\end{lemma}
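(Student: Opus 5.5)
We repeat the argument of Lemma \ref{lem:positive} without the weight $W$. We work with the quantity
$$
E := \Big\| \sqrt{\tfrac{u_0}{-u_{0yy}}}\,\Delta\phi\Big\|_2^2 - \|\nabla\phi\|_2^2 ,
$$
and aim to show $E \ge \|u_0\nabla q\|_2^2$. Combined with $\|\nabla \phi\|_2 \lesssim \|u_0 \nabla q\|_2$ from \eqref{H1_control_by_R}, this gives the claim, up to the harmless normalization of constants implicit in the statement.

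\emph{Step 1: the weightless identity.} Since $\phi$ vanishes on $x=0$ and on $y=0,H$, integration by parts gives $-\|\nabla\phi\|_2^2 = (\Delta\phi,\phi) - (\phi_x,\phi)_{x=L}\cdot 0$. With $W\equiv 1$ there is no boundary term at $x=L$ from the $x$-derivative of the weight. The term $(\phi_x\phi)|_{x=L}$ vanishes by \eqref{viscous_bc}, and $\phi$ vanishes on the other sides. Hence $-\|\nabla\phi\|_2^2=(\Delta\phi,\phi)$.

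\emph{Step 2: Cauchy--Schwarz with the Rayleigh weight.} Write
$$
(\Delta\phi,\phi) = \Big(\sqrt{\tfrac{u_0}{-u_{0yy}}}\Delta\phi,\ \sqrt{\tfrac{-u_{0yy}}{u_0}}\phi\Big).
$$
Then
$$
|(\Delta\phi,\phi)| \le \tfrac12\Big\|\sqrt{\tfrac{u_0}{-u_{0yy}}}\Delta\phi\Big\|_2^2+\tfrac12\int\tfrac{-u_{0yy}}{u_0}\phi^2 .
$$
The unweighted version of \eqref{iden_quotient} gives $\int \tfrac{-u_{0yy}}{u_0}\phi^2 = \|\phi_y\|_2^2-\|u_0q_y\|_2^2$. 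Therefore
$$
\|\nabla\phi\|_2^2 \le \tfrac12 \Big\|\sqrt{\tfrac{u_0}{-u_{0yy}}}\Delta\phi\Big\|_2^2 + \tfrac12\|\phi_y\|_2^2 - \tfrac12\|u_0q_y\|_2^2 .
$$
Writing $\|\nabla\phi\|_2^2=\|\phi_x\|_2^2+\|\phi_y\|_2^2$ and using $\phi_x=u_0q_x$, we rearrange to
$$
\tfrac12\|\phi_x\|_2^2+\tfrac12\|\nabla\phi\|_2^2 + \tfrac12\|u_0 q_y\|_2^2 \le \tfrac12\Big\|\sqrt{\tfrac{u_0}{-u_{0yy}}}\Delta\phi\Big\|_2^2 .
$$
This is the same manipulation as in \eqref{Tem-2}--\eqref{Tem-2_1}: one half of $\|\phi_y\|^2$ is moved across. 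It yields
$$
\|\nabla\phi\|_2^2+\|u_0\nabla q\|_2^2 \le \Big\|\sqrt{\tfrac{u_0}{-u_{0yy}}}\Delta\phi\Big\|_2^2 .
$$

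\emph{Step 3: conclude.} Since $(a+b)^2\le 2(a^2+b^2)$, Step 2 gives \eqref{H1_control_by_energy} with a factor $\sqrt2$. If the sharp constant $1$ is meant with each term separately, each of $\|\nabla\phi\|_2$ and $\|u_0\nabla q\|_2$ is individually bounded by the right-hand side, which is what will be used. The only delicate point is checking that all boundary terms vanish without the weight. This needs $\phi=0$ on $x=0$ and $y=0,H$ only; no condition at $x=L$ is required, since no $x$-integration by parts of the weight occurs. It also needs the integrability of $\frac{-u_{0yy}}{u_0}\phi^2$ near the walls, which holds since $\phi/u_0=q$ and $u_{0yy}$ is bounded. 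That integrability is the step where I would be most careful, approximating $\phi$ by smooth functions vanishing on the walls.
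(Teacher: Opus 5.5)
Your proof is correct and is essentially the paper's argument. You integrate by parts to get $-\|\nabla\phi\|_2^2=(\Delta\phi,\phi)$, apply Cauchy--Schwarz with the weight $\sqrt{u_0/(-u_{0yy})}$, use \eqref{iden_quotient} with $W\equiv1$, and move half of $\|\phi_y\|_2^2$ across. You are also right that this proves $\|\nabla\phi\|_2^2+\|u_0\nabla q\|_2^2\le\|\sqrt{u_0/(-u_{0yy})}\,\Delta\phi\|_2^2$, so the stated sum form holds only up to a factor $\sqrt2$; the detour through \eqref{H1_control_by_R} in your first paragraph is unnecessary. One slip: your closing claim that no condition at $x=L$ is needed is wrong. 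The boundary term $(\phi_x,\phi)_{x=L}$ produced by integrating $(\phi_{xx},\phi)$ by parts vanishes precisely because $\phi_x|_{x=L}=0$, which is exactly what you used in Step~1.
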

\begin{proof}
Similar to \eqref{Tem-2}, we have
\begin{align*}
 -  (\nabla \phi, \nabla \phi ) &=  (\Delta \phi, \phi )\\
 & \ge - \frac{1}{2} \int \frac{u_0}{-u_{0yy}} |\Delta \phi|^2 - \frac{1}{2} \int \frac{-u_{0yy}}{u_0} \phi^2 \\
 &= - \frac{1}{2} \int \frac{u_0}{-u_{0yy}} |\Delta \phi|^2 + \frac{1}{2} \int u_0^2 q_y^2 - \frac{1}{2} \int |\phi_y|^2,
\end{align*}
where we used the identity \eqref{iden_quotient} with $W = 1$. From the proof of \eqref{iden_quotient}, one can see that the identity holds for any $W$ that only depends on $x$. Then we use the same rearrangement as in the previous lemma to conclude that
$$
 - (\nabla \phi, \nabla \phi ) \ge - \int \frac{u_0}{-u_{0yy}} |\Delta \phi|^2  + \int u_0^2 |\nabla q|^2. 
$$
This concludes the proof of \eqref{H1_control_by_energy}.
\end{proof}

It remains to estimate the term $(\phi_t, \phi_x)$. In the following, we estimate $\|\phi_t\|_2$ by duality.

\subsubsection{Estimate of $\phi_t$ by duality}

Let $g$ be the auxiliary function satisfying
$$
\left\{
\begin{aligned}
-\Delta g = \phi_t, \quad \mbox{in}~(0,L)\times(0,H),\\
g|_{x=0} = g_x|_{x=L} = g|_{y=0,H} = 0.
\end{aligned}
\right.
$$
By the symmetry of $\phi_t$, we know that $g(x,y) = -g(x,H-y)$, hence $g(x , H/2) = 0$. It is straightforward to see that $g$ satisfies the estimates
\begin{equation}\label{est:g_basic}
\|g\|_2 + \| \nabla g\|_2 + \| \nabla^2 g\|_2 \lesssim_{H,L} \| \phi_t\|_2.
\end{equation}
In the following, we estimate $g_x$ with an explicit constant by separation of variables.

\begin{lemma}
\begin{equation}\label{est:g_x}
\|g_x\|_2 \le \frac{H}{4\pi} \|\phi_t\|_2.
\end{equation}
\end{lemma}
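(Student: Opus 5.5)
The plan is to diagonalize $-\Delta$ with the boundary conditions of $g$ by a double eigenfunction expansion, and then bound the multiplier that sends $\phi_t$ to $g_x$ mode by mode.

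\textbf{Eigenfunction expansion.} In $x$ I use the basis from the proof of \eqref{est:nabla_phi1},
$$\varphi_m(x)=\sin(\lambda_m x),\qquad \lambda_m=\frac{(2m+1)\pi}{2L},$$
which matches $g|_{x=0}=g_x|_{x=L}=0$. In $y$ I use $\sin(k\pi y/H)$, $k\ge1$, which matches $g|_{y=0,H}=0$. Both families are complete and orthogonal in $L^2$, and
$$\|\sin(\lambda_m\cdot)\|_{L^2(0,L)}^2=\|\cos(\lambda_m\cdot)\|_{L^2(0,L)}^2=\frac{L}{2}.$$
I write
$$\phi_t=\sum_{m,k} f_{mk}\,\sin(\lambda_m x)\sin\Big(\frac{k\pi y}{H}\Big).$$

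\textbf{Using the symmetry.} Since $\phi_t$ is odd with respect to $y=H/2$, I use
$$\sin\Big(\frac{k\pi(H-y)}{H}\Big)=-(-1)^k\sin\Big(\frac{k\pi y}{H}\Big).$$
So $\sin(k\pi y/H)$ is odd about $y=H/2$ exactly when $k$ is even. Hence only $k=2j$ with $j\ge1$ appear, and every surviving vertical frequency satisfies $\mu_k:=k\pi/H\ge 2\pi/H$. This is the only place where the oddness of $\phi_t$ (equivalently $g(x,H/2)=0$) is used, and it is what yields the constant $\frac{H}{4\pi}$ instead of $\frac{H}{2\pi}$.

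\textbf{Solving for $g$.} Each mode is an eigenfunction of $-\Delta$ with eigenvalue $\lambda_m^2+\mu_k^2$, so $g$ has coefficients
$$g_{mk}=\frac{f_{mk}}{\lambda_m^2+\mu_k^2}.$$
Differentiating in $x$ gives
$$g_x=\sum_{m,k}\lambda_m\,g_{mk}\cos(\lambda_m x)\sin\Big(\frac{k\pi y}{H}\Big).$$
The products $\cos(\lambda_m x)\sin(k\pi y/H)$ are again orthogonal, with the same normalization $\frac{L}{2}\cdot\frac{H}{2}$. Parseval therefore gives
$$\|g_x\|_2^2=\frac{LH}{4}\sum_{m,k}\frac{\lambda_m^2}{(\lambda_m^2+\mu_k^2)^2}\,f_{mk}^2,\qquad \|\phi_t\|_2^2=\frac{LH}{4}\sum_{m,k}f_{mk}^2.$$

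\textbf{Bounding the multiplier.} By AM--GM, $\lambda^2+\mu^2\ge 2\lambda\mu$, so for every $\lambda\ge0$,
$$\frac{\lambda}{\lambda^2+\mu^2}\le\frac{1}{2\mu}.$$
With $\mu_k\ge 2\pi/H$ this becomes
$$\frac{\lambda_m}{\lambda_m^2+\mu_k^2}\le\frac{H}{4\pi}$$
uniformly in $m,k$. Summing the modes gives \eqref{est:g_x}.

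\textbf{Main obstacle.} The only delicate point is justifying the expansion, i.e.\ completeness of the mixed basis and term-by-term differentiation. This is routine, because $g\in H^2$ by \eqref{est:g_basic} and the basis functions are exactly the eigenfunctions of the Laplacian with these boundary conditions.
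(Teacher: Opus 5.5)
Your proof is correct and is essentially the paper's argument: a double sine expansion, oddness about $y=H/2$ forcing every vertical frequency to be at least $2\pi/H$, and the mode-wise bound $\lambda^2/(\lambda^2+\mu^2)^2\le 1/(4\mu^2)$, which is the paper's maximization of $s/(s+a)^2$. The only cosmetic difference is that the paper expands on the half-channel $(0,L)\times(0,H/2)$, using $g(x,H/2)=0$ as a Dirichlet condition, whereas you expand on the full channel and discard the odd vertical modes; the two are equivalent.
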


\begin{proof}
By the boundary conditions and the symmetry above, we can solve $g$ in $(0,L) \times (0, H/2)$ using separation of variables. The orthonormal basis is
$$
\varphi_{nm} = \sqrt{\frac{8}{HL}} \sin \Big[ \Big( n + \frac{1}{2} \Big) \frac{\pi}{L} x \Big] \sin \Big( \frac{2m\pi}{H} y \Big), \quad n \ge 0, m \ge 1.
$$
Define
$$
\phi_{nm}:= \int_0^{H/2} \int_0^L \phi_t \varphi_{nm} \, dx dy,
$$
and
$$
g_{nm} := \frac{\phi_{nm}}{\Big( n + \frac{1}{2} \Big)^2 \Big(\frac{\pi}{L}\Big)^2 + \Big( \frac{2m\pi}{H} \Big)^2 }.
$$
Then
$$
g = \sum_{n \ge 0} \sum_{m \ge 1} g_{nm} \varphi_{nm},
$$ 
and
$$
g_x = \sum_{n \ge 0} \sum_{m \ge 1} g_{nm} \Big( n + \frac{1}{2} \Big)  \frac{\pi}{L} \sqrt{\frac{8}{HL}} \cos \Big[ \Big( n + \frac{1}{2} \Big) \frac{\pi}{L} x \Big] \sin \Big( \frac{2m\pi}{H} y \Big).
$$
Therefore,
$$
\|g_x\|_2^2 = \sum_{n \ge 0} \sum_{m \ge 1} \frac{\Big( n + \frac{1}{2} \Big)^2 \Big(\frac{\pi}{L}\Big)^2}{\Big[ \Big( n + \frac{1}{2} \Big)^2 \Big(\frac{\pi}{L}\Big)^2 + \Big( \frac{2m\pi}{H} \Big)^2 \Big]^2} \phi_{nm}^2.
$$
Now we consider the function 
$$
f(s) = \frac{s}{(s + a)^2}, \quad a,s > 0.
$$
It is straightforward to see that $f(s)$ achieves maximum at $s = a$. Therefore,
$$
\frac{\Big( n + \frac{1}{2} \Big)^2 \Big(\frac{\pi}{L}\Big)^2}{\Big[ \Big( n + \frac{1}{2} \Big)^2 \Big(\frac{\pi}{L}\Big)^2 + \Big( \frac{2m\pi}{H} \Big)^2 \Big]^2} \le \frac{\Big( \frac{2m\pi}{H} \Big)^2}{4\Big( \frac{2m\pi}{H} \Big)^4} = \Big( \frac{H}{4m\pi}\Big)^2.
$$
Then we can estimate
$$
\|g_x\|_2^2 \le \sum_{n \ge 0} \sum_{m \ge 1} \Big( \frac{H}{4m\pi}\Big)^2 \phi_{nm}^2 \le \Big( \frac{H}{4\pi}\Big)^2 \sum_{n \ge 0} \sum_{m \ge 1} \phi_{nm}^2 = \Big( \frac{H}{4\pi}\Big)^2 \|\phi_t\|_2^2.
$$
This concludes the proof.
\end{proof}
Now we are ready to estimate $\phi_t$.
\begin{proposition}\label{prop:phi_t}
For any $\delta > 0$, we can choose an $N$ large from the weight $W = NL - x$, and there exists some positive constant $C$ depending only on $\delta, A, L$, and $H$, such that
\begin{equation}\label{est:phi_t}
\|\phi_t\|_2 \le \frac{H}{4\pi}\|R\|_2 + \frac{\delta}{\sqrt{2}} \| \sqrt{\frac{W}{-u_{0yy}}} R\|_{x= L} + C \varepsilon^{\frac{1}{3}}  \Big( \|\frac{R}{\sqrt{-u_{0yy}}}\|_2 + \sqrt{\varepsilon} \| \sqrt{\frac{u_0}{-u_{0yy}}} \nabla \Delta \phi\|_2\Big).
\end{equation}
\end{proposition}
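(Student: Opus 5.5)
By duality I will write $\|\phi_t\|_2^2 = (\phi_t,\phi_t) = -(\Delta g,\phi_t) = -(g,\Delta\phi_t)$; the last step integrates by parts twice. The boundary terms vanish: $g$ and $\phi_t$ vanish on $x=0$ and $y=0,H$, and $g_x=\phi_{tx}=0$ on $x=L$. I then substitute $\Delta\phi_t$ from the linearized equation \eqref{Linearized_NS}, using the decomposition from the Rayleigh term section:
\begin{equation*}
\Delta\phi_t = -R_x - (u_s-u_0)\Delta\phi_x + (\Delta u_s-u_{0yy})\phi_x - v_s\Delta\phi_y + \Delta v_s\phi_y + \varepsilon\Delta^2\phi .
\end{equation*}
The main term is $(g,R_x)$. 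Integrating by parts in $x$ gives $(g,R_x) = (g,R)_{x=L} - (g_x,R)$, since $g|_{x=0}=0$. The bulk piece is controlled by \eqref{est:g_x}: $|(g_x,R)|\le \frac{H}{4\pi}\|\phi_t\|_2\|R\|_2$, which is exactly the sharp constant in \eqref{est:phi_t}.

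The boundary piece $(g,R)_{x=L}$ is where the weight and $N$ enter. I plan to bound it by $\|g\sqrt{-u_{0yy}/W}\|_{x=L}\,\|R\sqrt{W/(-u_{0yy})}\|_{x=L}$. On $x=L$ we have $W=(N-1)L$, and the trace inequality together with \eqref{est:g_basic} gives $\|g\|_{x=L}\lesssim_{H,L}\|\phi_t\|_2$. So the first factor is at most $C_{H,L}(N-1)^{-1/2}L^{-1/2}\|\phi_t\|_2$, and taking $N$ large makes its coefficient at most $\delta/\sqrt2$. This gives the middle term of \eqref{est:phi_t}.

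All remaining terms should be $O(\varepsilon^{1/3})$ times the dissipation norm, multiplied by $\|\phi_t\|_2$. For them I will move derivatives onto $g$ where needed, using $\|g\|_{H^2}\lesssim\|\phi_t\|_2$.
\begin{itemize}
\item For $((u_s-u_0)\Delta\phi_x,g)$, I integrate by parts in $x$. The boundary term at $x=L$ vanishes only if it is handled carefully. Otherwise I bound $|u_s-u_0|\lesssim\varepsilon^{1/3}$ and use $\|\Delta\phi\|$, with the $x$-derivative landing on $g$ and on $u_{sx}$. The estimate \eqref{est:H_weighted_Hardy} then gives $\varepsilon^{1/3}\|\Delta\phi\|_2\lesssim$ dissipation, and the boundary trace of $\Delta\phi$ is avoided by integrating by parts in the other direction only where needed.
\item For the term with $\Delta u_s-u_{0yy}$, I split it as in \eqref{Ray_nar_1-4} and use \eqref{ss_estimates_H}, \eqref{H1_control_by_R} and $g\in L^\infty$, which holds by Sobolev embedding from $H^2$.
\item The $v_s$ terms follow as in \eqref{Ray_nar_2-1}–\eqref{Ray_nar_2-3}.
\end{itemize}

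The dissipation term $\varepsilon(g,\Delta^2\phi)$ is where I expect the main obstacle. My first attempt is to integrate by parts once, obtaining $-\varepsilon(\nabla g,\nabla\Delta\phi)$ plus boundary terms $\varepsilon(g,\partial_n\Delta\phi)$. These boundary terms vanish on $y=0,H$ and $x=0$ because $g=0$ there, but at $x=L$ one gets $\varepsilon(g,\Delta\phi_x)_{x=L}=0$ by \eqref{viscous_bc}. The bulk term is then bounded by $\varepsilon\|\nabla g\|_{L^\infty_yL^2_x\ldots}$. The weight is the difficulty: I only control $\sqrt\varepsilon\|\sqrt{u_0}\nabla\Delta\phi\|$, not the unweighted norm. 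I will try to handle the region near the walls by integrating by parts once more, getting $\varepsilon(\Delta g,\Delta\phi)$ with the boundary term $\varepsilon(\partial_n g,\Delta\phi)_{y=0,H}$, and converting the latter via the Navier condition \eqref{Navier_bc2} to $\varepsilon^{2/3}A^{-1}(\partial_n g,\phi_y)$. Then $\varepsilon\|\Delta\phi\|_2\lesssim\varepsilon^{2/3}\cdot$ dissipation by \eqref{est:H_weighted_Hardy}, and the boundary term is $\lesssim\varepsilon^{2/3}\|g\|_{H^2}\|\phi_y\|_{L^2_xL^\infty_y}$, which by \eqref{est:mix_norm_H} is $\lesssim\varepsilon^{1/2}\|\phi_t\|_2\cdot$ dissipation.

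Finally, dividing by $\|\phi_t\|_2$ yields \eqref{est:phi_t}.
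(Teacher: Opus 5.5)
Your overall architecture matches the paper's. You use the duality identity through $g$, the bulk bound $|(g_x,R)|\le \frac{H}{4\pi}\|R\|_2\|\phi_t\|_2$ from \eqref{est:g_x}, and a large $N$ to make the $x=L$ trace small. The viscous term is integrated by parts twice, with \eqref{Navier_bc2} used on $y=0,H$. Note that the $x=0$ boundary term there vanishes because $\Delta\phi|_{x=0}=0$ by \eqref{viscous_bc}; this matters since $g_x|_{x=0}\neq 0$.

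The genuine gap is the term $((u_s-u_0)\Delta\phi_x,g)$. You bound the bulk piece $((u_s-u_0)\Delta\phi,g_x)$ by $\|u_s-u_0\|_{L^\infty}\|\Delta\phi\|_2\|g_x\|_2$ and then invoke \eqref{est:H_weighted_Hardy}. That inequality only gives $\varepsilon^{1/3}\|\Delta\phi\|_2\lesssim \|R/\sqrt{-u_{0yy}}\|_2+\sqrt\varepsilon\|\sqrt{u_0/(-u_{0yy})}\,\Delta\phi_y\|_2$. So the factor $\varepsilon^{1/3}$ from $0<u_s-u_0\lesssim A\varepsilon^{1/3}$ is used up entirely. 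You are left with an $O(A)$ multiple, not an $O(\varepsilon^{1/3})$ multiple, of the dissipation norm times $\|\phi_t\|_2$.

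That is not \eqref{est:phi_t}, and it cannot be absorbed later. In the proof of Theorem \ref{Thm_H_linear}, the only non-small coefficient that $(\phi_t,\phi_x)$ may carry is $H/(4\pi)$, which the spectral condition \eqref{assumption_3} balances against $\frac12\|R/\sqrt{-u_{0yy}}\|_2^2$. There is no hidden smallness either: $u_s-u_0$ is of size $A u_0'(0)\varepsilon^{1/3}$ right at the walls.

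The boundary term $((u_s-u_0)\Delta\phi,g)_{x=L}$ also does not vanish. Only $g_x$, not $g$, is zero at $x=L$, and $\Delta\phi$ has no condition there. Skipping the integration by parts is not an option, since $\Delta\phi_x$ is only controlled through the dissipation, at a cost of $\varepsilon^{-1/2}$.

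The missing idea is to keep the weight $u_0$ on $\Delta\phi$ and move $1/u_0$ onto $g$, using $g|_{y=0,H}=0$. By \eqref{Laplace_control_by_R}, Hardy in $y$, and \eqref{est:g_basic},
\[
|((u_s-u_0)\Delta\phi,g_x)|\lesssim A\varepsilon^{1/3}\|u_0\Delta\phi\|_2\,\|g_x/u_0\|_2\lesssim A\varepsilon^{1/3}\|R\|_2\,\|g_{xy}\|_2\lesssim A\varepsilon^{1/3}\|R\|_2\,\|\phi_t\|_2 .
\]
The $x=L$ term is handled the same way, as $A\varepsilon^{1/3}\|u_0\Delta\phi\|_{x=L}\|g/u_0\|_{x=L}$. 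Write $u_0\Delta\phi=R+u_{0yy}\phi$, and use $\|\phi\|_{x=L}\le\sqrt L\|\phi_x\|_2$ and $\|g/u_0\|_{x=L}\lesssim\|g_y\|_{x=L}\lesssim\|\phi_t\|_2$. The leftover $\varepsilon^{1/3}\|R\|_{x=L}$ is absorbed into the $\delta$-term for $\varepsilon$ small, as in \eqref{Dual_3-1}.

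A smaller point of the same kind concerns the $\Delta u_s-u_{0yy}$ term. There $\Delta u_s-\Delta u_a$ is only controlled in the weighted norm $\|\sqrt{u_0}(\Delta u_s-\Delta u_a)\|_2$, so $g\in L^\infty$ together with \eqref{H1_control_by_R} is not enough. Use instead $\|g/\sqrt{u_0}\|_{L^\infty}\lesssim\|g_y\|_{L^\infty_xL^2_y}\lesssim\|\phi_t\|_2$. This follows from $|g|\le\sqrt{y}\,\|g_y\|_{L^2_y}$ near $y=0$, and symmetrically near $y=H$.

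With these corrections, the rest of your plan (the $u_{sx}$, $v_s$ and viscous terms) goes through.
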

\begin{proof}
We rewrite the linearized Navier-Stokes equation \eqref{Linearized_NS} as
$$
\Delta \phi_t = - R_x -(u_s - u_0) \Delta \phi_x + (\Delta u_s - u_{0yy}) \phi_x - v_s \Delta \phi_y + \Delta v_s \phi_y + \varepsilon \Delta^2 \phi.
$$
We test the equation by $-g$ and estimate term by term.

For the temporal term, we simply integrate by parts and obtain
\begin{equation}\label{Dual_1}
(\Delta \phi_t, -g) = (\phi_t , - \Delta g) = \| \phi_t\|_2^2.
\end{equation}

For the Rayleigh terms, first by Sobolev inequality in $x$ and \eqref{est:g_x}, we have
\begin{align*}
|(R_x, g)| &\le |-(R,g_x)| + |(R,g)_{x=L}|\\
&\le \|R\|_2 \|g_x\|_2 + \sqrt{L}\|R\|_{x=L}\|g_x\|_2\\
&\le \frac{H}{4\pi} \|R\|_2 \|\phi_t\|_2 + \frac{H\sqrt{L}}{4\pi} \|R\|_{x=L} \| \phi_t\|_2.
\end{align*}
Then for any $\delta > 0$, we can choose $N$ large such that
$$
\frac{\sqrt{2}H}{4\pi \sqrt{-(N-1)u_{0yy}}} < \delta,
$$
which implies
\begin{equation}\label{Dual_2}
|(R_x, g)| \le \frac{H}{4\pi} \|R\|_2 \|\phi_t\|_2 + \frac{\delta}{\sqrt{2}} \| \sqrt{\frac{W}{-u_{0yy}}} R\|_{x= L} \| \phi_t\|_2.
\end{equation}

For the next Rayleigh term, we integrate by parts and write
\begin{equation}\label{Dual_3}
((u_s - u_0) \Delta \phi_x,g) = - ((u_s - u_0) \Delta \phi, g_x) + ((u_s- u_0) \Delta \phi ,g)_{x=L} - (u_{sx} \Delta \phi, g).
\end{equation}
For the first two terms, we use \eqref{ss_estimates_H}, \eqref{H1_control_by_R}, Hardy inequality in $y$, Sobolev inequality in $x$, and \eqref{est:g_basic} to estimate
\begin{equation}\label{Dual_3-1}
\begin{aligned}
|(\ref{Dual_3}.1+2)| \lesssim& A \varepsilon^{\frac{1}{3}} \| u_0 \Delta \phi\|_2 \| \frac{g_x}{u_0}\|_2 + A \varepsilon^{\frac{1}{3}} \| u_0 \Delta \phi\|_{x=L} \| \frac{g}{u_0}\|_{x=L}\\
\lesssim&_{A,H,L}~ \varepsilon^{\frac{1}{3}} \| \frac{R}{\sqrt{-u_{0yy}}} \|_2 \| g_{xy}\|_2 + \varepsilon^{\frac{1}{3}} \Big( \| \frac{R}{\sqrt{-u_{0yy}}} \|_{x=L} + \| \phi\|_{x=L} \Big) \| g_y\|_{x=L}\\
\lesssim&_{A,H,L}~ \varepsilon^{\frac{1}{3}} \| \frac{R}{\sqrt{-u_{0yy}}} \|_2 \| g_{xy}\|_2 + \varepsilon^{\frac{1}{3}} \Big( \| \frac{R}{\sqrt{-u_{0yy}}} \|_{x=L} + \| \phi_x\|_{2} \Big) (\| g_y\|_{2} + \|g_{xy}\|_2)\\
\lesssim&_{A,H,L}~ \varepsilon^{\frac{1}{3}} \Big( \| \frac{R}{\sqrt{-u_{0yy}}} \|_2 +  \| \frac{R}{\sqrt{-u_{0yy}}} \|_{x=L} \Big) \| \phi_t\|_2.
\end{aligned}
\end{equation}
For the third term, we use \eqref{ss_estimates_H}, \eqref{est:mix_norm_misc}, \eqref{est:mix_norm_Hardy} with $\phi = g$, and \eqref{est:g_basic} to estimate
\begin{equation}\label{Dual_3-2}
\begin{aligned}
|(\ref{Dual_3}.3)| \lesssim& \|u_{sx}\|_2 \| \sqrt{u_0} \Delta \phi\|_{L^\infty_x L^2_y} \| \frac{g}{\sqrt{u_0}}\|_{L^2_x L^\infty_y}\\
\lesssim&_{A,H,L} \varepsilon^{1-} \| \sqrt{u_0} \Delta \phi\|_{L^\infty_x L^2_y}  \|g_y\|_2\\
\lesssim&_{A,H,L} \varepsilon^{\frac{2}{3}-} \Big( \|\frac{R}{\sqrt{-u_{0yy}}}\|_2 + \sqrt{\varepsilon} \| \sqrt{\frac{u_0}{-u_{0yy}}} \nabla \Delta \phi\|_2\Big) \|\phi_t\|_2.
\end{aligned}
\end{equation}

Next, we split $\Delta u_s$ and estimate using \eqref{ss_estimates_H}, \eqref{est:mix_norm_misc}, \eqref{est:mix_norm_H}, \eqref{est:mix_norm_Hardy} with $\phi = g$, and \eqref{est:g_basic}
\begin{equation}\label{Dual_4}
\begin{aligned}
|((\Delta u_s - u_{0yy}) \phi_x, g)| \le& |((\Delta u_s - \Delta u_a) \phi_x, g) | + |((\Delta u_a - u_{0yy}) \phi_x, g) |\\
\lesssim&_{A,H,L} \| \sqrt{u_0} (\Delta u_s - \Delta u_a)\|_2 \| \phi_x\|_{L^\infty_x L^2_y} \| \frac{g}{\sqrt{u_0}}\|_{L^2_x L^\infty_y} + \varepsilon^{\frac{1}{3}}\|\phi_x\|_2\|g\|_2\\
\lesssim&_{A,H,L}~ \varepsilon^{\frac{1}{3}}  \Big( \|\frac{R}{\sqrt{-u_{0yy}}}\|_2 + \sqrt{\varepsilon} \| \sqrt{\frac{u_0}{-u_{0yy}}} \nabla \Delta \phi\|_2\Big) \|\phi_t\|_2.
\end{aligned}
\end{equation}

Similarly, we can split $\Delta v_s$ and estimate
\begin{equation}\label{Dual_5}
\begin{aligned}
|(\Delta v_s \phi_y, g)| \le& |((\Delta v_s - \Delta v_a) \phi_y, g) | + |(\Delta v_a  \phi_x, g) |\\
\lesssim&_{A,H,L} \| \sqrt{u_0} (\Delta v_s - \Delta v_a)\|_2 \| \phi_y\|_{L^\infty_x L^2_y} \| \frac{g}{\sqrt{u_0}}\|_{L^2_x L^\infty_y} + \varepsilon^{\frac{2}{3}}\|\phi_y\|_2\|g\|_2\\
\lesssim&_{A,H,L}~ \varepsilon^{\frac{2}{3}-}  \Big( \|\frac{R}{\sqrt{-u_{0yy}}}\|_2 + \sqrt{\varepsilon} \| \sqrt{\frac{u_0}{-u_{0yy}}} \nabla \Delta \phi\|_2\Big) \|\phi_t\|_2.
\end{aligned}
\end{equation}

For the last Rayleigh term, we can simply use \eqref{ss_estimates_H}, Hardy inequality in $y$, and \eqref{est:g_basic} to estimate
\begin{equation}\label{Dual_6}
\begin{aligned}
|( v_s \Delta\phi_y, g)| \lesssim_{A,H,L} \varepsilon^{1-} \| \sqrt{u_0} \Delta \phi_y\|_2\| \frac{g}{\sqrt{u_0}}\|_2 \lesssim_{A,H,L} \varepsilon^{\frac{1}{2}-} \Big( \sqrt{\varepsilon} \| \sqrt{\frac{u_0}{-u_{0yy}}}  \Delta \phi_y\|_2\Big)\phi_t\|_2.
\end{aligned}
\end{equation}

Finally for the dissipation term, we integrate by parts and use the $\varepsilon$-Navier boundary condition \eqref{Navier_bc2} to obtain
\begin{equation*}
\begin{aligned}
-\varepsilon(\Delta^2 \phi, g) &= - \varepsilon (\Delta \phi, \Delta g) + \varepsilon (\Delta \phi, g_y)_{y = H} - \varepsilon (\Delta \phi, g_y)_{y = 0}\\
&= - \varepsilon (\Delta \phi, \Delta g) - \frac{\varepsilon^{\frac{2}{3}}}{A} ( \phi_y, g_y)_{y = H} - \frac{\varepsilon^{\frac{2}{3}}}{A} ( \phi_y, g_y)_{y = 0}.
\end{aligned}
\end{equation*}
Therefore, by Sobolev inequality in $y$, \eqref{H1_control_by_R}, and \eqref{est:H_weighted_Hardy}, we can estimate
\begin{equation}\label{Dual_7}
\begin{aligned}
\varepsilon|(\Delta^2 \phi, g)| &\lesssim_{A,H} \varepsilon \| \Delta\phi\|_2\|\Delta g\|_2 + \varepsilon^{\frac{2}{3}} (\| \phi_y\|_2 + \| \phi_{yy}\|_2) (\|g_y\|_2 + \| g_{yy}\|_2)\\
&\lesssim_{A,H} \varepsilon^{\frac{1}{3}}  \Big( \|\frac{R}{\sqrt{-u_{0yy}}}\|_2 + \sqrt{\varepsilon} \| \sqrt{\frac{u_0}{-u_{0yy}}} \nabla \Delta \phi\|_2\Big) \|\phi_t\|_2.
\end{aligned}
\end{equation}
Collecting all the terms from \eqref{Dual_1}-\eqref{Dual_7} yields the desired estimate \eqref{est:phi_t}. This concludes the proof.
\end{proof}

\subsubsection{Proof of Theorem \ref{Thm_H_linear}}

Now we are ready to prove Theorem \ref{Thm_H_linear}.

 Collecting all the estimates from \eqref{Ray_nar_1}-\eqref{Dis_nar_1-2-3}, we can see that by taking $\varepsilon$ small and $HL/A^3$ small enough (assumption \eqref{H_assumption}), all the non-favorable terms can be absorbed by the crucial positive terms \eqref{Ray_nar_1-1}, (\ref{Dis_nar_1-1}.1), and (\ref{Dis_nar_1-1}.2). Combining with the temporal terms \eqref{Tem} and \eqref{Tem-1}, we can conclude that for any $\nu>0$ small, 
\begin{equation}\label{est:H_final1}
\begin{aligned}
&\frac{1}{2}\frac{d}{dt}\Big( \|\sqrt{\frac{u_0}{-u_{0yy}}} \Delta \phi \sqrt{W}\|_2^2 - \| \nabla \phi \sqrt{W}\|_2^2 + \| \phi\|_{x=L}^2 \Big)\\
&+ \Big(\frac{1}{2}- \nu \Big)  \|\frac{R}{\sqrt{-u_{0yy}}}\|_2^2 + \frac{1}{2}\|\sqrt{\frac{W}{-u_{0yy}}}R\|_{x=L}^2 + (1-\nu) \varepsilon  \| \sqrt{\frac{u_0}{-u_{0yy}}} \nabla \Delta \phi \sqrt{W}\|_2^2 \le |(\phi_t, \phi_x)|,
\end{aligned}
\end{equation}
provided $\varepsilon$ and $HL/A^3$ are small enough. Recall ourassumption \eqref{assumption_2} on Hardy-type inequality, and use it instead of \eqref{Hardy2_narrow} in the proof of Lemma \ref{lem_H1}, we can derive
$$
\|u_0 \nabla q\| \le C_2 H \|R\|_2.
$$
Therefore, we can estimate
\begin{equation}\label{est:phi_x_final}
\|\phi_x\|_2 = \|u_0 q_x\|_2 \le C_2 H \|R\|_2.
\end{equation}
Combining with the $\phi_t$ estimate \eqref{est:phi_t}, we can estimate the right-hand side of \eqref{est:H_final1}
\begin{equation}\label{est:H_final2}
\begin{aligned}
|(\phi_t, \phi_x)| \le& \frac{C_2H^2}{4\pi} \|R\|_2^2 + O(\delta)\|R\|_2 \| \sqrt{\frac{W}{-u_{0yy}}} R\|_{x= L}\\
& + O(\varepsilon^{\frac{1}{3}})C_\delta \|R\|_2 \Big( \|\frac{R}{\sqrt{-u_{0yy}}}\|_2 + \sqrt{\varepsilon} \| \sqrt{\frac{u_0}{-u_{0yy}}} \nabla \Delta \phi\|_2\Big).
\end{aligned}
\end{equation}
Our spectral assumption \eqref{assumption_3} implies that
$$
\frac{C_2H^2}{4\pi} \|R\|_2^2 <  \Big(\frac{1}{2}- \nu \Big)  \|\frac{R}{\sqrt{-u_{0yy}}}\|_2^2,
$$
provided $\nu$ is small enough. Therefore, by fixing $\delta$ in \eqref{est:H_final2} small first, we can conclude that for all $\varepsilon$ small enough, $|(\phi_t, \phi_x)|$ can be absorbed by the positive terms on the left-hand side of \eqref{est:H_final1}. Then, by the positivity estimate \eqref{Tem_positive}, we end up with
\begin{equation}\label{est:H_final3}
\begin{aligned}
&\frac{d}{dt}\Big( \|\sqrt{\frac{u_0}{-u_{0yy}}} \Delta \phi \sqrt{W}\|_2^2 - \| \nabla \phi \sqrt{W}\|_2^2 + \| \phi\|_{x=L}^2 \Big)+  \|\frac{R}{\sqrt{-u_{0yy}}}\|_2^2 +  \varepsilon  \| \sqrt{\frac{u_0}{-u_{0yy}}} \nabla \Delta \phi \sqrt{W}\|_2^2 \le 0
\end{aligned}
\end{equation}
On the other hand, by estimate \eqref{H1_control_by_energy}, and Sobolev inequality in $x$, we can see that 
$$
 \| \nabla \phi_0 \sqrt{W}\|_2^2 + \| \phi_0\|_{x=L}^2 \lesssim  \|\sqrt{\frac{u_0}{-u_{0yy}}} \Delta \phi_0 \sqrt{W}\|_2^2.
$$
Then the desired estimate \eqref{est:H_linear} follows from the positivity estimate \eqref{Tem_positive}. Finally, the exponential decay estimate \eqref{est:H_linear_enhance} follows from \eqref{est:H_final3}, \eqref{H1_control_by_energy}, and the weighted Hardy inequality
$$
\varepsilon^{\frac{1}{3}}\| \sqrt{\frac{u_0}{-u_{0yy}}}  \Delta \phi\|_2^2 \lesssim \varepsilon \| \sqrt{\frac{u_0}{-u_{0yy}}} \Delta \phi_y\|_2^2 + \|\frac{R}{\sqrt{-u_{0yy}}}\|_2^2
$$
from \eqref{est:H_weighted_Hardy}.


\subsection{Nonlinear stability estimates}\label{sec:H_nonlinear} In this subsection, we prove Theorem \ref{Thm_H_nonlinear}. Similar to the proof of Theorem \ref{Thm_L_nonlinear}, we define the space $\cX$ to be the closure of smooth function satisfying the boundary conditions \eqref{viscous_bc} and \eqref{Navier_bc2} under the norm
$$
\| \phi \|_{\cX}^2:= \sup_{t \in (0,\infty)} \| \sqrt{\frac{u_0}{-u_{0yy}}}  \Delta \phi\|_{L^2}^2 + \frac{1}{L} \int_0^\infty  \|\frac{R}{\sqrt{-u_{0yy}}}\|_{L^2}^2 \, dt + \varepsilon \int_0^\infty  \| \sqrt{\frac{u_0}{-u_{0yy}}} \nabla\Delta \phi\|_{L^2}^2 \, dt.
$$
We define the solution map $\cS: \psi \mapsto \phi$, where $\phi$ is the solution to
\begin{equation}\label{Nonlinear_new_H}
\left\{\begin{aligned}
\Delta \phi_t + u_s \Delta \phi_x - \Delta u_s \phi_x + v_s \Delta \phi_y - \Delta v_s \phi_y - \varepsilon \Delta^2 \phi &= -\psi_y \Delta \psi_x + \psi_x \Delta \psi_y, \quad \mbox{in}~\Omega \times (0,\infty),\\
\phi|_{t=0} &= \phi_0,
\end{aligned}\right.
\end{equation}
and satisfies the boundary conditions \eqref{viscous_bc} and \eqref{Navier_bc2}. We fix $\beta > 0$, and let 
$$
\cB := \{ \phi \in \cX : \| \phi \|_{\cX} \le 2 c_0 \varepsilon^{\frac{5}{6}+ \beta} \},
$$
where $c_0$ is the small constant in Theorem \ref{Thm_H_nonlinear}. Our goal is to show that $\cS$ is a contraction mapping in $\cB$ when $\varepsilon \ll 1$.

We test \eqref{Nonlinear_new_H} by $\frac{RW}{-u_{0yy}}$, where $W$ is given in \eqref{weight}, and estimate each term. The linear estimates have been done in the previous section, so it remains to estimate the nonlinear terms
\begin{equation}\label{Non_H}
-(\psi_y \Delta \psi_x, \frac{RW}{-u_{0yy}}) + (\psi_x \Delta \psi_y, \frac{RW}{-u_{0yy}}).
\end{equation}
For the first term, we can estimate using \eqref{est:nabla_phi_L_infty}, Hardy inequality in $y$, and \eqref{H1_control_by_energy},
\begin{equation}\label{Non_H_1}
\begin{aligned}
|(\ref{Non_H}.1)| &\le |(\psi_y \Delta \psi_x, \frac{u_0 \Delta \phi}{-u_{0yy}}W)| + |(\psi_y \Delta \psi_x, \phi W)|\\
&\lesssim_{H,L} \| \psi \|_{L^\infty} \| \sqrt{\frac{u_0}{-u_{0yy}}} \Delta \psi_x\|_2 \| \sqrt{\frac{u_0}{-u_{0yy}}} \Delta \phi\|_2  + \| \psi \|_{L^\infty} \| \sqrt{\frac{u_0}{-u_{0yy}}} \Delta \psi_x\|_2 \| \phi_y\|_2\\
&\lesssim_{H,L} \varepsilon^{-(\frac{5}{6}+)} \Big( \|\frac{R_\psi}{\sqrt{-u_{0yy}}}\|_{2}^2  + \varepsilon \| \sqrt{\frac{u_0}{-u_{0yy}}} \nabla\Delta \psi\|_{2}^2 \Big) \| \sqrt{\frac{u_0}{-u_{0yy}}} \Delta \phi\|_2,
\end{aligned}
\end{equation}
where $R_\psi := u_0 \Delta \psi - u_{0yy} \psi$. Similarly, we can also estimate
\begin{equation}\label{Non_H_2}
\begin{aligned}
|(\ref{Non_H}.2)|\lesssim_{H,L} \varepsilon^{-(\frac{5}{6}+)} \Big( \|\frac{R_\psi}{\sqrt{-u_{0yy}}}\|_{2}^2  + \varepsilon \| \sqrt{\frac{u_0}{-u_{0yy}}} \nabla\Delta \psi\|_{2}^2 \Big) \| \sqrt{\frac{u_0}{-u_{0yy}}} \Delta \phi\|_2.
\end{aligned}
\end{equation}
Combining with the linear estimates, we have
\begin{align*}
&\sup_{t \in (0,\infty)} \|\sqrt{\frac{u_0}{-u_{0yy}}} \Delta \phi\|_{2}^2 + \frac{1}{L} \int_0^\infty \|\frac{R}{\sqrt{-u_{0yy}}}\|_{2}^2 \, dt + \varepsilon \int_0^\infty  \| \sqrt{\frac{u_0}{-u_{0yy}}} \nabla \Delta \phi \|_{2}^2 \, dt\\
\lesssim&_{H,L}~ \varepsilon^{-(\frac{5}{6}+)} \sup_{t \in (0,\infty)} \| \sqrt{\frac{u_0}{-u_{0yy}}} \Delta \phi\|_2 \int_0^\infty \Big( \|\frac{R_\psi}{\sqrt{-u_{0yy}}}\|_{2}^2  + \varepsilon \| \sqrt{\frac{u_0}{-u_{0yy}}} \nabla\Delta \psi\|_{2}^2 \Big) \, dt + \| \sqrt{\frac{u_0}{-u_{0yy}}} \Delta \phi_0\|_2^2.
\end{align*}
Since $\psi \in \cB$, and by the initial data \eqref{H_initial}, we can conclude that $\phi \in \cB$ when $c_0$ is small enough. Hence the solution map $\cS$ maps $\cB$ into $\cB$. 

To show $\cS$ is a contraction mapping, we take $\psi_1, \psi_2 \in \cB$, and denote $\phi_1 = \cS(\psi_1), \phi_2 = \cS(\psi_2)$. We further denote $\widetilde{\phi} = \phi_1 - \phi_2$ and $\widetilde{\psi} = \psi_1 - \psi_2$. Repeating the linear and nonlinear estimates in this section for the equation for $\widetilde{\phi}$, we will end up with
$$
\| \widetilde \phi\|_{\cX} \lesssim c_0 \| \widetilde \psi\|_{\cX},
$$
which implies that $\cS$ is a contraction mapping when $c_0$ is small. By the contraction mapping theorem, there exists a unique solution $\phi$ satisfying the estimate \eqref{est:H_linear}. Similar as the linear case, the exponential decay estimate \eqref{est:H_linear_enhance} also follow. This concludes the proof of Theorem \ref{Thm_H_nonlinear}.


\section{Construction of the steady state solution}\label{sec:construction}


\subsection{The setup}

In this section, we present a unified construction of the steady-state solution that applies in both settings. The construction is uniform in $H, L$ and $A$. Therefore, the constants in estimates are allowed to depend on $H, L$ and $A$ without further specification.

Given the base flow $u_0$ satisfying either the assumptions in Theorem \ref{Thm_Existence_L} or those in Theorem \ref{Thm_H_Existence}, we define
$$
u_e^0(y) := u_0(y) + A u_0'(0) \varepsilon^{1/3}.
$$
Then $\{ u_e^0,0 \}$ satisfies the $\varepsilon$-Navier boundary condition \eqref{Navier_boundary_condition_new}. We will first construct an approximated steady solution, and study its stability. The approximated solution has the following ansatz:
\begin{equation}\label{approximated}
\begin{aligned}
u_a :=& u_e^0 + \varepsilon u_e^1 + \varepsilon u_p^1,\\
v_a :=& \varepsilon v_e^1 + \varepsilon^{4/3} v_p^1,\\
p_a :=& \varepsilon p_e^1 + \varepsilon^{4/3} p_p^1,
\end{aligned}
\end{equation}
where
\begin{align*}
u_p^1(x,y) &= u_p^-(x, \frac{y}{\varepsilon^{1/3}}) + u_p^+ (x, \frac{H-y}{\varepsilon^{1/3}}),\\
v_p^1(x,y) &= v_p^-(x, \frac{y}{\varepsilon^{1/3}}) + v_p^+ (x, \frac{H-y}{\varepsilon^{1/3}}),\\
p_p^1(x,y) &= p_p^-(x, \frac{y}{\varepsilon^{1/3}}) + p_p^+ (x, \frac{H-y}{\varepsilon^{1/3}}),
\end{align*}
are sum of Prandtl boundary layer correctors concentrating near the boundary $y = 0$ and $y = H$. We only focus on constructing $u_p^-, v_p^-$ near $y = 0$, as $u_p^+$ and $v_p^+$ are just the even and odd reflection of $u_p^-$ and $v_p^-$ with respect to $y = H/2$.

We denote the Prandtl variable
$$
Y := \frac{y}{\varepsilon^{1/3}}.
$$
Plugging the ansatz \eqref{approximated} into the left-hand side of \eqref{NS}, near the boundary $y = 0$, we have from the $u$ equation:
\begin{equation}\label{NS_u}
\begin{aligned}
NS[u_a]:=& (u_e^0 + \varepsilon u_e^1 + \varepsilon u_p^-)(\varepsilon u_{ex}^1 + \varepsilon u_{px}^-) + (\varepsilon v_e^1 + \varepsilon^{4/3} v_p^-)(u_{ey}^0 + \varepsilon u_{ey}^1 + \varepsilon^{2/3} u_{pY}^-)\\
&+ \varepsilon p_{ex}^1 + \varepsilon^{4/3} p_{px}^- - \varepsilon u_{eyy}^0 - \varepsilon^2 \Delta u_e^1 - \varepsilon^2 u_{pxx}^- - \varepsilon^{4/3} u_{pYY}^-;
\end{aligned}
\end{equation}
and from the $v$ equation:
\begin{equation}\label{NS_v}
\begin{aligned}
NS[v_a]:=& (u_e^0 + \varepsilon u_e^1 + \varepsilon u_p^-) (\varepsilon v_{ex}^1 + \varepsilon^{4/3} v_{px}^-) + (\varepsilon v_e^1 + \varepsilon^{4/3} v_p^-) (\varepsilon v_{ey}^1 + \varepsilon v_{pY}^-)\\
&+ \varepsilon p_{ey}^1 + \varepsilon p_{pY}^- - \varepsilon^2 \Delta v_e^1 - \varepsilon^{7/3} v_{pxx}^- - \varepsilon^{5/3} v_{pYY}^-.
\end{aligned}
\end{equation}
Collecting the terms of order $O(\varepsilon)$ that are away from the boundary from \eqref{NS_u} and \eqref{NS_v}, we know that $\{ u_e^1, v_e^1 \}$ satisfies the linearized Euler equation:
\begin{equation}\label{first_Euler_layer}
\left\{\begin{aligned}
u_e^0 u_{ex}^1 + u_{ey}^0 v_e^1 + p_{ex}^1 &= u_{eyy}^0,\\
u_e^0 v_{ex}^1 + p_{ey}^1 &= 0,\\
u_{ex}^1 + v_{ey}^1 &= 0.
\end{aligned}\right.
\end{equation}
In the next subsection, we will solve $v_e^1$ with the boundary condition $v_e^1 = 0$ on $y = 0, H$. Then we define 
\begin{equation}\label{first_Euler_u}
u_e^1(x,y) = - \int_0^x v_{ey}^1(x',y) \, dx'.
\end{equation}
This creates a mismatch for the boundary condition \eqref{Navier_boundary_condition_new}, which will be corrected by the Prandtl profile $\{u_p^-, v_p^-\}$.

Next we collect the leading order Prandtl terms. They are $O(\varepsilon^{4/3})$ order terms in \eqref{NS_u} and $O(\varepsilon)$ order terms in \eqref{NS_v}. In view of the boundary condition mismatch above, we know that $\{u_p^-, v_p^-\}$ satisfies the linearized Prandtl equation with a Robin boundary condition:
\begin{equation}\label{first_Prandtl_layer}
\left\{\begin{aligned}
\varepsilon^{-1/3} u_e^0 u_{px}^- + u_{ey}^0 v_p^- + p_{px}^- - u_{pYY}^- &= 0,\\
 p_{pY}^- &= 0,\\
u_{px}^- + v_{pY}^- &= 0,\\
\lim_{Y \to \infty} u_p^- = \lim_{Y \to \infty} v_p^- &= 0,\\
u_p^-(x,0) - A u_{pY}^-(x,0) &= u_e^1(x,0) - A \varepsilon^{1/3} u_{ey}^1(x,0).
\end{aligned}\right.
\end{equation}
Note that there is an $\varepsilon^{-1/3}$ factor appears in the first term. This is because near the boundary $y = 0$, $u_e^0 \sim u_{ey}^0(0) y + u_{ey}^0(0) A \varepsilon^{1/3}$. Therefore, $\varepsilon^{-1/3} u_e^0 \sim u_{ey}^0(0) (A+Y)$ is of order 1.

Once the approximated solution $\{u_a, v_a\}$ is constructed, we set
\begin{equation}\label{ss_construction}
\left\{
\begin{aligned}
u_s &= u_a + \varepsilon^{\frac{4}{3}-} u,\\
v_s &= v_a + \varepsilon^{\frac{4}{3}-} v,
\end{aligned}
\right.
\end{equation}
and solve for the remainder $\{u,v\}$. We denote by $\phi$ the stream function of the remainder $\{u,v\}$, then $\phi$ satisfies the equation
\begin{equation}\label{eqn:remainder}
u_a \Delta \phi_x - \Delta u_a \phi_x + v_a \Delta \phi_y - \Delta v_a \phi_y - \varepsilon \Delta^2 \phi = - \varepsilon^{-(\frac{4}{3}-)}(\partial_y NS[u_a] - \partial_x NS[v_a])  - \varepsilon^{\frac{4}{3}-} (\phi_y \Delta \phi_x - \phi_x \Delta \phi_y),
\end{equation}
with the viscous boundary condition \eqref{viscous_bc} and the $\varepsilon$-Navier-slip boundary condition \eqref{Navier_bc2}.

In the following subsections we construct solutions to \eqref{first_Euler_layer}, \eqref{first_Prandtl_layer}, and \eqref{eqn:remainder}. Note that a similar ansatz \eqref{approximated} was considered in \cite{IyerZhou} by Iyer and Zhou; they assumed that $\partial_y^j u_0(0) = \partial_y^j u_0(H) = 0$ for $2 \le j \le N_0$ with $N_0$ large. However, this condition is not satisfied by the Poiseuille flow. In our work, we only assume that $\| u_0'''/u_0 \|_{L^\infty} < \infty$, which leads to singularities in high-order $y$-derivative of $\{u_e^1, v_e^1\}$. We show that this weaker assumption nevertheless suffices for the stability analysis.

\subsection{Construction of the Euler profile $\{u_e^1, v_e^1\}$} Consider \eqref{first_Euler_layer} in vorticity formulation, i.e., $\partial_y$\eqref{first_Euler_layer}$_1 - \partial_x$\eqref{first_Euler_layer}$_2$, we have
\begin{equation}\label{first_Euler_layer_2}
\left\{\begin{aligned}
-u_e^0 \Delta v_e^1 + u_{eyy}^0 v_e^1 &= u_{eyyy}^0,\\
v_e^1|_{y=0,H} &= 0.
\end{aligned}\right.
\end{equation}
We will find a solution $v_e^1 = v_e^1(y)$ to \eqref{first_Euler_layer_2} that is independent of $x$, and consequently $u_e^1(x,y)$ is linear in $x$.

\begin{proposition}
There exists a solution $v_e^1(y)$ to the Euler equation \eqref{first_Euler_layer_2} satisfying
\begin{equation}\label{Euler_estimate1}
\begin{aligned}
\|\partial_y^j v_e^1\|_{L^\infty} &\lesssim 1, &&\mbox{for}~j = 0,1,2,\\
\|\partial_y^j v_e^1\|_{L^\infty} &\lesssim \varepsilon^{-(j-2)/3}, && \mbox{for}~j \ge 3.
\end{aligned}
\end{equation}
Consequently, $u_e^1(x,y):= -\int_0^x v_{ey}^1$ satisfies $u_{exx}^1 = 0$ and
\begin{equation}\label{Euler_estimate2}
\begin{aligned}
\|\partial_x^i \partial_y^j u_e^1\|_{L^\infty} &\lesssim 1, &&\mbox{for}~i,j = 0,1,\\
\|\partial_x^i \partial_y^j u_e^1\|_{L^\infty} &\lesssim_j \varepsilon^{-(j-1)/3}, && \mbox{for}~i =0,1, j \ge 2.
\end{aligned}
\end{equation}
\end{proposition}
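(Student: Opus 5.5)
The plan is to solve \eqref{first_Euler_layer_2} explicitly as an ODE in $y$, using the quotient substitution that already appeared in Lemma \ref{lem_H1}. For $x$-independent $v = v_e^1(y)$ we have $\Delta v_e^1 = v_e^{1\prime\prime}$, and $u_{eyy}^0 = u_0''$, $u_{eyyy}^0 = u_0'''$. Note that $u_e^0 = u_0 + Au_0'(0)\varepsilon^{1/3}$ satisfies $u_e^0(y) \sim \min(y, H-y) + \varepsilon^{1/3}$, so in particular $u_e^0 \gtrsim \varepsilon^{1/3} > 0$ on $[0,H]$. We write $v_e^1 = u_e^0 w$. Expanding as in the proof of Lemma \ref{lem_H1}, the equation becomes
\begin{equation*}
-u_e^0 (u_e^0 w)'' + u_e^{0\prime\prime} u_e^0 w = -\big((u_e^0)^2 w'\big)' = u_0'''.
\end{equation*}
Set $F(y) := \int_0^y u_0''' = u_0''(y) - u_0''(0)$. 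Integrating, every solution has the form
\begin{equation*}
w(y) = \int_0^y \frac{c - F(s)}{(u_e^0(s))^2}\, ds + d .
\end{equation*}
Since $u_e^0(0) > 0$, the condition $v_e^1(0)=0$ forces $d=0$. The condition $v_e^1(H)=0$ then reads
\begin{equation*}
c \int_0^H (u_e^0)^{-2} = \int_0^H F\, (u_e^0)^{-2},
\end{equation*}
which fixes $c$ uniquely. The homogeneous problem has only the trivial solution, so this gives uniqueness. Together with the evenness of $u_0$ about $H/2$, uniqueness shows that $v_e^1$ is odd about $H/2$.

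The key quantitative step is to show $c = O(\varepsilon^{1/3})$. By the assumption $\|u_0'''/u_0\|_{L^\infty}\le C$ we have $|u_0'''| \lesssim u_0 \lesssim \min(y,H-y)$. Hence $F(y) = O(y^2)$ near $y=0$. Near $y=H$ one has $F(y) = F(H) - \int_y^H u_0''' = O((H-y)^2)$, using $F(H)=u_0''(H)-u_0''(0)=0$ by symmetry. It follows that $F/(u_e^0)^2$ is bounded, so $\int_0^H F(u_e^0)^{-2} = O(1)$. On the other hand $\int_0^H (u_e^0)^{-2} \sim \varepsilon^{-1/3}$, which gives $|c| \lesssim \varepsilon^{1/3}$.

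From this we get the $j=0,1,2$ bounds in \eqref{Euler_estimate1}. First, $|w(y)| \lesssim |c|\int_0^y (s+\varepsilon^{1/3})^{-2}ds + \int_0^y |F|(u_e^0)^{-2} \lesssim 1$, so $|v_e^1| \lesssim 1$. Next,
\begin{equation*}
v_e^{1\prime} = u_e^{0\prime} w + \frac{c-F}{u_e^0}, \qquad \frac{|c| + |F|}{u_e^0} \lesssim \frac{\varepsilon^{1/3}+y^2}{y+\varepsilon^{1/3}} \lesssim 1,
\end{equation*}
and the analogous bound holds near $y=H$. Finally, the ODE gives
\begin{equation*}
v_e^{1\prime\prime} = u_0'' w - \frac{u_0'''}{u_e^0},
\end{equation*}
and both terms are bounded, the second by the assumption $\|u_0'''/u_0\|_{L^\infty}\le C$ together with $u_e^0 \ge u_0$.

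For $j\ge 3$, I would differentiate the identity $v_e^{1\prime\prime} = u_0'' w - u_0'''/u_e^0$ a further $j-2$ times and apply Leibniz. The ingredients are the bounds $|\partial_y^k (u_e^0)^{-1}| \lesssim (u_e^0)^{-1-k}$ and $|\partial^k w| \lesssim (u_e^0)^{-(k-1)}$ for $k \ge 1$; the latter follows from the explicit formula $w' = (c-F)(u_e^0)^{-2}$ by the same argument as for $v_e^{1\prime}$. The term in which all derivatives fall on $(u_e^0)^{-1}$ is $u_0'''\,\partial^{j-2}(u_e^0)^{-1}$, bounded by $y\,(y+\varepsilon^{1/3})^{-(j-1)} \lesssim \varepsilon^{-(j-2)/3}$. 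The term with $u_0^{(j+1)}/u_e^0$ is only $O(\varepsilon^{-1/3})$, since higher derivatives of $u_0$ need not vanish at the wall. Every other Leibniz term is bounded by the same power. This step is exactly where singular $y$-derivatives arise without any higher-order vanishing assumption on $u_0$, and I expect this bookkeeping near the boundary to be the main (though routine) obstacle.

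Lastly, since $v_e^1$ does not depend on $x$, we have $u_e^1(x,y) = -x\, v_e^{1\prime}(y)$. Thus $u^1_{exx}=0$ and $\partial_x^i\partial_y^j u_e^1$ is controlled by $\partial_y^{j+1} v_e^1$, with constants depending on $L$. This turns \eqref{Euler_estimate1} into \eqref{Euler_estimate2}.
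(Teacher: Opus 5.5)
Your argument is correct in outline and takes a genuinely different route from the paper. The paper also uses the quotient $q_e=v_e^1/u_e^0$ and the divergence form, but it gets existence from Lax--Milgram. Testing with $q_e$ and applying the Hardy inequality of Lemma~\ref{lem_hardy2_narrow} (which needs $q_e(H/2)=0$, i.e.\ the oddness it asserts from symmetry) gives the uniform bound $\|u_e^0 q_{ey}\|_{L^2}\lesssim 1$. One-dimensional Sobolev and the relation $v_{eyy}^1=u_{eyy}^0 q_e-u_{eyyy}^0/u_e^0$ then give the $L^\infty$ bounds for $j\le 2$. The case $j\ge 3$ follows by differentiating the equation and bootstrapping with $u_e^0\gtrsim\varepsilon^{1/3}$. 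You instead integrate once explicitly, $(u_e^0)^2w'=c-F$, and reduce everything to the scalar estimate $|c|\lesssim\varepsilon^{1/3}$. That estimate correctly uses both $\|u_0'''/u_0\|_{L^\infty}<\infty$ and $u_0''(H)=u_0''(0)$. Your route gives existence, uniqueness and oddness at once (the paper only asserts oddness), plus explicit pointwise information near the walls. The paper's energy route matches the Hardy-inequality machinery of the stability sections and does not need an explicit first integral.

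There is, however, a false intermediate claim in your $j\ge 3$ step: $|\partial_y^k w|\lesssim (u_e^0)^{-(k-1)}$ already fails for $k=1$. Since $F(0)=0$, you have $w'(0)=c/(u_e^0(0))^2=c\,(Au_0'(0))^{-2}\varepsilon^{-2/3}$. This is of order $\varepsilon^{-1/3}$ as soon as $|c|\gtrsim\varepsilon^{1/3}$, which is the generic situation, because $\varepsilon^{-1/3}c\to Au_0'(0)^2\int_0^{H/2}F/u_0^2$. Your argument for $\partial_y v_e^1$ does not transfer: $w'$ has one more power of $u_e^0$ in the denominator, so $|c|/(u_e^0)^2$ is only $O(\varepsilon^{-1/3})$.

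The correct bound is $|\partial_y^k w|\lesssim \varepsilon^{1/3}(u_e^0)^{-k-1}+(u_e^0)^{-(k-1)}\lesssim (u_e^0)^{-k}$ for $k\ge1$. With it, the Leibniz term $u_0''\,\partial_y^{j-2}w$ is $O(\varepsilon^{-(j-2)/3})$. That is exactly the target rate, not better, so \eqref{Euler_estimate1} and hence \eqref{Euler_estimate2} still hold. Alternatively, you can avoid $w$ as the paper does. Differentiating $-u_e^0\,\partial_y^2 v_e^1+u_0''v_e^1=u_0'''$ a total of $m$ times expresses $u_e^0\,\partial_y^{m+2}v_e^1$ through $\partial_y^i v_e^1$, $i\le m+1$, with bounded coefficients, plus $u_0^{(m+3)}$. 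Induction with $u_e^0\gtrsim\varepsilon^{1/3}$ then gives the claim directly.
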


\begin{proof}
Since we look for a solution $v_e^1$ that only depends on $y$, \eqref{first_Euler_layer_2} is reduced to an ODE. Since $u_e^0$ is even with respect to $y = H/2$, we know that $v_e^1$ is odd with respect to $y = H/2$, and consequently $v_e^1(H/2) = 0$. We define
$$
q_e := \frac{v_e^1}{u_e^0},
$$
then the equation \eqref{first_Euler_layer_2} can be written in divergence form:
\begin{equation}\label{first_Euler_layer_3}
- \Big( (u_e^0)^2 q_{ey} \Big)_y = u_{eyyy}^0.
\end{equation}
Multiplying the equation \eqref{first_Euler_layer_3} by $q_e$, integrating by parts, and using Lemma \ref{lem_hardy2_narrow}, we have
$$
\| u_e^0 q_{ey}\|^2 \le \|u_{eyyy}^0\| \|q_e\| \lesssim \|u_{eyyy}^0\| \| u_e^0 q_{ey}\|,
$$ 
which implies
\begin{equation}\label{est:qe}
\|q_e\| \le \| u_e^0 q_{ey}\| \lesssim 1.
\end{equation}
Therefore, the existence of $q_e$ follows from the Lax-Milgram theorem. We also have
$$
\| v_e^1 \|_{L^\infty} \lesssim \|v_{ey}^1\| = \| u_{ey}^0 q\| + \|u_e^0 q_y\| \lesssim \|u_e^0 q_y\| \lesssim 1.
$$
Since
$$
v_{eyy}^1 = u_{eyy}^0 \frac{v_e^1}{u_e^0} - \frac{u_{eyyy}^0}{u_e^0},
$$
by \eqref{est:qe} and the assumption that $\|u_{eyyy}^0/u_e^0\|_{L^\infty} < \infty$, we have
\begin{equation}\label{est:qey}
\| v_{ey}^1 \|_{L^\infty} \lesssim \|v_{eyy}^1\| + \|v_{ey}\| \lesssim 1.
\end{equation}
Next, it is easy to see that $|v_e^1/u_e^0| \lesssim 1$. Indeed, when $0< y < H/10$, we have
$$
\left| \frac{v_e^1}{u_e^0} \right| \lesssim \| v_{ey}^1 \|_{L^\infty} \frac{y}{\varepsilon^{1/3}+ y} \lesssim 1.
$$
Same argument applies when $9H/10< y < H$. When $H/10 \le y \le 9H/10$,
$$
\left| \frac{v_e^1}{u_e^0} \right| \lesssim \| v_{e}^1 \|_{L^\infty} \lesssim 1.
$$
Therefore,
\begin{equation}\label{est:qeyy}
\| v_{eyy}^1 \|_{L^\infty} \lesssim \left\| \frac{v_e^1}{u_e^0} \right\|_{L^\infty} + \left\| \frac{u_{eyyy}^0}{u_e^0} \right\|_{L^\infty} \lesssim 1.
\end{equation}
To estimate $v_{eyyy}^1$, we take $y$-derivative on the equation, we have
$$
-u_e^0  v_{eyyy}^1 -u_{ey}^0  v_{eyy}^1 + u_{eyy}^0 v_{ey}^1 + u_{eyyy}^0 v_e^1 = u_{eyyyy}^0.
$$
By \eqref{est:qe}, \eqref{est:qey}, and \eqref{est:qeyy}, we have
$$
|v_{eyyy}^1| \lesssim \frac{1}{u_e^0} \lesssim \varepsilon^{-1/3}.
$$
For estimating higher derivatives, we keep taking $y$-derivative and bootstrap. This concludes the proof.
\end{proof}

\subsection{Construction of the Prandtl profile $\{u_p^1, v_p^1\}$} The goal of this subsection is to construct $\{u_p^-, v_p^-\}$ so that
\begin{equation}\label{goal_prandtl}
\varepsilon^{-1/3} u_e^0 u_{px}^- + u_{ey}^0 v_p^- + p_{px}^- - u_{pYY}^- = O(\varepsilon^{\frac{1}{3}}).
\end{equation}
Instead of working on \eqref{first_Prandtl_layer}, we consider the approximated system:
\begin{equation}\label{eqn:approximated_Prandtl}
\left\{\begin{aligned}
 u_{ey}^0(0) (A+Y) u_{px}^- + u_{ey}^0(0) v_p^- + p_{px}^- - u_{pYY}^- &= 0,\\
 p_{pY}^- &= 0,\\
u_{px}^- + v_{pY}^- &= 0,\\
\lim_{Y \to \infty} u_p^- = \lim_{Y \to \infty} v_p^- &= 0,\\
u_p^-(x,0) - A u_{pY}^-(x,0) &= u_e^1(x,0) - A \varepsilon^{1/3} u_{ey}^1(x,0),
\end{aligned}\right.
\end{equation}
with a boundary condition $u_p^-(0,Y) = U_0(Y)$. We assume standard parabolic compatibility conditions on $U_0$ at the corner $(0,0)$, and sufficiently fast decay at infinity:
\begin{equation}\label{first_Prandtl_initialdata}
|e^Y d_Y^k U_0| \lesssim 1 \quad \mbox{for}~0 \le k \le N_0
\end{equation}
with some large $N_0$.  We denote
$$
g(x):= u_e^1(x,0) - A \varepsilon^{1/3} u_{ey}^1(x,0).
$$
From \eqref{first_Euler_u} and \eqref{Euler_estimate2}, we know that $g(0) = 0$, $|g(x)| + |g'(x)| \lesssim 1$, and $g''(x) = 0$. By taking $Y \to \infty$, one can see that $p_p^- \equiv 0$. Next, we homogenize the Robin boundary condition. We take $\chi(Y) \in C^\infty[0,\infty)$ such that
$$
\left\{
\begin{aligned}
& \chi(Y) = 1, \quad \mbox{when}~0<Y\le 1,\\
& \chi(Y) = 0, \quad \mbox{when}~Y \ge 2,\\
& \int_0^\infty \chi \, dY = 0.
\end{aligned}
\right.
$$
Let
\begin{align*}
u(x,Y)&:= u_p^- (x,Y) - \chi(Y)g(x),\\
v(x,Y)&:= \int_Y^\infty u_{px}^-(x,Y') \, dY' + g'(x) \int_0^Y \chi(Y') \, dY'.
\end{align*}
Then $\{u,v\}$ satisfies
\begin{equation}\label{first_Prandtl_layer_2}
\left\{\begin{aligned}
 u_{ey}^0(0) (A+Y) u_x + u_{ey}^0(0) v - u_{YY} &= F,\\
 v =  \int_Y^\infty u_x(x,Y')&\, dY',\\
\lim_{Y \to \infty} u  &= 0,\\
u(x,0) - A u_{Y}(x,0) &= 0,\\
u(0,Y) &= U_0(Y),
\end{aligned}\right.
\end{equation}
where
$$
F(x,Y):= -u_{ey}^0(0) (A+Y) \chi(Y) g'(x) + u_{ey}^0(0) g'(x) \int_0^Y \chi(Y')\, dY' + g(x) \chi''(Y).
$$
\begin{proposition}
Under the assumptions above, there exists a unique solution $\{u,v\}$ to \eqref{first_Prandtl_layer_2} that satisfies
\begin{equation}\label{est:Prandtl}
|(1+Y)^m \partial_x^i \partial_Y^j \{u,v\}| \lesssim_{m,i,j} 1 \quad \mbox{for large}~m,i,j.
\end{equation}
\end{proposition}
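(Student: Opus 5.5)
We treat \eqref{first_Prandtl_layer_2} as a degenerate-free linear parabolic problem in which $x$ plays the role of time. Write $\kappa := u_{ey}^0(0)>0$. The coefficient of $u_x$ is $\kappa(A+Y)\ge \kappa A>0$, so the equation is uniformly parabolic forward in $x$ on $[0,L]\times[0,\infty)$, with Robin condition at $Y=0$ and datum $U_0$ at $x=0$. The only nonlocal piece is the term $\kappa v=\kappa\int_Y^\infty u_x$. The forcing $F$ is smooth and supported in $Y\le 2$, because $\int_0^Y\chi=0$ for $Y\ge2$ thanks to $\int_0^\infty\chi=0$. It is also bounded with all derivatives, since $g''=0$ and $|g|+|g'|\lesssim1$.

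The plan has three steps.

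\textbf{Step 1: existence via a Galerkin or iteration scheme on a truncated domain $Y\in(0,M)$.} We obtain a priori estimates uniform in $M$ and pass to the limit. The basic energy estimate multiplies by $u_x$ and uses the weight $(A+Y)$:
$$
\kappa\|\sqrt{A+Y}\,u_x\|^2+\tfrac12\tfrac{d}{dx}\Big(\|u_Y\|^2+\tfrac1A u(x,0)^2\Big)=-\kappa(v,u_x)+(F,u_x).
$$
The Robin boundary term has a good sign because $u_Y(x,0)=u(x,0)/A$. For the $v$ term, Hardy's inequality gives
$$
\|v\|_{L^2}=\Big\|\int_Y^\infty u_x\Big\|\lesssim \|(1+Y)u_x\|,
$$
which is not controlled by $\|\sqrt{A+Y}u_x\|$. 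To handle this, I would instead test with $u$ itself, and with $(1+Y)^{2m}u$ for weighted decay. In that case $(v,u)=(\int_Y^\infty u_x,u)$, and integrating in $Y$ gives $\tfrac12\tfrac{d}{dx}\big(\int_0^\infty u\big)^2$-type structure. More precisely, with $U(Y)=\int_Y^\infty u$ we have $(v,u)=-(U_x,U_Y)=\tfrac12\tfrac{d}{dx}U(0)^2$. This is a sign-definite total derivative, which is the key cancellation.

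\textbf{Step 2: higher regularity and decay.} We differentiate in $x$: $\partial_x^i u$ satisfies the same equation with forcing $\partial_x^iF$, and initial data computed from the equation using the compatibility conditions at the corner. Then we repeat the weighted $L^2$ estimates with weights $(1+Y)^{2m}$. The commutator terms from the weight are lower order and are absorbed by Gronwall on $[0,L]$. $Y$-derivatives are recovered from the equation itself, $u_{YY}=\kappa(A+Y)u_x+\kappa v-F$, and bootstrapped. Sobolev embedding in $(x,Y)$ then yields the pointwise bound \eqref{est:Prandtl} for $u$.

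\textbf{Step 3: decay of $v$.} The bound for $v=\int_Y^\infty u_x$ follows from that of $u_x$ once the weight $m$ is large. Uniqueness follows from the $L^2$ estimate of Step 1 applied to the difference, which solves the homogeneous problem with zero data.

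The main obstacle is the nonlocal term $\kappa v$, which loses a weight $Y$ relative to the dissipation. I would first try the identity in Step 1, $(v,u)=\tfrac12\tfrac{d}{dx}U(0)^2$, to get a closed zeroth-order estimate. For the weighted and differentiated estimates, I would bound $(v,(1+Y)^{2m}u)$ by
$$
\|(1+Y)^{m+1}u_x\|\,\|(1+Y)^{m}u\|,
$$
control $u_x$ through the equation by $u_{YY}$ and $F$, and close the estimates in an ordered hierarchy in $m$, taking Gronwall over the finite interval $x\in[0,L]$.
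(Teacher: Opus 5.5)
Your Step 1 rests on an identity that is false. With $U=\int_Y^\infty u$ you have $U_x=v$ and $U_Y=-u$, and one integration by parts in $Y$ gives
\[
(v,u)=-(U_x,U_Y)=\tfrac12\tfrac{d}{dx}U(x,0)^2-(u_x,U).
\]
The remainder $(u_x,U)$ is neither a total $x$-derivative nor signed. Already for $u=a(x)e^{-Y}+b(x)e^{-2Y}$ you get $(v,u)=\tfrac14(a^2)'+\tfrac1{16}(b^2)'+\tfrac16(ab)'+\tfrac16 a'b$, and the last term has neither property. So testing with $u$ gives no closed zeroth-order estimate, and your uniqueness argument, which uses it, fails as well.

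The fallback you sketch is not closed either. The $u$-test controls only $\sup_x\|\sqrt{A+Y}\,u\|$ and $\int_0^L\|u_Y\|^2$. The bound $\|(1+Y)^{m+1}u_x\|\,\|(1+Y)^m u\|$ therefore needs a higher-order estimate for $u_x$ or $u_{YY}$, which you do not supply. In any such estimate $v$ reappears: through the equation $\kappa(A+Y)u_x=u_{YY}-\kappa v+F$, with a loss of one power of $Y$ via Hardy, and through the boundary value $u_{YY}(x,0)=\kappa A u_x(x,0)+\kappa v(x,0)-F(x,0)$.

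The missing observation is that the test function $u_x$ you wrote down first already works, because $v_Y=-u_x$. Hence $\kappa(v,u_x)=-\kappa(v,v_Y)=\tfrac{\kappa}{2}v(x,0)^2\ge 0$, and there is no need to bound $\|v\|$ by Hardy at all. With weights, $-\big((1+Y)^{2m}v,v_Y\big)=\tfrac12 v(x,0)^2+m\|(1+Y)^{m-1/2}v\|^2\ge0$. Your energy identity in Step 1 then closes immediately and gives \eqref{est:basic_Prandtl}, and uniqueness follows from it.

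This is the paper's route, and the rest goes as follows:
\begin{itemize}
\item The same sign structure persists after applying $\partial_x$, since $v_x=\int_Y^\infty u_{xx}$.
\item The datum $u_x(0,\cdot)$ is read off from the $Y$-differentiated equation $\kappa(A+Y)u_{xY}-u_{YYY}=F_Y$, in which the nonlocal term cancels because $\kappa u_x+\kappa v_Y=0$. At $x=0$ the undifferentiated equation is an integral equation for $u_x$, which your ``computed from the equation'' passes over.
\item $Y$-derivatives come from testing this differentiated equation with $u_{xY}$ and reducing the boundary terms at $Y=0$ by the equation and the Robin condition.
\item Decay comes from repeating everything with weights $(1+Y)^m$, whose commutator terms are lower order and handled by Gronwall on $[0,L]$.
\end{itemize}
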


\begin{proof}
We will only prove the apriori estimates. The existence of solution for such linear Prandtl system is classical, see, e.g., \cite{GuoIyer21}.

First, we multiply the equation \eqref{first_Prandtl_layer_2} by $u_x$ and integrate in $Y$, we have
\begin{equation}\label{energy:basic_Prandtl}
\int_0^\infty  u_{ey}^0(0) (A+Y) u_x^2 \, dY - \int_0^\infty  u_{ey}^0(0) v v_Y \, dY - \int_0^\infty  u_{YY} u_x \, dY = \int_0^\infty  F u_x \,dY.
\end{equation}
Integrating by parts, we have
$$
(\ref{energy:basic_Prandtl},2) =  - \frac{1}{2} \int_0^\infty  u_{ey}^0(0) (v)^2_Y \, dY = \frac{1}{2} u_{ey}^0(0)v^2(x,0),
$$
which is positive. Using the Robin boundary condition, we also have
\begin{align*}
(\ref{energy:basic_Prandtl},3) = \int_0^\infty  u_Y u_{Yx} \, dY + u_Y(x,0) u_x(x,0)= \frac{1}{2} \partial_x \int_0^\infty  u_Y^2 \, dY + \frac{1}{2A} \partial_x |u(x,0)|^2.
\end{align*}
Therefore, we end up with a basic energy estimate
\begin{equation}\label{est:basic_Prandtl}
\begin{aligned}
&\int_0^L \int_0^\infty  u_{ey}^0(0) (A+Y) u_x^2 \, dY dx + \int_0^L |v(x,0)|^2\, dx + \sup_{x\in(0,L)} \int_0^\infty  u_Y^2 \, dY + \sup_{x\in(0,L)} |u(x,0)|^2\\
& \lesssim \|F\|_2^2 + \|U_0\|_{H^1}^2.
\end{aligned}
\end{equation}

Here we recall that, due to $g(x)$ is linear in $x$ and the estimate \eqref{Euler_estimate2}, we have
$$
\|F\|_{H^m} \lesssim_m 1 \quad \mbox{for all}~m \geq 0.
$$

Next, applying $\partial_Y$ to the equation \eqref{first_Prandtl_layer_2}, we obtain
\begin{equation}\label{eqn:Prandtl_Y}
u_{ey}^0(0) (A+Y) u_{xY} - u_{YYY} = F_Y.
\end{equation}
Since $u_x \to 0$ as $Y \to \infty$, we have that
\begin{equation}\label{Prandtl:ux_boundary}
u_x(0,Y) = - \int_Y^\infty u_{xY}(0,Y') \, dY' = - \int_Y^\infty \frac{1}{u_{ey}^0(0) (A+Y)} \Big( F_Y(0,Y') + U_{0YYY}(Y') \Big) \, dY',
\end{equation}
which will serve as the boundary condition for $u_x$. Then, applying $\partial_x$ to the equation \eqref{first_Prandtl_layer_2}, we have
\begin{equation}\label{eqn:Prandtl_x}
u_{ey}^0(0) (A+Y) u_{xx} + u_{ey}^0(0) v_x - u_{YYx} = F_x.
\end{equation}
We test the equation \eqref{eqn:Prandtl_x} by $u_{xx}$ and follow the exact same strategy as in deriving \eqref{est:basic_Prandtl} with the boundary condition \eqref{Prandtl:ux_boundary}, we end up with
\begin{equation}\label{est:basic_x_Prandtl}
\begin{aligned}
&\int_0^L \int_0^\infty  u_{ey}^0(0) (A+Y) u_{xx}^2 \, dY dx + \int_0^L |v_x(x,0)|^2\, dx + \sup_{x\in(0,L)} \int_0^\infty  u_{xY}^2 \, dY + \sup_{x\in(0,L)} |u_x(x,0)|^2\\
& \lesssim \|\nabla F\|_2^2 + \|U_0\|_{H^3}^2.
\end{aligned}
\end{equation}
Next, we multiply the equation \eqref{eqn:Prandtl_Y} by $u_{xY}$ and integrate in $Y$, we have
\begin{equation}\label{energy:basic_Y_Prandtl}
\int_0^\infty  u_{ey}^0(0) (A+Y) u_{xY}^2 \, dY - \int_0^\infty  u_{YYY} u_{xY} \, dY = \int_0^\infty  F_Y u_{xY} \,dY.
\end{equation}
The first term is positive, and the right-hand side can be estimated using H\"older's inequality. To estimate (\ref{energy:basic_Y_Prandtl}.2), we integrate by parts and use the equation \eqref{first_Prandtl_layer_2} and the Robin boundary condition to obtain
\begin{align*}
(\ref{energy:basic_Y_Prandtl}.2) &= - \int_0^\infty  u_{YYY} u_{xY} \, dY = \int_0^\infty u_{YY} u_{xYY} \, dY + u_{YY}(x,0) u_{xY}(x,0)\\
&= \frac{1}{2} \partial_x \int_0^\infty  u_{YY}^2 \, dY + \Big( A u_{ey}^0(0) u_x(x,0) + u_{ey}^0(0) v(x,0) - F(x,0)   \Big) u_{xY}(x,0)\\
&= \frac{1}{2} \partial_x \int_0^\infty  u_{YY}^2 \, dY + u_{ey}^0(0) |u_x(x,0)|^2 + u_{ey}^0(0) v(x,0) u_x(x,0)  - F(x,0)  u_x(x,0),
\end{align*}
where the last two terms can be controlled by the positive terms in estimates \eqref{est:basic_Prandtl} and \eqref{est:basic_x_Prandtl}.

To obtain the higher-order derivative estimates, we inductively differentiate the equation in $Y$ and $x$, and repeat the procedure above. To deal with the boundary terms at $Y=0$, we can always use the equation to reduce the terms (as we did above) so that there is at most one $Y$ derivative of $u$. Then we can use the Robin boundary condition to estimate them.

To obtain the decay estimates, we multiply $(1+Y)^m$ to the equations and repeat the same procedure above. This concludes the proof.
\end{proof}

Now we have the solution $\{u,v\}$ to the approximated Prandtl system \eqref{eqn:approximated_Prandtl}, our goal is to construct $\{u_p^-,v_p^-\}$ so that \eqref{goal_prandtl} is satisfied. We take another smooth positive cutoff function $\tilde{\chi}(Y) \in C_0^\infty[0,\infty)$ such that
$$
\left\{
\begin{aligned}
& \tilde\chi(Y) = 1, \quad \mbox{when}~0<Y\le 1,\\
& \tilde\chi(Y) = 0, \quad \mbox{when}~Y \ge 2.\\
\end{aligned}
\right.
$$
For $N>0$ large, we define
$$
\left\{
\begin{aligned}
u_p^- &:= \tilde{\chi}\Big(\frac{Y}{N}\Big) u - \frac{1}{N} \tilde{\chi}' \Big(\frac{Y}{N}\Big) \int_0^x v(x',Y) \, dx',\\
v_p^- &:= \tilde{\chi}\Big(\frac{Y}{N}\Big) v.
\end{aligned}
\right.
$$
Now we estimate the left-hand side of \eqref{goal_prandtl}. We write
\begin{align*}
\varepsilon^{-1/3} u_e^0 u_{px}^- + u_{ey}^0 v_p^- - u_{pYY}^- &= [\varepsilon^{-1/3} u_e^0 - u_{ey}^0(0)(A+Y)]u_{px}^- + [ u_{ey}^0 - u_{ey}^0(0)] v_p^-\\
&+ u_{ey}^0(0)(A+Y) u_{px}^- + u_{ey}^0(0) v_p^- - u_{pYY}^-.
\end{align*}
By Taylor expansion near $y = 0$, we know that
\begin{align*}
\varepsilon^{-1/3} u_e^0 &= \varepsilon^{-1/3} \Big( u_{ey}^0(0) A \varepsilon^{1/3} +  u_{ey}^0(0) y + O(y^2) \Big)\\
&= u_{ey}^0(0) (A+Y) + O(\varepsilon^{1/3})Y^2.
\end{align*}
Therefore, by \eqref{est:Prandtl}, we have
$$
[\varepsilon^{-1/3} u_e^0 - u_{ey}^0(0)(A+Y)]u_{px}^- = O(\varepsilon^{1/3})Y^2 u_{px}^- = O(\varepsilon^{1/3}).
$$
Similarly, we can estimate
$$
[ u_{ey}^0 - u_{ey}^0(0)] v_p^- = O(\varepsilon^{1/3})Y v_p^- = O(\varepsilon^{1/3}).
$$
Finally, by direct computations from the definition of $\{u_p^-,v_p^-\}$, we have
\begin{align*}
&u_{ey}^0(0)(A+Y) u_{px}^- + u_{ey}^0(0) v_p^- - u_{pYY}^-\\
=& - \frac{u_{ey}^0(0)(A+Y)}{N} \tilde{\chi}' v - \frac{3}{N} \tilde{\chi}' u_Y + \frac{3}{N^2} \tilde{\chi}''u - \frac{1}{N^3} \tilde{\chi}''' \int_0^x v \, dx = O(\varepsilon^{1/3})
\end{align*}
by \eqref{est:basic_Prandtl} and choosing $N = \frac{\varepsilon^{-1/3}}{100}$. Therefore, \eqref{goal_prandtl} is satisfied.\\

\subsection{Construction of the remainder}\label{sec:remainder}
In this subsection, we construct the remainder $\{u,v\}$, whose stream function satisfies the equation \eqref{eqn:remainder}. We denote
$$
E := \partial_y NS[u_a] - \partial_x NS[v_a].
$$
By the construction of $\{u_a, v_a\}$ above, one can see that $|E| \lesssim \varepsilon^{\frac{4}{3}}$. We will work with the equation
\begin{equation}\label{eqn:remainder_new}
u_a \Delta \phi_x - \Delta u_a \phi_x + v_a \Delta \phi_y - \Delta v_a \phi_y - \varepsilon \Delta^2 \phi = - \varepsilon^{-(\frac{4}{3}-)}E  - \varepsilon^{\frac{4}{3}-} (\psi_y \Delta \psi_x - \psi_x \Delta \psi_y)
\end{equation}
with the boundary conditions \eqref{viscous_bc} and \eqref{Navier_bc2}, and use a fixed point argument.

\subsubsection{Small $L$ case}
First we construct the solution of \eqref{eqn:remainder} under the assumption of Theorem \ref{Thm_Existence_L}. We define the space $\cX_1$ to be the closure of smooth function satisfying the boundary conditions \eqref{viscous_bc} and \eqref{Navier_bc2} under the norm
$$
\| \phi \|_{\cX_1}^2:=  \frac{1}{L} \| u_0 \Delta \phi \|_{L^2}^2 + \varepsilon  \| \sqrt{u_0} \nabla \Delta \phi \|_{L^2}^2.
$$
We define the solution map $\cS: \psi \mapsto \phi$, where $\phi$ is the solution to \eqref{eqn:remainder_new} with the boundary conditions \eqref{viscous_bc} and \eqref{Navier_bc2}. Let 
$$
\cB_1 := \{ \phi \in \cX_1 : \| \phi \|_{\cX_1} \le 1 \}.
$$
We will prove that $\cS$ is a contraction mapping on $\cB_1$. We test the equation \eqref{eqn:remainder_new} by $u_0 \Delta \phi W$, with $W = 2 - \frac{x}{L}$. The estimates of the left-hand side of \eqref{eqn:remainder_new} follows exactly the same as in Section \ref{sec:L_linear}, as the regularity of $\{u_a,v_a\}$ is better than $\{u_s, v_s\}$. The linear part gives positive terms including
\begin{equation}\label{pos_1}
\frac{1}{L} \| u_0 \Delta \phi \|_{L^2}^2 + \varepsilon  \| \sqrt{u_0} \nabla \Delta \phi \|_{L^2}^2.
\end{equation}
It remains to estimate the right-hand side of \eqref{eqn:remainder_new}. First,
\begin{equation}\label{cons_1}
|\varepsilon^{\frac{4}{3}-}(E, u_0 \Delta \phi W)| \lesssim \varepsilon^{0+} \| u_0 \Delta \phi \|_{2}^2.
\end{equation}
For the nonlinear terms, by \eqref{est:mix_norm}, we can estimate
\begin{equation}\label{cons_2}
\begin{aligned}
&\varepsilon^{\frac{4}{3}-} |\Big((\psi_y \Delta \psi_x - \psi_x \Delta \psi_y), u_0 \Delta \phi W \Big)|\\
\lesssim & \varepsilon^{\frac{4}{3}-}\Big( \| \psi_y\|_{L^2_x L^\infty_y} \| \sqrt{u_0}\Delta \psi_x\|_2 \| \sqrt{u_0}\Delta \phi\|_{L^\infty_x L^2_y} + \| \psi_x\|_{L^2_x L^\infty_y} \| \sqrt{u_0}\Delta \psi_y\|_2 \| \sqrt{u_0}\Delta \phi\|_{L^\infty_x L^2_y}\Big)\\
\lesssim &\varepsilon^{\frac{1}{3}-} \| \psi \|_{\cX_1}^2 \| \phi\|_{\cX_1}.
\end{aligned}
\end{equation}
Therefore, for $\psi \in \cB_1$, one can see that \eqref{cons_1} and \eqref{cons_2} can be absorbed by the positive contribution \eqref{pos_1} provided $\varepsilon$ is small enough. This implies that $\phi = \cS(\psi) \in \cB_1$. Similarly, one can show that $\cS : \cB_1 \to \cB_1$ is a contraction mapping. Therefore, the existence and uniqueness of $\phi$ as the solution of \eqref{eqn:remainder} follow.

\subsubsection{General $L$ case}
Now we construct the solution of \eqref{eqn:remainder} under the assumption of Theorem \ref{Thm_H_Existence}. We instead define the space $\cX_2$ to be the closure of smooth function satisfying the boundary conditions \eqref{viscous_bc} and \eqref{Navier_bc2} under the norm
$$
\| \phi \|_{\cX_2}^2:=    \|\frac{R}{\sqrt{-u_{0yy}}}\|_{2}^2 + \varepsilon   \| \sqrt{\frac{u_0}{-u_{0yy}}} \nabla \Delta \phi \|_{2}^2.
$$
Let 
$$
\cB_2 := \{ \phi \in \cX_2 : \| \phi \|_{\cX_2} \le 1 \}.
$$
We will prove that the solution map $\cS$ is a contraction mapping on $\cB_1$. To this end, we test the equation \eqref{eqn:remainder_new} by $\frac{R}{-u_{0yy}} W$, with $W = 2L - x$. The estimates of the left-hand side of \eqref{eqn:remainder_new} follows  as in Section \ref{sec:H_linear}. Since no estimate of the temporal term is required here, we can use the simpler choice $N=2$ for the weight $W$. The linear part gives positive terms including
\begin{equation}\label{pos_2}
 \|\frac{R}{\sqrt{-u_{0yy}}}\|_{2}^2 + \varepsilon  \| \sqrt{\frac{u_0}{-u_{0yy}}} \nabla \Delta \phi \|_{2}^2.
\end{equation}
It remains to estimate the right-hand side of \eqref{eqn:remainder_new}. First,
\begin{equation}\label{cons_3}
|\varepsilon^{\frac{4}{3}-}(E, \frac{R}{-u_{0yy}} W)| \lesssim \varepsilon^{0+}  \|\frac{R}{\sqrt{-u_{0yy}}}\|_{2}^2.
\end{equation}
For the nonlinear terms, by  \eqref{est:mix_norm_H}, \eqref{est:mix_norm_Hardy}, and \eqref{est:mix_norm_misc}, we can estimate
\begin{equation}\label{cons_4}
\begin{aligned}
&\varepsilon^{\frac{4}{3}-} |\Big(\psi_y \Delta \psi_x, \frac{R}{-u_{0yy}} W \Big)|
 \le \varepsilon^{\frac{4}{3}-} |(\psi_y \Delta \psi_x, \frac{u_0 \Delta \phi}{-u_{0yy}}W)| + \varepsilon^{\frac{4}{3}-} |(\psi_y \Delta \psi_x, \phi W)|\\ 
\lesssim&  \varepsilon^{\frac{4}{3}-}\Big( \| \psi_y\|_{L^2_x L^\infty_y} \| \sqrt{\frac{u_0}{-u_{0yy}}}\Delta \psi_x\|_2 \| \sqrt{\frac{u_0}{-u_{0yy}}}\Delta \phi\|_{L^\infty_x L^2_y} +  \| \psi_y\|_{L^\infty_x L^2_y}\sqrt{\frac{u_0}{-u_{0yy}}}\Delta \psi_x\|_2 \| \frac{\phi}{\sqrt{u_0}}\|_{L^2_x L^\infty_y} \Big)\\
\lesssim &\varepsilon^{\frac{1}{3}-} \| \psi \|_{\cX_1}^2 \| \phi\|_{\cX_1}.
\end{aligned}
\end{equation}
Similarly, we can estimate
\begin{equation}\label{cons_5}
\varepsilon^{\frac{4}{3}-} |\Big(\psi_x \Delta \psi_y, \frac{R}{-u_{0yy}} W \Big)| \lesssim \varepsilon^{\frac{1}{3}-} \| \psi \|_{\cX_1}^2 \| \phi\|_{\cX_1}.
\end{equation}
Therefore, for $\psi \in \cB_2$, one can see that \eqref{cons_3}-\eqref{cons_5} can be absorbed by the positive contribution \eqref{pos_2} provided $\varepsilon$ is small enough. This implies that $\phi = \cS(\psi) \in \cB_2$. Similarly, one can show that $\cS : \cB_2 \to \cB_2$ is a contraction mapping. Therefore,  the existence and uniqueness of $\phi$ as the solution of \eqref{eqn:remainder} follow for the general $L$ case.

\subsection{Regularity of the steady state}

Finally, we verify that the constructed steady state has the desired regularity estimates \eqref{ss_estimates} or \eqref{ss_estimates_H}. Recall that our steady state is given by \eqref{ss_construction}. \eqref{Euler_estimate1}, \eqref{Euler_estimate2}, and \eqref{est:Prandtl} give the required pointwise control of the approximated component $\{u_a, v_a\}$. Thus the only remaining step is to bound the remainder $\{u,v\}$ constructed in Section \ref{sec:remainder}.

\subsubsection{Proof of \eqref{ss_estimates}}

First, we prove \eqref{ss_estimates} under the assumptions in Theorem \ref{Thm_Existence_L}. It suffices to prove
\begin{align}
&|\nabla \phi| \lesssim \varepsilon^{-\frac{1}{6}}, \qquad |\nabla \phi_x| \lesssim \varepsilon^{-(\frac{1}{3}+)}, \label{est:rem_pointwise}\\
&\| \sqrt{u_0} \nabla \Delta \phi \|_{L^\infty_x L^2_y} + \| \sqrt{u_0} \nabla \Delta \phi_x \|_{2} \lesssim \varepsilon^{-\frac{1}{2}}, \label{est:rem_weight}
\end{align}
where $\phi$ is the stream function for the remainder $\{u,v\}$.
We test the equation \eqref{eqn:remainder} by $-(u_0 \Delta \phi_x W)_x$, where $W = 2 - x/L$. Following the estimates in Section \ref{sec:L_nonlinear}, we will end up getting the uniform estimates
$$
\| u_0 \Delta \phi_x\|_2 + \sqrt{\varepsilon} \| \sqrt{u_0} \nabla\Delta \phi_x\|_2 \lesssim 1.
$$
This implies estimate \eqref{est:rem_weight} with Sobolev inequality in $x$. To estimate $\nabla \phi$, we use Sobolev inequality in $x$ and \eqref{est:mix_norm} to estimate
$$
|\nabla \phi| \lesssim \| \nabla \phi_x\|_{L^2_x L^\infty_y} \lesssim \varepsilon^{-\frac{1}{6}}.
$$
To estimate $\nabla \phi_x$, we follow the proofs of \eqref{est:nabla_phi_L_infty} and \eqref{est:Delta_phi_mix} with $\phi$ replaced by $\phi_x$ to obtain
\begin{align*}
|\nabla \phi_x| \lesssim_{q,\theta} \|\Delta \phi_x\|_{L^2_xL^2_y}^\theta \|\Delta \phi_x\|_{L^2_xL^q_y}^{1-\theta} \lesssim_{q,\theta} \|\Delta \phi_x\|_{L^2_xL^2_y}^\theta (\| \Delta \phi_x\|_2 + \| \sqrt{u_0} \Delta\phi_{xy}\|_2)^{1-\theta},
\end{align*}
where $q > 2$ and $\theta > 0$. Then estimate \eqref{est:rem_pointwise} follows by taking $\theta = 0+$ and \eqref{est:L_weighted_Hardy}.

\subsubsection{Proof of \eqref{ss_estimates_H}} Lastly, we prove \eqref{ss_estimates_H} under the assumptions in Theorem \ref{Thm_H_Existence}. It suffices to prove
\begin{align}
|\nabla \phi| \lesssim \varepsilon^{-(\frac{1}{3}+)}, \qquad \| \phi_{xy}\|_2 \lesssim \varepsilon^{-\frac{1}{3}},\qquad \| \sqrt{u_0} \nabla \Delta \phi \|_{2} \lesssim \varepsilon^{-\frac{1}{2}}, \label{est:rem_weight2}
\end{align}
where $\phi$ is the stream function for the remainder $\{u,v\}$. We test the equation \eqref{eqn:remainder} by $\frac{R}{-u_{0yy}}W$, where $W = 2L - x$. Following the estimates in Sections \ref{sec:H_linear} and \ref{sec:H_nonlinear}, we will end up getting the uniform estimates
$$
 \|\frac{R}{\sqrt{-u_{0yy}}}\|_{2} + \sqrt\varepsilon  \| \sqrt{\frac{u_0}{-u_{0yy}}} \nabla \Delta \phi \|_{2} \lesssim 1.
$$
Therefore, estimate \eqref{est:rem_weight2} follows from \eqref{est:nabla_phi_L_infty} and \eqref{est:H_weighted_Hardy}.


\appendix
\section{}\label{Sec:appendix}


In this Appendix, we provide a proof of \eqref{assumption_2} for the Poiseuille flow $\mu(y) = y(1-y)$ with $C_2 = 2$.

\begin{lemma}
Let $f$ be any $H^1$ function satisfying $f(1/2) = 0$, then
\begin{equation}\label{Hardy_goal}
\int_0^{1/2} f^2 \, dy \le 4 \int_0^{1/2} \mu^2 f_y^2 \, dy,
\end{equation}
where $\mu (y) = y(1-y)$. The constant $4$ is sharp.
\end{lemma}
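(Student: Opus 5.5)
The plan is to prove \eqref{Hardy_goal} with a ground-state (supersolution) argument on $(0,1/2)$, and to prove sharpness with truncated power functions concentrated at the endpoint $y=0$. By density I will work with $f$ smooth on $[0,1/2]$ with $f(1/2)=0$. The left side is finite and the weight $\mu^2$ vanishes quadratically at $0$, so the boundary terms at $y=0$ can be handled by cutting off near $0$ and letting the cutoff go to $0$.

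\emph{Step 1 (reduction).} It suffices to find $w>0$ on $(0,1/2)$ with
\begin{equation*}
-\bigl(4\mu^2 w'\bigr)' \ge w \quad\text{on } (0,1/2),
\end{equation*}
and with $\mu^2 w'/w$ bounded near $0$ and at $1/2$. Then I write $f=wg$ and expand
\begin{equation*}
4\int_0^{1/2}\mu^2 f_y^2 = 4\int_0^{1/2}\mu^2 w^2 g_y^2 + 4\int_0^{1/2}\mu^2\bigl(w'^2 g^2+ww'(g^2)_y\bigr).
\end{equation*}
I integrate the last term by parts. The boundary term at $1/2$ vanishes because $f(1/2)=0$, and the one at $0$ vanishes because $\mu^2w'/w\to0$ and $g^2w^2=f^2$ is bounded. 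This gives
\begin{equation*}
4\int_0^{1/2}\mu^2 f_y^2 \ge \int_0^{1/2}\bigl(-(4\mu^2 w')'\bigr) w g^2 \ge \int_0^{1/2} f^2 .
\end{equation*}
Equivalently, by the Riccati form, I need $h=-4\mu^2w'/w$ to satisfy $h'-h^2/(4\mu^2)\ge1$. I would keep this form in mind as a check.

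\emph{Step 2 (choice of $w$).} The natural coordinates are $t=y/(1-y)\in(0,1)$ and then $s=\log t\in(-\infty,0)$. In these coordinates $\mu\,\partial_y=t\,\partial_t=\partial_s$ and $dy=e^s\,ds/(1+e^s)^2=:\rho(s)\,ds$. The inequality becomes the one-dimensional weighted Hardy inequality
\begin{equation*}
\int_{-\infty}^0 f^2\rho\,ds\le 4\int_{-\infty}^0 f_s^2\rho\,ds, \qquad f(0)=0 .
\end{equation*}
As $s\to-\infty$ we have $\rho\sim e^s$, and the pure exponential weight gives exactly the constant $4$ with the critical profile $e^{-s/2}$. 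I will therefore look for $w=e^{-s/2}\,\theta(s)$ with $\theta$ a slowly varying positive correction, chosen to absorb the factor $(1+e^s)^{-2}$. In $y$-variables the first candidates are of the form $w=y^{-1/2}(1-y)^{a}(1-2y)^{b}$, whose exponents I will tune so that $-(4\mu^2w')'-w\ge0$ reduces to a polynomial inequality on $[0,1/2]$. The naive choice $w=\mu^{-1/2}$ fails, since it gives $\mu^{-1/2}(1-8y+8y^2)$, which is smaller than $w$. Likewise the naive Riccati choice $h=\sigma(s)=e^s/(1+e^s)$ just misses, because $|h|\le\rho$ fails. So the correction has to be genuinely $s$-dependent. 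If no closed form works, I would solve the ODE $-(4\mu^2w')'=\lambda w$ with $w$ regular at the Neumann-type endpoint $y=1/2$ (hypergeometric or Legendre type, after $z=1-2y$). I would then show by a Sturm comparison or oscillation argument that the solution stays positive on $(0,1/2)$ for $\lambda=1$.

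\emph{Step 3 (sharpness).} I take $f_\delta=(y^{-1/2+\delta}-2^{1/2-\delta})\,\eta(y)$ with a fixed cutoff $\eta$ near $1/2$, or simply the choice that makes $f_\delta(1/2)=0$. As $\delta\to0$ both sides blow up like $\int_0 y^{-1+2\delta}$, with ratio tending to $(1/2)^{-2}=4$. Since $\mu^2\sim y^2$ near $0$, all other contributions stay bounded, so the constant $4$ cannot be improved.

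The main obstacle is Step 2, namely exhibiting an explicit positive supersolution with constant exactly $4$. The endpoint $y=0$ is critical, with no slack in the leading order, so the lower-order correction must have the right sign over the whole interval $(0,1/2)$ and not just near $0$. I would first try to close it with the polynomial-type ansatz by checking the resulting polynomial inequality. Failing that, I would fall back on the ODE and Sturm comparison route.
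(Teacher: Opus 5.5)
There is a genuine gap. Step 2 carries the entire content of the inequality, and it is never carried out. As you have set it up, it also cannot be carried out.

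\textbf{Why your formulation of Step 2 fails.} Your Step 1 asks for $w>0$ with $-(4\mu^2w')'\ge w$ and $\mu^2w'/w$ bounded at $y=1/2$, which means $w$ stays positive up to $y=1/2$. No such $w$ exists. Let $w_1:=(1-2y)\mu^{-1/2}$; it solves $-(4\mu^2w_1')'=w_1$ exactly (verified below). Suppose $w>0$ is any supersolution, and set $P:=4\mu^2(w_1'w-w_1w')$.
\begin{itemize}
\item $P'=-(4\mu^2w')'w_1-w_1w\ge0$, so $P$ is nondecreasing.
\item $(w/w_1)'=-P/(4\mu^2w_1^2)$.
\item $4\mu^2w_1^2=4\mu(1-2y)^2$, whose reciprocal is non-integrable at both $y=0$ and $y=1/2$.
\end{itemize}
So a positive value of $P$ would drive $w/w_1\to-\infty$ at $y=1/2$, and a negative value would do so at $y=0$. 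Hence $P\equiv0$ and $w=cw_1$, which vanishes at $y=1/2$. In particular there is no strict supersolution at all, so no polynomial check with slack can close Step 2; you must find $w_1$ itself.

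Your fallback fails for the same reason. The point $y=1/2$ is the Dirichlet end, not a Neumann one, and it is an ordinary point of the ODE. Suppose you had a positive solution with $w'(1/2)=0$. It would kill the boundary term at $1/2$ without using $f(1/2)=0$, and so would give the inequality for $f\equiv1$, which is false. Explicitly, in $z=1-2y$ that solution is $\bigl(1-\tfrac z2\log\tfrac{1+z}{1-z}\bigr)/\sqrt{1-z^2}$, and it changes sign inside $(0,1)$.

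\textbf{The fix.} Your own ansatz already contains the right function. Take $a=-\tfrac12$, $b=1$, i.e.\ $w=(1-2y)\mu^{-1/2}=2(\sqrt\mu)'$; in your $s$-variable this is $w=e^{-s/2}(1-e^{s})$. Drop the boundedness requirement at $1/2$.
\begin{itemize}
\item Using $(1-2y)^2=1-4\mu$, one gets $w'=-\tfrac12\mu^{-3/2}$, hence $4\mu^2w'=-2\sqrt\mu$ and $-(4\mu^2w')'=\mu'\mu^{-1/2}=w$.
\item The boundary term in your expansion is $4\mu^2(w'/w)f^2=-2\mu f^2/(1-2y)$.
\item It tends to $0$ at $y=0$. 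It also tends to $0$ at $y=1/2$, since $f(y)^2\le(\tfrac12-y)\int_y^{1/2}f_y^2$.
\end{itemize}
Your Step 1 computation then becomes the identity
\[
4\int_0^{1/2}\mu^2f_y^2-\int_0^{1/2}f^2=4\int_0^{1/2}\mu^2w^2\bigl((f/w)_y\bigr)^2\ge0 .
\]

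\textbf{Comparison with the paper.} The paper uses the same function, seen differently. It sets $x=1-2y$ and passes to an angle variable; the substitution producing the weights $\cos\theta$, $\cos^3\theta$ with $g(0)=0$ is $x=\sin\theta$. It then proves $\int g^2\cos\theta\le\int g_\theta^2\cos^3\theta$ by Cauchy--Schwarz against $\cos^2$ followed by Fubini. The equality case of that argument is $g=\tan\theta=w/2$. The paper's route is shorter. Yours, once $w$ is found, also gives an explicit remainder term and shows directly that the constant $4$ is not attained.

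\textbf{Step 3.} Your sharpness argument is fine, except that $f_\delta\notin H^1$ near $y=0$. Replace it by the constant $f_\delta(\eta)$ on $(0,\eta)$ and let $\eta\to0$ first. Both integrands of $f_\delta$ are integrable at $0$, so the ratio is unaffected in the limit. This is exactly what the paper's $g_\delta$ does.
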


\begin{proof}
First, by change of variable $x = 1-2y$, one can see that \eqref{Hardy_goal} is equivalent to
$$
\int_0^1 f^2 \,dx \le \int_0^1 (1-x^2)^2 f_x^2 \, dx,
$$
where $f(x)$ satisfies $f(0) = 0$. We further change $x = \cos \theta$, and denote $g(\theta) = f(x)$. Then \eqref{Hardy_goal} is equivalent to
\begin{equation}\label{Hardy_goal_g}
\int_0^{\frac{\pi}{2}} g^2 \cos \theta \,d\theta \le \int_0^{\frac{\pi}{2}} g_\theta^2 \cos^3 \theta \, d\theta,
\end{equation}
where $g(\theta)$ satisfies $g(0) = 0$. By the fundamental theorem of calculus and H\"older's inequality, we can estimate
\begin{align*}
g^2(\theta) &= \Big(\int_0^\theta g_\theta (s) \, ds \Big)^2\\
&\le \int_0^\theta |g_\theta (s)|^2 \cos^2 s \, ds \int_0^\theta \frac{1}{\cos^2 s} \, ds\\
&= \int_0^\theta |g_\theta (s)|^2 \cos^2 s \, ds \tan \theta.
\end{align*}
Therefore,
\begin{align*}
\int_0^{\frac{\pi}{2}} g^2 \cos \theta \,d\theta & \le \int_0^{\frac{\pi}{2}} \int_0^\theta |g_\theta (s)|^2 \cos^2 s \, ds \sin \theta \, d\theta\\
& = \int_0^{\frac{\pi}{2}}    |g_\theta (s)|^2 \cos^2 s \int_s^{\frac{\pi}{2}} \sin \theta \, d\theta ds\\
& = \int_0^{\frac{\pi}{2}}    |g_\theta (s)|^2 \cos^3 s\, ds,
\end{align*}
which is exactly \eqref{Hardy_goal_g}. To prove the sharpness, for any small $\delta > 0$, we define
$$
g_\delta(\theta): = \left\{
\begin{aligned}
&\tan \theta, && \theta \in (0, \frac{\pi}{2} - \delta),\\
&\tan (\frac{\pi}{2} - \delta),  && \theta \in [\frac{\pi}{2} - \delta, \frac{\pi}{2}).
\end{aligned}
\right.
$$
Since $\tan (\frac{\pi}{2} - \delta) \sim \frac{1}{\delta}$ when $\delta$ is small, the left-hand side of \eqref{Hardy_goal_g} becomes
\begin{align*}
\int_0^{\frac{\pi}{2}} g_\delta^2 \cos \theta \,d\theta &= \int_0^{\frac{\pi}{2} - \delta} \frac{\sin^2 \theta}{\cos \theta} \, d\theta + \tan^2 (\frac{\pi}{2} - \delta) \int_{\frac{\pi}{2} - \delta}^{\frac{\pi}{2}} \cos \theta \, d\theta\\
&= \int_0^{\frac{\pi}{2} - \delta} \frac{1}{\cos \theta} \, d\theta -  \int_0^{\frac{\pi}{2} - \delta} \cos \theta \, d\theta + \tan^2 (\frac{\pi}{2} - \delta) O(\delta^2)\\
&= \int_0^{\frac{\pi}{2} - \delta} \frac{1}{\cos \theta} \, d\theta + O(1).
\end{align*}
Moreover,
\begin{align*}
\int_0^{\frac{\pi}{2} - \delta} \frac{1}{\cos \theta} \, d\theta = \int_0^{\frac{\pi}{2} - \delta} \frac{\cos \theta}{1 - \sin^2 \theta} \, d\theta= \int_0^{\sin (\frac{\pi}{2} - \delta)} \frac{1}{1-x^2} \, dx = O(|\log \delta|).
\end{align*}
Note that for the right-hand side of \eqref{Hardy_goal_g}, we also have
$$
 \int_0^{\frac{\pi}{2}} (g_\delta')^2 \cos^3 \theta \, d\theta = \int_0^{\frac{\pi}{2} - \delta} \frac{1}{\cos \theta} \, d\theta = O(|\log \delta|).
$$
Therefore, we can conclude that for any $\varepsilon > 0$, we can find a small $\delta > 0$ such that
$$
\int_0^{\frac{\pi}{2}} g_\delta^2 \cos \theta \,d\theta > (1-\varepsilon)  \int_0^{\frac{\pi}{2}} (g_\delta')^2 \cos^3 \theta \, d\theta.
$$
This confirms the sharpness of the inequality \eqref{Hardy_goal}.
\end{proof}

\bibliographystyle{amsplain}

\begin{bibdiv}
\begin{biblist}



\bib{Bedrossian-Germain-Masmoudi17}{article}{
      author={Bedrossian, Jacob},
      author={Germain, Pierre},
      author={Masmoudi, Nader},
       title={On the stability threshold for the 3{D} {C}ouette flow in
  {S}obolev regularity},
        date={2017},
        ISSN={0003-486X,1939-8980},
     journal={Ann. of Math. (2)},
      volume={185},
      number={2},
       pages={541\ndash 608},
         url={https://doi.org/10.4007/annals.2017.185.2.4},
      review={\MR{3612004}},
}

\bib{Bedrossian-Germain-Masmoudi19}{article}{
      author={Bedrossian, Jacob},
      author={Germain, Pierre},
      author={Masmoudi, Nader},
       title={Stability of the {C}ouette flow at high {R}eynolds numbers in two
  dimensions and three dimensions},
        date={2019},
        ISSN={0273-0979,1088-9485},
     journal={Bull. Amer. Math. Soc. (N.S.)},
      volume={56},
      number={3},
       pages={373\ndash 414},
         url={https://doi.org/10.1090/bull/1649},
      review={\MR{3974608}},
}

\bib{Bedrossian-Germain-Masmoudi20}{article}{
      author={Bedrossian, Jacob},
      author={Germain, Pierre},
      author={Masmoudi, Nader},
       title={Dynamics near the subcritical transition of the 3{D} {C}ouette
  flow {I}: {B}elow threshold case},
        date={2020},
        ISSN={0065-9266,1947-6221},
     journal={Mem. Amer. Math. Soc.},
      volume={266},
      number={1294},
       pages={v+158},
         url={https://doi.org/10.1090/memo/1294},
      review={\MR{4126259}},
}

\bib{Bedrossian-Germain-Masmoudi22}{article}{
      author={Bedrossian, Jacob},
      author={Germain, Pierre},
      author={Masmoudi, Nader},
       title={Dynamics near the subcritical transition of the 3{D} {C}ouette
  flow {II}: {A}bove threshold case},
        date={2022},
        ISSN={0065-9266,1947-6221},
     journal={Mem. Amer. Math. Soc.},
      volume={279},
      number={1377},
       pages={v+135},
         url={https://doi.org/10.1090/memo/1377},
      review={\MR{4458538}},
}



\bib{Bedrossian-He-Iyer-Li-Wang25}{article}{
      author={Bedrossian, Jacob},
      author={He, Siming},
      author={Iyer, Sameer},
      author={Li, Linfeng},
      author={Wang, Fei},
       title={Stability threshold of close-to-{Couette} shear flows with
  no-slip boundary conditions in 2d},
        date={2025},
         url={https://arxiv.org/abs/2510.16378},
        note={Preprint, {arXiv}:2510.16378 [math.{AP}]},
}

\bib{Bedrossian-He-Iyer-Wang25}{article}{
      author={Bedrossian, Jacob},
      author={He, Siming},
      author={Iyer, Sameer},
      author={Wang, Fei},
       title={Stability threshold of nearly-{C}ouette shear flows with {N}avier
  boundary conditions in 2{D}},
        date={2025},
        ISSN={0010-3616,1432-0916},
     journal={Comm. Math. Phys.},
      volume={406},
      number={2},
       pages={Paper No. 28, 42},
         url={https://doi.org/10.1007/s00220-024-05175-4},
      review={\MR{4848788}},
}

\bib{Bedrossian-Vicol-Wang18}{article}{
      author={Bedrossian, Jacob},
      author={Vicol, Vlad},
      author={Wang, Fei},
       title={The {S}obolev stability threshold for 2{D} shear flows near
  {C}ouette},
        date={2018},
        ISSN={0938-8974,1432-1467},
     journal={J. Nonlinear Sci.},
      volume={28},
      number={6},
       pages={2051\ndash 2075},
         url={https://doi.org/10.1007/s00332-016-9330-9},
      review={\MR{3867637}},
}

\bib{Beekie-Chen-Jia26}{article}{
      author={Beekie, Rajendra},
      author={Chen, Shan},
      author={Jia, Hao},
       title={Uniform {V}orticity {D}epletion and {I}nviscid {D}amping for
  {P}eriodic {S}hear {F}lows in the {H}igh {R}eynolds {N}umber {R}egime},
        date={2026},
        ISSN={0003-9527,1432-0673},
     journal={Arch. Ration. Mech. Anal.},
      volume={250},
      number={1},
       pages={Paper No. 7},
         url={https://doi.org/10.1007/s00205-025-02162-4},
      review={\MR{5006587}},
}

\bib{Bian-Grenier-Masmoudi-Zhao25}{article}{
      author={Bian, Dongfen},
      author={Grenier, Emmanuel},
      author={Masmoudi, Nader},
      author={Zhao, Weiren},
       title={Boundary driven instabilities of {C}ouette flows},
        date={2025},
        ISSN={0010-3616,1432-0916},
     journal={Comm. Math. Phys.},
      volume={406},
      number={9},
       pages={Paper No. 221, 23},
         url={https://doi.org/10.1007/s00220-025-05401-7},
      review={\MR{4940211}},
}

\bib{Chen-Ding-Lin-Zhang23}{article}{
      author={Chen, Qi},
      author={Ding, Shijin},
      author={Lin, Zhilin},
      author={Zhang, Zhifei},
       title={Nonlinear stability for 3-{D} plane {Poiseuille} flow in a finite
  channel},
        date={2023},
         url={https://arxiv.org/abs/2310.11694},
        note={Preprint, {arXiv}:2310.11694 [math.{AP}]},
}

\bib{Chen-Jia-Wei-Zhang25}{article}{
      author={Chen, Qi},
      author={Jia, Hao},
      author={Wei, Dongyi},
      author={Zhang, Zhifei},
       title={Asymptotic stability of the {Kolmogorov} flow at high {Reynolds}
  numbers},
        date={2025},
         url={https://arxiv.org/abs/2510.13181},
        note={Preprint, {arXiv}:2510.13181 [math.{AP}]},
}

\bib{Chen-Li-Shen-Zhang25}{article}{
      author={Chen, Qi},
      author={Li, Hao},
      author={Shen, Shunlin},
      author={Zhang, Zhifei},
       title={Asymptotic stability of the symmetric flow via inviscid damping
  and enhanced dissipation},
        date={2025},
         url={https://arxiv.org/abs/2510.18361},
        note={Preprint, {arXiv}:2510.18361 [math.{AP}]},
}

\bib{Chen-Li-Wei-Zhang20}{article}{
      author={Chen, Qi},
      author={Li, Te},
      author={Wei, Dongyi},
      author={Zhang, Zhifei},
       title={Transition threshold for the 2-{D} {C}ouette flow in a finite
  channel},
        date={2020},
        ISSN={0003-9527,1432-0673},
     journal={Arch. Ration. Mech. Anal.},
      volume={238},
      number={1},
       pages={125\ndash 183},
         url={https://doi.org/10.1007/s00205-020-01538-y},
      review={\MR{4121130}},
}

\bib{Chen-Wei-Zhang23a}{article}{
      author={Chen, Qi},
      author={Wei, Dongyi},
      author={Zhang, Zhifei},
       title={Linear inviscid damping and enhanced dissipation for monotone
  shear flows},
        date={2023},
        ISSN={0010-3616,1432-0916},
     journal={Comm. Math. Phys.},
      volume={400},
      number={1},
       pages={215\ndash 276},
         url={https://doi.org/10.1007/s00220-022-04597-2},
      review={\MR{4581474}},
}

\bib{Chen-Wei-Zhang23b}{article}{
      author={Chen, Qi},
      author={Wei, Dongyi},
      author={Zhang, Zhifei},
       title={Linear stability of pipe {Poiseuille} flow at high {Reynolds}
  number regime},
        date={2023},
        ISSN={0010-3640},
     journal={Comm. Pure Appl. Math.},
      volume={76},
      number={9},
       pages={1868\ndash 1964},
}

\bib{CotiZelati-Elgindi-Widmayer20}{article}{
      author={Coti~Zelati, Michele},
      author={Elgindi, Tarek~M.},
      author={Widmayer, Klaus},
       title={Enhanced dissipation in the {N}avier-{S}tokes equations near the
  {P}oiseuille flow},
        date={2020},
        ISSN={0010-3616,1432-0916},
     journal={Comm. Math. Phys.},
      volume={378},
      number={2},
       pages={987\ndash 1010},
         url={https://doi.org/10.1007/s00220-020-03814-0},
      review={\MR{4134940}},
}



\bib{Zotto23}{article}{
      author={Del~Zotto, Augusto},
       title={Enhanced dissipation and transition threshold for the
  {P}oiseuille flow in a periodic strip},
        date={2023},
        ISSN={0036-1410,1095-7154},
     journal={SIAM J. Math. Anal.},
      volume={55},
      number={5},
       pages={4410\ndash 4424},
         url={https://doi.org/10.1137/21M1444011},
      review={\MR{4641646}},
}

\bib{Deng-Masmoudi23}{article}{
      author={Deng, Yu},
      author={Masmoudi, Nader},
       title={Long-time instability of the {C}ouette flow in low {G}evrey
  spaces},
        date={2023},
        ISSN={0010-3640,1097-0312},
     journal={Comm. Pure Appl. Math.},
      volume={76},
      number={10},
       pages={2804\ndash 2887},
         url={https://doi.org/10.1002/cpa.22092},
      review={\MR{4630602}},
}

\bib{Ding-Lin25}{article}{
      author={Ding, Shijin},
      author={Lin, Zhilin},
       title={Stability for the 2-{D} plane {P}oiseuille flow in a finite
  channel},
        date={2025},
        ISSN={1674-7283,1869-1862},
     journal={Sci. China Math.},
      volume={68},
      number={10},
       pages={2333\ndash 2346},
  url={https://doi-org.proxy.lib.ohio-state.edu/10.1007/s11425-024-2382-4},
      review={\MR{4962641}},
}



\bib{Grenier-Guo-Toan16}{article}{
      author={Grenier, Emmanuel},
      author={Guo, Yan},
      author={Nguyen, Toan~T.},
       title={Spectral instability of general symmetric shear flows in a
  two-dimensional channel},
        date={2016},
        ISSN={0001-8708,1090-2082},
     journal={Adv. Math.},
      volume={292},
       pages={52\ndash 110},
         url={https://doi.org/10.1016/j.aim.2016.01.007},
      review={\MR{3464020}},
}


\bib{Grenier-Toan19}{article}{
      author={Grenier, Emmanuel},
      author={Nguyen, Toan~T.},
       title={{$L^\infty$} instability of {P}randtl layers},
        date={2019},
        ISSN={2524-5317,2199-2576},
     journal={Ann. PDE},
      volume={5},
      number={2},
       pages={Paper No. 18, 36},
  url={https://doi-org.proxy.lib.ohio-state.edu/10.1007/s40818-019-0074-3},
      review={\MR{4038143}},
}


\bib{GuoIyer21}{article}{
      author={Guo, Yan},
      author={Iyer, Sameer},
       title={Regularity and expansion for steady {P}randtl equations},
        date={2021},
        ISSN={0010-3616,1432-0916},
     journal={Comm. Math. Phys.},
      volume={382},
      number={3},
       pages={1403\ndash 1447},
         url={https://doi.org/10.1007/s00220-021-03964-9},
      review={\MR{4232771}},
}

\bib{GuoIyer23}{article}{
      author={Guo, Yan},
      author={Iyer, Sameer},
       title={Validity of steady {P}randtl layer expansions},
        date={2023},
        ISSN={0010-3640,1097-0312},
     journal={Comm. Pure Appl. Math.},
      volume={76},
      number={11},
       pages={3150\ndash 3232},
      review={\MR{4642817}},
}

\bib{IyerMasmoudi21a}{article}{
      author={Iyer, Sameer},
      author={Masmoudi, Nader},
       title={Global-in-$x$ stability of steady prandtl expansions for 2{D}
  {N}avier-{S}tokes flows},
        date={2021},
        note={Preprint, arXiv:2008.12347 [math.{AP}]},
}

\bib{IyerZhou}{article}{
      author={Iyer, Sameer},
      author={Zhou, Chunhui},
       title={Stationary inviscid limit to shear flows},
        date={2019},
        ISSN={0022-0396,1090-2732},
     journal={J. Differential Equations},
      volume={267},
      number={12},
       pages={7135\ndash 7153},
         url={https://doi.org/10.1016/j.jde.2019.07.017},
      review={\MR{4011041}},
}

\bib{Li-Wei-Zhang20}{article}{
      author={Li, Te},
      author={Wei, Dongyi},
      author={Zhang, Zhifei},
       title={Pseudospectral bound and transition threshold for the 3{D}
  {K}olmogorov flow},
        date={2020},
        ISSN={0010-3640,1097-0312},
     journal={Comm. Pure Appl. Math.},
      volume={73},
      number={3},
       pages={465\ndash 557},
         url={https://doi.org/10.1002/cpa.21863},
      review={\MR{4057900}},
}

\bib{Masmoudi-Zhao22}{article}{
      author={Masmoudi, Nader},
      author={Zhao, Weiren},
       title={Stability threshold of two-dimensional {C}ouette flow in
  {S}obolev spaces},
        date={2022},
        ISSN={0294-1449,1873-1430},
     journal={Ann. Inst. H. Poincar\'e{} C Anal. Non Lin\'eaire},
      volume={39},
      number={2},
       pages={245\ndash 325},
         url={https://doi.org/10.4171/aihpc/8},
      review={\MR{4412070}},
}

\bib{Trefethen}{article}{
      author={Trefethen, Lloyd~N.},
      author={Trefethen, Anne~E.},
      author={Reddy, Satish~C.},
      author={Driscoll, Tobin~A.},
       title={Hydrodynamic stability without eigenvalues},
        date={1993},
        ISSN={0036-8075,1095-9203},
     journal={Science},
      volume={261},
      number={5121},
       pages={578\ndash 584},
         url={https://doi.org/10.1126/science.261.5121.578},
      review={\MR{1229495}},
}

\bib{Wei-Zhang-Zhao19}{article}{
      author={Wei, Dongyi},
      author={Zhang, Zhifei},
      author={Zhao, Weiren},
       title={Linear inviscid damping and vorticity depletion for shear flows},
        date={2019},
        ISSN={2524-5317,2199-2576},
     journal={Ann. PDE},
      volume={5},
      number={1},
       pages={Paper No. 3, 101},
         url={https://doi.org/10.1007/s40818-019-0060-9},
      review={\MR{3919496}},
}

\bib{Wei-Zhang-Zhao20}{article}{
      author={Wei, Dongyi},
      author={Zhang, Zhifei},
      author={Zhao, Weiren},
       title={Linear inviscid damping and enhanced dissipation for the
  {K}olmogorov flow},
        date={2020},
        ISSN={0001-8708,1090-2082},
     journal={Adv. Math.},
      volume={362},
       pages={106963, 103},
         url={https://doi.org/10.1016/j.aim.2019.106963},
      review={\MR{4050586}},
}

\end{biblist}
\end{bibdiv}

\end{document}